\tikzset{
  symbol/.style={
    draw=none,
    every to/.append style={
      edge node={node [sloped, allow upside down, auto=false]{$#1$}}}
  }
}
\tikzset{
    labl/.style={anchor=south, rotate=90, inner sep=.5mm}
}
\theoremstyle{plain}
\newtheorem{theorem}{Theorem}[section]
\newtheorem{lemma}[theorem]{Lemma}
\newtheorem{proposition}[theorem]{Proposition}
\newtheorem{corollary}[theorem]{Corollary}
\theoremstyle{definition}
\newtheorem{definition}[theorem]{Definition}
\newtheorem{remark}[theorem]{Remark}
\newtheorem{example}[theorem]{Example}
\DeclareMathOperator{\Hom}{Hom}
\DeclareMathOperator{\Aut}{Aut}
\DeclareMathOperator{\End}{End}
\newcommand{\id}{\mathrm{id}}
\DeclareMathOperator{\im}{im}
\DeclareMathOperator{\Id}{Id}
\DeclareMathOperator{\Dec}{Dec}
\let\Im=\relax
\DeclareMathOperator{\Im}{lim\,im}
\DeclareMathOperator{\Ker}{lim\,ker}
\DeclareMathOperator{\Tr}{Tr}
\newcommand{\mapsfrom}{\mathrel{\reflectbox{\ensuremath{\mapsto}}}}
\newcommand{\tensor}{\otimes}
\newcommand{\D}{\ensuremath{\mathcal{D}}}
\DeclareMathOperator{\GpRg}{\D}
\newcommand{\Z}{\ensuremath{\mathbb{Z}}}
\newcommand{\Q}{\ensuremath{\mathbb{Q}}}
\newcommand{\C}{\ensuremath{\mathbb{C}}}
\newcommand{\F}{\ensuremath{\mathbb{F}}}
\newcommand{\isom}{\xrightarrow{\sim}}
\newcommand{\p}{\ensuremath{\mathfrak{p}}}
\renewcommand{\i}{\textup{i}}
\newcommand{\catt}[1]{\textbf{\textup{#1}}}
\newcommand{\homcat}[1]{\ensuremath{#1}\textup{Hom}}
\newcommand{\ltm}{\ensuremath{\big(\begin{smallmatrix}\Z&0\\ \Z&\Z\end{smallmatrix}\big)}}
\newcommand{\allm}{\ensuremath{\big(\begin{smallmatrix}\Z&\Z\\ \Z&\Z\end{smallmatrix}\big)}}
\title
[Realizing orders as group rings]
{Realizing orders as group rings}
\author[H.\ W.\ Lenstra, Jr.]{H.\ W.\ Lenstra, Jr.}
\address{Mathematisch Instituut, Universiteit Leiden, The Netherlands}
\email{hwl@math.leidenuniv.nl}
\author[A.\ Silverberg]{A.\ Silverberg}
\address{Department of Mathematics, University of California, Irvine, CA 92697, USA}
\email{asilverb@uci.edu}
\author[D.\ M.\ H. van Gent]{D.\ M.\ H. van Gent}
\address{Mathematisch Instituut, Universiteit Leiden, The Netherlands}
\email{d.m.h.van.gent@math.leidenuniv.nl}
\subjclass[2010]{13A02}
\keywords{group rings}
\thanks{Support for the research was provided by the Alfred P.~Sloan Foundation
and the National Science Foundation. Lenstra thanks the Max Planck Institute for Mathematics in Bonn for the hospitality and support extended to him during the fall of 2016.
Silverberg thanks the Simons Laufer Mathematical Sciences Institute for support while in residence during the Spring 2023 semester. We thank the referee for helpful comments.}
\begin{document}

\begin{abstract} 
An order is a commutative ring that as an abelian
group is finitely generated and free. A commutative ring is reduced if it has no non-zero nilpotent elements.
In this paper we use a new tool, namely, the fact that every reduced order has a universal grading, to answer questions about realizing orders as group rings. In particular, we address the Isomorphism Problem for group rings in the case where the ring is a reduced order.
We prove that any non-zero reduced order $R$ can be written as a group ring in a unique ``maximal'' way, up to
isomorphism. More precisely, there exist a ring $A$ and a finite abelian group $G$, both uniquely determined up to
isomorphism, such that $R\cong A[G]$ as rings, and such that
if $B$ is a ring and $H$ is a group, then $R\cong B[H]$ as
rings if and only if there is a finite abelian group $J$ such
that $B\cong A[J]$ as rings and $J\times H\cong G$ as groups. 
Computing $A$ and $G$ for given \(R\) can be done by means of an algorithm that is not quite polynomial-time.
We also give a description of the automorphism group of $R$ in terms of
$A$ and~$G$.
\end{abstract}

\maketitle


\section{Introduction}

Consider the map  
\[ (\{ \textup{rings} \}/{\cong}) \times (\{\textup{groups}\}/{\cong}) \to (\{\textup{rings}\}/{\cong}) \]
that for each ring $A$ and each group $G$ 
sends the pair of isomorphism classes of $(A, G)$ to the isomorphism class of the group ring $A[G]$. 
The {\em Isomorphism Problem for group rings} asks about the fibers of this map,
i.e., given a ring $R$, what can one say about the pairs $(A, G)$ with $A[G] \cong R$? 
There is a great deal of literature on this subject,
starting with the 1940 paper by Higman \cite{Higman}, 
which solved the case where \(A=\Z\) and \(G\) is a finite abelian group, 
and including \cite[Chapter~14]{Passman}, \cite[Chapters III and~IV]{Sehgal}, \cite{Sehgal2}, and the recent survey article \cite{Survey2022} and the references therein.

The emphasis has been 
on results stating that the fibers of our map are often ``small'' in a suitable sense.
For instance, it is a consequence of a theorem of May \cite{May2} that if $G$ is an abelian group and $\Z[G] \cong \Z[H]$ then $G \cong H$.
This contrasts with the complex case, where $\C[G] \cong \C[H]$, even as \(\C\)-algebras, whenever $G$ and $H$ are finite abelian groups of the same order.

One can slightly refine the question 
by not just asking for the existence of an isomorphism from $A[G]$ to $R$, but asking for the isomorphism as well. 
For a given non-zero ring $R$ the object of study is thus the set of triples $(A,G,\phi)$, where $A$ is a ring (always with \(1\)), $G$ is a group, and $\phi\colon A[G] \to R$ is a ring isomorphism. 
This essentially comes down to considering the set
\[\GpRg(R) = \{(A,G) : \textup{$A\subset R$ a subring, $G\subset R^*$ a subgroup, with $A[G] \to R$ a ring isomorphism}\},\]
where $R^*$ is the group of units of $R$, and $A[G] \to R$ is the natural group homomorphism.

One of the main contributions of the present paper is a new group-theoretic description of the set $\GpRg(R)$ for an important class of rings $R$, namely for connected reduced orders; see below for definitions. 
These rings are commutative, so $A$ is commutative and $G$ is abelian. 
The case of commutative group rings has received special attention in the literature (for example \cite{May1,May2,May3,May4,Adjaero,Parmenter}), an important tool being the abelian group $\mu = \mu(R)=\{\zeta\in R : (\exists\, n \in\Z_{\geq 1})\ \zeta^n=1\}$ of {\em roots of unity\/} in a commutative ring \(R\).

For connected reduced orders $R$ we add a new tool, namely the universal grading of $R$ and the abelian group $\Gamma = \Gamma(R)$ by which it is graded. 
The existence of such a universal grading was recently established in \cite{UniversalGradings}, and it was proved there that it comes with a natural map $d = d_R\colon \mu \to \Gamma$.
In one of our main theorems (Theorem~\ref{thm:S0_to_S} below) we exhibit, for connected reduced orders $R$, a natural bijection 
\[\GpRg(R) \isom \{f \in \Hom(\Gamma,\mu) : f \circ d \circ f = f\}.\]
Thus, the set of realizations of a connected reduced order as a group ring is parametrized by an easily defined set of homomorphisms from $\Gamma$ to $\mu$, both of which are finite abelian groups.

The theorem just stated has several striking consequences in the context of the Isomorphism Problem. For example, in  Theorem~\ref{thm:gcd} we prove that among reduced orders, group rings can only be isomorphic if they are so for obvious reasons. 
In Theorem~\ref{thm:universal} we show that each non-zero reduced order can in a unique ``maximal'' way be realized as a group ring.
In Theorem~\ref{thm:swop_grp} we establish a curious ``cross-over'' result.
For the connected case, we describe in Theorem~\ref{thm:matrix_group} the group of ring automorphisms of such a maximal group ring.

Deducing these results from our description of $\GpRg(R)$ is surprisingly non-trivial, and has two features that may be unexpected
in the context of
commutative algebra. The first is the use of modules over non-commutative rings, and the second the use of
techniques from number theory, taken from \cite{RootsOfUnity} and \cite{UniversalGradings}. 
We next give precise statements of some of our main results, along with a brief introduction to our new tool and methods.

By an {\em order} we mean a commutative ring of which the additive group is isomorphic to $\Z^n$ for some $n\in\Z_{\geq 0}$.
A ring element $x$ is {\em nilpotent} if $x^n = 0$ for some $n\in\Z_{>0}$. 
We call a commutative ring {\em reduced} if it has no non-zero nilpotent elements.

Obviously, if $A$ is a ring and $I$ and $H$ are
groups, then the group rings $A[I\times H]$ and $(A[I])[H]$
are isomorphic as rings. The following result expresses that,
among reduced orders, group rings can only be isomorphic if
they are so for this obvious reason.

\begin{theorem}\label{thm:gcd}
Suppose \(A\) and \(B\) are reduced orders and \(G\) and \(H\) are finite abelian groups. Then the following are equivalent:
\begin{enumerate}
\item \(A[G]\cong B[H]\) as rings,
\item there exist an order \(C\) and finite abelian groups \(I\) and \(J\) such that \(A\cong C[I]\) and \(B\cong C[J]\) as rings and \(I\times G\cong J\times H\) as groups.
\end{enumerate}
\end{theorem}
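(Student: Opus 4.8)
The plan is to dispatch the formal direction directly and to reduce the substantive direction to the ``maximal realization'' theorem, Theorem~\ref{thm:universal}. The implication (ii)$\Rightarrow$(i) uses nothing beyond the definitions: given $A\cong C[I]$, $B\cong C[J]$ and $I\times G\cong J\times H$, one has ring isomorphisms
\[
A[G]\cong C[I][G]\cong C[I\times G]\cong C[J\times H]\cong C[J][H]\cong B[H],
\]
invoking only that the group ring construction is functorial in the group and that $S[K_1][K_2]\cong S[K_1\times K_2]$ for every ring $S$ and all groups $K_1,K_2$. So the content lies entirely in (i)$\Rightarrow$(ii).

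For (i)$\Rightarrow$(ii), put $R:=A[G]$, so that also $R\cong B[H]$; the case $R=0$ forces $A=B=0$, and then (ii) holds with $C=0$, $I=H$, $J=G$, so assume $R\neq0$. The first step is to record that $R$ is a non-zero reduced order: it is finitely generated and free as an abelian group because $A$ is and $G$ is finite, and it is reduced because, $A$ being a reduced order, $R\otimes_{\Z}\Q\cong(A\otimes_{\Z}\Q)[G]$ is a finite product of group algebras of $G$ over number fields, each a product of fields, so that $R\otimes_{\Z}\Q$ is reduced and hence so is its subring $R$. The second step is to apply Theorem~\ref{thm:universal} to $R$: it furnishes a ring $C$ and a finite abelian group $N$ with $R\cong C[N]$ such that, for every ring $B'$ and every group $H'$, one has $R\cong B'[H']$ as rings if and only if $B'\cong C[J']$ as rings for some finite abelian group $J'$ with $J'\times H'\cong N$; moreover $C$ is automatically an order, since it embeds into $C[N]\cong R$ and a subgroup of a finitely generated free abelian group is again of that kind. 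The third step is to feed the two realizations of $R$ into this criterion: $R\cong A[G]$ yields a finite abelian group $I$ with $A\cong C[I]$ and $I\times G\cong N$, and $R\cong B[H]$ yields a finite abelian group $J$ with $B\cong C[J]$ and $J\times H\cong N$; composing with the two isomorphisms onto $N$ gives $I\times G\cong J\times H$, which is exactly statement (ii).

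In this form the theorem is a short corollary, so there is no serious obstacle in the deduction itself; the real difficulty has been displaced into Theorem~\ref{thm:universal}, whose proof I expect to be the true crux. I anticipate that it rests on the bijection $\GpRg(R)\isom\{f\in\Hom(\Gamma,\mu):f\circ d\circ f=f\}$ for connected reduced orders, on arithmetic control of the map $d\colon\mu\to\Gamma$ coming from the universal grading, and on a reduction from arbitrary reduced orders to connected ones that records how a product decomposition of $R$ interacts with its group-ring structures. Were one to try to prove (i)$\Rightarrow$(ii) without citing Theorem~\ref{thm:universal}, the crux would be essentially the same: the realizations $A[G]$ and $B[H]$ of $R$ correspond to two coarsenings of the universal grading, and one must show their common refinement is still realized by a group ring $C[N]$ in which $N$ carries $G$ and $H$ as quotients with \emph{split} kernels --- so that $N\cong I\times G\cong J\times H$, and not merely $N\twoheadrightarrow G$ and $N\twoheadrightarrow H$ --- and it is this splitting that I expect to demand the number-theoretic input.
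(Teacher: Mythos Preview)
Your reduction is circular in the paper's logical order: Theorem~\ref{thm:universal} is proved \emph{from} Theorem~\ref{thm:gcd} (see Section~\ref{sec:general_case}), so invoking it here begs the question. The paper does remark, immediately after stating Theorem~\ref{thm:universal}, that the implication can be reversed using Proposition~\ref{prop:reduced_connected_groupring}(i), but this is offered as an aside; the actual development proves Theorem~\ref{thm:gcd} first and derives Theorem~\ref{thm:universal} as a corollary.

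Even setting circularity aside, you attribute to Theorem~\ref{thm:universal} more than its statement contains. The theorem asserts only existence and uniqueness of the stark decomposition $R\cong C[N]$; the characterization ``$R\cong B'[H']$ iff $B'\cong C[J']$ with $J'\times H'\cong N$'' that you invoke is the abstract's paraphrase, not the theorem. To recover it you must apply Theorem~\ref{thm:universal} a second time, to $A$ itself: write $A\cong C_A[I]$ with $C_A$ stark, so $R\cong C_A[I\times G]$, and then uniqueness forces $C_A\cong C$ and $I\times G\cong N$; similarly for $B$. This is easy but not automatic.

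The paper's direct proof of (i)$\Rightarrow$(ii) treats the connected case first: it picks maximal elements $(C,V)\ge(A,G)$ and $(D,W)\ge(B,H)$ in $\GpRg(R)$, uses Theorem~\ref{thm:strong_swop_grp} to carry one to the other by an automorphism of $R$, and then extracts $I$, $J$ via Lemma~\ref{lem:connected_factorization}. For the general case it decomposes $R$ into connected factors $R_x$, applies the connected case to each (after projecting via Lemma~\ref{lem:projection}), normalizes so that the resulting $I_x$, $J_x$ are coprime---whence all $I_x\times G$ are isomorphic as least common multiples of $G$ and $H$---and reassembles using Proposition~\ref{prop:join}. Your closing paragraph correctly anticipates that the connected case and the degree map are where the content lies, but your proof as written does not supply it.
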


The proof is given in Section \ref{sec:general_case}.

We call a commutative ring $R$ {\em stark} if there do not exist a ring \(A\) and a non-trivial group \(G\) such that \(R\) is isomorphic to the group ring \(A[G]\).

\begin{theorem}\label{thm:universal}
Let \(R\) be a non-zero reduced order. Then there exist a stark ring \(A\), unique up to ring isomorphism, and a finite abelian group \(G\), unique up to group isomorphism, such that \(R\cong A[G]\) as rings.
\end{theorem}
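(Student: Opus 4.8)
My strategy is to derive Theorem~\ref{thm:universal} from Theorem~\ref{thm:gcd} together with the parametrization of $\GpRg(R)$ by the set $\{f\in\Hom(\Gamma,\mu):f\circ d\circ f=f\}$. The idea is that a ``maximal'' way of writing $R$ as a group ring should correspond to a maximal element in a suitable partially ordered set of such realizations, and one must show this maximum exists, is unique, and yields a stark ring as the base.

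First I would establish \emph{existence} by a straightforward finiteness/descent argument. Since $R$ is a non-zero reduced order, it is Noetherian, so any chain of subrings $R\supsetneq A_1\supsetneq A_2\supsetneq\cdots$ together with isomorphisms $R\cong A_1[G_1]$, $A_1\cong A_2[G_2]$, etc., with all $G_i$ non-trivial finite abelian, must terminate: each step multiplies the $\Z$-rank by $|G_i|\geq 2$ and divides it going down, so the rank of $A_i$ strictly decreases and stays a positive integer. Hence there is a maximal number of steps, and collapsing the tower via $A[I\times H]\cong (A[I])[H]$ produces a ring $A$ that is stark (it cannot be decomposed further) and a finite abelian group $G$ with $R\cong A[G]$.

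Next, \emph{uniqueness}. Suppose $R\cong A[G]\cong B[H]$ with $A$, $B$ stark and $G$, $H$ finite abelian. Apply Theorem~\ref{thm:gcd} (taking the $A,B$ there to be our $A,B$, which are reduced orders since they are subrings of a reduced order of finite additive rank): there exist an order $C$ and finite abelian groups $I$, $J$ with $A\cong C[I]$, $B\cong C[J]$, and $I\times G\cong J\times H$. Because $A$ is stark, $I$ must be trivial, so $A\cong C$; likewise $J$ is trivial and $B\cong C$. Hence $A\cong B$, and then $I\times G\cong J\times H$ collapses to $G\cong H$. Note $C$ is automatically a reduced order (again as a subring of $R$, up to isomorphism), so Theorem~\ref{thm:gcd} applies cleanly. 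One small point to check is that ``stark'' is preserved under ring isomorphism, which is immediate from the definition.

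The main obstacle is verifying the hypotheses of Theorem~\ref{thm:gcd} are genuinely available---specifically, that the stark factor $A$ produced in the existence step is again a \emph{reduced order}, not merely a commutative ring. This follows because $A$ embeds as a subring of $R$ (via the chosen isomorphism $R\cong A[G]$, restricting to $A\subset A[G]$), and a subring of a reduced ring is reduced, while a subring of a finitely generated free $\Z$-module is again finitely generated and free; so $A$ is a reduced order. With that in hand, the rest is the bookkeeping above. A secondary subtlety is ensuring the descent in the existence argument does not require any choices that could fail to be stark in the limit; but since ``non-stark'' means a decomposition with non-trivial group exists, and each such decomposition strictly drops the rank, the process is forced to halt at a stark ring.
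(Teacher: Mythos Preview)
Your proposal is correct and takes essentially the same approach as the paper: existence via a rank-bounded termination argument (the paper phrases this as taking a maximal element of the poset $\GpRg(R)$ using Lemma~\ref{lem:max_exists} and Lemma~\ref{lem:stark_iff_maximal}, but the underlying idea is identical), and uniqueness by applying Theorem~\ref{thm:gcd} to force $I=J=1$ from starkness of $A$ and $B$. One minor remark: the parametrization of $\GpRg(R)$ by $\{f:fdf=f\}$ that you mention in your plan is never actually invoked in your argument, nor is it needed---the proof runs entirely on Theorem~\ref{thm:gcd} plus the rank descent.
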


Theorem~\ref{thm:universal}
is an immediate consequence of Theorem~\ref{thm:gcd}, and we give a proof in Section~\ref{sec:general_case}.
We note that Theorem~\ref{thm:gcd} may be deduced from Theorem~\ref{thm:universal} using that group rings of finite
abelian groups over reduced orders are reduced (see Proposition~\ref{prop:reduced_connected_groupring}(i)).
There are many examples showing that $A$ and $G$ in Theorem \ref{thm:universal} need not be uniquely
determined as a subring of $R$ and a subgroup of 
$R^*$,  
respectively (see Example~\ref{ex:list_all_of_gprg}).

Theorems~\ref{thm:gcd} and \ref{thm:universal} are closely related to known results about group rings of torsion abelian groups over integral domains of characteristic zero; see Corollary~5 in \cite{Adjaero} and Theorem~8 in \cite{Parmenter}, which were proved by studying the group of torsion units in the group ring.
Our new tool, namely the consideration of gradings, allows us in the case of orders to replace the condition that the base ring be a domain by the weaker condition that it be stark and reduced.

A ring element \(x\) is {\em idempotent} if \(x^2=x\). 
We call a commutative ring {\em connected} if it has precisely two idempotents, i.e., the ring is non-zero and \(0\) and \(1\) are the only idempotents.
For a commutative ring \(R\) with a 
subring \(A\subset R\) and a subgroup \(G\subset R^*\) we write \(A[G]=R\) when the natural map \(A[G]\to R\) is a ring isomorphism. 

If \(R\), \(A\), and \(G\) are as in Theorem~\ref{thm:universal}, then \(A\) is isomorphic to a subring of \(R\), and \(G\) is isomorphic to a subgroup of \(\mu(R)\),
but as we remarked, this subring and subgroup need not be uniquely determined.
However, in the important special case where $R$ is connected, there is a sense in which the set of subrings \(A\subset R\) that can be used and the set of subgroups \(G\subset\mu(R)\) that can be used are entirely independent.
The following ``cross-over'' result, which we found surprising, formulates this more precisely.

\begin{theorem}\label{thm:swop_grp}
Let \(R\) be a connected reduced order. Suppose that \(A\) and \(B\) are stark subrings of \(R\)
and that \(G\) and \(H\) are subgroups of \(R^*\) such that  
\(A[G]=B[H]=R\). Then  
\(A[H]=B[G]=R\).
\end{theorem}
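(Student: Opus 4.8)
The plan is to pass through the bijection of Theorem~\ref{thm:S0_to_S} and reduce the statement to an assertion about subgroups of $\mu(R)$. Fix the universal grading $R=\bigoplus_{\gamma\in\Gamma}R_\gamma$ and recall that every element of $\mu=\mu(R)$ is homogeneous (this is built into the definition of $d\colon\mu\to\Gamma$). Under the bijection $\GpRg(R)\isom\{\phi\in\Hom(\Gamma,\mu):\phi\circ d\circ\phi=\phi\}=:S$, the pair $(A,G)$ corresponds to the homomorphism $f$ with $\im f=G$ and $\ker f$ equal to the set of degrees occurring in $A$, so that $A=\bigoplus_{\gamma\in\ker f}R_\gamma$ as a graded subring; similarly $(B,H)\leftrightarrow f'\in S$. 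For any $\phi\in S$ the map $d\circ\phi$ is an idempotent endomorphism of $\Gamma$ with kernel $\ker\phi$, whence $\Gamma=\ker\phi\oplus d(\im\phi)$ and $d$ restricts to an isomorphism $\im\phi\to d(\im\phi)$. In particular $d|_G$ and $d|_H$ are injective and $\Gamma=\ker f\oplus d(G)=\ker f'\oplus d(H)$.

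Next I would record the group-theoretic reformulation. Because $A=\bigoplus_{\gamma\in\ker f}R_\gamma$ and roots of unity are homogeneous, $\mu(A)=\{\zeta\in\mu:d(\zeta)\in\ker f\}$; combining this with $\Gamma=\ker f\oplus d(G)$ and the injectivity of $d|_G$ gives $\mu(A)\cap G=\{1\}$ and $\lvert\mu(A)\rvert\cdot\lvert G\rvert=\lvert\mu\rvert$, i.e. $\mu(R)=\mu(A)\times G$ as an internal direct product; likewise $\mu(R)=\mu(B)\times H$. By Theorem~\ref{thm:universal}, $G\cong H$, so $\lvert d(H)\rvert=\lvert H\rvert=\lvert G\rvert=[\Gamma:\ker f]$, and therefore $\Gamma=\ker f\oplus d(H)$ if and only if $\ker f\cap d(H)=0$; when this holds, multiplication by the (homogeneous) units $h\in H$ permutes the homogeneous components, giving $hA=\bigoplus_{\gamma\in d(h)+\ker f}R_\gamma$, and since $\Gamma=\ker f\oplus d(H)$ these exhaust the components, so $\{h\}_{h\in H}$ is an $A$-basis of $R$ and $(A,H)\in\GpRg(R)$; the converse is immediate from Theorem~\ref{thm:S0_to_S}. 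Finally $\ker f\cap d(H)=d(\mu(A)\cap H)$ and $d|_H$ is injective, so $(A,H)\in\GpRg(R)\iff\mu(A)\cap H=\{1\}$; symmetrically $(B,G)\in\GpRg(R)\iff\mu(B)\cap G=\{1\}$. Thus it suffices to prove $\mu(A)\cap H=\{1\}$ and $\mu(B)\cap G=\{1\}$.

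For this I would use a criterion for starkness obtained by applying Theorem~\ref{thm:S0_to_S} to $A$ itself, which is again a connected reduced order (a subring of $R$ that is a $\Z$-module direct summand). Identifying $\Gamma(A)$ with $\ker f$ and $d_A$ with the restriction of $d$ — the compatibility of universal gradings with the group-ring construction being a result of \cite{UniversalGradings} — the nonzero solutions $h$ of $h\circ d_A\circ h=h$ are exactly the subgroups $W=\im h\le\mu(A)$ with $W\ne\{1\}$, $d|_W$ injective, and $d(W)$ a direct summand of $\ker f$. Hence $A$ is stark precisely when $\mu(A)$ contains no such $W$, and likewise for $B$.

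Now suppose $\mu(A)\cap H\ne\{1\}$ and set $W_0:=\mu(A)\cap H$. Then $W_0\le\mu(A)$ is nontrivial and $d|_{W_0}$ is injective (a restriction of $d|_H$), so by the criterion and the starkness of $A$ the subgroup $d(W_0)$ is \emph{not} a direct summand of $\ker f$. Converting this into a contradiction is the main obstacle, and I expect essentially all the difficulty of the theorem to be concentrated here: a naive attempt to exhibit $A$ as a nontrivial group ring using $W_0$ stalls exactly because $d(W_0)$ need not be a direct summand. The plan is to exploit that $A$ \emph{and} $B$ are both stark — so that the decompositions $\mu(R)=\mu(A)\times G=\mu(B)\times H$, and the induced decompositions of $\Gamma$, are both as large as possible — together with the fine structure of the homomorphism $d$ and of the subgroup $d(\mu)\le\Gamma$ coming from \cite{RootsOfUnity} and \cite{UniversalGradings}, working one rational prime at a time, in order to manufacture out of $W_0$ a nonzero element of $\Hom(\Gamma(A),\mu(A))$ satisfying the $\GpRg$-equation — equivalently, by Theorem~\ref{thm:S0_to_S}, a realization of $R$ as a group ring properly refining $(A,G)$ — which contradicts the starkness of $A$. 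The starkness of $B$ enters because $W_0\subseteq H$ and $\mu(R)=\mu(B)\times H$, so $\mu(B)$ being large restricts how $W_0$ can sit inside $\mu(R)$. The same argument with the roles of $(A,G)$ and $(B,H)$ interchanged gives $\mu(B)\cap G=\{1\}$, and by the reduction above this completes the proof.
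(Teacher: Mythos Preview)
Your reduction is clean up to a point, but the proposal has two genuine gaps.

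First, you invoke Theorem~\ref{thm:universal} to conclude $G\cong H$ (hence $\lvert d(H)\rvert=[\Gamma:\ker f]$), but in the paper's logical structure the connected case of Theorem~\ref{thm:universal} is itself a consequence of Theorem~\ref{thm:strong_swop_grp}, which is the strengthened form of the very statement you are proving. So this step is circular. Without it, your equivalence $(A,H)\in\GpRg(R)\iff\mu(A)\cap H=\{1\}$ collapses to only one direction: you still need $\ker f + d(H)=\Gamma$, and the cardinality shortcut is no longer available.

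Second, and more seriously, you yourself flag the endgame as ``the main obstacle'' and do not carry it out. The plan to ``manufacture out of $W_0$ a nonzero element of $\Hom(\Gamma(A),\mu(A))$ satisfying the $\GpRg$-equation'' using ``fine structure'' of $d$ ``one rational prime at a time'' is not an argument; the obstruction you identified---that $d(W_0)$ need not be a direct summand of $\ker f$---is real, and nothing you wrote removes it. Starkness of $B$ constrains how $H$ sits in $\mu(R)$, but not in a way that forces $d(W_0)$ to split off $\ker f$.

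The paper takes a completely different route. It interprets $d\colon\mu\to\Gamma$ as a finite-length module over the ring $\ltm$ (Proposition~\ref{prop:matrix_cat}), so that an element $(A,G)\in\GpRg(R)$ corresponds to a direct-sum decomposition $d=d_0\oplus d_1$ with $d_1$ an isomorphism (Theorem~\ref{thm:composite_maps}), and $A$ stark means precisely that $d_0$ has no nonzero isomorphism as a direct summand. The crux is then Proposition~\ref{prop:swap_prop}: if $A_1\oplus A_2=B_1\oplus B_2=M$ are two decompositions of a finite-length module with $A_1\cong B_1$ and $A_1,A_2$ coprime, then $A_1\oplus B_2=B_1\oplus A_2=M$. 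This is proved by a short computation with Fitting's lemma (Lemma~\ref{lem:fitting}) showing that the composite $B_1\hookrightarrow M\twoheadrightarrow A_1$ is an isomorphism. Packaged as Proposition~\ref{prop:decI}(v), this gives the cross-over at the level of decompositions of $d$, and Theorem~\ref{thm:composite_maps} transports it back to $\GpRg(R)$. No appeal to Theorem~\ref{thm:universal}, no contradiction argument, and no prime-by-prime analysis is needed; the Krull--Remak--Schmidt structure of $d$ as a $\ltm$-module does all the work.
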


Theorem~\ref{thm:swop_grp} 
will follow immediately from Theorem~\ref{thm:strong_swop_grp}, which is proved in Section~\ref{sec:degree_map} below. 
As can be seen in Example~\ref{ex:counter_connected_swop}, we cannot drop the assumption
that \(R\) be connected in Theorem~\ref{thm:swop_grp}.

We call a ring element \(x\) {\em autopotent\/} if \(x^{n+1}=x\)
for some \(n\in\Z_{>0}\), or equivalently if it is the product of a root of unity and an idempotent that
commute with each other (see Proposition~\ref{prop:autopotents}.iv).

We have the following algorithmic result. All our algorithms will be deterministic.

\begin{theorem}\label{thm:algos}
There is 
an algorithm that, given a non-zero reduced order \(R\), computes a stark subring \(A\subset R\) and a subgroup \(G\subset\mu(R)\) such that \(A[G]=R\).
This algorithm runs \textup{(a)} 
in polynomial time when the additive group of \(R\) is generated by autopotents,
and generally \textup{(b)} in time \(n^{O(m)}\) where \(n\) is the length of the input and \(m\) is the number of minimal prime ideals of~\(R\).
\end{theorem}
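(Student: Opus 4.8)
The plan is to reduce the algorithmic statement to the structural bijection $\GpRg(R)\isom\{f\in\Hom(\Gamma,\mu):f\circ d\circ f=f\}$ and to the (presumably algorithmic) construction of the universal grading from \cite{UniversalGradings}, and then to produce from a suitable $f$ the pair $(A,G)$. First I would recall that a non-zero reduced order $R$ splits as a finite product $R\cong\prod_i R_i$ of connected reduced orders, where the $R_i$ correspond to the minimal prime ideals of $R$; the number $m$ in (b) is the number of these factors. I would handle the connected case first: compute $\mu=\mu(R)$ and the grading group $\Gamma=\Gamma(R)$ together with the degree map $d\colon\mu\to\Gamma$, using the algorithms underlying \cite{UniversalGradings} and \cite{RootsOfUnity}; all of these are finite abelian groups of size bounded polynomially in $n$ (indeed $\mu$ embeds into $R^*$ and $\Gamma$ is a quotient of a finite group attached to the grading). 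Then I would search, among the finitely many $f\in\Hom(\Gamma,\mu)$, for one whose image $G=f(\Gamma)\subset\mu\subset R^*$ is as large as possible subject to $f\circ d\circ f=f$; by the bijection of Theorem~\ref{thm:S0_to_S} such an $f$ yields a subring $A\subset R$ with $A[G]=R$, and maximality of $G$ (equivalently, that the corresponding $f$ is not a restriction of a larger admissible homomorphism) forces $A$ to be stark. Recovering $A$ explicitly from $f$ and $G$ is a matter of linear algebra over $\Z$: $A$ is the degree-$0$ part of $R$ for the grading pulled back along the complementary data, which one computes by intersecting kernels of the characters defining the grading.

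The global case is assembled from the connected pieces, but not simply by taking the product of the local answers: the stark ring $A$ and group $G$ for $R=\prod_i R_i$ need not be the product of the $(A_i,G_i)$, because the factors can be permuted. Concretely, I would group the connected factors $R_i$ into isomorphism classes, and within each class of size $k$ realized by a connected stark ring $S$, recognize $S^{\,k}$ as a group ring $S[H]$ for a suitable abelian group $H$ of order $k$ exactly when $k$ admits such a realization; more precisely, one wants the ``maximal'' such $H$, and the correct bookkeeping is furnished by Theorem~\ref{thm:gcd}/Theorem~\ref{thm:universal}, which tell us that the stark base and the group are unique up to isomorphism, so the algorithm only needs to (i) compute the multiset of isomorphism types of connected factors, (ii) for each type handle the intra-type permutation group, and (iii) combine with the connected-case output $(A_i,G_i)$ for one representative of each type. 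Checking ring isomorphism between two connected reduced orders of bounded size is itself a finite computation (it reduces to matching $\Z$-bases compatibly with multiplication), done in time polynomial in $n$ for each pair, hence $n^{O(1)}$ overall in part (b) as well; the genuinely expensive step is the search over $\Hom(\Gamma,\mu)$.

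For the running-time analysis: in part (b), the dominant cost is enumerating candidate homomorphisms $\Gamma\to\mu$ (equivalently admissible $f$ with $f\circ d\circ f=f$). Since $\Gamma$ and $\mu$ are finite abelian groups whose orders are bounded by $n^{O(m)}$ — each connected factor contributes a grading group of polynomially bounded size, but the grading group of the product, together with the permutation contributions, can grow like a product over the $m$ factors — the number of such homomorphisms, and hence the search, is $n^{O(m)}$; every other step (splitting into connected factors via idempotents, computing $\mu$, computing the universal grading and $d$, the linear algebra to extract $A$) is polynomial in $n$ by the cited algorithms and by standard integral linear algebra (Hermite/Smith normal form). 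In part (a), the hypothesis that the additive group of $R$ is generated by autopotents is exactly the condition (via Proposition~\ref{prop:autopotents}) that makes $\mu$ large and $\Gamma$ controlled enough that the relevant $f$ can be found directly — I expect one shows $\Gamma$ and $\mu$ then have polynomially bounded order \emph{and} the admissible $f$ can be read off without search, e.g. because $d$ is surjective or because $R$ is spanned by roots of unity times idempotents, pinning down the maximal $G$ uniquely — so the whole procedure is polynomial-time.

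\textbf{The main obstacle} I anticipate is the global assembly: proving that the maximal realization of $R=\prod_i R_i$ as a group ring is obtained correctly from the connected data plus the permutation action on isomorphic factors, and that this can be computed within the claimed time bound. This is where one must invoke Theorem~\ref{thm:gcd} to know that the stark base is an isomorphism invariant, handle the combinatorics of decomposing a ``permutation module'' $S^{\,k}$ over a connected stark $S$ into $S[H]$ for the largest possible $H$ (a question about when $k$ is the order of an abelian group realizing this, intertwined with the $\GpRg$ of $S^{\,k}$ itself), and verify that the $n^{O(m)}$ budget covers both the homomorphism search in the connected case and the isomorphism-testing among factors. The connected case is comparatively clean once the universal grading and roots of unity are available as black boxes; it is the bridge from connected to general, and the honest accounting of why the exponent is $O(m)$ and not worse, that will require the most care.
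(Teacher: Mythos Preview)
Your proposal has the right high-level architecture (split into connected pieces, use the degree map $d\colon\mu\to\Gamma$, reassemble), but it misidentifies where the complexity lies and contains two genuine gaps.

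\textbf{First gap: the $n^{O(m)}$ bound is in the wrong place.} In the paper, the expensive step is \emph{computing the universal grading} of $R$; this is the content of Theorem~\ref{thm:daan_extern}, which runs in time $n^{O(m)}$. Once the universal grading is known, finding a maximal element of $\GpRg(R)$ is \emph{polynomial time}, with no search over $\Hom(\Gamma,\mu)$ at all. The paper interprets $d\colon\mu\to\Gamma$ as a finite-length module over the ring $\ltm$ (Proposition~\ref{prop:matrix_cat}) and then computes a maximal element of $\textup{Dec}_\mathcal{I}(d)$ directly: take a greatest common divisor $D$ of $\mu$ and $\Gamma$ as $\Z$-modules, then a greatest common divisor of $d$ and $\id_D$ as $\ltm$-modules, using the polynomial-time gcd algorithm of Theorem~\ref{thm:comp_gcd} (Propositions~\ref{prop:comp_gcd_inf} and~\ref{prop:comp_S1}). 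Your brute-force enumeration of $\Hom(\Gamma,\mu)$ is not bounded by $n^{O(m)}$ in any way you justify; for instance, if $\Gamma\cong\mu\cong(\Z/2\Z)^k$ then $\lvert\Hom(\Gamma,\mu)\rvert=2^{k^2}$, while the rank of $R$ can be as small as~$2^k$.

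\textbf{Second gap: the global assembly is wrong.} Your plan to group the connected factors $R_i$ by ring-isomorphism type and to realize $S^{\,k}$ as $S[H]$ does not work: for a connected order $S$, the product $S^{\,k}$ is \emph{not} a group ring over $S$ (for example $\Z\times\Z\not\cong\Z[C_2]$), and no permutation of isomorphic factors produces one. The paper's assembly (Proposition~\ref{prop:join}) is much simpler and requires no ring-isomorphism testing: from maximal pairs $(A_x,G_x)\in\GpRg(R_x)$, take $D$ to be a greatest common divisor of the finite abelian groups $\{G_x\}_x$, write $G_x=D_x\oplus E_x$ with $D_x\cong D$, and set $A=\prod_x A_x[E_x]$; then $(A,D)$ is maximal in $\GpRg(R)$. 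All of this is polynomial time once the universal gradings of the $R_x$ are known.

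\textbf{Part (a) is essentially unaddressed.} Your ``I expect one shows\ldots'' is not an argument. The actual content of (a) is a separate polynomial-time algorithm (Theorem~\ref{thm:universal_unity_grading}) for computing the universal grading when $R$ is generated by autopotents, based on the trace inner product of Definition~\ref{def:inner_product}: one iteratively enlarges a subring $A\subset R_1$ by adjoining ratios $s_i^{-1}s_j$ of roots of unity whenever $\langle As_i,As_j\rangle\ne0$, and orthogonality of the universal grading forces this to terminate at $A=R_1$.
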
 

The proof of Theorem~\ref{thm:algos}, and a description of the algorithm and 
how its input and output are given, 
are found in Section~\ref{sec:algo}.
Note that the algorithm runs in polynomial time when \(m\) is bounded by a constant. 
The case \(m=1\) is precisely the case where \(R\) is a domain, in which case one necessarily has \(A=R\) and \(G=1\).  
A notable special case for (a) is when \(R\) is the product of finitely many group rings over~\(\Z\).
We do not know whether there exists a polynomial-time algorithm that decides whether a given
reduced order is stark.

We next discuss the new tool that we announced.
A {\it grading\/} of a commutative ring $R$ is a pair $(\Delta,
(R_\delta)_{\delta\in\Delta})$ where $\Delta$ is an abelian
group and the $R_\delta$ are additive subgroups of $R$ such
that, first, for all $\gamma$, $\delta\in\Delta$ one has
$R_\gamma\cdot R_\delta\subset R_{\gamma\delta}$, and, second,
one has $\bigoplus_{\delta\in\Delta} R_\delta=R$ in the sense that the
natural map $\bigoplus_{\delta\in\Delta} R_\delta\to R$ is bijective.
For each commutative ring \(A\) and abelian group \(G\) the ring \(A[G]\) has a natural grading \((G, (A\gamma)_{\gamma\in G})\).

Suppose now that $R$ is a reduced order. 
Two recent results from \cite{UniversalGradings}  
(Theorems~\ref{thm:universal_main_thm1} and \ref{thm:universal_main_thm2} below) are of crucial importance to the present paper. 
The first states that $R$ has a grading
$(\Gamma,(R_\gamma)_{\gamma\in\Gamma})$ that is universal in
the sense that giving a grading of $R$ with group $\Delta$ is
equivalent to giving a group homomorphism $\Gamma\to\Delta$
(see Definition~\ref{def:univgrading}). The abelian group $\Gamma=\Gamma(R)$ is
finite, and by universality it is canonically defined, so that
the group $\Aut(R)$ of ring automorphisms acts on it (see Remark~\ref{rem:r2d2}). 
The second result states that, if $R$ is
also connected, then for each $\zeta\in\mu$
there is a unique $\gamma\in\Gamma$ such that $\zeta\in
R_\gamma$, and we write $d(\zeta)=\gamma$; this gives rise to a
group homomorphism $d\colon\mu\to\Gamma$, sometimes denoted \(d_R\), which we call the
{\it degree map}.

For each $(A,G)\in\GpRg(R)$, the natural grading of \(A[G]\) gives rise to the grading \((\mu,(A_\zeta)_{\zeta\in\mu})\) of \(R\), where $A_\zeta=A\zeta$ if $\zeta\in G$ and $A_\zeta=0$ otherwise.
This grading corresponds to a homomorphism \(f\colon\Gamma\to\mu\).
We prove in Theorem~\ref{thm:S0_to_S} that in the connected case this construction induces a bijection from 
$\GpRg(R)$ to the set of 
$f$ 
such that \(fdf=f\), so $\GpRg(R)$ can be studied group-theoretically as mentioned above.

Our next result is on automorphism groups of orders. As we saw in Theorem~\ref{thm:universal},
any non-zero reduced order $R$ may be written as $A[G]$, where $A$ is a stark subring of $R$ and $G$ is a
subgroup of $\mu(R)$, but it is only up to the action of the automorphism group $\Aut(R)$ that the pair $(A,G)$ is unique.
Thus, a natural question is how to describe the group $\Aut(R)$ in terms of $A$ and~$G$.
Restricting to connected orders, which is the main case of interest, we show in Theorem~\ref{thm:matrix_group}
that this group has a somewhat curious description in terms of $2\times2$ matrices.

We write \(\mu=\mu(A)\) and \(\Gamma=\Gamma(A)\).
There is a left action of \(\Aut(A)\) on \(\Hom(G,\mu)\) given via the restriction \(\Aut(A)\to\Aut(\mu)\), and a right action of \(\Aut(A)\) on \(\Hom(\Gamma,G)\) induced by its action on \(\Gamma\).
For abelian groups \(M\) and \(N\) we will, here and elsewhere, write the group \(\Hom(M,N)\) additively, regardless of the notation used for~\(N\).

\begin{theorem}\label{thm:matrix_group}
Let \(A\) be a stark connected reduced order with degree map \(d_A\colon\mu\to\Gamma\) and let \(G\) be a finite abelian group. 
We equip the cartesian product
\begin{align*} 
M = \begin{pmatrix} \Aut(A) & \Hom(G,\mu) \\ \Hom(\Gamma,G) & \Aut(G) \end{pmatrix}
\end{align*}
of \(\Aut(A)\), \(\Hom(G,\mu)\), \(\Hom(\Gamma,G)\), and \(\Aut(G)\) with the following multiplication:
\begin{align*}
\begin{pmatrix} \alpha_1 & s_1 \\ t_1 & \sigma_1 \end{pmatrix} \begin{pmatrix} \alpha_2 & s_2 \\ t_2 & \sigma_2 \end{pmatrix} &= \begin{pmatrix} \alpha_1 \alpha_2 + s_1 t_2 & \alpha_1 s_2 + s_1 \sigma_2 \\ t_1\alpha_2+\sigma_1 t_2 & t_1d_As_2+\sigma_1\sigma_2  \end{pmatrix},
\end{align*}
where the sum in \(\Aut(A)\) is as in Lemma~\ref{lem:Hom_action_on_Aut} and the sum in \(\Aut(G)\) is taken inside \(\End(G)\).
For \(x\in A\) and \(g\in G\) write \((\begin{smallmatrix}x \\ g \end{smallmatrix})\) for the element \(x\cdot g \in A[G]\). 
Then:
\begin{enumerate}
\item
\(M\) is a group; 
\item
there is a group isomorphism  \(M\isom\Aut(A[G])\) such that the action of \(M\) on the ring \(A[G]\) is given by
\begin{align*}
 \begin{pmatrix} \alpha & s \\ t & \sigma \end{pmatrix} \begin{pmatrix} x \\ g \end{pmatrix} &= \begin{pmatrix} \alpha(x) \cdot s(g) \\  t(\gamma) \cdot \sigma(g) \end{pmatrix}
\end{align*}
for all $g\in G$, $\gamma\in\Gamma$, and $x\in A_\gamma$.
\end{enumerate}
\end{theorem}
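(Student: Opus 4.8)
The plan is to write down the isomorphism of part (ii) by hand and then read off part (i) from it, rather than verifying the group axioms for $M$ directly. Write $R=A[G]$, $\mu=\mu(A)$, $\Gamma=\Gamma(A)$. I would first record the structural facts about the group ring $R$: the product grading $(A[G])_{(\gamma,g)}=A_\gamma g$ exhibits $\Gamma(R)$ as $\Gamma\times G$ with neutral component $A_0$, and (componentwise over the connected components of $R$) identifies $d_R$ with $d_A\times\id_G$; moreover the torsion units of $R$ are $\mu(R)=\mu\times G$, so that $\mu\cap G=1$ inside $R$. These are Higman-type statements about the gradings and torsion units of group rings over stark connected reduced orders, proved with the number-theoretic machinery of \cite{RootsOfUnity} and \cite{UniversalGradings}, and I will treat them as known input.

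For $m=\bigl(\begin{smallmatrix}\alpha&s\\ t&\sigma\end{smallmatrix}\bigr)\in M$ define $\phi_m\colon R\to R$ by the formula in (ii), i.e.\ $\phi_m(xg)=\alpha(x)\,s(g)\,t(\gamma)\,\sigma(g)$ for $x\in A_\gamma$ and $g\in G$, extended additively along $R=\bigoplus_{\gamma,g}A_\gamma g$. That $\phi_m$ is a well-defined ring homomorphism is routine, using only that $\alpha$ is a ring homomorphism with $\alpha(A_\gamma)\subseteq A_{\alpha(\gamma)}$ and that $s,t$ are group homomorphisms. A direct computation on homogeneous elements then gives $\phi_{m_1m_2}=\phi_{m_1}\circ\phi_{m_2}$ with $m_1m_2$ the product in the statement — this is the place where the $d_A$-twist in the $(2,2)$-entry is forced, since $t_1$ gets evaluated on the $\Gamma$-degree of $s_2(g)$, namely $d_A(s_2(g))$, and where the twisted sum in the $(1,1)$-entry is the one from Lemma~\ref{lem:Hom_action_on_Aut}. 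Injectivity of $m\mapsto\phi_m$ is immediate: restricting to $G$ recovers $s,\sigma$ from $\phi_m(g)=s(g)\sigma(g)\in\mu\times G=\mu(R)$, and restricting to $A$ recovers $\alpha,t$ from the fact that $\phi_m(x)=\alpha(x)\,t(\gamma)$ is $G$-homogeneous of degree $t(\gamma)$ for $x\in A_\gamma$. Since $\id_R=\phi_{(\id,0,0,\id)}$ and composition is associative, these two properties already force the multiplication on $M$ to be associative with two-sided identity, so $M$ is a monoid and $\phi$ is an injective homomorphism of monoids into $\End(R)$; because the ``elementary'' matrices $\bigl(\begin{smallmatrix}\id&0\\ t&\id\end{smallmatrix}\bigr)$, $\bigl(\begin{smallmatrix}\id&s\\ 0&\id\end{smallmatrix}\bigr)$, $\bigl(\begin{smallmatrix}\alpha&0\\ 0&\sigma\end{smallmatrix}\bigr)$ each map to an automorphism whose inverse is visibly of the same shape, and because $M$ factors as (lower unipotent)$\cdot$(diagonal)$\cdot$(upper unipotent), in fact $\phi_m\in\Aut(R)$ for every $m$.

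It remains to show every $\psi\in\Aut(R)$ is of the form $\phi_m$, and the crux is the rigidity statement that $\psi(A)=A^{(t)}:=\bigoplus_\gamma A_\gamma\,t(\gamma)=\phi_{(\id,0,t,\id)}(A)$ for some $t\in\Hom(\Gamma,G)$. Granting this, $\phi_{(\id,0,t,\id)}^{-1}\circ\psi$ stabilises the subring $A$, hence restricts to some $\alpha\in\Aut(A)$, so $\phi_{(\alpha^{-1},0,0,\id)}\circ\phi_{(\id,0,t,\id)}^{-1}\circ\psi$ is the identity on $A$; an automorphism of $A[G]$ fixing $A$ pointwise sends each $g\in G$ homomorphically to a root of unity, hence (using $\mu(R)=\mu\times G$) equals $\phi_{(\id,s,0,\sigma)}$, and composing back and using $\phi_{m_1m_2}=\phi_{m_1}\circ\phi_{m_2}$ gives $\psi=\phi_m$ for the corresponding $m\in M$. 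To prove the rigidity statement, note that $B:=\psi(A)$ is again a stark connected reduced order with $(B,\psi(G))\in\GpRg(R)$, so $R=\bigoplus_{h}Bh$ is a grading of $R$ with $B=\bigoplus_{\omega\in\ker c}R_\omega$ for its classifying homomorphism $c\colon\Gamma\times G\twoheadrightarrow\psi(G)$; since $|\ker c|=|\Gamma|$ and $1_{\Gamma(R)}\in\ker c$, the subgroup $\ker c$ is the graph of a homomorphism $\Gamma\to G$ — equivalently $B=A^{(t)}$ — precisely when $\ker c\cap(\{1\}\times G)=1$, and one checks this is the same as $B\cap G=1$, i.e.\ $\psi(\mu)\cap G=1$ inside $\mu(R)=\mu\times G$. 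Thus the whole theorem reduces to the single fact that the roots of unity of the stark base $A$ meet the group $G$ trivially inside every competing stark realisation of $R$; this is where the genuine work lies, and I expect its proof to combine the analysis of $\mu(A[G])$ and the degree map from \cite{RootsOfUnity,UniversalGradings} with the uniqueness of the stark base from Theorem~\ref{thm:universal} and the crossover phenomenon behind Theorem~\ref{thm:swop_grp}.

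With surjectivity in hand, $\phi\colon M\to\Aut(R)$ is a bijective homomorphism of monoids onto a group, so $M$ is a group and $\phi$ is an isomorphism of groups realising the action in (ii); this gives both (i) and (ii). The only non-formal ingredient is the rigidity input $\psi(\mu)\cap G=1$: the algebraic reduction above makes all the matrix identities and the block structure automatic, and isolates the theorem's difficulty in that one number-theoretic statement.
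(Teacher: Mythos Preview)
Your approach is sound and genuinely different from the paper's. The paper does not define the action and verify $\phi_{m_1m_2}=\phi_{m_1}\circ\phi_{m_2}$ by hand; instead it writes $\vartheta\colon M\to\Aut(A[G])$ as a composite
\[
M \xrightarrow{\ \varphi\ } U^*(A[G])\rtimes\Aut(A) \xrightarrow{\ \pi\ } \Aut(A[G]),
\]
where $U^*(A[G])$ is described as a $2\times2$ matrix group by Proposition~\ref{prop:U_facts} and $\pi$ is the surjection of Proposition~\ref{prop:AutR_facts}. Bijectivity then comes from a cardinality count using the exact sequence $0\to U^*(A)\to U^*(A[G])\rtimes\Aut(A)\to\Aut(A[G])\to0$. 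Your route avoids the $Q(A[G])$/matrix-ring-of-a-category machinery of Lemmas~\ref{lem:matrix_ring_from_category}--\ref{lem:Hom_action_on_Aut} and is more self-contained; the paper's route explains \emph{why} the twisted multiplication on $M$ arises (it is the shadow of the honest ring $Q(A[G])$) and yields Proposition~\ref{prop:AutR_facts} as an independent structural statement.

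Two points to tighten. First, your LDU factorization of $M$ and the closure of $M$ under multiplication both silently use that $t\,d_A\,s\in\textup{Jac}(\End(G))$ for all $s\in\Hom(G,\mu)$, $t\in\Hom(\Gamma,G)$; this is exactly Lemma~\ref{lem:eq_stark}(vi), and it is the only place starkness enters before the surjectivity step. Second, the ``single fact'' you isolate for surjectivity is not a separate number-theoretic input but an immediate corollary of the swap theorem already proved in Section~\ref{sec:degree_map}: applying Theorem~\ref{thm:strong_swop_grp} (equivalently Theorem~\ref{thm:swop_grp}) to the stark pairs $(A,G)$ and $(\psi(A),\psi(G))$ gives $\psi(A)[G]=R$, and linear independence of $G$ over $\psi(A)$ forces $\psi(A)\cap G=\{1\}$, hence $\psi(\mu)\cap G=\mu(\psi(A))\cap G=\{1\}$. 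So your reduction is correct, and the missing line is exactly one invocation of Theorem~\ref{thm:swop_grp}; you do not need Theorem~\ref{thm:universal} here, and there is no circularity.
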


For the proof and additional results, we refer to Section~\ref{sec:automorphisms}.

\begin{theorem}\label{thm:matrix_group_corollary}
Let \(A\) be a stark connected reduced order with degree map \(d_A\colon\mu\to\Gamma\) and let \(G\) be a finite abelian group.
Let 
$\mathcal{A} = \{ \bigoplus_{\gamma\in\Gamma} A_\gamma t(\gamma)  : t \in \Hom(\Gamma,G) \},$ 
a set of subrings of \(A[G]\).
Let
$\mathcal{G} = \{ \{s(g)g : g\in G\} : s\in \Hom(G,\mu) \},$
a set of subgroups of $A[G]^*$. 
Then
$$\mathcal{A}\times\mathcal{G} = \{ (B,H) \in\mathcal{D}(A[G]) : \text{\(B\) is stark}\}.$$
\end{theorem}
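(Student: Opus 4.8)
The plan is to realize $\mathcal{A}\times\mathcal{G}$ as a single orbit of the natural action of $\Aut(A[G])$ on $\mathcal{D}(A[G])$, and then to pin that orbit down by means of the matrix description of $\Aut(A[G])$ given in Theorem~\ref{thm:matrix_group}. Write $R=A[G]$. First I would note that $R$ is a non-zero reduced order: it is commutative with additive group free of finite rank since $G$ is finite, it is non-zero since $A\neq0$, and it is reduced by Proposition~\ref{prop:reduced_connected_groupring}(i). The group $\Aut(R)$ acts on $\mathcal{D}(R)$ by $\theta\cdot(B,H)=(\theta(B),\theta(H))$, and since being stark is a ring-isomorphism invariant this action preserves the subset $\mathcal{S}=\{(B,H)\in\mathcal{D}(R):B\text{ is stark}\}$; note also that $(A,G)\in\mathcal{S}$. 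It thus suffices to establish the two equalities $\Aut(R)\cdot(A,G)=\mathcal{A}\times\mathcal{G}$ and $\Aut(R)\cdot(A,G)=\mathcal{S}$.

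For the first equality I would identify $\Aut(R)$ with the matrix group $M$ of Theorem~\ref{thm:matrix_group} and compute the images $\theta(A)$ and $\theta(G)$ of a general element $\theta=\left(\begin{smallmatrix}\alpha&s\\ t&\sigma\end{smallmatrix}\right)\in M$ from the action formula in part~(ii). Taking $g=1$ there yields $\theta(x)=\alpha(x)\cdot t(\gamma)$ for $x\in A_\gamma$; as $\alpha$ permutes the homogeneous components, say $\alpha(A_\gamma)=A_{\alpha_*\gamma}$ where $\alpha_*\in\Aut(\Gamma)$ is the automorphism induced by $\alpha$, a reindexing gives $\theta(A)=\bigoplus_{\gamma}A_\gamma\,(t\circ\alpha_*^{-1})(\gamma)\in\mathcal{A}$. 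Taking $x=1$ yields $\theta(g)=s(g)\cdot\sigma(g)$ for $g\in G$, whence $\theta(G)=\{(s\circ\sigma^{-1})(h)\cdot h:h\in G\}\in\mathcal{G}$. Conversely, for $t\in\Hom(\Gamma,G)$ and $s\in\Hom(G,\mu)$ the automorphism $\left(\begin{smallmatrix}\id&s\\ t&\id\end{smallmatrix}\right)$ sends $(A,G)$ precisely to $\bigl(\bigoplus_\gamma A_\gamma t(\gamma),\,\{s(g)g:g\in G\}\bigr)$, so every element of $\mathcal{A}\times\mathcal{G}$ lies in the orbit of $(A,G)$. This same computation also yields the incidental claims in the statement, that the members of $\mathcal{A}$ are subrings of $R$ and those of $\mathcal{G}$ are subgroups of $R^*$.

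The second equality, $\Aut(R)\cdot(A,G)=\mathcal{S}$, is the transitivity of $\Aut(R)$ on the stark realizations, and is the point where I would invoke Theorem~\ref{thm:universal}. Let $(B,H)\in\mathcal{S}$. Then the natural map $B[H]\to R$ is a ring isomorphism, $B$ is a non-zero order (a subring of the order $R$), and $H\subseteq R^*$ is a finite abelian group --- finite because $B[H]\cong R$ is finitely generated as a $\Z$-module while $B\neq0$, abelian because $R$ is commutative. So $R\cong A[G]$ and $R\cong B[H]$ are two ways of writing $R$ as a group ring of a finite abelian group over a stark base, and the uniqueness in Theorem~\ref{thm:universal} furnishes a ring isomorphism $\beta\colon B\to A$ and a group isomorphism $\tau\colon H\to G$. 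Writing $n\colon B[H]\to R$ and $n'\colon A[G]\to R$ for the natural isomorphisms and $\beta[\tau]\colon B[H]\to A[G]$ for the ring isomorphism they induce, the automorphism $\theta=n'\circ\beta[\tau]\circ n^{-1}$ of $R$ satisfies $\theta(B)=A$ and $\theta(H)=G$, since each of $n$, $\beta[\tau]$, $n'$ sends the distinguished subring, respectively subgroup, of its source to that of its target. Hence $(B,H)\in\Aut(R)\cdot(A,G)$, and combining the two equalities gives $\mathcal{S}=\mathcal{A}\times\mathcal{G}$.

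Given Theorems~\ref{thm:matrix_group} and~\ref{thm:universal}, I do not anticipate a genuinely hard step: the work is largely organizational, in recognizing that transitivity of $\Aut(R)$ on stark realizations is the correct mechanism, together with the bookkeeping of the second paragraph. Within that bookkeeping, the one delicate verification is that the full matrix action --- and not merely its unipotent part --- maps $A$ into $\mathcal{A}$ and $G$ into $\mathcal{G}$, which is precisely what forces one to carry along the twists by $\alpha_*$ and by $\sigma$.
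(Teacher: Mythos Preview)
Your proposal is correct and follows essentially the same approach as the paper's proof: compute the $\Aut(A[G])$-orbit of $(A,G)$ via the matrix description of Theorem~\ref{thm:matrix_group}, and identify it with the set of stark realizations via Theorem~\ref{thm:universal}. Your write-up is more explicit than the paper's (which is quite terse), particularly in verifying that the orbit of the \emph{pair} $(A,G)$ is the full product $\mathcal{A}\times\mathcal{G}$ rather than merely that the individual orbits are $\mathcal{A}$ and $\mathcal{G}$, and in spelling out how the isomorphisms from Theorem~\ref{thm:universal} assemble into an automorphism of $R$ carrying $(B,H)$ to $(A,G)$.
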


The pairs $(B,H)$ in Theorem~\ref{thm:matrix_group_corollary}  are exactly the pairs occurring in Theorem~\ref{thm:swop_grp} with \(R=A[G]\).
Theorem~\ref{thm:matrix_group_corollary} allows one, in the connected case, to read off the set of subrings \(A\subset R\) and the set of subgroups \(G\subset R^*\) that can be used in Theorem~\ref{thm:universal}. At the end of Section~\ref{sec:general_case} we show how Theorem~\ref{thm:matrix_group_corollary} readily follows from Theorems~\ref{thm:matrix_group} and \ref{thm:universal}.

Our results suggest several questions. 
Which other rings have a universal grading?
Under what conditions is there a degree map? 
Once one has a degree map $d\colon \mu \to \Gamma$, are there consequences for the Isomorphism Problem for group rings, even if $\mu$ or $\Gamma$ is infinite? 
Can we replace rings by algebras over base rings other than \(\Z\)?
For preliminary results in these directions, see Theorem~6.21 in \cite{Daan} (cf.\ Theorem~\ref{thm:universal_main_thm1}), and \cite{Daan2} (cf.\ Theorem~\ref{thm:universal_main_thm2}).

The structure of the paper is as follows. 
Section~\ref{sec:Modules} contains generalities on modules of finite length over rings that need not be commutative.
We show in Section~\ref{sec:decomp_new} that a morphism of abelian groups, in particular the degree map defined above, can be interpreted as a module over a certain matrix ring.
This enables us, in Section~\ref{sec:Ustar}, to apply the theorem of Krull--Remak--Schmidt to morphisms of finite abelian groups.
Additionally, we introduce, for any morphism of abelian groups, an important group \(U^*\) that acts on it.  
In Section~\ref{sec:GroupRings} we treat generalities on graded rings and group rings.
In Section~\ref{sec:degree_map} we apply the theory from the former sections to the degree map of a connected reduced order.
It is an essential property of \(U^*\) that its action on the degree map can be lifted to an action on the order (see Lemma~\ref{lem:group_actions}).
We prove Theorem~\ref{thm:swop_grp} by showing that the pairs \((A,G)\), \((B,H)\), \((A,H)\), and \((B,G)\) are in the same orbit under the action of \(U^*\). This effectively proves Theorems~\ref{thm:gcd} and \ref{thm:universal} in the connected case.
In Section~\ref{sec:automorphisms} we prove Theorem~\ref{thm:matrix_group}. Here $U^*$ will make a further appearance.
In Section~\ref{sec:general_case} we deduce Theorems~\ref{thm:gcd} and \ref{thm:universal} by reduction to the connected case.
Finally, the algorithmic Theorem~\ref{thm:algos} is proved in Section~\ref{sec:algo}.

\section{Modules and Decompositions} \label{sec:Modules}

In this section we gather some results on modules, by which we mean left modules.
Let \(R\) be a ring and \(M\) an \(R\)-module.
We call an \(R\)-module \(D\) a {\em divisor\/} of \(M\) if \(M\cong D\oplus N\) for some \(R\)-module~\(N\).
If $(M_i)_{i\in I}$ is a family of submodules of $M$, then there is a natural map
\(\bigoplus_{i\in I} M_i\to M\), and we write \(\bigoplus_{i\in I} M_i = M\) if this map is
an isomorphism.
Likewise, if $C$, $D$, $N$ are submodules of~$M$, we write $C\oplus D=N$ if the natural
map $C\oplus D\to M$ is injective with image~$N$.

\begin{definition}\label{def:dec_indec}
Let \(R\) be a ring and \(M\) an \(R\)-module.
A {\em decomposition\/} of \(M\) is a pair \((I,(M_i)_{i\in I})\) where \(I\) is a set and the
\(M_i\) are submodules of \(M\) such that \(\bigoplus_{i\in I} M_i=M\); in this case we also call
\((M_i)_{i\in I}\) an $I${\em-indexed decomposition\/} of~$M$.
We call \(M\) {\em indecomposable\/} if \(M\) is non-zero and there do not exist non-zero
\(R\)-modules \(D\) and \(N\) with \(M\cong D\oplus N\).
\end{definition}

We abbreviate \((M_i)_{i\in I}\) to \((M_i)_i\) when the index set is understood.

\begin{definition}\label{def:id_dec}
For a ring \(R\) and an \(R\)-module \(M\) we define
\[\Id(R)=\{e\in R:e^2=e\},\] 
\[\Dec(M)=\{(D,N):D,\,N\text{ are submodules of }M\text{ with }D\oplus N=M\}.\]
We equip \(\Id(R)\) with a partial order given by \(e\leq f\) if and only if \(ef=fe=e\).
We equip \(\Dec(M)\) with a partial order given by \((D,N)\leq(D',N')\) if and only if there exists a
submodule \(C\subset M\) such that \(D=D'\oplus C\) and \(N\oplus C=N'\).
\end{definition}

The following 
result, which we will use to prove Theorem~\ref{thm:swop_grp},
is easily verified.

\begin{proposition}\label{prop:id_end_dec}
Let \(R\) be a ring and \(M\) an \(R\)-module.
Then we have a bijection \(\Phi\colon\Id(\End(M))\to \Dec(M)\) given by \(e\mapsto(\ker(e),\im(e))\).
If we let \(\Aut(M)\) act naturally both on \(\Id(\End(M))\) by conjugation and on \(\Dec(M)\) coordinate-wise, then \(\Phi\) is an isomorphism of partially ordered sets that respects the action of \(\Aut(M)\). 
\end{proposition}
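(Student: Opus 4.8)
The plan is to verify directly that $\Phi\colon e\mapsto(\ker e,\im e)$ is a well-defined bijection, and then check compatibility with the partial orders and the $\Aut(M)$-action. First I would observe that for an idempotent $e\in\End(M)$ the standard splitting $M=\ker e\oplus\im e$ holds: indeed $x = (x-ex)+ex$ with $x-ex\in\ker e$ and $ex\in\im e$, and if $y\in\ker e\cap\im e$, say $y=ez$, then $y = ez = e^2z = ey = 0$. So $(\ker e,\im e)\in\Dec(M)$ and $\Phi$ is well-defined. For the inverse, given $(D,N)\in\Dec(M)$, let $e\colon M\to M$ be the projection onto $N$ along $D$, i.e.\ $e(d+n)=n$ for $d\in D$, $n\in N$; this is an $R$-linear idempotent with $\ker e = D$ and $\im e = N$. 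The two constructions are mutually inverse: starting from $e$, the projection onto $\im e$ along $\ker e$ sends $x=(x-ex)+ex$ to $ex$, so it equals $e$; starting from $(D,N)$, one recovers $(\ker e,\im e)=(D,N)$ by the identifications just made. Hence $\Phi$ is a bijection.

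Next I would check that $\Phi$ is order-preserving in both directions. Suppose $e,f\in\Id(\End M)$ with $e\le f$, i.e.\ $ef=fe=e$. Set $C=\im(f-e)$; note $f-e$ is idempotent since $(f-e)^2 = f^2 - ef - fe + e^2 = f - e - e + e = f - e$. Then $\ker f\subset\ker e$ and I claim $\ker e = \ker f\oplus C$ and $\im f = \im e\oplus C$, exhibiting $(\ker e,\im e)\le(\ker f,\im f)$ in $\Dec(M)$. These follow from the orthogonality relations $e(f-e) = ef - e = 0 = fe - e = (f-e)e$ and $f(f-e) = f - fe = f - e = (f-e)$, which identify $C$ as the "middle piece" of the two splittings. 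Conversely, given $(D,N)\le(D',N')$ via a submodule $C$ with $D = D'\oplus C$ and $N\oplus C = N'$, the corresponding projections $e$ (onto $N$ along $D$) and $f$ (onto $N'$ along $D'$) satisfy $ef=fe=e$ by a direct check on the three summands $D'$, $C$, $N$ of $M$. The $\Aut(M)$-equivariance is then immediate: for $\phi\in\Aut(M)$ and $e\in\Id(\End M)$ one has $\phi\ker(e)=\ker(\phi e\phi^{-1})$ and $\phi\im(e)=\im(\phi e\phi^{-1})$, so $\Phi(\phi e\phi^{-1}) = \phi\cdot\Phi(e)$, and since automorphisms preserve the order on $\Dec(M)$ (they carry a witnessing submodule $C$ to a witnessing $\phi(C)$), the claim follows.

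The only mild subtlety — and the step I would be most careful about — is confirming that the partial order on $\Dec(M)$ as defined (via the \emph{existence} of a submodule $C$) is genuinely antisymmetric and that the order correspondence goes through cleanly in both directions; in particular, that the $C$ witnessing $e\le f$ is forced to be $\im(f-e)$ up to the relevant identifications, so that the translation between the two orders is unambiguous. Once the bookkeeping of the three-fold decomposition $M = D'\oplus C\oplus N$ (equivalently $\ker f\oplus\im(f-e)\oplus\im e$) is set up, every verification reduces to evaluating idempotents and projections on these summands, which is routine. I would therefore organize the write-up around that three-summand picture and state the orthogonality identities $e(f-e)=(f-e)e=0$ once, invoking them for both the bijection-compatibility and the order-compatibility claims.
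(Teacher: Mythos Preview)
Your proposal is correct and complete. The paper does not actually supply a proof of this proposition; it simply states that the result ``is easily verified,'' so your explicit verification via the three-summand decomposition $M=\ker f\oplus\im(f-e)\oplus\im e$ is exactly the sort of routine check the authors had in mind, and there is nothing to compare against.
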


A module has \emph{finite length}\ if every totally ordered set of submodules is finite, or equivalently if it is both Noetherian and Artinian.

\begin{theorem}[Krull--Remak--Schmidt; see Theorem~X.7.5 of \cite{lang2005algebra}]\label{thm:krs}
Suppose \(R\) is a ring and \(M\) is an \(R\)-module of finite length.
Then there exists a decomposition 
of \(M\) into finitely many indecomposable submodules, and such a decomposition is unique up to automorphisms of \(M\) and relabeling of the indices. 
\end{theorem}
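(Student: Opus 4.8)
The plan is to recall the standard proof, organised around the classical fact, due to Fitting, that an indecomposable module of finite length has a local endomorphism ring; this is where the possible non-commutativity of $\End(M)$ genuinely enters. For existence I would induct on the length of $M$. If $M$ is zero the empty decomposition works, and if $M$ is indecomposable there is nothing to prove. Otherwise $M\cong D\oplus N$ with $D,N$ non-zero; identifying $D$ and $N$ with submodules of $M$ as in Definition~\ref{def:dec_indec}, each has length strictly less than that of $M$, so by the induction hypothesis each is a finite direct sum of indecomposable submodules, and the union of these two families is a decomposition of $M$ of the required kind.

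For the local endomorphism ring I would show first that for $\phi\in\End(M)$ with $M$ of finite length one has $M=\ker(\phi^n)\oplus\im(\phi^n)$ once $n$ is large: the ascending chain of kernels and the descending chain of images both stabilise at some $\phi^n$, after which $\ker(\phi^n)\cap\im(\phi^n)=0$ and $\ker(\phi^n)+\im(\phi^n)=M$. If $M$ is indecomposable this forces $\phi$ to be nilpotent or an automorphism. Since in a module of finite length an endomorphism is injective iff surjective iff bijective, it follows that the non-automorphisms of $\End(M)$ are closed under left and right multiplication by arbitrary elements and, crucially, under addition: if $u,v$ are non-automorphisms but $w=u+v$ is one, then from $\id=w^{-1}u+w^{-1}v$, with $w^{-1}v$ a non-automorphism hence nilpotent, we get that $w^{-1}u=\id-w^{-1}v$ is invertible, whence $u$ is an automorphism, a contradiction. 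Thus $\End(M)$ is local.

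For uniqueness, given two decompositions $M=\bigoplus_{i=1}^m M_i=\bigoplus_{j=1}^n N_j$ into indecomposables with inclusions and projections $\iota_i,\pi_i,\kappa_j,\rho_j$, I would write $\id_{M_1}=\sum_{j=1}^n (\pi_1\kappa_j)(\rho_j\iota_1)$ in the local ring $\End(M_1)$; one summand, say for $j=1$ after relabelling, is therefore an automorphism of $M_1$. Then $\rho_1\iota_1\colon M_1\to N_1$ is a split monomorphism, and since $N_1$ is indecomposable it is an isomorphism; a short computation, conveniently phrased via Proposition~\ref{prop:id_end_dec} in terms of $\Dec(M)$, shows $M=M_1\oplus\bigoplus_{j\geq 2}N_j$. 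Passing to this new complement of $M_1$ and inducting on $m$ gives $m=n$, a permutation $\sigma$ with $M_i\cong N_{\sigma(i)}$, and, by composing the successive exchange automorphisms, an automorphism of $M$ carrying $(M_i)_i$ to $(N_{\sigma(i)})_i$.

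The main obstacle I expect is the local-ring step: both the proof of Fitting's Lemma and the verification that the non-units of the possibly non-commutative ring $\End(M)$ form a two-sided ideal require care, and one must also be attentive in the exchange induction to produce an honest automorphism of $M$ realising the matching of indecomposable summands, rather than merely an abstract isomorphism between them.
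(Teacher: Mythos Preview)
The paper does not supply its own proof of this statement; it simply cites Theorem~X.7.5 of Lang's \emph{Algebra} and uses the result as a black box, and separately records Fitting's Lemma (Lemma~\ref{lem:fitting}) as another citation from the same source. Your proposal is correct and is precisely the standard argument one finds in the cited reference: existence by induction on length, Fitting's Lemma giving locality of the endomorphism ring of an indecomposable of finite length, and the exchange argument for uniqueness.
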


\begin{remark}\label{rem:gcd}
Let $R$ be a ring, and let $\mathcal{M}$ be a non-empty set of \(R\)-modules of finite length.
As a consequence of Theorem~\ref{thm:krs}, there exists up to isomorphism exactly one \(R\)-module \(D\)
that is a divisor of
every 
$M\in\mathcal{M}$ such that
every 
$R$-module that is a divisor of
every 
$M\in\mathcal{M}$ is a divisor of~$D$; 
it is called a {\it greatest common divisor\/} of the set~$\mathcal{M}$. 
Every such $D$ is of finite length.
We say \(R\)-modules \(M\) and \(N\) are {\it coprime\/} if the greatest common divisor of \(\{M,N\}\) is \(0\).
Likewise, if $\mathcal{M}$ is a finite set of \(R\)-modules of finite length, then there exists
up to isomorphism exactly one \(R\)-module \(L\) of which each $M\in\mathcal{M}$ is a divisor
such that $L$ is a divisor of each $R$-module of finite length of which each $M\in\mathcal{M}$ is
a divisor; it is called a {\it least common multiple\/}
of~$\mathcal{M}$.
Every such $L$ is of finite length.
\end{remark}

\begin{definition}
Suppose \(R\) is a ring, \(M\) is an \(R\)-module, and \(h\in\End(M)\). We define the \(R\)-modules
\[ \Im(h) = \bigcap_{n=1}^\infty  h^n(M) \quad\text{and}\quad \Ker(h) = \bigcup_{n=1}^\infty\ker(h^n).\]
\end{definition}

\begin{lemma}[Fitting; see Theorem~X.7.3 of \cite{lang2005algebra}]\label{lem:fitting}
Suppose \(R\) is a ring, \(M\) is an \(R\)-module of finite length, and \(h\in\End(M)\).
Then \(M=\Im(h)\oplus\Ker(h)\), the restriction of \(h\) to \(\Im(h)\) is an automorphism, and 
the restriction 
of $h$ 
to \(\Ker(h)\) is nilpotent.
\end{lemma}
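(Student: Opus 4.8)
The plan is to exploit the finite-length hypothesis to collapse the infinite intersection and union in the definitions of $\Im(h)$ and $\Ker(h)$ down to a single power of $h$. First I would observe that the descending chain $h(M)\supseteq h^2(M)\supseteq\cdots$ stabilizes because $M$ is Artinian, and the ascending chain $\ker(h)\subseteq\ker(h^2)\subseteq\cdots$ stabilizes because $M$ is Noetherian; both properties hold since $M$ has finite length. Choosing $N$ large enough that both chains are constant from the $N$-th term onward, we obtain $\Im(h)=h^N(M)$ and $\Ker(h)=\ker(h^N)$, and since $h$ is an $R$-module endomorphism these are genuine $R$-submodules of $M$.

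Next I would establish $M=h^N(M)\oplus\ker(h^N)$ by checking the two conditions defining this internal direct sum. For the trivial intersection $h^N(M)\cap\ker(h^N)=0$: if $x=h^N(y)$ with $h^N(x)=0$, then $y\in\ker(h^{2N})=\ker(h^N)$ by the choice of $N$, whence $x=h^N(y)=0$. For the spanning property $M=h^N(M)+\ker(h^N)$: given $x\in M$ we have $h^N(x)\in h^N(M)=h^{2N}(M)$, so $h^N(x)=h^{2N}(z)$ for some $z\in M$; then $x=h^N(z)+\bigl(x-h^N(z)\bigr)$, where $h^N(z)\in h^N(M)$ and $x-h^N(z)\in\ker(h^N)$ since $h^N\bigl(x-h^N(z)\bigr)=h^N(x)-h^{2N}(z)=0$.

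Finally I would verify the two assertions about the restrictions of $h$. On $\Im(h)=h^N(M)$ the map $h$ is surjective because $h\bigl(h^N(M)\bigr)=h^{N+1}(M)=h^N(M)$, and it is injective because its kernel is contained in $\ker(h)\cap\Im(h)\subseteq\Ker(h)\cap\Im(h)=0$ (using $\ker(h)\subseteq\ker(h^N)=\Ker(h)$); hence $h|_{\Im(h)}$ is an automorphism. On $\Ker(h)=\ker(h^N)$ we have $\bigl(h|_{\Ker(h)}\bigr)^N=0$, so this restriction is nilpotent.

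The argument is essentially formal and I do not anticipate a serious obstacle; the only point that needs care is to fix a single exponent $N$ lying past both stabilization points, so that $\Im(h)$ and $\Ker(h)$ are simultaneously realized as $h^N(M)$ and $\ker(h^N)$ and the two halves of the decomposition interact correctly. One should also record that $h^N(M)$ and $\ker(h^N)$ are submodules, which is immediate.
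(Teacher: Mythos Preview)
Your proof is correct and is the standard argument for Fitting's lemma. The paper does not supply its own proof but simply cites Theorem~X.7.3 of Lang's \emph{Algebra}; your argument is essentially the one found there, so there is nothing further to compare.
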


\begin{lemma}\label{lem:limit_iso}
Suppose \(R\) is a ring, \(M\) and \(N\) are \(R\)-modules, and \(f\colon M\to N\) and \(g\colon N\to M\) are morphisms.
Then \(f\) restricts to morphisms \(i\colon \Im(gf)\to\Im(fg)\) and \(k\colon \Ker(gf)\to\Ker(fg)\).
If \(M\) and \(N\) have finite length, then \(i\) is an isomorphism.
\end{lemma}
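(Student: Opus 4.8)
The plan is to obtain the two restrictions by a one-line computation and then to prove that $i$ is an isomorphism by pairing it with the analogous restriction of $g$ and invoking Fitting's lemma (Lemma~\ref{lem:fitting}).

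First I would record the identity $f\circ(gf)^n=(fg)^n\circ f$ for all $n\geq 1$, proved by a trivial induction. Applying $f$ to $(gf)^n(M)$ then gives $f\big((gf)^n(M)\big)=(fg)^n\big(f(M)\big)\subseteq(fg)^n(N)$, and intersecting over $n$ yields $f(\Im(gf))\subseteq\Im(fg)$; similarly, if $(gf)^n(x)=0$ then $(fg)^n(f(x))=f\big((gf)^n(x)\big)=0$, so $f$ carries $\ker((gf)^n)$ into $\ker((fg)^n)$ and hence $\Ker(gf)$ into $\Ker(fg)$. This produces the $R$-module morphisms $i\colon\Im(gf)\to\Im(fg)$ and $k\colon\Ker(gf)\to\Ker(fg)$ asserted in the statement. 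Swapping the roles of $f$ and $g$ in this same argument shows that $g$ restricts to a morphism $j\colon\Im(fg)\to\Im(gf)$ (and likewise $\Ker(fg)\to\Ker(gf)$).

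Now suppose $M$ and $N$ have finite length. Since $i$ is the restriction of $f$ and $j$ the restriction of $g$, the composite $j\circ i$ is precisely the restriction of $gf\in\End(M)$ to $\Im(gf)$, and $i\circ j$ is the restriction of $fg\in\End(N)$ to $\Im(fg)$. By Lemma~\ref{lem:fitting} applied separately to $gf$ and to $fg$, both of these restrictions are automorphisms. From $j\circ i$ being injective we conclude that $i$ is injective, and from $i\circ j$ being surjective we conclude that $i$ is surjective; hence $i$ is an isomorphism.

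I do not expect a genuine obstacle here; the only point requiring care is the bookkeeping of domains and codomains — in particular, that Step~1 applied with $f$ and $g$ interchanged really does guarantee $g(\Im(fg))\subseteq\Im(gf)$, so that $j\circ i$ and $i\circ j$ are well defined as endomorphisms of $\Im(gf)$ and $\Im(fg)$, which is exactly what allows the single endomorphisms $gf$ and $fg$ to be fed into Fitting's lemma. Note that the same pairing does \emph{not} force $k$ to be an isomorphism, consistent with the statement: $j|_{\Ker(fg)}\circ k$ is the restriction of $gf$ to $\Ker(gf)$, which Fitting's lemma tells us is nilpotent rather than invertible.
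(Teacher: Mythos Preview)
Your proof is correct and follows essentially the same approach as the paper: establish the restrictions $i$, $k$, and (by symmetry) $j$ via the identity $f\circ(gf)^n=(fg)^n\circ f$, then observe that $j\circ i$ and $i\circ j$ are the restrictions of $gf$ and $fg$ to their respective $\Im$'s, which are automorphisms by Fitting's lemma. The only cosmetic difference is that the paper routes the kernel inclusion through $\ker(g(fg)^n)$ rather than directly into $\ker((fg)^n)$, but both versions are equally valid.
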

\begin{proof}
We have \(f(gf)^{n}(M) = (fg)^nf(M) \subset (fg)^n(N)\) for all \(n\geq 1\). 
Hence \(f(\Im(gf))\subset \Im(fg)\), 
so 
\(i\) is well-defined.
As 
$$f(\ker((gf)^{n+1}))\subset \ker(g(fg)^n)\subset\ker((fg)^{n+1})$$ 
for all \(n\geq 1\) we also get \(f(\Ker(gf))\subset\Ker(fg)\), so \(k\) is well-defined.
By symmetry we obtain a restriction \(j\colon \Im(fg)\to\Im(gf)\) of \(g\).
Under the finite length assumption both \(ji\) and \(ij\) are automorphisms by Lemma~\ref{lem:fitting}, hence \(i\) is an isomorphism.
\end{proof}

\begin{proposition}\label{prop:swap_prop}
Suppose \(R\) is a ring, \(M\) is an \(R\)-module of finite length, and \(A_1\), \(A_2\), \(B_1\),
\(B_2\subset M\) are submodules such that \(A_1\) and \(A_2\) are coprime,
\(A_1\oplus A_2=B_1\oplus B_2=M\), and \(A_1\cong B_1\). Then 
\(A_1\oplus B_2=B_1\oplus A_2=M\).
\end{proposition}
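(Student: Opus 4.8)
The plan is to produce an automorphism of $M$ carrying the decomposition $(A_1,A_2)$ to $(B_1,A_2)$, which by symmetry of the hypotheses will also handle $(A_1,B_2)$. Let $p\colon M\to A_1$ be the projection along $A_2$ and let $q\colon M\to B_1$ be the projection along $B_2$; view both as idempotent endomorphisms $e=\iota_{A_1}p$ and $f=\iota_{B_1}q$ of $M$ via the inclusions, so $\im(e)=A_1$, $\ker(e)=A_2$, $\im(f)=B_1$, $\ker(f)=B_2$. The key is to understand the ``transition'' map $g=ef\colon B_1\to A_1$ (restricting $e$ to $B_1$) together with $h=fe\colon A_1\to B_1$. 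First I would apply Lemma~\ref{lem:limit_iso} to the pair $h\colon A_1\to B_1$, $g\colon B_1\to A_1$: it gives that $e$ restricts to an isomorphism $\Im(gh)\isom\Im(hg)$, where $\Im$ denotes the eventual image in the Fitting sense. The complementary pieces $\Ker(gh)\subset A_1$ and $\Ker(hg)\subset B_1$ are the obstruction to $e|_{B_1}$ being an isomorphism outright, and controlling them is where the coprimality hypothesis must enter.

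The main step is to show $\Ker(gh)=0$. By Fitting (Lemma~\ref{lem:fitting}) applied to $g\colon B_1\to A_1$ — more precisely to $gh\in\End(A_1)$ — we have $A_1=\Im(gh)\oplus\Ker(gh)$, and similarly $B_1=\Im(hg)\oplus\Ker(hg)$, with $e$ inducing an isomorphism $\Im(gh)\cong\Im(hg)$. Now I claim $\Ker(gh)$ is a divisor of $A_2$: indeed, on $\Ker(gh)$ some power of $gh$ vanishes, and $gh$ is (up to the identifications) the composite $A_1\xrightarrow{\,e\,}M\xrightarrow{\,f\,}M\xrightarrow{\,e\,}A_1$; iterating, one gets a submodule of $A_1$ on which a suitable endomorphism factoring through $A_2=\ker(e)$ behaves nilpotently, forcing a copy of it to split off inside $A_2$. (Concretely: consider $1-f$ restricted appropriately, or observe that $A_1/\im(f|_{A_1})$-type cokernels embed into $A_2$.) Since $A_1\cong B_1$ and, by a symmetric argument, $\Ker(hg)$ is a divisor of $B_2$, and since $\Im(gh)\cong\Im(hg)$, a Krull--Remak--Schmidt cancellation (Theorem~\ref{thm:krs}) gives $\Ker(gh)\cong\Ker(hg)$; but $\Ker(gh)$ is a divisor of $A_1$ and of $A_2$, hence a divisor of a greatest common divisor of $\{A_1,A_2\}$, which is $0$ by coprimality. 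Therefore $\Ker(gh)=0$, so $gh$ is an automorphism of $A_1$, hence $e|_{B_1}\colon B_1\to A_1$ is an isomorphism.

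Once $e|_{B_1}\colon B_1\isom A_1$, I would build the desired automorphism of $M$. Since $e|_{B_1}$ is an isomorphism, $B_1\cap A_2=B_1\cap\ker(e)=0$ and $B_1+A_2\supset e(B_1)+A_2=A_1+A_2=M$, so $B_1\oplus A_2=M$; this is already the first assertion. For the second, by the symmetry of the hypotheses in the roles of $(B_1,B_2)$ — note $A_1\cong B_1$ and $A_1,A_2$ coprime, with $A_1\oplus A_2=B_1\oplus B_2=M$ — the very same argument applied with $f'=\iota_{B_2}\circ(\text{projection along }B_1)$ in place of $f$, using that $A_1$ is also coprime to... actually here I would instead argue directly: applying the already-proved first statement with the decomposition $B_1\oplus B_2$ in the role of $A_1\oplus A_2$ is not literally allowed since we'd need $B_1,B_2$ coprime, so instead I use that $B_1\oplus A_2 = M$ together with $A_1\cong B_1$ and $A_1$ coprime to $A_2$: repeating the Fitting/Krull--Remak--Schmidt analysis for the pair of projections between $A_1$ and $B_1$ relative to the \emph{two} complements $A_2$ and $B_2$ simultaneously shows the projection $M\to A_1$ along $A_2$ restricts to an isomorphism $B_1\to A_1$ \emph{and} the projection $M\to B_1$ along $B_2$ restricts to an isomorphism $A_1\to B_1$; the latter gives $A_1\cap B_2=0$ and $A_1+B_2=M$, i.e.\ $A_1\oplus B_2=M$.

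\emph{Main obstacle.} The delicate point is the claim that $\Ker(gh)$ is a divisor of $A_2$ (equivalently, identifying the ``leftover'' after the Fitting decomposition as something that genuinely splits off inside the complementary summand). This requires carefully chasing the composites of the idempotents $e,f$ and invoking Fitting's lemma on the right ambient module at the right stage, rather than a purely formal manipulation; the coprimality hypothesis is used exactly once, and only after Krull--Remak--Schmidt has been applied to cancel the common isomorphic part $\Im(gh)\cong\Im(hg)$. Everything else — deducing $B_1\oplus A_2=M$ from $e|_{B_1}$ being an isomorphism, and the symmetric bookkeeping for $A_1\oplus B_2=M$ — is routine.
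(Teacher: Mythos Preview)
Your overall strategy---show that the projection $e|_{B_1}\colon B_1\to A_1$ along $A_2$ is an isomorphism---matches the paper's. But the step you flag as the main obstacle is a genuine gap, and your sketch has the bookkeeping wrong in a way that cannot be patched as stated.

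You claim $\Ker(gh)$ is a divisor of $A_2$, hinting at a factorization ``through $A_2=\ker(e)$.'' But $gh=ef|_{A_1}$, and what actually factors is $\id_{A_1}-gh=e\circ(1-f)|_{A_1}$, which goes through $B_2=\ker(f)$, not $A_2$. Symmetrically, it is $\id_{B_1}-hg=f\circ(1-e)|_{B_1}$ that factors through $A_2$. Even after correcting this swap, there is no general reason for $\Ker(hg)$ to coincide with $\Im(\id-hg)$: if $hg$ happens to be an automorphism other than the identity, then $\Ker(hg)=0$ while $\Im(\id-hg)$ need not vanish. So the passage from ``something factors through $A_2$'' to ``$\Ker(gh)$ is a divisor of $A_2$'' does not go through, and your parenthetical hints (``consider $1-f$'', ``cokernels embed into $A_2$'') do not supply it.

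The paper's fix is to use the hypothesis $A_1\cong B_1$ concretely. Fixing an isomorphism $\varphi\colon A_1\to B_1$ and composing with the inclusions and projections yields maps $f_i\colon A_1\to A_i$ and $g_i\colon A_i\to A_1$ with $\id_{A_1}=g_1f_1+g_2f_2$. Lemma~\ref{lem:limit_iso} is then applied to the pair $(f_2,g_2)$---not to your $(g,h)$---giving $\Im(g_2f_2)\cong\Im(f_2g_2)$, a common divisor of $A_1$ and $A_2$, hence $0$ by coprimality. Thus $g_2f_2$ is nilpotent, $g_1f_1=\id_{A_1}-g_2f_2$ is an automorphism, and $e|_{B_1}$ is an isomorphism. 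Translated into your notation, this is: factor $\id_{B_1}-hg$ as $B_1\to A_2\to B_1$ and apply Lemma~\ref{lem:limit_iso} to \emph{that} pair to get $\Im(\id_{B_1}-hg)$ a common divisor of $B_1\cong A_1$ and $A_2$, hence zero; then $hg$ is an automorphism. The point is where Lemma~\ref{lem:limit_iso} is aimed.

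For $A_1\oplus B_2=M$ you correctly observe that naive symmetry would need $B_1,B_2$ coprime, and then get tangled. The paper handles this in one line at the outset: by Theorem~\ref{thm:krs} one has $A_2\cong B_2$, so $B_1\cong A_1$ and $B_2\cong A_2$ are coprime, and the two decompositions play fully symmetric roles.
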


Note that under the above assumptions it immediately follows that \(A_1\oplus B_2 \cong B_1 \oplus B_2 = M\). This is not equivalent to \(A_1\oplus B_2 = M\), as this concerns a specific map \(A_1\oplus B_2\to M\). We need to show that the natural map \(B_1\to M \to A_1\) is an isomorphism.

\begin{proof}
From Theorem~\ref{thm:krs} it follows that \(A_2\cong B_2\) and thus \(B_1\) and \(B_2\) are coprime as well. By symmetry it therefore suffices to show \(B_1\oplus A_2 = M\).
We consider the maps as in the following commutative diagram, where \(\varphi\colon A_1\to B_1\) is an isomorphism, the maps to and from \(M\) are the natural inclusions and projections, and the \(f_i\) and \(g_i\) are defined to make the diagram commute.
\begin{center}
\begin{tikzcd}[row sep=small,column sep=small]
& [25pt]& A_1 &[20pt] A_1 \arrow[swap,d,outer sep=2pt,"e_1"] \arrow[rrd,bend left=16,"g_1"] &  & [25pt] \\
A_1 \arrow{r}{\varphi} \arrow[rru,bend left=16,"f_1"]\arrow[swap,rrd,bend right=16,"f_2"]& B_1 \arrow[r,"e"] & M \arrow[swap,u,outer sep=2pt,"p_1"]\arrow[d,outer sep=2pt,"p_2"] & M \arrow{r}{p} & B_1 \arrow{r}{\varphi^{-1}} &  A_1 \\
&& A_2 & A_2 \arrow[u,outer sep=2pt,"e_2"] \arrow[rru,swap,bend right=16,"g_2"]
\end{tikzcd}
\end{center}
Note that \(\id_{B_1}=pe\) and \(\id_M=e_1p_1+e_2p_2\), so 
\[\id_{A_1}=\varphi^{-1}pe\varphi=\varphi^{-1}p(e_1p_1+e_2p_2)e\varphi=\varphi^{-1}pe_1\cdot
	p_1e\varphi+\varphi^{-1}pe_2\cdot p_2e\varphi=g_1f_1+g_2f_2.\]
Lemma~\ref{lem:limit_iso} shows that \(D=\Im(g_2f_2)\cong\Im(f_2g_2)\), so \(D\) is a divisor of both \(A_1\) and \(A_2\) 
by Lemma~\ref{lem:fitting}.
Since \(A_1\) and \(A_2\) are coprime, we must have that \(D=0\) and thus \(g_2f_2\) is nilpotent.
We conclude that \(g_1f_1=\id_{A_1}-g_2f_2\) is an automorphism of \(A_1\). 
Hence $f_1$ is injective, and since \(A_1\) is of finite length it must be an automorphism. 
It follows that \(p_1 e=f_1\varphi^{-1}\colon B_1\to A_1\) is an isomorphism, 
so 
\(M=B_1\oplus A_2\), as was to be shown.
\end{proof}

\begin{definition}\label{def:dec}
Let \(R\) be a ring.
A class \(\mathcal{S}\) of \(R\)-modules is {\em multiplicative\/} if \(0\in\mathcal{S}\) and
for all \(R\)-modules \(M\), \(N\) and \(D\) with \(M\cong N\oplus D\) and \(N\), \(D\in\mathcal{S}\)
one has \(M\in\mathcal{S}\).
We say a multiplicative class \(\mathcal{S}\) of \(R\)-modules is {\em saturated\/} if for all \(M\in\mathcal{S}\) and
all divisors \(D\) of \(M\) one has \(D\in\mathcal{S}\).
For a multiplicative class \(\mathcal{S}\) of \(R\)-modules and an \(R\)-module \(M\),
write \(\textup{Dec}_\mathcal{S}(M)=\{(M_1,M_2)\in\textup{Dec}(M):M_2 \in \mathcal{S}\}\), where \(\textup{Dec}(M)\) is as in Definition~\ref{def:id_dec}. We equip \(\textup{Dec}_\mathcal{S}(M)\) with the partial order inherited from \(\textup{Dec}(M)\), and write \(\max(\textup{Dec}_\mathcal{S}(M))\) for its set of maximal elements.
\end{definition}

\begin{proposition}\label{prop:decI}
Let \(R\) be a ring, let \(\mathcal{S}\) be a multiplicative class of \(R\)-modules, and let \(M\) and \(N\) be \(R\)-modules. Then
\begin{enumerate}[nosep]
\item if \(M\cong N\) and \(N\in\mathcal{S}\), then \(M\in\mathcal{S}\);
\item the set \(\textup{Dec}_\mathcal{S}(M)\) is non-empty.
\end{enumerate}
Suppose in addition that \(\mathcal{S}\) is saturated and that \(M\) is of finite length, and let
\((A_1,A_2)\), \((B_1,B_2)\in\textup{Dec}_\mathcal{S}(M)\). Then
\begin{enumerate}[resume,nosep]
\item one has \((A_1,A_2)\in\max(\textup{Dec}_\mathcal{S}(M))\) if and only if \(0\) is the only divisor of \(A_1\) that is in \(\mathcal{S}\);
\item the set \(\max(\textup{Dec}_\mathcal{S}(M))\) is non-empty and consists of 
one 
orbit 
of \(\textup{Dec}_\mathcal{S}(M)\) 
under the action of \(\Aut(M)\);
\item if \((A_1,A_2)\), \((B_1,B_2)\in\max(\textup{Dec}_\mathcal{S}(M))\),
then \((A_1,B_2)\), \((B_1,A_2)\in\max(\textup{Dec}_\mathcal{S}(M))\).
\end{enumerate}
\end{proposition}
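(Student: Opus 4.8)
The plan is to take the five assertions in order. Assertions (i) and (ii) are immediate: for (i), $M\cong N\oplus 0$ with $N,0\in\mathcal{S}$, so $M\in\mathcal{S}$ by multiplicativity; for (ii), the pair $(M,0)$ lies in $\textup{Dec}_\mathcal{S}(M)$ since $M\oplus 0=M$ and $0\in\mathcal{S}$. Assertion (iii) is a direct unwinding of the partial order of Definition~\ref{def:id_dec}, and (iv) and (v) rest on Krull--Remak--Schmidt (Theorem~\ref{thm:krs}) together with Proposition~\ref{prop:swap_prop}. I would also record at the outset that $\Aut(M)$ genuinely acts on $\textup{Dec}_\mathcal{S}(M)$: if $\phi\in\Aut(M)$ then $\phi M_2\cong M_2$, so $\phi M_2\in\mathcal{S}$ by (i), and $\phi$ is an automorphism of the poset $\textup{Dec}_\mathcal{S}(M)$, hence preserves its set of maximal elements.

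For (iii) I would translate the failure of maximality through the definition of the order. An element of $\textup{Dec}_\mathcal{S}(M)$ strictly above $(A_1,A_2)$ is of the form $(A_1',A_2\oplus C)$ where $C\subseteq M$ is a nonzero submodule with $A_1=A_1'\oplus C$; here $C$ is a divisor of $A_1$, and since $A_2\oplus C\in\mathcal{S}$ and $\mathcal{S}$ is saturated, $C\in\mathcal{S}$. Conversely, given a nonzero divisor $D$ of $A_1$ lying in $\mathcal{S}$, realize it as an internal direct summand $C$ of $A_1$, say $A_1=A_1'\oplus C$ with $C\cong D$; then $C\in\mathcal{S}$ by (i), and since $M$ is the internal direct sum $C\oplus A_1'\oplus A_2$, the pair $(A_1',C\oplus A_2)$ lies in $\textup{Dec}_\mathcal{S}(M)$ — multiplicativity keeps the second coordinate in $\mathcal{S}$ — and sits strictly above $(A_1,A_2)$. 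This is exactly the asserted equivalence.

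For (iv) and (v) the engine is bookkeeping of indecomposable summands. First, since $M$ has finite length and each step upward in $\textup{Dec}_\mathcal{S}(M)$ strictly shortens the first coordinate, chains are finite, so with (ii) the set $\max(\textup{Dec}_\mathcal{S}(M))$ is non-empty. Next I would fix, via Theorem~\ref{thm:krs}, the multiset of isomorphism classes of indecomposable summands of $M$ and partition it according to membership in $\mathcal{S}$ — unambiguous, since an indecomposable module has only $0$ and itself as divisors up to isomorphism. If $(A_1,A_2)\in\max(\textup{Dec}_\mathcal{S}(M))$, then by (iii) no nonzero divisor of $A_1$ lies in $\mathcal{S}$, so every indecomposable summand of $A_1$ is outside $\mathcal{S}$, while every indecomposable summand of $A_2\in\mathcal{S}$ is in $\mathcal{S}$ by saturation. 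Hence the indecomposable summands of $A_1$ are precisely those of $M$ outside $\mathcal{S}$ and those of $A_2$ precisely those of $M$ in $\mathcal{S}$, independently of the chosen maximal element. Consequently any two maximal elements $(A_1,A_2)$, $(B_1,B_2)$ satisfy $A_1\cong B_1$ and $A_2\cong B_2$, and composing the internal splittings $M\cong A_1\oplus A_2$ and $M\cong B_1\oplus B_2$ with the direct sum of these two isomorphisms produces an automorphism of $M$ sending $(A_1,A_2)$ to $(B_1,B_2)$; this proves (iv). The same summand analysis shows $A_1$ and $A_2$ are coprime (a common divisor would have indecomposable summands on both sides of the partition), so Proposition~\ref{prop:swap_prop} applies and gives $A_1\oplus B_2=B_1\oplus A_2=M$; finally $(A_1,B_2)$ and $(B_1,A_2)$ lie in $\max(\textup{Dec}_\mathcal{S}(M))$ by the criterion (iii), since $A_1$ and $B_1$ still have no nonzero divisor in $\mathcal{S}$. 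This is (v).

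I do not expect a single hard step. The two points needing care are getting the partial order in (iii) oriented correctly — which coordinate grows, and passing cleanly between ``divisor'' and ``internal direct summand'' — and, in (iv), the observation that $\Aut(M)$-transitivity follows cheaply from the isomorphisms $A_1\cong B_1$, $A_2\cong B_2$ once those isomorphism types have been pinned down by the $\mathcal{S}$/non-$\mathcal{S}$ partition of indecomposable summands. The only genuinely load-bearing input is that ``divisor'' and ``coprime'' behave for indecomposables as one expects, which is where Theorem~\ref{thm:krs}, in the packaging of Remark~\ref{rem:gcd}, enters.
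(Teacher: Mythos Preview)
Your proposal is correct and follows essentially the same approach as the paper: parts (i)--(iii) are handled identically, and for (iv)--(v) you use the same partition of indecomposable summands by membership in $\mathcal{S}$, invoke Krull--Remak--Schmidt to obtain $A_1\cong B_1$ and $A_2\cong B_2$, and then apply Proposition~\ref{prop:swap_prop} for the swap in (v). The only cosmetic difference is that you deduce non-emptiness of $\max(\textup{Dec}_\mathcal{S}(M))$ from finiteness of chains, whereas the paper exhibits an explicit maximal element by grouping the indecomposables of a Krull--Remak--Schmidt decomposition of $M$.
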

\begin{proof}
(i) Apply the definition 
of multiplicative
with \(D=0\). 

(ii) 
The trivial element \((M,0)\) is in \(\textup{Dec}_\mathcal{S}(M)\). 

(iii) If \((A_1,A_2)\) is maximal but \(A_1=D\oplus B_1\) for some \(D\in\mathcal{S}\) and some \(B_1\),
then \((A_1,A_2) \leq (B_1,A_2\oplus D)\in\textup{Dec}_\mathcal{S}(M)\) and thus \(A_2=A_2\oplus D\)
and \(D=0\). Conversely, suppose \(0\) is the only divisor of \(A_1\) that is in \(\mathcal{S}\)
and \((A_1,A_2)\leq (B_1,B_2)\). Then there is some \(C\) such that \(A_1=B_1\oplus C\) and \(B_2=A_2\oplus C\).
Since \(\mathcal{S}\) is saturated we have \(C\in\mathcal{S}\), and since \(C\) is a divisor
of \(A_1\) we must have that \(C=0\). Hence \((A_1,A_2)=(B_1,B_2)\) is maximal. 

(iv) Let $M=\bigoplus_{i\in I}M_i$ with each $M_i$ indecomposable. If $A_2$, respectively $A_1$, is the direct sum
of those $M_i$ that are, respectively are not, in $\mathcal{S}$,
then \((A_1,A_2)\) is in \(\textup{Dec}_\mathcal{S}(M)\) and it is maximal by (iii).
If \((B_1,B_2)\) is also maximal, then \(B_2\), respectively \(B_1\),
is a direct sum of indecomposables that are, respectively are not, in $\mathcal{S}$; this follows
from the definition of \(\textup{Dec}_\mathcal{S}(M)\) and from (iii). Since together these
decompositions give a decomposition of \(M\) into indecomposables, Theorem~\ref{thm:krs} implies that 
\(A_1\cong B_1\) and \(A_2\cong B_2\), so 
\((B_1,B_2)\) belongs to the \(\Aut(M)\)-orbit of
\((A_1,A_2)\). Because the action of \(\Aut(M)\) preserves the partial order, this orbit
is conversely contained in \(\max(\textup{Dec}_\mathcal{S}(M))\).

(v) By (iii) we have that \(A_1\) and \(A_2\) are coprime and by (iv) we have \(A_1\cong B_1\) and
\(A_2\cong B_2\).
We may conclude from Proposition~\ref{prop:swap_prop} that \((A_1,B_2),(B_1,A_2)\in\textup{Dec}_\mathcal{S}(M)\). 
Applying (iii) again we may conclude they are maximal.
\end{proof}

\section{Morphisms as modules} \label{sec:decomp_new}

In this section we will interpret a morphism of (finite) abelian groups as a (finite length) module, as expressed by Proposition~\ref{prop:matrix_cat}.
We will then study decompositions of this module and what this decomposition corresponds to in terms of the original morphism. 
This will enable us in the next section to apply the Krull--Remak--Schmidt theorem to morphisms of finite abelian groups.

We write \(\ltm\) for the ring of lower-triangular \(2\times 2\) matrices with integer coefficients, \(\catt{Ab}\) for the category of abelian groups, and \(\catt{ab}\) for the category of finite abelian groups.

\begin{definition}
Let \(\mathcal{C}\) be a category.
We define the category of {\em \(\mathcal{C}\)-morphisms}, written \homcat{\mathcal{C}}, where the objects are the morphisms of \(\mathcal{C}\) and for objects \(f\colon A\to B\) and \(g\colon C\to D\) the morphisms from \(f\) to \(g\) are the pairs \((\alpha,\beta)\in\Hom_\mathcal{C}(A,C)\times\Hom_\mathcal{C}(B,D)\) such that \(\beta f = g\alpha\), as in the following diagram:
\[\begin{tikzcd} &[-30pt] A \ar{r}{f}\ar[swap]{d}{\alpha} & B \ar{d}{\beta} &[-30pt] \\ 
& C \ar{r}{g} & D\textup{.\!} & \end{tikzcd}\]
The composition of \(\mathcal{C}\)-morphisms \((\gamma,\delta):g\to h\) and \((\alpha,\beta):f\to g\) is \((\gamma\alpha,\delta\beta)\). 
\end{definition}

\begin{remark}\label{rem:Endd-module}
Let \(d\colon A\to B\) be a morphism of abelian groups. 
Then the set \(\End(d)\subset \End(A)\times\End(B)\) of endomorphisms of \(d\) is a ring. 
Moreover, we have natural maps \(\End(d)\to\End(A)\) and \(\End(d)\to\End(B)\), turning \(A\) and \(B\) into \(\End(d)\)-modules.
Similarly we write \(\Aut(d)\) for the group of automorphisms of \(d\), which equals \(\End(d)\cap(\Aut(A)\times\Aut(B))\).
\end{remark}

The following proposition can be thought of as an explicit instance of Mitchell's embedding theorem for abelian categories.

\begin{proposition}\label{prop:matrix_cat}
There is an equivalence of categories, specified in the proof, between
the category \(\homcat{\catt{Ab}}\) and the category \ltm\catt{-Mod} of \ltm-modules.
This equivalence restricts to an equivalence between the subcategory \(\homcat{\catt{ab}}\) and the subcategory of \ltm-modules of finite length.
\begin{proof}
We will define functors \(F\colon\homcat{\catt{Ab}}\to\ltm\catt{-Mod}\) and \(G\colon\ltm\catt{-Mod}\to\homcat{\catt{Ab}}\) such that \(FG\) and \(GF\) are naturally isomorphic to the identity functors of their respective categories.
For an object \(f\colon A\to B\) we take \(F(f)\) to be \(A\oplus B\), where the \ltm-module structure is given by 
\begin{align*}
 \hspace{15em} \begin{pmatrix} x & 0 \\ y & z \end{pmatrix} \begin{pmatrix} a \\ b \end{pmatrix} = \begin{pmatrix} xa \\ yf(a)+zb \end{pmatrix} \quad\quad (\text{for }x,y,z\in\Z,\, a\in A,\, b\in B).
\end{align*}
For a \ltm-module \(M\) we take \(G(M)\) to be the morphism \(E_{11} M\to E_{22}M\) given by multiplication with \(E_{21}\), where \(E_{ij}\) is the $2 \times 2$ matrix having a \(1\) at position \((i,j)\) and zeros elsewhere. The remainder of this proposition is a straightforward verification.
\end{proof}
\end{proposition}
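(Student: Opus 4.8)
The plan is to check that the functors $F$ and $G$ described in the proof are well defined and functorial, to produce natural isomorphisms $GF\cong\id$ and $FG\cong\id$, and finally to see that they match up the finite objects with the $\ltm$-modules of finite length.

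First I would verify that $F$ is well defined. For an object $f\colon A\to B$ the displayed formula must be shown to make $A\oplus B$ into an $\ltm$-module: bi-additivity and the action of the identity matrix are immediate, and associativity reduces to comparing the two ways of acting by a product of two lower-triangular integer matrices, which uses only the $\Z$-linearity of $f$. A morphism $(\alpha,\beta)\colon f\to g$ is sent to $\alpha\oplus\beta$; the relation $\beta f=g\alpha$ is precisely what makes $\alpha\oplus\beta$ commute with the action of $E_{21}$, while compatibility with $E_{11}$, $E_{22}$ and the diagonal follows from $\Z$-linearity of $\alpha$ and $\beta$. Since on underlying groups $F$ is the direct sum, it preserves identities and composition. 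For $G$: given an $\ltm$-module $M$, the matrices $E_{11}$ and $E_{22}$ are orthogonal idempotents with $E_{11}+E_{22}=1$, so $M=E_{11}M\oplus E_{22}M$ as abelian groups, each summand being a $\Z$-module via the corresponding diagonal copy of $\Z$; from $E_{22}E_{21}=E_{21}=E_{21}E_{11}$ one gets that left multiplication by $E_{21}$ maps $E_{11}M$ into $E_{22}M$, so $G(M)$ is a genuine morphism of abelian groups, and an $\ltm$-linear map restricts to its $E_{11}$- and $E_{22}$-parts, which commute with multiplication by $E_{21}$ and hence form a morphism in $\homcat{\catt{Ab}}$.

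Next I would exhibit the natural isomorphisms. For $GF$: given $f\colon A\to B$, the maps $a\mapsto(a,0)$ and $b\mapsto(0,b)$ identify $A$ with $E_{11}F(f)$ and $B$ with $E_{22}F(f)$ compatibly with the $\Z$-module structures, and under these identifications multiplication by $E_{21}$ carries $(a,0)$ to $(0,f(a))$, so $GF(f)$ is identified with $f$, naturally in $f$. For $FG$: given $M$, the addition map $E_{11}M\oplus E_{22}M\to M$ is an isomorphism of abelian groups by the decomposition above, and one checks it is $\ltm$-linear by a direct computation using $E_{11}m_1=m_1$, $E_{22}m_2=m_2$, $E_{11}E_{22}=E_{22}E_{11}=E_{21}E_{22}=0$ and the formula defining the action on $FG(M)$; naturality in $M$ is immediate. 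This gives the equivalence of $\homcat{\catt{Ab}}$ with $\ltm\catt{-Mod}$.

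Finally, for the restriction to finite objects, I would observe that $E_{22}M$ is an $\ltm$-submodule of $M$ with quotient isomorphic to $E_{11}M$ carrying the zero $E_{21}$-action, and that on each of $E_{11}M$ and $E_{22}M$ the $\ltm$-action factors through a projection $\ltm\to\Z$; hence the $\ltm$-length of $M$ is the sum of the $\Z$-lengths of $E_{11}M$ and $E_{22}M$, and since a $\Z$-module has finite length if and only if it is finite, $M$ has finite length exactly when $E_{11}M$ and $E_{22}M$ are finite. Thus $F(f)$ has finite length precisely when $A$ and $B$ are finite, i.e.\ when $f$ is an object of $\homcat{\catt{ab}}$, and the equivalence restricts as claimed. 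None of this presents a real difficulty; the only points requiring a little care are that the addition map $E_{11}M\oplus E_{22}M\to M$ be $\ltm$-linear, not merely $\Z$-linear, in the $FG\cong\id$ step, and the identification of $\ltm$-length with a sum of two $\Z$-lengths — I expect the former to be the main, if modest, obstacle.
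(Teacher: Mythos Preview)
Your proposal is correct and follows exactly the approach of the paper: you use the same functors $F$ and $G$ that the paper specifies, and you carry out in detail the ``straightforward verification'' that the paper leaves to the reader. The points you flag as needing care --- the $\ltm$-linearity of the addition map $E_{11}M\oplus E_{22}M\to M$ and the identification of $\ltm$-length with the sum of two $\Z$-lengths --- are handled correctly.
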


\begin{definition}\label{def:I}
Write \(\mathcal{I}\) for the class of \(\ltm\)-modules that correspond to isomorphisms under the equivalence of categories of Proposition~\ref{prop:matrix_cat}.
\end{definition}

One readily checks that the class \(\mathcal{I}\) is
multiplicative and saturated in the sense of Definition~\ref{def:dec}.
We observe that a \(\ltm\)-module \(M\) belongs to \(\mathcal{I}\) if
and only if its \(\ltm\)-module structure can be extended to a
\(\allm\)-module structure. This fact will not be needed, and we omit
the proof.

\begin{remark}\label{rem:transposing}
Using the equivalence of categories of Proposition \ref{prop:matrix_cat}, one can 
translate 
terminology
related to modules into terminology about morphisms of abelian groups. We briefly go
through what is most relevant to us: 
\begin{enumerate}
\item
If \(f\colon A\to B\) is a morphism of abelian groups, then a
{\em submodule\/} of the \(\ltm\)-module corresponding to \(f\) corresponds to a {\em restriction\/} of \(f\),
i.e., a morphism \(f'\colon A'\to B'\) where \(A'\subset A\) and \(B'\subset B\) are subgroups and
\(f'(a')=f(a')\in B'\) for all \(a'\in A'\).
\item
For morphisms \(f\colon A\to B\) and \(g\colon C\to D\) of abelian groups and 
for 
\(r=(\alpha,\beta)\in\Hom(f,g)\),
the image \(\im(r)\) equals the restriction \(\im(\alpha)\to\im(\beta)\) of \(g\), and the kernel \(\ker(r)\)
equals the restriction \(\ker(\alpha)\to\ker(\beta)\)  of \(f\). 
\item
If \((f_i)_{i\in I}\) is a family of morphisms \(f_i\colon A_i\to B_i\) of abelian groups, then we write \(\bigoplus_{i\in I} f_i\) for the natural map \(\bigoplus_{i\in I}A_i\to \bigoplus_{i\in I} B_i\)
and we write $f/f_i$ for the induced map  $A/A_i\to B/B_i$.
One verifies that \(\bigoplus_{i\in I} f_i\) corresponds to the direct sum of the \(\ltm\)-modules that the
\(f_i\) correspond to. 
If \(f\colon A\to B\) 
is a morphism and
\(f_i\colon A_i\to B_i\) is a family of restrictions of $f$ then, 
just as we do for modules, we will write \(\bigoplus_{i\in I} f_i = f\) 
if the natural map \(\bigoplus_{i\in I} f_i \to f\) is an isomorphism.
\item For a morphism \(f\colon A\to B\), the set $\textup{Dec}(f)$ is the set of all pairs $(f_0,f_1)$ of restrictions of $f$ such that $f_0\oplus f_1=f$, which is a partially ordered set as in Definition \ref{def:id_dec}. 
The set \(\textup{Dec}_\mathcal{I}(f)\) is the set of $(f_0,f_1)\in\textup{Dec}(f)$ such that $f_1$ is an isomorphism.
\end{enumerate}
\end{remark}

\begin{definition}\label{def:prj}
For a morphism \(f\colon A\to B\) of abelian groups we denote the ring of
morphisms from $f$ to $f$ in the category \(\homcat{\catt{Ab}}\) by
\(\End(f)\),
we define 
\[\textup{Prj}(f)=\{e\in\Id(\End(f)):\im(e)\text{ is an isomorphism}\},\] 
and we equip \(\textup{Prj}(f)\) with the partial order inherited from the partial order on
\(\Id(\End(f))\) from Definition~\ref{def:id_dec}.
\end{definition}

We have the following corollary to Proposition~\ref{prop:id_end_dec}.

\begin{corollary}\label{cor:S1_to_S2}
Let \(f\colon A\to B\) be a morphism of abelian groups.
Then we have an isomorphism \(\textup{Prj}(f)\to \textup{Dec}_\mathcal{I}(f)\) of partially ordered
sets that respects the action of \(\Aut(f)\).
\end{corollary}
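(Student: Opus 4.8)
The plan is to derive the corollary mechanically from Proposition~\ref{prop:id_end_dec} by transporting everything along the equivalence of categories $F\colon\homcat{\catt{Ab}}\to\ltm\catt{-Mod}$ of Proposition~\ref{prop:matrix_cat}. Write $M=F(f)$ for the $\ltm$-module attached to $f$. Being an equivalence of categories, $F$ induces a ring isomorphism $\End(f)\isom\End(M)$, hence identifies $\Id(\End(f))$ with $\Id(\End(M))$, and a group isomorphism $\Aut(f)\isom\Aut(M)$ compatible with the conjugation actions on idempotents. By Remark~\ref{rem:transposing}(i), submodules of $M$ correspond to restrictions of $f$; this identifies $\textup{Dec}(M)$ with $\textup{Dec}(f)$, and one checks directly from Definition~\ref{def:id_dec} that this identification preserves the partial orders and is equivariant for $\Aut(f)\isom\Aut(M)$ acting coordinate-wise.

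Now apply Proposition~\ref{prop:id_end_dec} to $M$: the map $\Phi\colon e\mapsto(\ker(e),\im(e))$ is an isomorphism of partially ordered sets $\Id(\End(M))\isom\textup{Dec}(M)$ respecting the $\Aut(M)$-action. Transporting through $F$, and using Remark~\ref{rem:transposing}(ii) — which says that for $e=(\alpha,\beta)\in\End(f)$ the submodules $\ker(e)$ and $\im(e)$ of $M$ correspond to the restrictions $\ker(\alpha)\to\ker(\beta)$ and $\im(\alpha)\to\im(\beta)$ of $f$ — we obtain an $\Aut(f)$-equivariant poset isomorphism $\Id(\End(f))\isom\textup{Dec}(f)$, still given by $e\mapsto(\ker(e),\im(e))$, now interpreted inside $\homcat{\catt{Ab}}$.

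It remains to see that this isomorphism restricts to a bijection $\textup{Prj}(f)\to\textup{Dec}_\mathcal{I}(f)$. For $e\in\Id(\End(f))$ we have, by Definition~\ref{def:prj}, that $e\in\textup{Prj}(f)$ if and only if the restriction $\im(e)$ of $f$ is an isomorphism; on the other side, $\Phi(e)=(\ker(e),\im(e))$ lies in $\textup{Dec}_\mathcal{I}(f)$ if and only if its second coordinate $\im(e)$ is an isomorphism, i.e.\ lies in the class $\mathcal{I}$ of Definition~\ref{def:I}, by the description of $\textup{Dec}_\mathcal{I}$ in Remark~\ref{rem:transposing}(iv) together with Definition~\ref{def:dec}. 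These two conditions are literally the same, so $\Phi$ maps $\textup{Prj}(f)$ bijectively onto $\textup{Dec}_\mathcal{I}(f)$. Since both sides carry the partial orders and the $\Aut(f)$-actions inherited from $\Id(\End(f))$ and $\textup{Dec}(f)$, the restriction of $\Phi$ is automatically an isomorphism of partially ordered sets respecting the action of $\Aut(f)$, which is the assertion.

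The argument is entirely formal: the only point requiring a modicum of care is the translation between the module-theoretic notions ($\End$, $\Id$, $\textup{Dec}$, the subclass $\mathcal{I}$, and the $\Aut$-actions) and their counterparts for morphisms of abelian groups, and this is precisely what Proposition~\ref{prop:matrix_cat} and Remark~\ref{rem:transposing} supply. I expect no genuine obstacle.
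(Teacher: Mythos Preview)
Your proposal is correct and is exactly the argument the paper intends: the corollary is stated immediately after Definition~\ref{def:prj} as a direct consequence of Proposition~\ref{prop:id_end_dec}, with the translation between modules and morphisms supplied by Proposition~\ref{prop:matrix_cat} and Remark~\ref{rem:transposing}. The paper gives no further proof, so your write-up simply makes explicit what is left to the reader.
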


\section{The group \texorpdfstring{\(U^*\)}{U*}} \label{sec:Ustar}

In this section we fix a morphism \(d\colon A\to B\) of abelian groups.
We will define a group \(U^*\) that acts on \(d\) and study some of its properties.

\begin{definition}\label{def:U}
For \(f,g\in\Hom(B,A)\), define \(f\star g= fdg \in \Hom(B,A)\), and extend \(\star\) to a ring multiplication on the additive group \(Q=Q(d)=\Z\oplus\Hom(B,A)\) by 
$$(m,f)\star (n,g)= (mn,mg+nf+fdg)$$ 
for $m,n\in\Z$ and \(f,g\in\Hom(B,A)\).
We define the multiplicative monoid 
$$U=U(d)=1+\Hom(B,A) \subset \Z\oplus\Hom(B,A) = Q$$ 
and write
\(U^*=U^*(d)=U\cap Q^*\) for the intersection of $U$ with the group of units of \(Q\).
\end{definition}

It is easy to check that $Q$ is indeed a ring
with unit element $1=(1,0)$, 
and that the projection map $Q\to\Z$
is a ring homomorphism with kernel \(\Hom(B,A)\). The inverse image of $1$ equals $U$, and
\(U^*\) is a group because it is the kernel of the induced group homomorphism \(Q^*\to\Z^*\).
The following lemma is
easy 
to verify.

\begin{lemma}\label{lem:ring_hom}
We have a ring homomorphism \(q\colon Q\to\End(d)\) defined by sending $1$ to the identity \(\id_d\)
and \(f\in\Hom(B,A)\) to \((fd,df)\).
It restricts to a group homomorphism \(U^*\to\Aut(d)\).
\end{lemma}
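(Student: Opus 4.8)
The plan is to verify directly that the stated formula defines a ring homomorphism, and then to deduce the assertion about $U^*$ by a formal unit argument. By additivity the map $q$ is forced to be $q(m,f)=\bigl(m\cdot\id_A+fd,\ m\cdot\id_B+df\bigr)$ for $m\in\Z$ and $f\in\Hom(B,A)$, so the first thing I would check is that this pair actually lies in $\End(d)\subset\End(A)\times\End(B)$, i.e.\ that the square commutes: $(m\cdot\id_B+df)\circ d=d\circ(m\cdot\id_A+fd)$. Both sides equal $md+dfd$, so $q$ is well defined as a map $Q\to\End(d)$.

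Next, $q$ is manifestly additive and sends $(1,0)$ to $\id_d=(\id_A,\id_B)$, so the only substantive point is multiplicativity: $q\bigl((m,f)\star(n,g)\bigr)=q(m,f)\circ q(n,g)$, where composition in $\End(d)$ is taken coordinatewise. Using the definition of $\star$, the left-hand side has first coordinate $mn\cdot\id_A+(mg+nf+fdg)d$, while $(m\cdot\id_A+fd)\circ(n\cdot\id_A+gd)=mn\cdot\id_A+mgd+nfd+fdgd$; these agree, and the second coordinates agree by the symmetric computation $(m\cdot\id_B+df)\circ(n\cdot\id_B+dg)=mn\cdot\id_B+d(mg+nf+fdg)$. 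Hence $q$ is a ring homomorphism. This bookkeeping is the only ``work'' in the lemma, and it is routine.

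Finally, for the restriction to $U^*$: every ring homomorphism carries units to units, so $q(Q^*)\subset\End(d)^\ast$, and I would note that $\End(d)^\ast=\Aut(d)$. Indeed, if $(\alpha,\beta)\in\End(d)$ admits a two-sided inverse in $\End(d)$, that inverse is necessarily $(\alpha^{-1},\beta^{-1})$, whence $\alpha\in\Aut(A)$ and $\beta\in\Aut(B)$, i.e.\ $(\alpha,\beta)\in\Aut(d)$ in the sense of Remark~\ref{rem:Endd-module}; conversely, if $\alpha,\beta$ are automorphisms with $\beta d=d\alpha$ then $\beta^{-1}d=d\alpha^{-1}$, so $(\alpha^{-1},\beta^{-1})\in\End(d)$ is a two-sided inverse. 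Since $U^*=U\cap Q^\ast$ by Definition~\ref{def:U}, the restriction $q|_{U^*}$ maps $U^*$ into $\Aut(d)$, and it is a group homomorphism because $q$ is multiplicative and unital. I do not expect any real obstacle; the only mild subtlety is recognizing $\Aut(d)$ as the unit group of $\End(d)$, everything else being a short computation.
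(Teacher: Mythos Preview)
Your proof is correct and is precisely the routine verification the paper has in mind; indeed, the paper itself offers no proof beyond the remark that the lemma ``is easy to verify.'' Your explicit check of multiplicativity and your identification of $\Aut(d)$ with $\End(d)^*$ fill in exactly the details that were left to the reader.
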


\begin{remark}\label{rem:Qmodule}
Recall from Remark~\ref{rem:Endd-module} that \(A\) and \(B\) are \(\End(d)\)-modules.
The map $q$ makes $A$ and $B$ into $Q$-modules in such a way that $d$ is $Q$-linear.
\end{remark}

\begin{definition}\label{def:S} 
We write 
$$\textup{Id}_0=\textup{Id}_0(d)=\Id(Q)\cap \Hom(B,A)=\{f\in\Hom(B,A):fdf=f\}$$ 
with \(Q\)
as in Definition~\ref{def:U}.
We equip \(\textup{Id}_0\) with the partial order inherited from the partial order on \(\Id(Q)\) as in
Definition~\ref{def:id_dec}.
\end{definition}

\begin{remark}
From Lemma~\ref{lem:ring_hom} we get a map \(U^*\to\Aut(d)\), i.e., \(U^*\) acts on \(d\). In turn, since
an isomorphism between $d$'s induces a bijection between their $\textup{Id}_0$'s, the group \(\Aut(d)\)
acts on \(\textup{Id}_0\). However, \(U^*\) acts directly on \(\textup{Id}_0\) by conjugation within \(Q\).
Although we will not need it, both of the induced maps \(U^*\to\Aut(\textup{Id}_0)\) are the same. 
\end{remark}

Recall the terminology $\im(r)$, $\ker(r)$, and \(\bigoplus_{i\in I} f_i\) from Remark~\ref{rem:transposing}, and \textup{Prj}(d) from Definition~\ref{def:prj}. If 
$e \in\End(d)$, we write $e=(e_A,e_B)$ with $e_A\in\End(A)$ and $e_B\in\End(B)$.

\begin{proposition}\label{prop:S_to_S1}
The map \(q\colon Q\to\End(d)\) from Lemma~\ref{lem:ring_hom} restricts to an
isomorphism 
\(\textup{Id}_0(d)\isom \textup{Prj}(d)\)
of partially ordered
sets that respects the action of \(\Aut(d)\). 
Its inverse is the map given by \(e\mapsto 0_e \oplus \im(e)^{-1} \),
where 
\(0_e\colon \ker(e_B)\to \ker(e_A)\)
is the zero map. 
\end{proposition}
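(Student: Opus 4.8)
The plan is to show directly that the ring homomorphism $q\colon Q\to\End(d)$ of Lemma~\ref{lem:ring_hom} behaves as claimed. Because $q$ is a ring homomorphism it carries $\Id(Q)$ into $\Id(\End(d))$ and preserves the relation ``$xy=yx=x$'', hence is monotone for the idempotent partial orders of Definition~\ref{def:id_dec}. So I would reduce the proposition to four points: (a) $q$ maps $\textup{Id}_0(d)$ into $\textup{Prj}(d)$; (b) the displayed formula $e\mapsto 0_e\oplus\im(e)^{-1}$ takes values in $\textup{Id}_0(d)$ and is a two-sided inverse of $q|_{\textup{Id}_0(d)}$; (c) $q$ also reflects the order; (d) $q$ is $\Aut(d)$-equivariant. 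Note that no finiteness hypothesis is available, so every step must be carried out for arbitrary abelian groups.

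For (a), let $f\in\textup{Id}_0(d)$, so $f\in\Hom(B,A)$ with $fdf=f$; then $q(f)=(fd,df)$ is idempotent in $\End(d)$. I would first read off from $fdf=f$ the following: $fd$ acts as the identity on $\im(f)$, $f$ annihilates $\ker(df)$, and therefore $\im(fd)=\im(f)$, $\ker(df)=\ker(f)$, and $d$ together with $f$ restrict to mutually inverse isomorphisms between $\im(fd)$ and $\im(df)$ (checking that the two composites are identities uses $fdf=f$ once each). By Remark~\ref{rem:transposing}(ii) the object $\im(q(f))$ is exactly the restriction $\im(fd)\to\im(df)$ of $d$, which we have just seen to be an isomorphism; hence $q(f)\in\textup{Prj}(d)$.

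For (b), let $e=(e_A,e_B)\in\textup{Prj}(d)$. Since $e_A$ and $e_B$ are idempotent one has $A=\ker(e_A)\oplus\im(e_A)$ and $B=\ker(e_B)\oplus\im(e_B)$, with $e_A$ and $e_B$ the projections onto the image summands along the kernel summands; and since $e_Bd=de_A$, the map $d$ respects both decompositions and restricts on $\im(e_A)$ to the isomorphism $\im(e)\colon\im(e_A)\to\im(e_B)$. Put $f=0_e\oplus\im(e)^{-1}$. Evaluating on the two summands: on $\im(e_B)$ the composite $df$ equals $\im(e)\circ\im(e)^{-1}=\id$, while on $\ker(e_B)$ it is $0$ since $f=0$ there, so $df=e_B$; symmetrically $fd=e_A$ (on $\im(e_A)$, $fd=\im(e)^{-1}\circ\im(e)=\id$; on $\ker(e_A)$, $d$ maps into $\ker(e_B)$ where $f=0$). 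Hence $q(f)=e$. Moreover $\im(f)\subseteq\im(e_A)$, on which $e_A=fd$ is the identity, so $fdf=e_Af=f$ and $f\in\textup{Id}_0(d)$. Conversely, for $f\in\textup{Id}_0(d)$ and $e=q(f)=(fd,df)$, feeding $e$ into the formula and using $\ker(df)=\ker(f)$, $\im(fd)=\im(f)$ and the mutual-inverse property from (a) identifies $\im(e)^{-1}$ with the restriction of $f$ to $\im(df)$ and recovers $f$. Thus $q|_{\textup{Id}_0(d)}$ and $e\mapsto 0_e\oplus\im(e)^{-1}$ are mutually inverse bijections.

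For (c), unwinding $q(f)\leq q(g)$ componentwise (recall $q(h)=(hd,dh)$) gives in particular $dfdg=df$ and $gdfd=fd$; post-composing the first with $f$ and using $fdf=f$ yields $fdg=f$, while pre-composing the second with $f$ and using $fdf=f$ yields $gdf=f$, i.e.\ $f\leq g$ in $\Id(Q)$, hence in $\textup{Id}_0(d)$. For (d), the action of $u=(u_A,u_B)\in\Aut(d)$ on $\textup{Id}_0(d)$ is $f\mapsto u_Afu_B^{-1}$ (which stays in $\textup{Id}_0(d)$ because $u_B^{-1}du_A=d$) and on $\textup{Prj}(d)\subseteq\End(d)$ it is conjugation; the identity $u_Bd=du_A$ then gives at once $q(u_Afu_B^{-1})=(u_A(fd)u_A^{-1},u_B(df)u_B^{-1})=u\,q(f)\,u^{-1}$. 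The only step carrying real weight is the bookkeeping in (b) — the compatible direct-sum decompositions and the cancellation of $\im(e)$ against $\im(e)^{-1}$ — while (a), (c), (d) are each a one- or two-line manipulation of the relation $fdf=f$.
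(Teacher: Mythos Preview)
Your proof is correct and follows essentially the same approach as the paper: both verify that $q$ and the map $e\mapsto 0_e\oplus\im(e)^{-1}$ are mutually inverse by the same direct-sum bookkeeping, and both handle the $\Aut(d)$-equivariance and order-preservation by short algebraic manipulations. The only cosmetic difference is that the paper shows the inverse $\Psi$ is monotone (via $\Psi(e)\star\Psi(e')=\Psi(e)e'_B=\Psi(e)e_Be'_B=\Psi(e)$) whereas you show equivalently that $q$ reflects the order; your pre- and post-composition argument in (c) is fine once one checks the types (from $gdfd=fd$ one gets $gdfdf=fdf$, hence $gdf=f$).
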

\begin{proof}
Write \(\Phi\colon\textup{Id}_0(d)\to\End(d)\) for the restriction of \(q\), and write \(\Psi\colon\textup{Prj}(d)\to\Hom(B,A)\) for the map \(e\mapsto 0_e \oplus \im(e)^{-1}\). To see that \(\Psi\) is well defined, note that \(e\) and hence \(e_A\) and \(e_B\) are idempotents, so  \(A=\ker(e_A)\oplus \im(e_A)\) and likewise for \(B\) by Proposition~\ref{prop:id_end_dec}.

Let \(e\in \textup{Prj}(d)\) and \(f=\Psi(e)\).
Note that \(fd=e_A\), by considering the restriction to \(\im(e_A)\) and \(\ker(e_A)\)
separately, and similarly \(df=e_B\) and \(fe_B=f\).
Hence \(q(f)=e\) and \(q\circ\Psi=\id_{\textup{Prj}(d)}\).
For \(g\in \Hom(B,A)\)  
we have 
\(g\star f = gdf= g e_B\). 
In particular \(f\star f=fe_B=f\), so \(f\in \textup{Id}_0\).
Thus \(\Psi\) restricts to \(\textup{Prj}(d)\to \textup{Id}_0\).

Since \(q\) is a ring homomorphism, it 
maps idempotents to idempotents.
For \(f\in \textup{Id}_0\) it follows from \(fdf=f\) that \(f\) and \(d\) are mutually inverse
when restricted to \(\im(df)\to\im(fd)\), respectively \(\im(fd)\to\im(df)\). 
In particular \(\im(q(f))\), which is precisely the restriction of \(d\) just mentioned, is an
isomorphism. 
Hence \(\Phi\) restricts to \(\textup{Id}_0\to\textup{Prj}(d)\).
Moreover, the restriction of \(f\) to \(\ker(df)\to\ker(fd)\) is zero since $fdf=f$, so
\(\Psi(\Phi(f))=f\). We conclude that \(\Phi\) and \(\Psi\) are mutually inverse.

All constructions are functorial in \(d\) and thus \(\Aut(d)\) commutes with \(\Phi\).
The definition of the partial order on idempotents is completely algebraic, so the partial
order is preserved by \(\Phi\), which is the restriction of a ring homomorphism.
If 
\(e,e'\in\textup{Prj}(d)\) are such that \(ee'=e=e'e\), 
then
\(\Psi(e)\star\Psi(e')=\Psi(e)e'_B=\Psi(e)e_Be'_B=\Psi(e)e_B=\Psi(e)\), and likewise
\(\Psi(e')\star\Psi(e)=\Psi(e)\), so \(\Psi\) preserves the partial order as well.
\end{proof}

In the following results, we use the terminology from Remark \ref{rem:transposing}.
We let $U^*$ act on \(\textup{Dec}(d)\) via the map \(U^*\to\Aut(d)\) from Lemma~\ref{lem:ring_hom}.

\begin{lemma}\label{lem:orbit_lemma}
Let \(d_i\colon A_i\to B_i\) with \(i\in\{-1,0,1\}\) be restrictions of $d$ such that
\((d_0,d_1)\) and \((d_0,d_{-1})\) belong to \(\textup{Dec}(d)\), and suppose that
\(d_0\) or \(d_1\) is an isomorphism. Then 
\((d_0,d_{-1})\in U^*\cdot (d_0,d_1)\).
\begin{proof}
We have \(A_0\oplus A_1 \cong A \cong A_0 \oplus A_{-1}\).
Hence the map $A_1 \to A_{-1}$ given by $x \mapsto x_{-1}$ where
$x = x_0 + x_{-1}$ with $x_i\in A_i$ is an isomorphism. Similarly,
we have a natural isomorphism \(g_1\colon d_1\to d/d_0\to d_{-1}\), and its extension 
\(g=\id_{d_0}\oplus \mathop{g_1} \in \Aut(d)\) maps \((d_0,d_1)\) to \((d_0,d_{-1})\). 
Letting 
\(r=\id_d-g
\in\End(d)$, then
\(r(d_1)\subset d_0\) and \(r(d_0) = 0\), so 
\(r^2=0\).
We first construct $f\in\Hom(B,A)$ that maps to \(r\) under \(q\colon Q\to\End(d)\).
Write \(r=(r_A,r_B)\) with $r_A \in \End(A)$ and $r_B \in \End(B)$.
Since \(d_0\) or \(d_1\) is invertible, there exists \(f_1\colon B_1\to A_0\) such that the diagram
\begin{center}
\begin{tikzcd}
A_1 \ar{r}{d_1} \ar[swap]{d}{r_A} & B_1 \ar{d}{r_B} \ar[swap,dotted]{dl}{f_1}\\
A_0 \ar[swap]{r}{d_0} & B_0
\end{tikzcd}
\end{center}
commutes. Then \(f=0\oplus f_1\) with \(0\colon B_0\to A_1\) satisfies \((fd,df)=r\), so
$f$ 
does map to \(r\) under \(q\colon Q\to\End(d)\). From \(f\star f \star f=fdfdf=r_A^2f=0\) we
see that $f$ is nilpotent, so the element \(1-f\in U\) belongs to $U^*$. 
Since
\(1-f\)
maps to \(\id_d-r=g\)
via $q$, 
it sends \((d_0,d_1)\) to \((d_0,d_{-1})\).
\end{proof}
\end{lemma}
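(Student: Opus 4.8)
The plan is to produce an element of \(U^*\) whose image under \(q\colon Q\to\End(d)\) from Lemma~\ref{lem:ring_hom} carries \((d_0,d_1)\) to \((d_0,d_{-1})\). The first step is to write down the obvious automorphism of \(d\) that does this: since \(A=A_0\oplus A_1=A_0\oplus A_{-1}\) and likewise \(B=B_0\oplus B_1=B_0\oplus B_{-1}\), the assignment sending \(x\in A_1\) to its \(A_{-1}\)-component with respect to the second decomposition is an isomorphism \(A_1\to A_{-1}\), and similarly for \(B\); by compatibility with \(d\) these give an isomorphism \(g_1\colon d_1\isom d_{-1}\) of restrictions of \(d\), and \(g=\id_{d_0}\oplus g_1\in\Aut(d)\) sends \((d_0,d_1)\) to \((d_0,d_{-1})\). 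Setting \(r=\id_d-g=(r_A,r_B)\in\End(d)\), I would note that \(g\) is the identity on \(A_0\) and on \(B_0\) and differs from the identity on \(A_1\), resp.\ \(B_1\), only by a term in \(A_0\), resp.\ \(B_0\); hence \(r\) annihilates \(d_0\) and maps \(d_1\) into \(d_0\), and in particular \(r_A^2=0\) and \(r_B^2=0\).

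The crux is to realize \(r\) as \(q(f)=(fd,df)\) for some \(f\in\Hom(B,A)\). Because \(r\) kills \(d_0\) and sends \(d_1\) into \(d_0\), it is encoded by a pair of maps \(\sigma\colon A_1\to A_0\) and \(\rho\colon B_1\to B_0\), and the condition \(r\in\End(d)\), i.e.\ \(r_Bd=dr_A\), amounts to the single relation \(\rho d_1=d_0\sigma\). I would look for \(f\) of the form \(f=0\oplus f_1\) with \(0\colon B_0\to A_1\) and \(f_1\colon B_1\to A_0\); a direct computation shows that \(q(f)=r\) holds precisely when \(f_1d_1=\sigma\) and \(d_0f_1=\rho\). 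Now the hypothesis that \(d_0\) or \(d_1\) be an isomorphism is exactly what is needed: if \(d_0\) is invertible take \(f_1=d_0^{-1}\rho\), and if \(d_1\) is invertible take \(f_1=\sigma d_1^{-1}\); in either case the relation \(\rho d_1=d_0\sigma\) makes both required equations hold. I expect this lifting step to be the main obstacle, and in particular the observation that invertibility of \(d_0\) or of \(d_1\) is precisely what lets one solve the relevant square; the rest is bookkeeping.

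Finally, since \(fd=r_A\) we get \(f\star f\star f=fdfdf=(fd)^2f=r_A^2f=0\), so \(f\) is nilpotent in \(Q\); therefore \(1-f\in U\) is a unit, hence lies in \(U^*\). As \(q(1-f)=\id_d-r=g\) and \(U^*\) acts on \(\textup{Dec}(d)\) through \(q\), the element \(1-f\) carries \((d_0,d_1)\) to \((d_0,d_{-1})\), which gives \((d_0,d_{-1})\in U^*\cdot(d_0,d_1)\), as desired.
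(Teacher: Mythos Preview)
Your proof is correct and follows essentially the same route as the paper's: construct the automorphism \(g=\id_{d_0}\oplus g_1\) carrying \((d_0,d_1)\) to \((d_0,d_{-1})\), set \(r=\id_d-g\), lift \(r\) to \(f=0\oplus f_1\in\Hom(B,A)\) using that \(d_0\) or \(d_1\) is invertible, and conclude from nilpotency of \(f\) that \(1-f\in U^*\) does the job. Your version spells out the two lifting equations \(f_1d_1=\sigma\), \(d_0f_1=\rho\) and the compatibility \(\rho d_1=d_0\sigma\) more explicitly than the paper (which packages them in a single commuting square), but the argument is the same.
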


The proof of the following proposition, which can be considered a sharpening of Proposition~\ref{prop:decI}.iv when \(R=\ltm\), is the main reason for considering \(d\) as a module.

\begin{proposition}\label{prop:thm_proof_in_S2}
Assume \(A\) and \(B\) are finite. Then the set of maximal elements of
\(\textup{Dec}_\mathcal{I}(d)\) 
equals one 
orbit of \(\textup{Dec}_\mathcal{I}(d)\) under the action of \(U^*\).
\end{proposition}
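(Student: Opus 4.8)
The plan is to deduce this from Proposition~\ref{prop:decI} applied to the ring $R=\ltm$ and the saturated multiplicative class $\mathcal{S}=\mathcal{I}$, transported through the equivalence of categories of Proposition~\ref{prop:matrix_cat}. First I would observe that, since $A$ and $B$ are finite, the $\ltm$-module $M=F(d)=A\oplus B$ has finite length, so Proposition~\ref{prop:decI}(iv) applies: $\max(\textup{Dec}_\mathcal{I}(d))$ is non-empty and is a single orbit under the action of $\Aut(d)=\Aut(M)$. Translating via Remark~\ref{rem:transposing}, the set $\textup{Dec}_\mathcal{I}(d)$ here is exactly the set of pairs $(d_0,d_1)\in\textup{Dec}(d)$ with $d_1$ an isomorphism, and the partial order and $\Aut(d)$-action match up. So the content that remains is to replace ``single $\Aut(d)$-orbit'' by ``single $U^*$-orbit''.

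The key step is therefore to show that two maximal elements of $\textup{Dec}_\mathcal{I}(d)$ that lie in the same $\Aut(d)$-orbit in fact lie in the same $U^*$-orbit. This is precisely where Lemma~\ref{lem:orbit_lemma} enters. Suppose $(d_0,d_1)$ and $(d_0',d_1')$ are both maximal in $\textup{Dec}_\mathcal{I}(d)$. By Proposition~\ref{prop:decI}(iv) there is $\psi\in\Aut(d)$ carrying $(d_0,d_1)$ to $(d_0',d_1')$; replacing $(d_0',d_1')$ by its preimage, it suffices to compare two maximal elements $(d_0,d_1)$ and $(d_0',d_1')$ of $\textup{Dec}_\mathcal{I}(d)$. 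The natural move is to pass through a common refinement: since $d_1$ and $d_1'$ are both isomorphisms and $(d_0,d_1),(d_0',d_1')\in\textup{Dec}(d)$, I would apply Lemma~\ref{lem:orbit_lemma}. Concretely, with $d_1$ invertible, Lemma~\ref{lem:orbit_lemma} (taking the roles $d_{-1}=d_1'$, $d_0=d_0$, $d_1=d_1$, once I have arranged a common first coordinate) shows $(d_0,d_1')\in U^*\cdot(d_0,d_1)$; and then, with $d_0$ (or rather the isomorphism $d_1'$) playing the pivot role, a second application of Lemma~\ref{lem:orbit_lemma} moves the first coordinate, giving $(d_0',d_1')\in U^*\cdot(d_0,d_1')$. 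Composing, $(d_0',d_1')\in U^*\cdot(d_0,d_1)$, as desired.

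The one genuine obstacle is bridging between $(d_0,d_1)$ and $(d_0',d_1')$ when the \emph{first} coordinates differ, since Lemma~\ref{lem:orbit_lemma} is stated for two decompositions sharing their first component. To handle this I would first use that $(d_0,d_1)$ and $(d_0',d_1')$ are $\Aut(d)$-conjugate (from Proposition~\ref{prop:decI}(iv)) to reduce to the case $d_0\cong d_0'$ inside $d$, and then set up an intermediate decomposition $(d_0,d_1')$: here one needs $d_1'$, as a restriction of $d$ complementary to $d_0$, to still be an isomorphism — which holds because $d_1'\cong d/d_0\cong d_1$ is an isomorphism. Then both applications of Lemma~\ref{lem:orbit_lemma} have the hypothesis ``$d_0$ or $d_1$ is an isomorphism'' satisfied (via the isomorphism among the $d_1$'s). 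The verification that the element $1-f\in U^*$ produced by Lemma~\ref{lem:orbit_lemma} really does act as claimed is already done in that lemma, so this step is bookkeeping rather than new work. Finally, conversely, every $U^*$-translate of a maximal element is maximal, because $U^*$ acts through $\Aut(d)$ and the $\Aut(d)$-action preserves the partial order (as in the last line of the proof of Proposition~\ref{prop:decI}(iv)); hence the $U^*$-orbit of a maximal element is exactly $\max(\textup{Dec}_\mathcal{I}(d))$.
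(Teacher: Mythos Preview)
Your overall architecture is right: reduce to finite length via Proposition~\ref{prop:matrix_cat}, invoke Proposition~\ref{prop:decI}(iv), and then bridge two maximal elements by two applications of Lemma~\ref{lem:orbit_lemma} through an intermediate pair $(d_0,d_1')$. But there is a real gap at the intermediate step. To apply Lemma~\ref{lem:orbit_lemma} with $d_{-1}=d_1'$ you need $(d_0,d_1')\in\textup{Dec}(d)$, i.e.\ that the \emph{internal} sum $d_0\oplus d_1'=d$ holds inside $d$. You have only $(d_0,d_1)\in\textup{Dec}(d)$ and $(d_0',d_1')\in\textup{Dec}(d)$; the claim that $d_0$ and $d_1'$ are complementary is precisely what must be proved. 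Your justification ``$d_1'\cong d/d_0\cong d_1$'' already assumes the conclusion, and the earlier reduction ``use $\Aut(d)$-conjugacy to arrange $d_0\cong d_0'$'' only gives an abstract isomorphism, which the remark preceding Proposition~\ref{prop:swap_prop} explicitly warns is not the same as the internal statement.

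This is exactly the content of Proposition~\ref{prop:decI}(v) (equivalently Proposition~\ref{prop:swap_prop}), which the paper invokes at this point: from maximality one gets that $d_0$ and $d_1$ are coprime (Proposition~\ref{prop:decI}(iii)), and then the swap proposition yields $(d_0,d_1')\in\textup{Dec}_\mathcal{I}(d)$. With that in hand, your two applications of Lemma~\ref{lem:orbit_lemma} go through as you describe. So the missing idea is the coprimality-plus-Fitting argument encapsulated in Proposition~\ref{prop:swap_prop}; once you cite Proposition~\ref{prop:decI}(v), your sketch matches the paper's proof.
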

\begin{proof}
By Proposition~\ref{prop:matrix_cat} we may apply Proposition~\ref{prop:decI}.iv.
Thus, it suffices to show that any two maximal elements
\((d_0,d_1),(e_0,e_1)\in\textup{Dec}_\mathcal{I}(d)\) are in the same \(U^*\)-orbit.
Recall that \((d_0,e_1)\in\textup{Dec}_\mathcal{I}(d)\) by Proposition~\ref{prop:decI}.v.
Applying Lemma~\ref{lem:orbit_lemma} we obtain \((d_0,e_1)\in U^*\cdot (d_0,d_1)\)
since $d_1$ is an isomorphism, and 
\((e_0,e_1)\in U^*\cdot(d_0,e_1)\)
since $e_1$ is an isomorphism.
Thus 
\((e_0,e_1)\in U^*\cdot(d_0,e_1)=U^*\cdot (d_0,d_1)\).
\end{proof}

\section{Graded rings}\label{sec:GroupRings}

In this section we consider gradings, which may be viewed as a generalization of group rings. 
In Section~\ref{sec:degree_map} we will use them to prove Theorem~\ref{thm:swop_grp}.
We begin by giving the definitions as we need them, and state some results from \cite{UniversalGradings}.

\begin{definition}[Definition~1.1 and Lemma~7.1 in \cite{UniversalGradings}]\label{def:grading}
Let $R$ be a commutative ring. A {\em grading} of \(R\) is a pair \((\Delta,\mathcal{R})\) where \(\Delta\)
is a (multiplicatively written) abelian group and \(\mathcal{R}=(R_\delta)_{\delta\in\Delta}\) is a
\(\Delta\)-indexed decomposition of \(R\) as a \(\Z\)-module, as defined in 
Definition~\ref{def:dec_indec}, 
such that \(R_\gamma R_\delta \subset R_{\gamma\delta}\) for all \(\gamma\), \(\delta\in\Delta\).
For a grading \((\Delta,\mathcal{R})\) with \(\mathcal{R}=(R_\delta)_{\delta\in\Delta}\) and a group homomorphism
\(f\colon\Delta\to\textup{E}\), we define \(f_*(\mathcal{R})\) to be the \(\textup{E}\)-indexed decomposition
\((S_\epsilon)_{\epsilon\in\textup{E}}\) of $R$ defined by \(S_\epsilon=\sum_{\delta\in f^{-1}\epsilon} R_\delta\); then
\((\textup{E},f_*(\mathcal{R}))\) is a grading of~\(R\).
We turn the class of gradings of \(R\) into a category by defining a morphism
\((\Delta,\mathcal{R})\to(\textup{E},\mathcal{S})\) to be a group homomorphism
\(f\colon\Delta\to\textup{E}\) for which $f_*(\mathcal{R})=\mathcal{S}$.
\end{definition}

From now on, we will usually write $(R_\delta)_{\delta}$ instead of $(R_\delta)_{\delta\in\Delta}$.

\begin{remark}\label{rem:group_ring_to_grading}
Note that every non-zero commutative group ring \(A[G]\) naturally comes with a grading
\((G,(Ag)_{g})\). Let \(R\) be a commutative ring.
Analogously to writing \(\bigoplus_{i\in I} M_i = M\) for submodules \(M_i\) of some \(R\)-module \(M\) when the natural map \(\bigoplus_{i\in I} M_i\to M\) is an isomorphism, we write \(R=A[G]\) for a subring \(A\) of \(R\) and subgroup \(G\) of \(R^*\) if the natural map \(A[G]\to R\) is an isomorphism.
\end{remark}

\begin{definition}[Definition~1.2 in \cite{UniversalGradings}]\label{def:univgrading}
Let $R$ be a commutative ring. A grading \((\Gamma,\mathcal{R})\) of $R$ is called {\em universal\/}
if it is an initial object of the category of gradings of $R$, or equivalently if for every grading
\((\textup{E},\mathcal{S})\) of $R$ there is a unique group homomorphism $f\colon\Gamma\to\textup{E}$
such that $f_*(\mathcal{R})=\mathcal{S}$. If a universal grading \((\Gamma,\mathcal{R})\) exists, then
it is unique up to a unique isomorphism and we refer to it as {\em the\/} universal grading of~$R$.
We write $\Gamma(R)=\Gamma$ for the group of this grading.
\end{definition}

\begin{remark}\label{rem:functoriality}
If $R$ and $R'$ are commutative rings that have universal gradings, then any ring isomorphism $R\to R'$
induces a group isomorphism $\Gamma(R)\to\Gamma(R')$, so $\Gamma(R)$ behaves functorially under ring
isomorphisms; in particular, the group $\Aut(R)$ of ring automorphisms of $R$ acts in a natural way on
$\Gamma(R)$.
\end{remark}

Two important results on gradings of orders from \cite{UniversalGradings} are fundamental to the present paper.
The first concerns the existence of a universal grading.
Recall a commutative ring is {\em reduced\/} if it has no non-zero nilpotent elements.

\begin{theorem}[Theorem~1.3 of \cite{UniversalGradings}]
\label{thm:universal_main_thm1}
If \(R\) is a reduced order, then \(R\) has a universal grading and \(\Gamma(R)\) is finite.
\end{theorem}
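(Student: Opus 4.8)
The plan is to realize the universal grading of $R$ as its unique finest grading. The first step is to reduce to gradings by finite groups. Let $(\Delta,(R_\delta)_\delta)$ be any grading of $R$, with support $S=\{\delta\in\Delta:R_\delta\neq 0\}$. Since $R=\bigoplus_\delta R_\delta$ is free of finite rank $n$ over $\Z$, at most $n$ of the $R_\delta$ are non-zero, so $S$ is finite; and for $0\neq x\in R_\delta$ the element $x$ is not nilpotent (here reducedness enters), so $x^k\in R_{\delta^k}$ is non-zero for all $k\geq 1$, giving $\langle\delta\rangle\subseteq S$ and $\mathrm{ord}(\delta)\leq|S|\leq n$. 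Replacing $\Delta$ by $\langle S\rangle$ — the same family of subgroups is a grading by $\langle S\rangle$, and $\langle S\rangle\hookrightarrow\Delta$ is a morphism of gradings in the sense of Definition~\ref{def:grading}, since $R_\delta=0$ for $\delta\notin\langle S\rangle$ — I may assume that $S$ generates $\Delta$; then $\Delta$ is abelian, generated by at most $n$ elements of order at most $n$, so $|\Delta|\leq n^n$. (Alternatively, finiteness of $\langle S\rangle$ follows because any surjection onto $\Z$ would push the grading forward to a $\Z$-grading in which, by reducedness and finiteness of the support, every non-trivial homogeneous part vanishes — contradicting that $S$ generates.) Hence every grading is, via a morphism of gradings, a grading by a finite group whose support generates it; so a universal, i.e.\ initial, grading may be sought among these, and an initial object among them is initial among all gradings.

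Partially order such ``reduced'' gradings by putting $\mathcal R'\succeq\mathcal R$ when there is a morphism of gradings $\mathcal R'\to\mathcal R$; such a morphism is a surjection of grading groups, hence an isomorphism unless the orders differ, so chains of strict refinements are finite and finest gradings exist. To see that a finest grading $(\Gamma,\mathcal R_\Gamma)$ is universal I would show, as the main point below, that any two gradings have a common refinement; granting this, $(\Gamma,\mathcal R_\Gamma)$ dominates every grading $(\Delta,\mathcal R)$, giving a morphism $\Gamma\to\Delta$, and this morphism is unique: two homomorphisms $f,g$ with $f_*\mathcal R_\Gamma=g_*\mathcal R_\Gamma=\mathcal R$ have identical fibres, since for each $\delta$ the submodules $\bigoplus_{\gamma\in f^{-1}(\delta)}(R_\Gamma)_\gamma$ and $\bigoplus_{\gamma\in g^{-1}(\delta)}(R_\Gamma)_\gamma$ must coincide while the $(R_\Gamma)_\gamma$ are non-zero and independent; thus $f=g$. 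So $(\Gamma,\mathcal R_\Gamma)$ is the universal grading, and $\Gamma$ is finite by the first step.

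The one serious obstacle is therefore the existence of common refinements, and this is exactly where the hypothesis that $R$ is an \emph{order} — rather than a reduced $\Q$-algebra — must be used: already $R\otimes\Q\cong\prod_i K_i$, a product of number fields, and its scalar extensions carry far too many mutually incompatible gradings for a universal one to exist in general. My plan here is to base change to $\bar R:=R\otimes_{\Z}\overline{\Z}\subset\overline{\Q}^X$, where $X=\Hom_{\textup{ring}}(R,\overline{\Q})$ is the finite Galois set of geometric points, to translate a grading of $R$ by a finite abelian group $\Delta$ into an action of the finite abelian group $\Delta$ on $\bar R$ by $\overline{\Z}$-algebra automorphisms that is suitably $\mathrm{Gal}(\overline{\Q}/\Q)$-equivariant, and then to use the rigidity forced by integrality — the lattice-preserving automorphisms of $\bar R$ form a finite group, so the ``diagonal'' part of such an action necessarily has entries that are roots of unity — together with the arithmetic of $\mu(\overline{\Z})$ and of the maximal orders $\mathcal O_{K_i}$, to prove that all these actions factor through a single maximal one. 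This number-theoretic input — the part the introduction attributes to \cite{RootsOfUnity} and \cite{UniversalGradings} — is where I expect the genuine work to be, rather than in formal categorical manipulations; for the complete argument I would follow \cite{UniversalGradings}.
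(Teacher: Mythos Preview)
The paper itself does not prove this theorem; it is quoted from \cite{UniversalGradings} as a black box, with only the remark that ``the proofs of these two theorems as given in \cite{UniversalGradings} are of a number-theoretic nature.'' So there is no in-paper argument to compare against. Judging from the auxiliary material the present paper does import from \cite{UniversalGradings} --- the trace form of Definition~\ref{def:inner_product}, orthogonality of homogeneous components (Proposition~5.8 there), and indecomposable elements (Corollary~5.6 there) --- the actual proof in \cite{UniversalGradings} proceeds via a positive-definite symmetric bilinear form on $R$ for which every grading is an orthogonal decomposition; the universal grading is then the decomposition into indecomposables for this form.

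Your preliminary reductions are correct and cleanly argued: the bound on the order of support elements uses reducedness in exactly the right place, the passage to gradings whose support generates the group is Proposition~\ref{prop:universal_efficient} in disguise, and your existence-of-finest and uniqueness-of-morphism arguments are sound.

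The genuine gap is the common-refinement step, which you yourself flag. Your sketch (base-change to $R\otimes\overline{\Z}$, reinterpret a $\Delta$-grading as an action of the dual group $\hat\Delta$, invoke rigidity of lattice-preserving automorphisms) is a reasonable direction, but it is a plan rather than a proof: you do not explain why two such actions must jointly embed in a single one, and you end by deferring to \cite{UniversalGradings} --- which leaves your treatment on exactly the same footing as the paper's. If you want an independent argument, note that the inner-product route bypasses the refinement problem entirely: once every grading is orthogonal for a fixed positive-definite form on the lattice $R$, the decomposition into indecomposables automatically refines every other grading, and no separate common-refinement lemma is needed.
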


The second result relates to roots of unity.
Recall that a commutative ring \(R\) is {\em connected\/} if it has exactly two idempotents and that \(\mu(R)\) is the group of roots of unity in $R$.

\begin{theorem}[Theorem~1.5.iii in \cite{UniversalGradings}]
\label{thm:universal_main_thm2}
If \(R\) is a connected order and \((\Delta,(R_\delta)_{\delta})\) is a grading of \(R\), then \(\mu(R)\subset\bigcup_{\delta\in\Delta}R_\delta\).
\end{theorem}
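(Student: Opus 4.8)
The statement to prove is Theorem~\ref{thm:universal_main_thm2}: if $R$ is a connected order and $(\Delta,(R_\delta)_\delta)$ is a grading of $R$, then every root of unity $\zeta\in\mu(R)$ lies in $\bigcup_{\delta\in\Delta}R_\delta$, i.e.\ is homogeneous. (This is quoted from \cite{UniversalGradings}; what follows is a sketch of how one would prove it.) Fix $\zeta\in\mu(R)$, say $\zeta^N=1$, and write $\zeta=\sum_{\delta}z_\delta$ with $z_\delta\in R_\delta$ and only finitely many $z_\delta$ nonzero; let $S=\{\delta:z_\delta\neq 0\}$ be the support. The goal is to show $|S|\le 1$. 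The first reduction is to pass to the subring $R'$ generated by the $z_\delta$ and by a primitive $N$-th root of unity's worth of structure — more robustly, one observes that being a connected order is inherited well enough, and that one may reduce to the case where $R$ is moreover a \emph{domain}: indeed an order's minimal primes $\mathfrak{p}$ give quotient maps $R\to R/\mathfrak{p}$ to domains, these quotients are again orders, connectedness lets one control idempotents, and a grading of $R$ descends to a grading of $R/\mathfrak{p}$ only if $\mathfrak{p}$ is homogeneous — so instead one works with the reduced ring $R/\mathrm{nil}(R)$ and then localizes/quotients carefully. The cleanest route: it suffices to treat $R$ a connected \emph{reduced} order, then embed $R\hookrightarrow R\otimes_\Z\Q = \prod_i K_i$ a product of number fields; connectedness of $R$ forces the grading group action to permute the factors transitively in a suitable sense.

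**Key steps.** First I would record that $\Delta$ may be taken finite and $\zeta$-relevant: replace $\Delta$ by the subgroup generated by $S\cdot S^{-1}$, which is finitely generated, and then note the degree-$0$ part $R_1$ is a subring containing no nontrivial idempotents beyond those of $R$ (connectedness), hence $R_1$ is itself connected. Second, the crucial computation: from $\zeta^N=1$ and the grading, comparing homogeneous components, one gets $\sum_{\delta_1\cdots\delta_N=\epsilon} z_{\delta_1}\cdots z_{\delta_N}=0$ for $\epsilon\neq 1$ and $=1$ for $\epsilon=1$. Taking the component at $\epsilon=1$ and using that $R$ is reduced (no nilpotents), together with an argument about the "leading" and "trailing" terms with respect to an auxiliary ordering or a character $\chi\colon\Delta\to\C^*$, shows that $\sum_{\delta\in S}\chi(\delta)z_\delta$ is again a root of unity for every $\chi$. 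Third — and this is the heart — one invokes a height/archimedean argument: embedding into $\prod K_i$ and choosing archimedean absolute values, a root of unity has all conjugates of absolute value $1$; writing $\zeta=\sum_{\delta\in S}z_\delta$ and averaging $\chi(\delta)$ over characters separates the $z_\delta$, and the constraint that every $\sum\chi(\delta)z_\delta$ is integral over $\Z$ with all archimedean absolute values equal to $1$ forces, via a Kronecker-type theorem, each individual $z_\delta$ (suitably interpreted) to be $0$ unless $|S|=1$. Finally, connectedness is what rules out the "split" possibility where $\zeta$ could be spread across two idempotent-orthogonal pieces: if $|S|\ge 2$ one manufactures a nontrivial idempotent in $R_1$ (e.g.\ from a projector onto part of the support surviving in $\prod K_i$), contradicting connectedness.

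**Main obstacle.** The hard part is the archimedean/Kronecker input: turning the algebraic identity "$\sum_{\delta\in S}\chi(\delta)z_\delta\in\mu$ for all characters $\chi$ of the finite group generated by $S$" into the conclusion "$|S|=1$". One must be careful that the $z_\delta$ are not themselves roots of unity or even units, so a naive norm computation fails; the right tool is to look in $R\otimes\R$ or in each completion, use that a totally-archimedean-bounded algebraic integer is a root of unity (Kronecker), and combine this across \emph{all} twists $\chi$ to pin down the support. Packaging this so that connectedness — rather than, say, being a domain — is exactly the hypothesis that makes it go through is the delicate point, and is presumably where \cite{UniversalGradings} does its real work; here I would simply cite Theorem~1.5.iii of \cite{UniversalGradings} for the statement and move on, since the present paper only \emph{uses} this result.
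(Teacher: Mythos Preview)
The paper does not prove this statement at all: Theorem~\ref{thm:universal_main_thm2} is quoted verbatim from \cite{UniversalGradings} as an external input, and the surrounding text explicitly says ``The proofs of these two theorems as given in \cite{UniversalGradings} are of a number-theoretic nature.'' There is therefore nothing in the present paper to compare your sketch against, and you yourself note this in your final sentence. For the purposes of this paper, the correct ``proof'' is simply the citation, which you supply.

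That said, a brief remark on your sketch relative to what \cite{UniversalGradings} actually does: your instinct that the argument is archimedean/Kronecker-flavoured is on target---the paper itself describes the proofs as number-theoretic, and Lemma~\ref{lem:roots_of_unity_indec} in Section~\ref{sec:algo} together with Definition~\ref{def:inner_product} hints at the mechanism (roots of unity are \emph{indecomposable} with respect to the trace-form inner product $\langle x,y\rangle=\sum_\sigma\sigma(x)\overline{\sigma(y)}$, and homogeneous pieces of a grading are orthogonal for this form). Your character-twist idea is in the right spirit but more circuitous than needed: once one knows the graded pieces are pairwise orthogonal for $\langle\,,\,\rangle$ and that $1$ (hence any $\zeta\in\mu(R)$, by the isometry $x\mapsto\zeta x$) is indecomposable, homogeneity is immediate. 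Your attempted reduction to domains via minimal primes is unnecessary and, as you half-noticed, awkward because minimal primes need not be homogeneous; the inner-product route avoids this entirely.
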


The proofs of these two theorems as given in \cite{UniversalGradings} are of a number-theoretic nature. 
For algebraic arguments one may refer to \cite{Daan,Daan2}.

Another useful fact is that the properties of being reduced and being connected are preserved under construction of group rings.
We write \(\textup{nil}(R)\) for the set of nilpotent elements of a commutative ring \(R\).

\begin{proposition}[Theorem~1.5 in \cite{UniversalGradings}] 
\label{prop:reduced_connected_groupring}
Let \(A\) be an order and \(G\) a finite abelian group. Then:
\begin{enumerate}[nosep]
\item \(\textup{nil}(A[G])=\textup{nil}(A)[G]\), and \(A\) is reduced if and only if \(A[G]\) is reduced;
\item \(\textup{Id}(A[G])=\textup{Id}(A)\), and \(A\) is connected if and only if \(A[G]\) is connected;
\item If \(A\) is connected, then \(\mu(A[G])=\mu(A)\times G\).
\end{enumerate}
\end{proposition}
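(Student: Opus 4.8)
The plan is to prove the three statements in the order (i), (ii), (iii), because once (ii) is available we know $A[G]$ is a \emph{connected} order and can feed it into Theorem~\ref{thm:universal_main_thm2} to obtain (iii). For (i), the inclusion $\textup{nil}(A)[G]\subseteq\textup{nil}(A[G])$ is immediate: $\textup{nil}(A)$ is an ideal of the commutative ring $A$, each $ag$ with $a$ nilpotent and $g\in G$ satisfies $(ag)^k=a^kg^k$ and so is nilpotent, and a finite sum of nilpotent elements of a commutative ring is nilpotent. For the reverse inclusion I would pass to the quotient $A/\textup{nil}(A)$, which is again a reduced order (torsion-freeness is inherited), so it embeds into its $\Q$-algebra $\prod_iK_i$, a finite product of number fields; then $(A/\textup{nil}(A))[G]\cong A[G]/\textup{nil}(A)[G]$, being $\Z$-free, embeds into $\prod_iK_i[G]$, and each $K_i[G]$ is semisimple by Maschke's theorem (characteristic zero) and commutative, hence a finite product of fields, hence reduced. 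So $A[G]/\textup{nil}(A)[G]$ is reduced, which forces $\textup{nil}(A[G])\subseteq\textup{nil}(A)[G]$, and hence equality. Specializing to reduced $A$ shows $A[G]$ is reduced, and the converse is trivial because $A\subseteq A[G]$.

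For (ii), clearly $\Id(A)\subseteq\Id(A[G])$. Since $\textup{nil}(A[G])=\textup{nil}(A)[G]$ by (i) is a nil ideal and idempotents lift uniquely along nil ideals of commutative rings, it suffices to prove $\Id(A[G])\subseteq A$ when $A$ is reduced; writing such an $A$ as a finite product of connected rings $A=\prod_kA_k$ and correspondingly $A[G]\cong\prod_kA_k[G]$, it then suffices to show that $A[G]$ is connected whenever $A$ is a connected reduced order. This is the crux, and I would prove it by induction on $|G|$: the case $G=1$ is trivial, and for $G\neq1$ one writes $G\cong G'\times\Z/q$ with $q$ a prime power and $|G'|<|G|$, uses $A[G]\cong(A[G'])[\Z/q]$ with $A[G']$ again a connected reduced order (induction together with (i)), and is thereby reduced to showing that $B[\Z/q]\cong B[x]/(x^q-1)$ is connected for every connected reduced order $B$. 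For this I would argue geometrically: $\mathrm{Spec}(B[x]/(x^q-1))\to\mathrm{Spec}(B)$ is finite and free of rank $q$ over a connected base, so any nontrivial clopen splitting of the source corresponds to a product decomposition $B[x]/(x^q-1)=R_1\times R_2$ into nonzero $B$-projective factors, each of constant positive rank over the connected base, hence with nonempty fibre over every point of $\mathrm{Spec}(B)$; but over a closed point $t$ of residue characteristic $p$ (one exists since $B/pB\neq0$) that fibre is $\mathrm{Spec}(\kappa(t)[x]/((x-1)^q))$, the spectrum of a local ring, so connected — a contradiction. Hence $\Id(A[G])=\Id(A)$, and in particular $A$ is connected iff $A[G]$ is. The main obstacle is exactly this connectedness statement: unlike reducedness it cannot be tested after inverting $|G|$ — the fibres of $\mathrm{Spec}(A[G])\to\mathrm{Spec}(A)$ over primes not dividing $|G|$ are typically disconnected — so one genuinely has to exploit the wild fibre at a prime dividing $|G|$, which is what forces the reduction to cyclic groups of prime-power order.

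For (iii), if $A$ is connected then $A[G]$ is a connected order by (ii), so Theorem~\ref{thm:universal_main_thm2} applied to $A[G]$ with its natural grading $(G,(Ag)_{g\in G})$ yields $\mu(A[G])\subseteq\bigcup_{g\in G}Ag$. Thus any $\zeta\in\mu(A[G])$ has the form $ag$ with $a\in A$ and $g\in G$; then $a=\zeta g^{-1}$ is a root of unity lying in $A$, so $a\in\mu(A)$ and $\zeta\in\mu(A)\cdot G$. Conversely the multiplication map $\mu(A)\times G\to A[G]^*$ has image inside $\mu(A[G])$ and is injective, because $\mu(A)$ and $G$ commute and the only element of $G$ lying in the summand $A\cdot1$ of the grading is $1$, so $\mu(A)\cap G=\{1\}$. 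Therefore $\mu(A[G])=\mu(A)\times G$, as claimed.
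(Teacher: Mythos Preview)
Your proof is correct. For part (iii) you proceed exactly as the paper does: once (ii) gives connectedness of $A[G]$, both you and the paper invoke Theorem~\ref{thm:universal_main_thm2} (which is Theorem~1.5.iii of \cite{UniversalGradings}) to force every root of unity into a single graded piece $Ag$, and the rest is bookkeeping.

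For (i) and (ii), however, your route is genuinely different. The paper does not prove these here at all: it simply applies Theorem~1.5.i and 1.5.ii of \cite{UniversalGradings}, which state in general that for any grading of an order the nilradical is a graded ideal and every idempotent lies in the degree-$1$ piece; specializing to the natural $G$-grading of $A[G]$ gives $\mathrm{nil}(A[G])=\bigoplus_g(\mathrm{nil}(A[G])\cap Ag)=\mathrm{nil}(A)[G]$ and $\mathrm{Id}(A[G])\subset A$. Your arguments avoid that grading machinery entirely. For (i) you embed $(A/\mathrm{nil}(A))[G]$ into a product of group algebras of number fields and invoke Maschke, which is elementary and self-contained. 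For (ii) your reduction via idempotent lifting and the product decomposition into connected factors is clean, and the core step---connectedness of $B[\Z/q]$ for $q$ a prime power---is handled by a nice geometric observation: the fibre of $\mathrm{Spec}\,B[x]/(x^q-1)\to\mathrm{Spec}\,B$ over a point of residue characteristic $p\mid q$ is $\mathrm{Spec}$ of a local ring, which kills any putative product decomposition since both factors would have constant positive rank over the connected base. This is a pleasant argument that makes transparent \emph{why} integrality matters (over $\Q$ the group ring is almost never connected), whereas the paper's citation hides this inside the number-theoretic proofs of \cite{UniversalGradings}. The trade-off is that the paper's approach is one line, while yours spends a paragraph; but yours is independent of the external reference except for (iii), where that dependence is unavoidable in this framework.
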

\begin{proof}
We apply the theory of gradings to the natural grading of \(A[G]\).
For (i) we apply Theorem~1.5.i in \cite{UniversalGradings} so that \(\textup{nil}(A[G])=\bigoplus_{g\in G} (\textup{nil}(A[G])\cap Ag) = \textup{nil}(A)[G]\).
The remaining equivalence follows trivially.
Theorem~1.5.ii and iii in \cite{UniversalGradings} prove (ii) and (iii).
\end{proof}

\begin{proposition}\label{prop:universal_efficient}
Suppose \(\overline{R}=(\Delta,(R_\delta)_\delta)\) is a grading of a commutative ring \(R\) and let 
$$\Delta'=\langle \delta\in\Delta : R_\delta \neq 0 \rangle.$$ 
Then:
\begin{enumerate}[nosep]
\item We have that \(\overline{R}{}'=(\Delta',(R_\delta)_\delta)\) is a grading of \(R\). 
\item The inclusion \(i\colon \Delta'\to\Delta\) is a morphism \(\overline{R}{}'\to\overline{R}\) of gradings.
\item If \(\overline{S}\) is a grading of $R$ and there exists a 
morphism \(f\colon\overline{R}\to\overline{S}\), then 
there exists a unique morphism \(f'\colon\overline{R}{}'\to\overline{S}\). It equals \(f\circ i\). 
\item If there exists a morphism from \(\overline{R}{}'\) to a universal grading, then \(\overline{R}{}'\) is universal.
\item If \(\overline{R}\) is universal, then $\Delta = \Delta'$.
\end{enumerate}
\end{proposition}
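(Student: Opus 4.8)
The plan is to verify the five assertions in order, exploiting the fact that passing from $\Delta$ to the subgroup $\Delta'$ generated by the ``occurring'' degrees changes neither the ring $R$ nor the homogeneous pieces $R_\delta$, but only discards redundant index-group elements.

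For (i), I would observe that the underlying $\Z$-module decomposition $\bigoplus_{\delta\in\Delta'} R_\delta$ is literally the same family as $\bigoplus_{\delta\in\Delta} R_\delta$ once the zero summands outside $\Delta'$ are dropped, hence still equals $R$; and the multiplicativity condition $R_\gamma R_\delta\subset R_{\gamma\delta}$ for $\gamma,\delta\in\Delta'$ is inherited from the same condition in $\Delta$, since $\Delta'\subset\Delta$ is a subgroup. So $\overline{R}{}'$ is a grading. For (ii), the inclusion $i\colon\Delta'\to\Delta$ is a group homomorphism, and I must check $i_*((R_\delta)_{\delta\in\Delta'})=(R_\delta)_{\delta\in\Delta}$: for $\delta\in\Delta'$ the fiber $i^{-1}(\delta)=\{\delta\}$ gives back $R_\delta$, while for $\delta\in\Delta\setminus\Delta'$ the fiber is empty and the sum is $0=R_\delta$ by definition of $\Delta'$. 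This is the only place the precise definition of $\Delta'$ enters, and it is routine.

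For (iii), given $f\colon\overline{R}\to\overline{S}$ a morphism of gradings with $\overline{S}=(\textup{E},(S_\epsilon)_\epsilon)$, the composite $f\circ i\colon\Delta'\to\textup{E}$ is a group homomorphism and $(f\circ i)_*=f_*\circ i_*$ applied to $(R_\delta)_{\delta\in\Delta'}$ yields $f_*((R_\delta)_{\delta\in\Delta})=(S_\epsilon)_\epsilon$ by (ii) and the hypothesis, so $f\circ i$ is a morphism $\overline{R}{}'\to\overline{S}$. For uniqueness I would argue that any morphism $f'\colon\overline{R}{}'\to\overline{S}$ is determined on each generator $\delta$ with $R_\delta\neq 0$: pick a nonzero $x\in R_\delta$; since $f'_*((R_\delta)_\delta)=(S_\epsilon)_\epsilon$, the element $x$ lies in $S_{f'(\delta)}$, and because the $S_\epsilon$ form a direct sum the index $\epsilon$ with $x\in S_\epsilon$ is unique, pinning down $f'(\delta)$; as such $\delta$ generate $\Delta'$, $f'$ is unique, hence equals $f\circ i$. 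For (iv), if $f'\colon\overline{R}{}'\to\overline{U}$ is a morphism to a universal grading $\overline{U}$, then universality gives a morphism $g\colon\overline{U}\to\overline{R}{}'$; the composites $g f'$ and $f' g$ are endomorphisms of $\overline{R}{}'$ and of $\overline{U}$ respectively, and by the uniqueness clause just proved (applied to $\overline{R}{}'$) and the initiality of $\overline{U}$ they must be the identities, so $\overline{R}{}'$ is isomorphic to $\overline{U}$ and is therefore itself universal (an initial object). Finally (v): if $\overline{R}$ is universal then by (iv) (or directly) $\overline{R}{}'$ is universal too, and by (iii) the identity on $\Delta$ factors as $i\circ f'$ for the unique $f'\colon\overline{R}\to\overline{R}{}'$; comparing with the identity morphism $\overline{R}\to\overline{R}$ via uniqueness of morphisms out of a universal grading forces $i$ to be surjective, so $\Delta=\Delta'$.

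The only mildly delicate point is the uniqueness statement in (iii): one needs the directness of the decomposition $(S_\epsilon)_\epsilon$ to conclude that a nonzero homogeneous element determines its degree, and one needs that the $\delta$ with $R_\delta\neq 0$ actually generate $\Delta'$ — which holds by construction. Everything else is a direct unwinding of Definitions~\ref{def:dec_indec}, \ref{def:grading}, and \ref{def:univgrading}; I do not anticipate a genuine obstacle, only bookkeeping about empty fibers and direct sums.
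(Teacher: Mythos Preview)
Your proof is correct and follows essentially the same route as the paper: (i) and (ii) are checked directly, (iii) by observing that a morphism out of $\overline{R}{}'$ is pinned down on the generators $\delta$ with $R_\delta\neq 0$, and (v) by noting that both $\overline{R}$ and $\overline{R}{}'$ are universal so $i$ must be an isomorphism. The only minor difference is in (iv): the paper verifies the universal property of $\overline{R}{}'$ directly (existence of a map to any $\overline{S}$ via the universal grading, uniqueness by the argument of (iii)), whereas you instead show $\overline{R}{}'$ is isomorphic to the given universal grading by producing a two-sided inverse---a standard and equally valid variant. In both your proof and the paper's, the application of ``uniqueness from (iii)'' in (iv) quietly uses that the uniqueness argument does not actually require the hypothesis of a morphism $\overline{R}\to\overline{S}$; you rely on this when concluding $gf'=\id_{\overline{R}{}'}$, and it is justified by your own proof of (iii).
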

\begin{proof}
Both (i) and (ii) are trivial. For (iii), clearly \(f\circ i\) is such a morphism. For uniqueness, 
it follows from the definitions that 
\(f'\) must equal \(f\) for all \(\delta\in\Delta\) such that \(R_\delta \neq 0\), and such \(\delta\) generate \(\Delta'\). For (iv), we 
have a map from \(\overline{R}{}'\) to any other grading by passing through the universal grading, and such 
a 
map is unique by (iii).
For (v), if \(\overline{R}\) is universal, then so is \(\overline{R}{}'\) by (ii) and (iv), and then $i$ is a bijection since universal objects are uniquely unique.
\end{proof}

\begin{proposition}\label{prop:product_grading}
Let \(S\) and \(T\) be orders, write \(R=S\times T\), and let \(\pi\colon R\to S\) be the
natural projection. 
\begin{enumerate}[nosep]
\item If \((\Delta,(R_\delta)_\delta)\) is a grading of \(R\), then
\((\Delta',(\pi(R_\delta))_\delta)\) with \(\Delta'=\langle\delta\in\Delta:\pi(R_\delta)\neq
0\rangle\) is a grading of \(S\), and if the former is universal, then so is the latter.
\item If \((\textup{E},(S_\epsilon)_\epsilon)\) and \((\textup{Z},(T_\zeta)_\zeta)\) are
gradings of \(S\), respectively \(T\), then
\((\textup{E}\times\textup{Z},(R_{(\epsilon,\zeta)})_{(\epsilon,\zeta)})\) with
$R_{1,1}=S_1\times T_1$, $R_{\epsilon,1}=S_\epsilon\times\{0\}$ for $\epsilon\ne1$,
$R_{1,\zeta}=\{0\}\times T_\zeta$ for $\zeta\ne1$, and $R_{(\epsilon,\zeta)}=\{(0,0)\}$ for
$\epsilon\ne1$, $\zeta\ne1$,
is a grading of \(R\). 
If the 
first 
two are universal, then so is the latter.
\end{enumerate}
\end{proposition}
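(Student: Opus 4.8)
The plan is to reduce everything to one structural fact: \emph{for any grading $(\Delta,(R_\delta)_\delta)$ of an order, every idempotent of the ring lies in the identity component $R_1$.} For group rings with their natural grading this is Proposition~\ref{prop:reduced_connected_groupring}(ii); the general statement is in \cite{UniversalGradings}, and I would also sketch a derivation from Proposition~\ref{prop:reduced_connected_groupring}(ii): write the order $R$ as a finite product $\prod_i R_i$ of connected orders, assume $\Delta$ finitely generated (replace it by the subgroup generated by the support, which does not change $R_1$), and use the coaction $\rho\colon R\to R\otimes_\Z\Z[\Delta]$, $r\mapsto\sum_\delta r_\delta\otimes\delta$, a ring homomorphism with $(\mathrm{id}\otimes\varepsilon)\circ\rho=\mathrm{id}_R$ for the augmentation $\varepsilon\colon\Z[\Delta]\to\Z$. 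Since each $R_i\otimes_\Z\Z[\Delta]$ is a Laurent extension of the group ring $R_i[\Delta_{\mathrm{tors}}]$, it is connected by Proposition~\ref{prop:reduced_connected_groupring}(ii), so $\rho(e)$ must equal $e\otimes1$ for every idempotent $e$, i.e.\ $e_\delta=0$ for $\delta\neq1$. In particular, for $R=S\times T$ the orthogonal idempotents $e=(1,0)$ and $e'=(0,1)$ lie in $R_1$, so for each $\delta$ one has $R_\delta=eR_\delta\oplus e'R_\delta$ with $eR_\delta\subseteq R_\delta$, $e'R_\delta\subseteq R_\delta$, and $eR_\delta=\pi(R_\delta)\times\{0\}$, $e'R_\delta=\{0\}\times\pi'(R_\delta)$ for the projections $\pi\colon R\to S$ and $\pi'\colon R\to T$. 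I would prove the four assertions in the order: the ``grading'' claim of~(ii), the ``grading'' claim of~(i), the universality claim of~(i), the universality claim of~(ii) (there is no circularity, since neither universality claim uses the other).

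\textbf{Part~(i).} Summing $R_\delta=eR_\delta\oplus e'R_\delta$ over $\delta$ and comparing with $R=\bigoplus_\delta R_\delta$ yields $eR=\bigoplus_\delta eR_\delta$; as $eR_\delta\cdot eR_\gamma\subseteq eR_{\delta\gamma}$ and $\pi$ gives $eR\cong S$, the pair $(\Delta,(\pi(R_\delta))_\delta)$ is a grading of $S$, and Proposition~\ref{prop:universal_efficient}(i) lets me pass to $\Delta'=\langle\delta:\pi(R_\delta)\neq0\rangle$. For universality, assuming $(\Delta,(R_\delta)_\delta)$ is universal for $R$ and given any grading $(\textup{E},(S_\epsilon)_\epsilon)$ of $S$, I would apply the construction of part~(ii) to $(\textup{E},(S_\epsilon)_\epsilon)$ and the trivial grading of $T$ to obtain a grading of $R$ with group $\textup{E}$, take the unique realizing morphism $f\colon\Delta\to\textup{E}$, and apply $\pi$: from $\sum_{f(\delta)=\epsilon}R_\delta$ being $S_\epsilon\times\{0\}$ (or $S_1\times T$ when $\epsilon=1$) one gets $\sum_{\delta\in\Delta',\,f(\delta)=\epsilon}\pi(R_\delta)=S_\epsilon$, so $f|_{\Delta'}$ realizes $(\textup{E},(S_\epsilon))$. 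Uniqueness is automatic: for any realizing morphism $g$ and any $\delta$ with $\pi(R_\delta)\neq0$ one has $\pi(R_\delta)\subseteq S_{g(\delta)}$, so $g(\delta)$ is the common $\textup{E}$-degree of the nonzero elements of $\pi(R_\delta)$, and such $\delta$ generate $\Delta'$. Hence $(\Delta',(\pi(R_\delta))_\delta)$ is universal (in particular $S$ has a universal grading, so Theorem~\ref{thm:universal_main_thm1} is not separately needed here).

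\textbf{Part~(ii).} The first statement is a direct check: taking $S$- and $T$-coordinates shows the listed $R_{(\epsilon,\zeta)}$ form an $(\textup{E}\times\textup{Z})$-indexed decomposition of $S\times T$, and the inclusion $R_{(\epsilon_1,\zeta_1)}R_{(\epsilon_2,\zeta_2)}\subseteq R_{(\epsilon_1\epsilon_2,\zeta_1\zeta_2)}$ is verified by distinguishing whether each factor is of ``$S$-type'' ($\zeta=1$) or ``$T$-type'' ($\epsilon=1$), the mixed product being $0$. For universality, given an arbitrary grading $(\textup{D},(R'_\delta)_\delta)$ of $R$, I would apply part~(i) to $\pi$ and to $\pi'$ to obtain gradings $(\textup{D},(\pi(R'_\delta))_\delta)$ of $S$ and $(\textup{D},(\pi'(R'_\delta))_\delta)$ of $T$, invoke universality of $(\textup{E},(S_\epsilon))$ and $(\textup{Z},(T_\zeta))$ to get unique morphisms $a\colon\textup{E}\to\textup{D}$ and $b\colon\textup{Z}\to\textup{D}$ realizing them, and set $g(\epsilon,\zeta)=a(\epsilon)b(\zeta)$. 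A componentwise computation shows the $\delta$-component of $g_*$ of the product grading has $S$-coordinate $\sum_{a(\epsilon)=\delta}S_\epsilon=\pi(R'_\delta)$ and $T$-coordinate $\pi'(R'_\delta)$, hence equals $\pi(R'_\delta)\times\pi'(R'_\delta)$, which by the idempotent fact is precisely $R'_\delta=eR'_\delta\oplus e'R'_\delta$. Uniqueness: any realizing morphism $g'$ is forced on $\{(\epsilon,1):\epsilon\neq1,\ S_\epsilon\neq0\}$ and on $\{(1,\zeta):\zeta\neq1,\ T_\zeta\neq0\}$, since each nonzero $R_{(\epsilon,1)}=S_\epsilon\times\{0\}$, respectively $R_{(1,\zeta)}=\{0\}\times T_\zeta$, sits inside a single $R'_{g'(\cdot)}$; by Proposition~\ref{prop:universal_efficient}(v) these sets generate $\textup{E}\times\{1\}$ and $\{1\}\times\textup{Z}$, hence all of $\textup{E}\times\textup{Z}$, so $g'=g$.

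\textbf{Where the difficulty lies.} The only non-formal ingredient is the homogeneity of idempotents of graded orders in the first paragraph; once that is granted, every remaining step is routine bookkeeping, the sole mild subtlety being that uniqueness of the comparison morphisms is obtained not abstractly but from the observation that the degree of each nonzero homogeneous piece is forced, together with Proposition~\ref{prop:universal_efficient}(v) guaranteeing that those pieces generate the relevant grading group.
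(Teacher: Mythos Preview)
Your proposal is correct and follows essentially the same route as the paper. Both arguments hinge on the single non-formal input that idempotents of a graded order lie in the identity component (the paper simply cites Theorem~1.5.ii of \cite{UniversalGradings} for $(1,0)\in R_1$), both prove universality in~(i) by extending an $S$-grading to an $R$-grading with $T$ placed in degree~$1$ and restricting the resulting comparison map to~$\Delta'$, and both prove universality in~(ii) by applying~(i) to each projection and combining the two comparison maps into $\textup{E}\times\textup{Z}\to\textup{D}$; your uniqueness arguments spell out exactly what the paper's invocation of Proposition~\ref{prop:universal_efficient}(iii),(v) encodes. The coaction sketch you offer for idempotent homogeneity is a nice self-contained alternative, but since the paper cites the result directly it is not needed for the comparison.
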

\begin{proof}
(i) Clearly 
\(S=\sum_{\delta\in\Delta'}\pi(R_\delta)\). We identify \(S\) with
\(S\times\{0\}\subset R\), so that  
\(\pi(R_\delta)=(1,0)\cdot R_\delta\). As
\((1,0)\in R_1\) by Theorem~1.5.ii in \cite{UniversalGradings}, we find
\(\pi(R_\delta)\subset R_\delta\), and hence the sum of the \(\pi(R_\delta)\) is a
direct sum. It follows that \((\Delta',(\pi(R_\delta))_\delta)\) is a grading of \(S\).

Suppose that \((\Delta,(R_\delta)_\delta)\) is universal and let
\((\textup{E},(S_\epsilon')_\epsilon)\) be a grading of~\(S\). 
Then \((\textup{E},(S_\epsilon)_\epsilon)\) with \(S_1=S_1'\times T\) and
\(S_\epsilon=S_\epsilon'\times\{0\}\) for \(\epsilon\neq 1\) is a grading of \(R\).
By universality, there is a group homomorphism $f\colon\Delta\to\textup{E}$ such that
$f_*((R_\delta)_\delta)=(S_\epsilon)_\epsilon$. Its restriction $f'$ to a map
\(\Delta'\to\textup{E}\) is a morphism \((\Delta',(\pi(R_\delta))_\delta)\to
(\textup{E},(S_\epsilon')_\epsilon)\), and the map is unique by Proposition~\ref{prop:universal_efficient}.iii.
We conclude that \((\Delta',(\pi(R_\delta))_\delta)\) is universal.

(ii) The first statement is immediate. For universality, any grading $(\Delta,\mathcal{R})$
of $R$ induces, as in (i), gradings of $S$ and of $T$ by the same group $\Delta$, which
come from unique group homomorphisms $\textup{E}\to\Delta$ 
and 
$\textup{Z}\to\Delta$. 
One
readily checks that the induced group homomorphism $\textup{E}\times\textup{Z}\to\Delta$ is
the unique morphism $(\textup{E}\times\textup{Z},(R_{(\epsilon,\zeta)})_{(\epsilon,\zeta)})\to
(\Delta,\mathcal{R})$. The details are left to the reader.
\end{proof}

\begin{definition}\label{def:S0}
When \(R\) is a commutative ring, we define the set 
\[\GpRg(R)=\{ (A,G):A\subset R\text{ a subring, } G\subset R^*\text{ a subgroup, } A[G] = R \},\]
and equip it with a partial order \(\leq\) given by \((B,H)\leq (A,G)\) if and only if \(H\subset G\) and \(B\supset A\).
\end{definition}

Note that \(\Aut(R)\) naturally acts component-wise on \(\GpRg(R)\).

\begin{lemma}\label{lem:max_exists}
Suppose \(R\) is a non-zero order. Then for each \((A,G)\in\GpRg(R)\) the order of \(G\) is at most the rank of \(R\) as a \(\Z\)-module, and \(\GpRg(R)\) contains a maximal element.
\end{lemma}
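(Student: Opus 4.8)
The plan is to prove the two assertions separately, starting with the rank bound since the existence of a maximal element will follow from it by a finiteness argument. For the rank bound, suppose $(A,G)\in\GpRg(R)$, so that the natural map $A[G]\to R$ is an isomorphism of rings, hence also of $\Z$-modules. Since $A$ is a subring of the order $R$, it is itself an order, so its additive group is free of some rank $r\ge 1$ (note $r\ge 1$ because $1\in A$ and $A\ne 0$ as $R\ne 0$). As a $\Z$-module we have $A[G]\cong A^{\oplus|G|}$, so $\rk_\Z R = \rk_\Z A[G] = r\cdot|G|\ge |G|$. This gives $|G|\le\rk_\Z R$, which is the first claim. (Here we use implicitly that $G$ is finite, which holds because $G\subset\mu(R)$: every element of $G$ lies in $R^*$ and has finite order since it generates a finite subring of the order $R$ — alternatively one may simply observe that an infinite $G$ would force $A[G]$ to have infinite $\Z$-rank, contradicting that $R$ is an order.)

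For the existence of a maximal element, I would argue that $\GpRg(R)$ is non-empty and that the partial order $\leq$ from Definition~\ref{def:S0} has no infinite strictly ascending chains, so a maximal element exists by an entirely elementary argument. Non-emptiness is clear: the pair $(R,\{1\})$ lies in $\GpRg(R)$ since $R[\{1\}]\to R$ is the identity. For the chain condition, suppose $(B_0,H_0)\le(B_1,H_1)\le(B_2,H_2)\le\cdots$ is an ascending chain in $\GpRg(R)$. By definition of $\leq$ we have $H_0\subset H_1\subset H_2\subset\cdots$ as subgroups of $R^*$ and $B_0\supset B_1\supset B_2\supset\cdots$ as subrings of $R$. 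Each $H_i$ satisfies $|H_i|\le\rk_\Z R$ by the first part, so the chain of the $H_i$ stabilizes. Once $H_i=H_{i+1}=\cdots=G$ is constant, the relation $(B_{i+1},G)\le(B_i,G)$ forces $\rk_\Z B_{i+1}\cdot|G| = \rk_\Z R = \rk_\Z B_i\cdot|G|$, hence $\rk_\Z B_i=\rk_\Z B_{i+1}$; since $B_{i+1}\subset B_i$ are both orders (finitely generated free $\Z$-modules) of the same rank, $B_{i+1}$ has finite index in $B_i$, and a descending chain of subrings of bounded-then-fixed rank inside the fixed order $R$ must also stabilize (for instance because $[B_i:B_{i+1}]$ is a positive integer and the indices $[B_0:B_j]$ would otherwise be a strictly increasing sequence of integers, while they are bounded by $[B_0 : \Z\cdot 1\,]$-type considerations — more cleanly, the $B_i$ all lie between the fixed order $R$ and the constant subring they eventually generate, and an order has only finitely many subrings of any fixed finite index, or one notes the discriminants $|\mathrm{disc}(B_i)|$ form a nondecreasing bounded sequence of integers). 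Hence every ascending chain in $\GpRg(R)$ stabilizes, and therefore $\GpRg(R)$ contains a maximal element.

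The main obstacle is making the descending-chain argument for the subrings $B_i$ fully rigorous: one needs that a fixed order contains no infinite strictly descending chain of subrings. The cleanest route is via discriminants — if $B'\subsetneq B$ are orders of the same $\Z$-rank then $[B:B']$ is a finite integer $>1$ and $\mathrm{disc}(B') = [B:B']^2\,\mathrm{disc}(B)$, so $|\mathrm{disc}(B_i)|$ is nondecreasing in $i$; but all the $B_i$ contain the eventual stable subring and are contained in $R$, so an alternative is simply to bound everything by $R$: since the $H_i$ stabilize to $G$ and $\rk_\Z B_i = \rk_\Z R/|G|$ is eventually constant while $B_{i}\subset B_0\subset R$, the indices $[B_0:B_i]$ divide $[B_0 : \text{(stable subring)}]$ (which is finite), and an eventually-constant divisor chain stabilizes. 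Either formulation works and is short; I would pick the discriminant one if the paper has that notion available, and otherwise spell out the index argument.
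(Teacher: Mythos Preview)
Your rank bound is correct and matches the paper's argument (the paper phrases it as ``the elements of $G$ are linearly independent'', which is the same thing).

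For the existence of a maximal element, however, you take a harder road than necessary, and the road you take has a real pothole. The paper simply picks $(A,G)\in\GpRg(R)$ with $\#G$ as large as possible; since $\#G\le\rk_\Z R$, such a pair exists, and it is maximal. The implicit step is: if $(A,G)\le(B,H)$ then $G\subset H$, so $\#G=\#H$ forces $G=H$; and then from $B\subset A$ and $\bigoplus_{g\in G}Bg=R=\bigoplus_{g\in G}Ag$ with $Bg\subset Ag$ one reads off $B=A$ by looking at the summand $g=1$. That is the whole proof.

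Your ascending-chain approach can be made to work, but the discriminant argument you propose is \emph{not} valid as stated: the lemma is for arbitrary non-zero orders, not reduced ones, and a non-reduced order can have discriminant zero (e.g.\ $\Z[x]/(x^2)$), so ``$|\mathrm{disc}(B_i)|$ is nondecreasing and bounded'' gives no information. Your alternative index argument is vague (``divide $[B_0:\text{(stable subring)}]$'' --- which subring?). The clean fix is exactly the observation above: once the $H_i$ stabilize at $G$, the inclusion $B_{i+1}\subset B_i$ together with $B_{i+1}[G]=R=B_i[G]$ forces $B_{i+1}=B_i$ immediately, by comparing the $g=1$ components of the two direct-sum decompositions of $R$. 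With that fix your argument is correct, but it is then just a chain-condition repackaging of the paper's one-line proof.
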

\begin{proof}
By definition of \(\GpRg(R)\) the elements of \(G\) are linearly independent, from which the first claim follows.
We have 
\((R,1)\in\GpRg(R)\), 
so 
\(\GpRg(R)\) is 
not empty. 
Thus if \((A,G)\in\GpRg(R)\) and \(\#G\) is maximal, then \((A,G)\) is a maximal element of \(\GpRg(R)\).
\end{proof}

\begin{lemma}\label{lem:connected_factorization}
Let \(R\) be a connected order and let \((A,G)\), \((B,H)\in \GpRg(R)\) be such that \((B,H)\leq (A,G)\). Then with \(J=G\cap \mu(B)\) we have \(G=J\times H\) and \(B=A[J]\).
\end{lemma}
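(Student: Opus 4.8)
The plan is to exploit that $R$ is connected, so that Proposition~\ref{prop:reduced_connected_groupring}(iii) applies twice: once to the realization $R = A[G]$ and once to $R = B[H]$. Since $R$ is connected, so are $A$ and $B$ (they are subrings sharing $1$ with $R$, and by part (ii) of that proposition $A[G]$ connected forces $A$ connected, similarly for $B$). Hence $\mu(R) = \mu(A) \times G = \mu(B) \times H$, where I identify $\mu(A)$ with the roots of unity of $R$ lying in $A$, and similarly for $\mu(B)$. The containment $(B,H) \le (A,G)$ means $A \subset B$ and $H \subset G$, so in particular $H \subset G \subset \mu(R)$ and $\mu(A) \subset \mu(B)$.

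First I would show $G = J \times H$ as subgroups of $\mu(R)$, where $J = G \cap \mu(B)$. Since $H \subset G$ and $H \subset \mu(B)^{\perp}$-complement — more precisely, since $\mu(R) = \mu(B) \times H$ is an internal direct product and $G$ contains $H$, the modular law gives $G = G \cap (\mu(B) \times H) = (G \cap \mu(B)) \times H = J \times H$. This is the only genuinely group-theoretic step, and it is routine given the internal direct product decomposition; the key point is just that $H$ is one of the two factors and $H \subset G$.

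Next I would identify $B = A[J]$ inside $R$. I know $B \supset A$ and $B \supset J$ (since $J \subset \mu(B) \subset B$ and $J \subset G \subset R^*$), so the natural map $A[J] \to B$ lands in $B$; I must show it is an isomorphism. For surjectivity: $R = A[G] = A[J \times H]$, so as an $A$-module $R$ is free on $G = J \cdot H$ (a set of coset representatives), i.e.\ $R = \bigoplus_{h \in H} A[J]\, h$. On the other hand $R = B[H] = \bigoplus_{h \in H} B\, h$. Comparing the two $H$-gradings of $R$ — both are gradings by the group $H$ via the natural grading of a group ring, and the elements of $H \subset R^*$ are the same — the degree-$h$ component of the first is $A[J]\,h$ and of the second is $B\,h$. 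Since the grading by $H$ determined by $H \subset R^\ast$ is intrinsic (the homogeneous components are $\{x \in R : hx = \text{scalar multiple in the }H\text{-grading}\}$, or more simply both decompositions refine/are refined via the universal grading), the components must agree: $A[J] \, h = B\, h$ for each $h$, hence $A[J] = B$ after dividing by the unit $h = 1$. Injectivity of $A[J]\to B$ then follows since both sides are free $A$-modules of the same rank $\#J$, or directly because the composite $A[J] \hookrightarrow A[J][H] = A[G] \cong R \supset B$ is injective on $A[J]$.

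The main obstacle is making the comparison of the two $H$-gradings rigorous, i.e.\ arguing that the degree-$1$ piece of $R$ for the grading coming from $H \subset R^\ast$ is well-defined independently of which realization $B[H] = R$ or $A[J][H] = R$ we use. The cleanest way is: both $(G,(A\gamma)_\gamma)$ and $(H, (Bh)_h)$ are gradings of $R$; the map $G \to H$ killing $J$ (projection, well-defined since $G = J\times H$) pushes the first grading forward, and one checks this pushforward is exactly $(H,(Bh)_h)$ by verifying the degree-$h$ component $\sum_{g \mapsto h} Ag = \bigoplus_{j\in J} A jh = A[J]h$ equals $Bh$; that last equality is what we want, so instead I would verify the pushforward of the first grading along $G\to H$ is *some* grading of $R$ with $1$-component containing $A[J]$, note $B h$ is the $1$-component of the second, and use that $R = B \oplus (\text{rest})$ together with $A[J] \subset B$ and a rank count over $\Z$ to force equality. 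Alternatively — and this is probably shortest — one simply computes ranks: $A[J]$ is a free $A$-module of rank $\#J$, $B$ is a free $A$-module of some rank $r$, and from $R = B[H]$, $R = A[J\times H]$ one gets $r \cdot \#H \cdot \rk_\Z A = \#J \cdot \#H \cdot \rk_\Z A$, so $r = \#J$; since $A[J] \subset B$ is an inclusion of free $A$-modules of equal finite rank with torsion-free cokernel (as $B/A[J]$ embeds in $R/A[J]$ which is free), and in fact $B \cdot h_0 = $ the $1$-component forces $A[J] = B$ — I would streamline this to a direct summand argument: $A[J]$ is an $A$-module direct summand of $R$ (being the degree-$1$ part of the $H$-pushforward grading), $B$ is too, $A[J] \subset B$, same rank, hence equal.
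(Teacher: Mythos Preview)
Your approach is correct and essentially the same as the paper's: use $\mu(R)=\mu(B)\times H$ together with $H\subset G$ to get $G=J\times H$ via the modular law, then compare the two $H$-indexed decompositions of $R$. The cancellation step is simpler than your discussion suggests: once you have $R=\bigoplus_{h\in H}A[J]\,h=\bigoplus_{h\in H}B\,h$ with $A[J]\subset B$, every $b\in B$ has the $B$-expansion $b=b\cdot 1$, and its $A[J]$-expansion is also a $B$-expansion, so by uniqueness $b\in A[J]$ --- the paper records this in one line as ``$A[J][H]=A[G]=B[H]$ and therefore $A[J]=B$''.
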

\begin{proof}
By Lemma~\ref{lem:max_exists} the group \(H\) is finite, and by Proposition~\ref{prop:reduced_connected_groupring}.iii the multiplication map \(\mu(B)\times H\to\mu(R)\) is an isomorphism. 
Since the inverse image of \(G\) is \(J\times H\), 
we have \(G=J\times H\).
Thus \(A[J][H]=A[J\times H]=A[G]=B[H]\) and therefore \(A[J]=B\).
\end{proof}

\begin{example}
The conclusion to Lemma~\ref{lem:connected_factorization} does not hold in general for non-connected orders.
Let \(p\) be prime and let \(G=C_p\times C_p\) with \(C_p\) a group of order \(p\). 
Then \(G\) is a $2$-dimensional \(\F_p\)-vector space and thus there are precisely \(p+1\) subgroups \(H_0,\dotsc, H_p\) of \(G\) of order \(p\). 
We have \(H_i \cdot H_j = G\) if and only if \(i\neq j\). 
Let \(R=\Z[G]\times \Z[G]\) and let \(\Delta\colon G\to\mu(R)\) be the map given by \(g\mapsto(g,g)\).
Now consider the elements 
$$(\Z\times\Z,\Delta(G))\geq(\Z[H_0]\times \Z[H_1],\Delta(H_p))$$ 
of \(\GpRg(R)\).
As Proposition~\ref{prop:reduced_connected_groupring}.iii implies 
$$\mu(\Z[H_0]\times \Z[H_1])=\mu(\Z[H_0])\times\mu(\Z[H_1])=\{(\pm h_0, \pm h_1):h_0\in H_0,\, h_1\in H_1\},$$ 
we get \(J=\Delta(G)\cap\mu(\Z[H_0]\times \Z[H_1])=1\) and \((\Z\times \Z)[J]\neq \Z[H_0]\times \Z[H_1]\). 
\end{example}

Recall that we say a commutative ring \(R\) is {\em stark\/} if there do not exist a ring \(A\) and a non-trivial group \(G\) such that \(R\) is isomorphic to the group ring \(A[G]\), or equivalently for \(R\) non-zero, if \(\#\GpRg(R)=1\).

\begin{lemma}\label{lem:stark_iff_maximal}
Let \(R\) be a non-zero commutative ring and let \((A,G)\in \GpRg(R)\).
If \((A,G)\) is maximal, then \(A\) is stark.
When \(R\) is a connected order, the converse also holds.
\end{lemma}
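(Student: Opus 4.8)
The plan is to prove the two implications separately; only the second will use that \(R\) is a connected order.

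\textbf{Maximal implies stark.} I would argue the contrapositive, for \(R\) an arbitrary non-zero commutative ring. Since \(A\) is a subring of \(R\) it contains \(1_R\neq0\), so \(A\) is non-zero and, by the stated reformulation of starkness, if \(A\) is not stark there is a pair \((B,H)\in\GpRg(A)\) with \(H\neq1\). The key step is to ``compose'' \((B,H)\) with \((A,G)\): set \(K=HG\subset R^*\), the subgroup generated by \(H\subset A^*\subset R^*\) and by \(G\). Using the module decompositions \(A=\bigoplus_{h\in H}Bh\) and \(R=\bigoplus_{g\in G}Ag\) coming from \(B[H]=A\) and \(A[G]=R\), one checks that \(R=\bigoplus_{h\in H,\,g\in G}B(hg)\), that the products \(hg\) are pairwise distinct, that \((h,g)\mapsto hg\) is an injective homomorphism \(H\times G\to R^*\) with image \(K\), and hence that \(K\), viewed as a subset of \(R\), is a \(B\)-basis of \(R\); therefore \((B,K)\in\GpRg(R)\). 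As \(H\cap G=1\) (by injectivity of \((h,g)\mapsto hg\)) and \(H\neq1\), we get \(G\subsetneq K\), while \(B\subset A\); thus \((A,G)<(B,K)\), contradicting the maximality of \((A,G)\).

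\textbf{Stark implies maximal, for \(R\) a connected order.} Again by contraposition: suppose \((A,G)\) is not maximal, so there is \((B,H)\in\GpRg(R)\) with \((A,G)<(B,H)\). Applying Lemma~\ref{lem:connected_factorization} to the inequality \((A,G)\leq(B,H)\) --- that is, with the two pairs playing the roles of ``\((B,H)\)'' and ``\((A,G)\)'' of that lemma, respectively --- and writing \(J=H\cap\mu(A)\), we obtain \(H=J\times G\) and \(A=B[J]\). If \(J\neq1\), then \(A\cong B[J]\) realizes \(A\) as a group ring over the non-trivial group \(J\), so \(A\) is not stark. If \(J=1\), then \(A=B\) and \(H=G\), so \((A,G)=(B,H)\), contradicting strictness of the inequality. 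In either case we reach a contradiction, so \((A,G)\) is maximal.

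The bulk of the work is the elementary module bookkeeping in the first implication --- distinctness of the elements \(hg\) and the fact that they form a \(B\)-basis of \(R\) --- which is immediate from the directness of the two given decompositions. The only substantial ingredient is Lemma~\ref{lem:connected_factorization}; this is exactly where connectedness of \(R\) enters, and (as the cited examples show) is the reason the converse fails for disconnected rings. I do not anticipate a real obstacle beyond keeping the notation straight when invoking that lemma.
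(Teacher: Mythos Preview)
Your proof is correct and follows the paper's approach: for the forward direction you compose a nontrivial \((B,H)\in\GpRg(A)\) with \((A,G)\) to produce a strictly larger element \((B,HG)\in\GpRg(R)\), and for the converse you invoke Lemma~\ref{lem:connected_factorization} exactly as the paper does. The paper compresses your basis bookkeeping into the single assertion ``\((A,G)\leq(B,J\times G)\in\GpRg(R)\)'', so your version is just a more detailed rendition of the same argument.
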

\begin{proof}
If \(A=B[J]\) for some \(J\subset A^*\), then \((A,G)\leq (B,J\times G)\in \GpRg(R)\). Hence if \((A,G)\) is maximal we have \((A,G)=(B,J\times G)\) and thus \(J=1\), so \(A\) is stark. For connected orders, the converse follows from Lemma~\ref{lem:connected_factorization}.
\end{proof}

Note that from Theorem~\ref{thm:gcd} it follows that maximality of \((A,G)\in\GpRg(R)\) for a non-zero reduced order \(R\) is equivalent to \(A\) being stark even when \(R\) is not connected.
However, we have not proved this yet.

\section{The degree map}\label{sec:degree_map}

In this section we extract from a connected reduced order \(R\) a morphism 
$d$ 
of abelian groups. We will describe \(\GpRg(R)\) in terms of \(d\) using the theory 
in 
Section~\ref{sec:Ustar} 
and then prove Theorem~\ref{thm:swop_grp}.

\begin{lemma}\label{lem:degree_abhom}
Let \(R\) be a connected reduced order and let \((\Gamma(R),(R_\gamma)_\gamma)\) be its universal grading (see Definition~\ref{def:univgrading}). 
Then there exists a morphism of finite abelian groups \(d\colon\mu(R)\to\Gamma(R)\) that sends \(\zeta\in\mu(R)\) to the unique \(\gamma\in\Gamma(R)\) such that \(\zeta\in R_\gamma\).
\end{lemma}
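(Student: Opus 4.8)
The plan is to obtain the statement almost directly from Theorems~\ref{thm:universal_main_thm1} and \ref{thm:universal_main_thm2}. First, Theorem~\ref{thm:universal_main_thm1} guarantees that the universal grading \((\Gamma(R),(R_\gamma)_\gamma)\) exists and that \(\Gamma(R)\) is finite. Since \(R\) is connected, Theorem~\ref{thm:universal_main_thm2} applied to this grading gives \(\mu(R)\subset\bigcup_{\gamma\in\Gamma(R)}R_\gamma\), i.e.\ every root of unity is homogeneous. This is the one non-trivial ingredient, and it is where connectedness of \(R\) is used; everything else is routine.

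Next I would pin down the map. Because \(\bigoplus_{\gamma}R_\gamma=R\) as \(\Z\)-modules, we have \(R_\gamma\cap R_{\gamma'}=0\) whenever \(\gamma\ne\gamma'\); as \(R\ne0\), every \(\zeta\in\mu(R)\) is a non-zero unit, so there is a \emph{unique} \(\gamma\in\Gamma(R)\) with \(\zeta\in R_\gamma\), and we set \(d(\zeta)=\gamma\). That \(d\) is a group homomorphism is then a one-line check: for \(\zeta\in R_\gamma\) and \(\xi\in R_{\gamma'}\) roots of unity one has \(\zeta\xi\in R_\gamma R_{\gamma'}\subset R_{\gamma\gamma'}\), and since \(\zeta\xi\in\mu(R)\) and its degree is unique, this forces \(d(\zeta\xi)=\gamma\gamma'=d(\zeta)d(\xi)\). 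Preservation of the identity is then automatic, since \(d(1)\) is an idempotent of the group \(\Gamma(R)\) and hence trivial.

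Finally, for the ``finite abelian groups'' part: \(\Gamma(R)\) is finite by Theorem~\ref{thm:universal_main_thm1}, and \(\mu(R)\) is finite because \(R\) is a reduced order, so it embeds into \(R\otimes_\Z\Q\), which is a finite product of number fields, whose group of roots of unity is finite (and \(\mu(R)\) is abelian as \(R\) is commutative).

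I do not expect a genuine obstacle here; the content of the lemma is essentially the reduction to the already-cited Theorem~\ref{thm:universal_main_thm2}. The only point worth flagging explicitly is that the existence of a well-defined degree for each root of unity rests entirely on that theorem and on the connectedness hypothesis: without connectedness a root of unity need not be homogeneous for the universal grading, so the map \(d\) would not be defined.
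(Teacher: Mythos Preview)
Your proof is correct and follows essentially the same approach as the paper: both use Theorem~\ref{thm:universal_main_thm1} for finiteness of \(\Gamma(R)\), Theorem~\ref{thm:universal_main_thm2} for homogeneity of roots of unity, the direct sum property for uniqueness, and the grading axiom \(R_\gamma R_{\gamma'}\subset R_{\gamma\gamma'}\) for the homomorphism check. The only difference is that the paper cites an external lemma for finiteness of \(\mu(R)\), while you supply the equivalent argument via \(R\hookrightarrow R\otimes_\Z\Q\) being a product of number fields.
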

\begin{proof}
The group \(\Gamma(R)\) is finite by Theorem~\ref{thm:universal_main_thm1}, and \(\mu(R)\) is finite by Lemma~3.3.ii in \cite{RootsOfUnity}. 
By Theorem~\ref{thm:universal_main_thm2}, if \(\zeta\in\mu(R)\), then there exists a \(\gamma\in\Gamma(R)\) such that \(\zeta\in R_\gamma\). 
The element \(\gamma\) is unique, since \(R_\gamma\cap R_\delta = 0\) for all \(\gamma\neq \delta\).
That \(d\) is a homomorphism follows from the definitions.
\end{proof}

\begin{definition}\label{def:degree-map}
For a connected reduced order \(R\) we call the map \(d=d_R\colon\mu(R)\to\Gamma(R)\) from Lemma~\ref{lem:degree_abhom} the {\em degree map} of \(R\).
\end{definition}

The above definition depends on the choice of universal grading.
However, the universal grading of \(R\) is uniquely unique.
Moreover, the proof of Theorem~1.3 of \cite{UniversalGradings}, which states that a reduced order has a universal grading, exhibited an explicit canonical choice of universal grading.
Thus we can confidently refer to {\em the} degree map of a connected reduced order.
We now describe the degree map \(d_{A[G]}\) of \(A[G]\).

\begin{proposition}\label{prop:universal_of_groupring}
Let \(A\) be a connected reduced order and let \(G\) be a finite abelian group.
Let \((\Gamma(A),(A_\gamma)_{\gamma})\) and \((\Gamma(A[G]),(R_\gamma)_\gamma)\) be the universal grading of \(A\) and \(A[G]\), respectively. Then
\begin{enumerate}[nosep]
\item we have \(\Gamma(A[G])=\Gamma(A)\times G\), and \(R_{(\gamma,g)}=A_\gamma\cdot g\) for all \(\gamma\in\Gamma(A)\) and \(g\in G\);
\item if we identify \(\mu(A[G])\) with \(\mu(A)\times G\) as in Proposition~\ref{prop:reduced_connected_groupring}.iii, then the degree map 
$$d_{A[G]}\colon\mu(A)\times G \to\Gamma(A)\times G$$ 
equals \(d_A\times\id_G\);
\item we have \(\Gamma(A)=\langle \gamma\in\Gamma(A[G]) : R_\gamma \cap A \neq 0 \rangle\).
\end{enumerate}
\end{proposition}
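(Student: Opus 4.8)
The content of the proposition is concentrated in~(i); parts~(ii) and~(iii) follow quickly once (i) is known. For~(i), the natural candidate for the universal grading of $A[G]$ is the ``combined'' grading $(\Gamma(A)\times G,(A_\gamma g)_{(\gamma,g)})$: it is a grading of $A[G]$ since $A=\bigoplus_\gamma A_\gamma$ and $A[G]=\bigoplus_g Ag$ combine to give $A[G]=\bigoplus_{(\gamma,g)}A_\gamma g$, and $(A_\gamma g)(A_\delta h)=A_\gamma A_\delta\,gh\subseteq A_{\gamma\delta}gh$. Here $A[G]$ is a connected reduced order by Proposition~\ref{prop:reduced_connected_groupring}(i) and~(ii), so by Theorem~\ref{thm:universal_main_thm1} it has a universal grading $(\Gamma(A[G]),(R_\epsilon)_\epsilon)$ with $\Gamma(A[G])$ finite, and a degree map $d_{A[G]}$. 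The plan is to prove the combined grading is universal by checking the two hypotheses of Proposition~\ref{prop:universal_efficient}(iv). First, it is efficient: the indices $(\gamma,g)$ with $A_\gamma g\neq0$ are exactly those with $A_\gamma\neq0$ (multiplication by $g$ is injective on $A[G]$), and this set $\{\gamma:A_\gamma\neq0\}\times G$ generates $\Gamma(A)\times G$ because $\langle\gamma:A_\gamma\neq0\rangle=\Gamma(A)$ by Proposition~\ref{prop:universal_efficient}(v) applied to the universal grading of $A$. Second --- the crux --- one must exhibit a morphism of gradings from the combined grading to the universal grading of $A[G]$.

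To build this morphism $\psi\colon\Gamma(A)\times G\to\Gamma(A[G])$ I would treat the two factors separately. On $G$, take $\psi|_G=d_{A[G]}|_G$, which makes sense since $G\subseteq\mu(A[G])$. On $\Gamma(A)$: the natural $G$-grading $(G,(Ag)_g)$ of $A[G]$ is induced, by universality of $\Gamma(A[G])$, by a homomorphism $\pi_G\colon\Gamma(A[G])\to G$; pushing the universal grading of $A[G]$ forward along $\pi_G$ and restricting to the identity component shows $A=A\cdot1=\bigoplus_{\gamma\in\ker\pi_G}R_\gamma$ is a grading of $A$ by $\ker\pi_G$, so by universality of $\Gamma(A)$ there is a homomorphism $\rho\colon\Gamma(A)\to\ker\pi_G\subseteq\Gamma(A[G])$ inducing it from $(A_\delta)_\delta$. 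Set $\psi(\delta,g)=\rho(\delta)\cdot d_{A[G]}(g)$. The main obstacle is checking that $\psi$ induces the combined grading; unwinding the pushforward and using $\pi_G(d_{A[G]}(g))=g$ (valid since $g$ has $G$-degree $g$) to see that for a given $\epsilon$ only the term $g_0=\pi_G(\epsilon)$ survives, this reduces to the identity $R_\epsilon=R_{\epsilon\,d_{A[G]}(g_0)^{-1}}\cdot g_0$. The inclusion $\supseteq$ here is the real point: for $r\in R_\epsilon$ one has $r\in(A[G])_{\pi_G(\epsilon)}=A\cdot g_0$, so $r=ag_0$ with $a\in A$, and then $a=rg_0^{-1}\in R_\epsilon R_{d_{A[G]}(g_0)^{-1}}\subseteq R_{\epsilon\,d_{A[G]}(g_0)^{-1}}$, using that $d_{A[G]}$ is a homomorphism; the inclusion $\subseteq$ is just $R_{\epsilon\,d_{A[G]}(g_0)^{-1}}R_{d_{A[G]}(g_0)}\subseteq R_\epsilon$. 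Granting this, Proposition~\ref{prop:universal_efficient}(iv) gives that the combined grading is universal, so there is a unique isomorphism $\Gamma(A[G])\isom\Gamma(A)\times G$ carrying $(R_\epsilon)$ to $(A_\gamma g)$, which is~(i).

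Given~(i), part~(ii) is immediate: identifying $\mu(A[G])$ with $\mu(A)\times G$ as in Proposition~\ref{prop:reduced_connected_groupring}(iii), an element $\zeta g$ with $\zeta\in\mu(A)$ and $g\in G$ lies in $A_{d_A(\zeta)}\,g=R_{(d_A(\zeta),g)}$, and since distinct homogeneous components meet only in $0$ this forces $d_{A[G]}(\zeta g)=(d_A(\zeta),g)=(d_A\times\id_G)(\zeta,g)$. For~(iii), by~(i) we have $R_{(\gamma,g)}\cap A=A_\gamma g\cap A$, and since $A[G]=\bigoplus_h Ah$ this is nonzero precisely when $g=1$ and $A_\gamma\neq0$; hence $\{\gamma\in\Gamma(A[G]):R_\gamma\cap A\neq0\}=\{\gamma:A_\gamma\neq0\}\times\{1\}$, which generates $\Gamma(A)\times\{1\}=\Gamma(A)$ by Proposition~\ref{prop:universal_efficient}(v) applied to the universal grading of $A$.
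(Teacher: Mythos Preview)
Your proof is correct and takes essentially the same route as the paper's: both rest on the $\ker\pi_G$-grading of $A$, the section $G\to\Gamma(A[G])$ provided by $d_{A[G]}$, and Proposition~\ref{prop:universal_efficient}, the only difference being that the paper starts from the canonical map $\varphi\colon\Gamma(A[G])\to\Gamma(A)\times G$ (which exists for free by universality) and shows it is an isomorphism, whereas you build its inverse $\psi$ explicitly and then invoke Proposition~\ref{prop:universal_efficient}(iv). One cosmetic slip: your labels ``$\supseteq$'' and ``$\subseteq$'' for the two halves of the identity $R_\epsilon=R_{\epsilon\,d_{A[G]}(g_0)^{-1}}\cdot g_0$ are interchanged---the argument starting with $r\in R_\epsilon$ establishes $\subseteq$, not $\supseteq$.
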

\begin{proof}
Let \(\mathcal{A}=(\Gamma(A),(A_\gamma)_\gamma)\) and \(\mathcal{R}=(\Gamma(A[G]),(R_\gamma)_\gamma)\) be the universal gradings of \(A\) and \(A[G]\) respectively and
define \(\mathcal{A}[G]=(\Gamma(A)\times G, (A_\gamma\cdot g)_{(\gamma,g)})\).
By universality there exists a unique morphism of gradings \(\varphi\colon \mathcal{R}\to \mathcal{A}[G]\), which by Definition~\ref{def:grading} is a group homomorphism \(\Gamma(A[G])\to \Gamma(A)\times G\), and we will show that it is an isomorphism.
Let \(\pi\colon\Gamma(A)\times G\to G\) be the projection and \(\Delta = \ker(\pi \varphi)\). 
For \(g\in G\) we have \(g\in R_{d_{A[G]}(g)}\) and \(g\in A_1 \cdot g\), so \(\pi\varphi d_{A[G]}\) is the identity on \(G\).
It follows that \(\Gamma(A[G])=\Delta \times G\).
Then \(\mathcal{R}_A=(\Delta,(R_\delta)_\delta)\) is a grading of \(A\), and \(\varphi\) restricts to a morphism of gradings \(\varphi'\colon\mathcal{R}_A\to \mathcal{A}\) with \(\varphi=\varphi'\times \id_G\).
With \(\Delta'=\langle \delta\in\Delta : R_\delta\neq 0\rangle\) we have
\[ \bigoplus_{(\delta,g)\in\Delta'\times G} R_{\delta} \cdot g = A[G],\]
so by Proposition~\ref{prop:universal_efficient}.v we obtain \(\Delta'\times G = \Gamma(A[G]) = \Delta \times G\).
Hence \(\Delta'=\Delta\), so \(\mathcal{R}_A\) is universal by Proposition~\ref{prop:universal_efficient}.iv.
It follows that \(\varphi'\) and hence \(\varphi\) is an isomorphism, proving (i).
Now (ii) and (iii) follow by inspection.
\end{proof}

Proposition~\ref{prop:universal_of_groupring}.ii expresses the degree map of \(A[G]\) in terms of \(G\) and the degree map of \(A\), but we will mainly use it in the opposite direction.
Specifically, for a connected reduced order \(R\), an element \((A,G)\in\GpRg(R)\) corresponds to a certain decomposition \((d_A,\id_G)\in\textup{Dec}_\mathcal{I}(d)\) of the degree map \(d\) of \(R\), as defined in Definition~\ref{def:dec} and Definition~\ref{def:I}.
In Theorem~\ref{thm:composite_maps} 
we will show
it is in fact a bijective correspondence.
This together with Proposition~\ref{prop:thm_proof_in_S2} will prove Theorem~\ref{thm:swop_grp}.

\begin{remark}\label{rem:r2d2}
Let \(R\) be a connected reduced order with universal grading \((\Gamma,(R_\gamma)_\gamma)\) and
degree map \(d\colon \mu\to\Gamma\).
Note that the group \(\Aut(R)\) acts on the category of gradings of \(R\).
Under this action, \(\sigma\in\Aut(R)\) sends \((\Gamma,(R_\gamma)_\gamma)\) to \((\Gamma,(\sigma(R_\gamma))_\gamma)\), which is again a universal grading of \(R\).
Thus by universality, \(\sigma\) induces a unique isomorphism 
$\Gamma\isom\Gamma$. 
It follows that \(\Aut(R)\) acts on \(\Gamma\).
Clearly \(\Aut(R)\) acts on \(\mu\), and it is easy to see the following diagram commutes
\begin{center}
\begin{tikzcd}
\mu\vphantom{\Gamma} \ar{r}{d} \ar[swap]{d}{\sigma} & \Gamma\vphantom{\mu} \ar{d}{\sigma} \\
\mu\vphantom{\Gamma} \ar{r}{d} & \Gamma,
\end{tikzcd}
\end{center}
i.e., we obtain an action \(\Aut(R)\to\Aut(d)\) of \(\Aut(R)\) on \(d\).
\end{remark}

For a degree map \(d\colon\mu\to\Gamma\) we define \(U^*(d)\) as in Definition~\ref{def:U}.

\begin{lemma}\label{lem:group_actions}
Let \(R\) be a connected reduced order with universal grading \((\Gamma,(R_\gamma)_{\gamma\in\Gamma})\) and corresponding degree map \(d\colon \mu\to\Gamma\).
Let \(\varphi\colon U^*(d)\to\Aut(d)\) be as in Lemma~\ref{lem:ring_hom} and \(\chi\colon \Aut(R)\to\Aut(d)\) as in Remark~\ref{rem:r2d2}. We then have a commutative diagram
\begin{center}
\begin{tikzcd}[column sep=tiny]
&U^*(d) \arrow[swap]{dl}{\psi}\arrow{dr}{\varphi}& \\
\Aut(R)\arrow{rr}{\chi} & & \Aut(d) 
\end{tikzcd}
\end{center}
where \(\psi\) is a morphism given by \(1+f\mapsto (x\in R_\gamma\mapsto f(\gamma)\cdot x)\).
\end{lemma}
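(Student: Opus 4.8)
The plan is to construct $\psi$ explicitly as a "twist" of the universal grading, check it is a group homomorphism $U^*(d)\to\Aut(R)$, and then verify $\chi\circ\psi=\varphi$ by comparing the two resulting automorphisms of $d$ on $\mu$ and on $\Gamma$ separately. First I would note that since $(\Gamma,(R_\gamma)_\gamma)$ is a grading we have $R=\bigoplus_{\gamma\in\Gamma}R_\gamma$ as a $\Z$-module, so for each $f\in\Hom(\Gamma,\mu)$ there is a unique $\Z$-linear map $\psi_f\colon R\to R$ whose restriction to each $R_\gamma$ is multiplication by the unit $f(\gamma)\in\mu\subset R^*$; I would then set $\psi(1+f)=\psi_f$. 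Each $\psi_f$ is a ring homomorphism: it fixes $1$ because $1\in R_1$ and $f(1)=1$, and for $x\in R_\gamma$, $y\in R_\delta$ one has $xy\in R_{\gamma\delta}$, so $\psi_f(xy)=f(\gamma\delta)\,xy=f(\gamma)f(\delta)\,xy=\psi_f(x)\psi_f(y)$, which extends to all of $R$ by bilinearity.

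Next I would show that $1+f\mapsto\psi_f$ is a homomorphism of monoids from $U(d)$ to the monoid of ring endomorphisms of $R$ under composition. One has $\psi_0=\id_R$, and in $U(d)$ the product of $1+f$ and $1+g$ is $1+(f+g+fdg)$. The key point is that, by the defining property of the degree map (Lemma~\ref{lem:degree_abhom}), $g(\gamma)\in R_{d(g(\gamma))}$, so for $x\in R_\gamma$ we get $\psi_g(x)=g(\gamma)\,x\in R_{d(g(\gamma))\cdot\gamma}$ and hence
\[\psi_f\bigl(\psi_g(x)\bigr)=f\bigl(d(g(\gamma))\cdot\gamma\bigr)\,g(\gamma)\,x=f\bigl(d(g(\gamma))\bigr)\,f(\gamma)\,g(\gamma)\,x=(f+g+fdg)(\gamma)\cdot x=\psi_{f+g+fdg}(x);\]
by $\Z$-linearity this gives $\psi_f\circ\psi_g=\psi_{(1+f)(1+g)}$. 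Since $U^*(d)=U(d)\cap Q(d)^*$ and the ring homomorphism $Q(d)\to\Z$ shows that the inverse of a unit of $U(d)$ again lies in $U(d)$, a monoid homomorphism carries $U^*(d)$ into $\Aut(R)$; thus $\psi\colon U^*(d)\to\Aut(R)$ is a well-defined group homomorphism, given by the stated formula.

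Finally I would verify $\chi\circ\psi=\varphi$. By Lemma~\ref{lem:ring_hom}, $\varphi(1+f)=q(1+f)=\id_d+(fd,df)$, i.e.\ the automorphism of $d$ acting on $\mu$ by $\zeta\mapsto\zeta\cdot f(d(\zeta))$ and on $\Gamma$ by $\gamma\mapsto\gamma\cdot d(f(\gamma))$. On the other hand $\chi(\psi_f)$ is, by Remark~\ref{rem:r2d2}, the pair consisting of the restriction $\psi_f|_\mu$ together with the automorphism $\tau$ of $\Gamma$ determined by $\psi_f(R_\gamma)=R_{\tau(\gamma)}$. For $\zeta\in\mu$ we have $\zeta\in R_{d(\zeta)}$, so $\psi_f(\zeta)=f(d(\zeta))\cdot\zeta$, matching the $\mu$-component of $\varphi(1+f)$. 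For the $\Gamma$-component, the computation above shows $\psi_f(R_\gamma)=f(\gamma)R_\gamma\subseteq R_{d(f(\gamma))\cdot\gamma}$, so $\tau(\gamma)=d(f(\gamma))\cdot\gamma$ whenever $R_\gamma\neq0$; since $\tau$ and $\gamma\mapsto d(f(\gamma))\cdot\gamma$ are group homomorphisms $\Gamma\to\Gamma$ agreeing on $\{\gamma:R_\gamma\neq0\}$, which generates $\Gamma$ by Proposition~\ref{prop:universal_efficient}.v (the grading being universal), they coincide on all of $\Gamma$. Hence $\chi(\psi_f)=\varphi(1+f)$.

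I expect the displayed computation establishing multiplicativity of $\psi$ to be the heart of the matter rather than an obstacle: it is precisely there that the defining property $\zeta\in R_{d(\zeta)}$ of the degree map is used, and it is what converts the twisted multiplication $\star$ of $Q(d)$ into ordinary composition of ring automorphisms. A minor additional subtlety is that the $\Gamma$-component of $\chi(\psi_f)$ must be identified on all of $\Gamma$, not just on the support of the grading, which is where the universality of the grading (through Proposition~\ref{prop:universal_efficient}.v) is needed.
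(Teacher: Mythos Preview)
Your proof is correct and follows essentially the same approach as the paper's: the same displayed computation $\psi_f\circ\psi_g=\psi_{f+g+fdg}$ using $g(\gamma)\in R_{d(g(\gamma))}$, and the same verification of $\chi\circ\psi=\varphi$ component-wise on $\mu$ and on $\Gamma$, invoking Proposition~\ref{prop:universal_efficient}.v to pass from $\{\gamma:R_\gamma\neq0\}$ to all of $\Gamma$. You are slightly more explicit than the paper in checking that $\psi_f$ is a ring endomorphism and in the monoid-to-group passage, but these are only elaborations of steps the paper dismisses as easy.
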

\begin{proof}
Let \(1+f,1+g\in U^*\) and recall 
that 
their product equals \((1+f)\star(1+g)=1+f+g+fdg\) in \(U^*\). 
It is easy to see that \(\psi(1+f)\) is an endomorphism of \(R\). 
For \(\gamma\in \Gamma\) we have
\begin{gather*}
x \in R_\gamma \xmapsto{\psi(1+g)}  g(\gamma)\cdot x \in R_{dg(\gamma)} \cdot R_\gamma \subset R_{dg(\gamma)\cdot\gamma}\\
\xmapsto{\psi(1+f)} 
f( dg(\gamma)\cdot \gamma ) \cdot g(\gamma)\cdot x = f(\gamma)g(\gamma)fdg(\gamma) \cdot x,
\end{gather*}
so indeed \(\psi(1+f)\circ\psi(1+g)=\psi((1+f)\star(1+g))\). It follows that \(\psi(1+f)\in\Aut(R)\) and that \(\psi\) is a morphism.

Let \(1+f\in U^*\)  and write \(F=\psi(1+f)\).
For \(\zeta\in\mu\) we have \(F(\zeta)=f(d\zeta)\zeta\), so \(F|_{\mu(R)}=\id_\mu+fd\).
For \(\gamma\in\Gamma\) and \(x\in R_\gamma\) non-zero we have \(F(x)=f(\gamma)\cdot x\), so the induced action on \(\Gamma\) sends \(\gamma\) to \(df(\gamma)\gamma\).
Hence \(1+f\) gets sent to \(\id_\Gamma+df\), since \(\{\gamma\in\Gamma:R_\gamma\neq 0\}\) is a generating set of \(\Gamma\) by Proposition~\ref{prop:universal_efficient}.v.
We conclude that \(\chi(\psi(1+f))=(\id_{\mu}+fd,\id_\Gamma+df)=\varphi(1+f)\), as was to be shown.
\end{proof}

\begin{example}
The map \(\psi\colon U^*\to\Aut(R)\) need not be injective, even when \(R\) is stark.
Consider the subring \(R=\Z\cdot(1,1)+2 S\) of \(S=\Z[\i]\times\Z[\i]\) where \(\i^2=-1\), which is clearly connected, reduced, and has \(\mu(R)=\{\pm 1\}\times\{\pm1\}\).
Let \(\Gamma=\mu(R)\) and write 
$$R_{1,1}=R\cap (\Q\times\Q)=\Z\cdot (1,1)+\Z\cdot (1,-1),$$ 
$$R_{1,-1}=2\i\cdot(\Z\times \{0\}), \quad 
R_{-1,1}=2\i\cdot(\{0\}\times\Z), \quad R_{-1,-1}=0.$$
Then \((\Gamma,(R_\gamma)_{\gamma})\) is the universal grading of \(R\).
Consider the identity \(\id\colon \Gamma\to\mu\).
Note that \(2\id=0\) and \(d=0\), hence \((1+\id)^2=1\) in \(Q\) and so \(1+\id\in U^*\).
Moreover, \(\psi(1+\id)\)
is the identity of \(R\), so 
$\psi$ is not injective.
To see \(R\) is stark, we
can apply 
Lemma~\ref{lem:eq_stark}.vi 
below 
since \(d=0\).
\end{example}

For a degree map \(d\colon\mu\to\Gamma\) we write \(\textup{Id}_0(d)\) for the partially ordered set \(\{ f\in\Hom(\Gamma,\mu):fdf=f \}\) corresponding to \(d\), as defined in Definition~\ref{def:S}.

\begin{theorem}\label{thm:S0_to_S}
Let \(R\) be a connected reduced order with universal grading \((\Gamma,(R_\gamma)_{\gamma})\) and corresponding degree map \(d\colon\mu\to\Gamma\).
The map \(\textup{Id}_0(d)\to \GpRg(R)\) given by \(f\mapsto( \bigoplus_{\gamma\in\ker(f)} R_\gamma, \im(f) )\) is a well-defined isomorphism of partially ordered sets and respects the action of \(\Aut(R)\) 
of 
Remark~\ref{rem:r2d2}. Its inverse is the map given by \((A,G)\mapsto 0_{\Gamma(A)\to\mu(A)} \oplus d_{G\to d(G)}^{-1}\).
\end{theorem}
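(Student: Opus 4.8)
The plan is to exploit the chain of isomorphisms of partially ordered sets already assembled in Sections~\ref{sec:decomp_new} and~\ref{sec:Ustar}, namely
\[
\textup{Id}_0(d) \xrightarrow{\ \sim\ } \textup{Prj}(d) \xrightarrow{\ \sim\ } \textup{Dec}_{\mathcal{I}}(d),
\]
where the first isomorphism is Proposition~\ref{prop:S_to_S1} and the second is Corollary~\ref{cor:S1_to_S2} (both respecting the relevant automorphism actions). So the real content is to produce an isomorphism of partially ordered sets $\textup{Dec}_{\mathcal{I}}(d) \xrightarrow{\sim} \GpRg(R)$ that is $\Aut(R)$-equivariant, and then to trace through the composite to check it is the stated formula. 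Here an element of $\textup{Dec}_{\mathcal{I}}(d)$ is, by Remark~\ref{rem:transposing}, a pair $(d_0,d_1)$ of restrictions of $d$ with $d_0\oplus d_1 = d$ and $d_1$ an isomorphism; writing $d_1\colon \mu_1 \to \Gamma_1$ with $\mu_1\subset\mu$, $\Gamma_1\subset\Gamma$, the map to $\GpRg(R)$ should send $(d_0,d_1)$ to $\bigl(\bigoplus_{\gamma\in\Gamma_0}R_\gamma,\ \mu_1\bigr)$ where $\Gamma_0 = $ the domain-component group of $d_0$ (i.e.\ $\Gamma = \Gamma_0\times\Gamma_1$).

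First I would check this map is well-defined: given $(d_0,d_1)\in\textup{Dec}_{\mathcal{I}}(d)$, set $A = \bigoplus_{\gamma\in\Gamma_0}R_\gamma$ and $G = \mu_1\subset\mu(R)\subset R^*$. Because $d$ is a homomorphism and $\Gamma_0$ is a subgroup of $\Gamma$, the subset $A$ is closed under multiplication and contains $1$, hence is a subring; it inherits a grading $(\Gamma_0,(R_\gamma)_{\gamma\in\Gamma_0})$. The decomposition $\Gamma = \Gamma_0\times\Gamma_1$ induces $R = \bigoplus_{(\gamma_0,\gamma_1)} R_{\gamma_0}R_{\gamma_1}$-type data; the key is that each $\zeta\in G=\mu_1$ has $d(\zeta)\in\Gamma_1$ and that multiplication by $\zeta$ maps $R_\gamma$ isomorphically onto $R_{\gamma\, d(\zeta)}$ (a unit times a graded piece). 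Using that $d_1\colon\mu_1\to\Gamma_1$ is an isomorphism, one shows the natural map $A[G]\to R$ is bijective: surjectivity because every $R_\gamma$ with $\gamma = \gamma_0\gamma_1$ equals $R_{\gamma_0}\cdot\zeta$ for the unique $\zeta\in\mu_1$ with $d(\zeta)=\gamma_1$ (invoking Theorem~\ref{thm:universal_main_thm2} to know $R_{\gamma_0}\zeta$ lands in $R_\gamma$ and a counting/rank argument for the reverse inclusion), and injectivity because the elements of $G$ are $\Z$-linearly independent over the $R_\gamma$'s, $G$ mapping injectively into $\Gamma/\Gamma_0$ under $d$. So $(A,G)\in\GpRg(R)$. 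This is essentially the content of Proposition~\ref{prop:universal_of_groupring} read backwards, and I expect to cite that proposition rather than redo it: given $(d_0,d_1)$, the grading $(\Gamma_0,(R_\gamma)_{\gamma\in\Gamma_0})$ of $A$ is forced to be universal (Proposition~\ref{prop:universal_efficient}, since its group is generated by the degrees with $R_\gamma\neq 0$ once one removes the $\mu_1$-torsor directions), so $d_A = d_0$ and $d_{A[G]} = d_A\times\id_G = d_0\oplus d_1 = d$, consistently.

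Next, for the inverse direction, given $(A,G)\in\GpRg(R)$: by Proposition~\ref{prop:universal_of_groupring}, $\Gamma(R) = \Gamma(A)\times G$, $\mu(R) = \mu(A)\times G$, and $d = d_A\times\id_G$; moreover $\Gamma(A)$ is the subgroup $\langle\gamma : R_\gamma\cap A\neq 0\rangle$ and $\bigoplus_{\gamma\in\Gamma(A)}R_\gamma = A$. Thus $(A,G)\mapsto (d_A, \id_G)\in\textup{Dec}_{\mathcal{I}}(d)$, which under $\textup{Dec}_{\mathcal{I}}(d)\cong\textup{Prj}(d)\cong\textup{Id}_0(d)$ corresponds (via the explicit inverse in Proposition~\ref{prop:S_to_S1}, $e\mapsto 0_e\oplus\im(e)^{-1}$) to $0_{\Gamma(A)\to\mu(A)}\oplus d_{G\to d(G)}^{-1}$, which is exactly the claimed formula. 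The two constructions are mutually inverse because the decomposition $\Gamma = \Gamma_0\times\Gamma_1$ and the subgroup $G$ determine each other, and $\GpRg(R)\to\textup{Dec}_{\mathcal{I}}(d)$ is injective since $A$ is recovered as $\bigoplus_{\gamma\in\Gamma(A)}R_\gamma$ and $G$ as $\im(d_{G}^{-1})$-image. For the order-compatibility: $(B,H)\leq(A,G)$ in $\GpRg(R)$ means $H\subset G$ and $B\supset A$; by Lemma~\ref{lem:connected_factorization} this happens exactly when $\Gamma(B)\supset\Gamma(A)$ and $G = (G\cap\mu(B))\times H$, which is precisely the condition $(d_B,\id_H)\geq$ something $\leq (d_A,\id_G)$ in the partial order on $\textup{Dec}(d)$ — matching Definition~\ref{def:id_dec}. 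Equivariance under $\Aut(R)$ is automatic from Remark~\ref{rem:r2d2}, Proposition~\ref{prop:universal_of_groupring}, and the equivariance already recorded in Proposition~\ref{prop:S_to_S1} and Corollary~\ref{cor:S1_to_S2}. The main obstacle I anticipate is the careful bookkeeping in the well-definedness step — verifying that the natural map $A[G]\to R$ is genuinely an isomorphism of rings, i.e.\ matching graded pieces $R_{\gamma_0 d(\zeta)} = R_{\gamma_0}\cdot\zeta$ exactly — but this is exactly what Proposition~\ref{prop:universal_of_groupring} packages, so most of the difficulty has been front-loaded into that result.
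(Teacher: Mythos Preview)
Your proposal is correct, and the core computation---showing that for the appropriate subgroup decomposition \(\Gamma=\Gamma_0\times\Gamma_1\) and \(\zeta\in\mu_1\) one has \(R_{\gamma_0\gamma_1}=R_{\gamma_0}\cdot\zeta\), whence \(R=\bigoplus_{\zeta\in G}A\zeta=A[G]\)---is exactly the computation the paper carries out. The difference is one of packaging: the paper proves the bijection \(\textup{Id}_0(d)\to\GpRg(R)\) \emph{directly}, extracting \(\Gamma_0=\ker(f)\) and \(G=\im(f)\) straight from the idempotent condition \(fdf=f\), and only afterwards (in Theorem~\ref{thm:composite_maps}) composes with the chain \(\textup{Id}_0(d)\cong\textup{Prj}(d)\cong\textup{Dec}_\mathcal{I}(d)\) to obtain the isomorphism \(\textup{Dec}_\mathcal{I}(d)\cong\GpRg(R)\) that you propose to establish first. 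Your route thus inverts the logical order of Theorems~\ref{thm:S0_to_S} and~\ref{thm:composite_maps}. Two small cautions: first, you cannot cite Proposition~\ref{prop:universal_of_groupring} to \emph{avoid} the well-definedness computation, since applying that proposition presupposes you already know \(A[G]=R\)---but you do sketch the computation anyway, so this is harmless. Second, the verification that the two maps are mutually inverse requires the observation (via Proposition~\ref{prop:universal_of_groupring}.iii) that \(\Gamma(A)=\langle\gamma\in\ker(f):R_\gamma\neq0\rangle\) actually equals \(\ker(f)\); the paper handles this explicitly, and your sketch glosses over it.
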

\begin{proof}
Write \(\Phi\) for the map \(\textup{Id}_0(d)\to \GpRg(R)\), which we first verify is well-defined. 
Suppose \(f\in \textup{Id}_0(d)\). 
As \(\ker(f)\) is a subgroup of \(\Gamma\) we have that \(B=\bigoplus_{\gamma\in\ker(f)} R_\gamma\) is a subring of \(R\).
Furthermore, from \(fdf=f\) it follows that the restriction of \(d\) to \(\im(f)\to\im(df)\)  is an isomorphism, with the restriction of \(f\) to \(\im(df)\to\im(f)\) as inverse, and 
\(\Gamma=\ker(f)\oplus\im(df)\).
Thus
\begin{align*}
R=\bigoplus_{\gamma\in\Gamma} R_\gamma =\bigoplus_{\zeta\in \im(f)} \Big(\bigoplus_{\gamma\in \ker(f) d(\zeta)  }R_\gamma \Big) = \bigoplus_{\zeta\in \im(f)} B\zeta = B[\im(f)].
\end{align*}
Hence \(\Phi(f)\in \GpRg(R)\) and \(\Phi\) is well-defined. 

We 
next 
construct an inverse \(\Psi\) of \(\Phi\). 
Suppose \((A,G)\in \GpRg(R)\). 
By Proposition~\ref{prop:universal_of_groupring} we may factor \(\Gamma(R)\) and \(\mu(R)\) such that \(d\) is given by \(d\colon\mu(A)\times G \to \Gamma(A) \times d(G)\), where 
$$\Gamma(A)=\langle \gamma\in\Gamma(R) : R_\gamma \cap A \neq 0 \rangle,$$ 
and the restriction \(d_1\) of \(d\) to \(G\to d(G)\) is an isomorphism.
Now define \(f\colon\Gamma(R)\to\mu(R)\) as the product of \(0\colon\Gamma(A)\to\mu(A)\) and \(d_1^{-1}\).
Clearly \(fdf=f\), so \(\Psi\) is well-defined.

For \((A,G)\in\GpRg(R)\) we have \(\ker(\Psi(A,G))=\Gamma(A)\) and \(\im(\Psi(A,G))=G\), so indeed \(\Phi\circ\Psi=\id\).
Conversely, let \(f\in\textup{Id}_0(d)\) and \(f'=\Psi(\Phi(f))\). 
Restricted to \(\im(df)\to\im(f)\), both \(f\) and \(f'\) equal the inverse of a restriction of \(d\), and are \(0\) on a subgroup complementary to \(\im(df)\).
By Proposition~\ref{prop:universal_of_groupring}.iii we have
\[ \ker(f') = \Big\langle \gamma'\in\Gamma(R) : R_{\gamma'} \cap \bigoplus_{\gamma\in\ker(f)} R_\gamma \neq 0 \Big\rangle = \langle \gamma\in\ker(f) : R_\gamma \neq 0 \rangle \subset\ker(f),\]
so equality must hold and \(f=f'\).
Thus \(\Psi\circ\Phi=\id\), and \(\Phi\) and \(\Psi\) are bijections.

It follows 
easily 
from the definitions
that \(\Phi\) and \(\Psi\) respect the partial order 
and that the action of \(\Aut(R)\) commutes with \(\Phi\).
\end{proof}

\begin{remark}\label{rem:mui_direct_summand}
Recall that \(U^*(d)\) was defined in Definition~\ref{def:U}, and that
it acts on $R$ and hence on \(\GpRg(R)\) by Lemma~\ref{lem:group_actions}. Moreover, \(U^*(d)\) acts on \(d\) and hence on 
\[\textup{Dec}_\mathcal{I}(d)=\{ (d_0,d_1) : d_0 \oplus d_1 = d \textup{ and } d_1 \textup{ is an isomorphism} \}\]
as in Remark~\ref{rem:transposing}.iv. 
For \((d_0,d_1)\in\textup{Dec}_\mathcal{I}(d)\) we will write \(\mu_i\) and \(\Gamma_i\) for the finite abelian groups such that \(d_i\colon\mu_i\to\Gamma_i\). 
Note that if \(d_0\oplus d_1=d\), then  
\(\mu_0\oplus \mu_1=\mu\) and \(\Gamma_0\oplus\Gamma_1=\Gamma\).
\end{remark}

\begin{theorem}\label{thm:composite_maps}
Let \(R\) be a connected reduced order with degree map \(d\) and universal grading \((\Gamma,(R_\gamma)_{\gamma})\).
We have an isomorphism of partially ordered sets \(\textup{Dec}_\mathcal{I}(d)\isom \GpRg(R)\) given by
\begin{align*} 
\big(d_0\colon\mu_0\to\Gamma_0,\, d_1\colon\mu_1\to\Gamma_1) &\mapsto \Big( \bigoplus_{\gamma\in \Gamma_0} R_\gamma, \mu_1 \Big), \\
\big(d|_{\mu(A)\to\Gamma(A)},\, d|_{G\to d(G)}\big) &\mapsfrom \big( A, G \big),
\end{align*}
that respects the action of \(U^*(d)\), where \(\Gamma(A)=\langle\gamma\in\Gamma : 0\neq R_\gamma\subset A\rangle\).
\end{theorem}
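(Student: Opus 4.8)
The plan is to realize the asserted bijection as a composite of three poset isomorphisms that are already available, and then to compute it explicitly and check $U^*(d)$-equivariance. Concretely, Corollary~\ref{cor:S1_to_S2} gives an isomorphism $\textup{Prj}(d)\isom\textup{Dec}_\mathcal{I}(d)$, Proposition~\ref{prop:S_to_S1} gives $\textup{Id}_0(d)\isom\textup{Prj}(d)$ via $q$, and Theorem~\ref{thm:S0_to_S} gives $\textup{Id}_0(d)\isom\GpRg(R)$. Composing the last two with the inverse of the first yields an isomorphism of partially ordered sets $\textup{Dec}_\mathcal{I}(d)\isom\GpRg(R)$, and (landing in $\GpRg(R)$ being automatic) what remains is: (1) to evaluate the composite on an element $(d_0,d_1)$; (2) to evaluate the composite inverse on an element $(A,G)$; (3) to reconcile the two descriptions of $\Gamma(A)$; (4) to deduce $U^*(d)$-equivariance.

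For step (1): given $(d_0\colon\mu_0\to\Gamma_0,\ d_1\colon\mu_1\to\Gamma_1)\in\textup{Dec}_\mathcal{I}(d)$, the inverse of Corollary~\ref{cor:S1_to_S2}, which is $\Phi^{-1}$ of Proposition~\ref{prop:id_end_dec}, returns the idempotent $e=(e_A,e_B)\in\End(d)$ that projects $\mu$ onto $\mu_1$ with kernel $\mu_0$ and $\Gamma$ onto $\Gamma_1$ with kernel $\Gamma_0$; then the explicit inverse $e\mapsto 0_e\oplus\im(e)^{-1}$ of Proposition~\ref{prop:S_to_S1} returns $f=0_{\Gamma_0\to\mu_0}\oplus d_1^{-1}\in\textup{Id}_0(d)$, a homomorphism $\Gamma\to\mu$ with $\ker f=\Gamma_0$ and $\im f=\mu_1$; finally Theorem~\ref{thm:S0_to_S} sends $f$ to $\bigl(\bigoplus_{\gamma\in\Gamma_0}R_\gamma,\ \mu_1\bigr)$, the claimed value. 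For step (2): starting from $(A,G)\in\GpRg(R)$, the inverse map of Theorem~\ref{thm:S0_to_S} produces $f=0_{\Gamma(A)\to\mu(A)}\oplus d|_{G\to d(G)}^{-1}$; applying $q$ of Lemma~\ref{lem:ring_hom} gives $q(f)=(fd,df)$, which—using Proposition~\ref{prop:universal_of_groupring}(ii), so that $d=d_A\times\id_G$ under $\mu=\mu(A)\times G$ and $\Gamma=\Gamma(A)\times d(G)$—is the idempotent with $\ker(fd)=\mu(A)$, $\im(fd)=G$, $\ker(df)=\Gamma(A)$, $\im(df)=d(G)$; hence its image under $\Phi$ is the pair of restrictions $\bigl(d|_{\mu(A)\to\Gamma(A)},\ d|_{G\to d(G)}\bigr)$. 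For step (3), the two descriptions of $\Gamma(A)$ agree because by Proposition~\ref{prop:universal_of_groupring}(i) one has $A=\bigoplus_{\gamma\in\Gamma(A)}R_{(\gamma,1)}$, so a nonzero graded piece $R_\gamma$ meets $A$ precisely when it is contained in $A$, and such $\gamma$ generate $\Gamma(A)$.

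For step (4): Proposition~\ref{prop:S_to_S1} and Corollary~\ref{cor:S1_to_S2} are $\Aut(d)$-equivariant, while Theorem~\ref{thm:S0_to_S} is $\Aut(R)$-equivariant for the action of Remark~\ref{rem:r2d2}. Now $U^*(d)$ acts on $\textup{Dec}_\mathcal{I}(d)$ through $\varphi\colon U^*(d)\to\Aut(d)$ and on $\GpRg(R)$ through $\psi\colon U^*(d)\to\Aut(R)$. Transporting both actions to the intermediate set $\textup{Id}_0(d)$, the first becomes the $\Aut(d)$-action restricted along $\varphi$ and the second becomes the $\Aut(R)$-action restricted along $\psi$ and then pushed through $\chi\colon\Aut(R)\to\Aut(d)$; these coincide since $\chi\circ\psi=\varphi$ by Lemma~\ref{lem:group_actions}. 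Hence both legs of the composite are equivariant for one and the same $U^*(d)$-action on $\textup{Id}_0(d)$, so the composite $\textup{Dec}_\mathcal{I}(d)\isom\GpRg(R)$ intertwines the $U^*(d)$-actions, as required.

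The step I expect to be the main obstacle is (2) together with (3): matching the abstract idempotent $q(f)=(fd,df)$ against the concrete restrictions $d|_{\mu(A)\to\Gamma(A)}$ and $d|_{G\to d(G)}$ forces one to pin down, via Proposition~\ref{prop:universal_of_groupring}, exactly how $\mu(A)$, $G$, $\Gamma(A)$, and $d(G)$ sit inside $\mu$ and $\Gamma$ as the relevant kernels and images. Once those identifications are in place, everything else is routine bookkeeping with formulas already recorded in Sections~\ref{sec:decomp_new}--\ref{sec:degree_map}.
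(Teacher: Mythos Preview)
Your proposal is correct and follows essentially the same approach as the paper: compose the explicit poset isomorphisms of Theorem~\ref{thm:S0_to_S}, Proposition~\ref{prop:S_to_S1}, and Corollary~\ref{cor:S1_to_S2}, read off the resulting formulas, and deduce $U^*(d)$-equivariance from Lemma~\ref{lem:group_actions}. Your steps (1)--(4) simply spell out what the paper condenses into two sentences, and your identification of $\Gamma(A)$ in step~(3) is exactly the content of Proposition~\ref{prop:universal_of_groupring}(iii).
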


Before we prove Theorem~\ref{thm:composite_maps}, we remark that the notation \(\Gamma(A)\) used in the theorem was already reserved for the group of the universal grading of \(A\). 
However, by Proposition~\ref{prop:universal_of_groupring}.iii there is no ambiguity, as they are uniquely isomorphic.
In fact, the map \(d|_{\mu(A)\to\Gamma(A)}\) is the degree map of \(A\).

\begin{proof}
Theorem~\ref{thm:S0_to_S}, Proposition~\ref{prop:S_to_S1}, and Corollary~\ref{cor:S1_to_S2} give explicit isomorphisms of partially ordered sets \(\GpRg(R)\to \textup{Id}_0(d)\to \textup{Prj}(d)\to \textup{Dec}_\mathcal{I}(d)\), from which one readily reads off that their composition and its inverse are as given in the theorem.
From Lemma~\ref{lem:group_actions} it follows that the isomorphisms respect the action of \(U^*(d)\).
\end{proof}

\begin{theorem}\label{thm:strong_swop_grp}
Let \(R\) be a connected reduced order with degree map $d$, and suppose \((A,G),(B,H)\in\GpRg(R)\) are such that \(A\) and \(B\) are stark. Then 
$A\cong B$
as rings, $G\cong H$ as groups, and \((A,G)\), \((A,H)\), \((B,G)\) and \((B,H)\) are all in
the same \(U^*(d)\)-orbit of \(\GpRg(R)\).
\end{theorem}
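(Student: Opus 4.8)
The plan is to transport the entire statement along the $U^*(d)$-equivariant isomorphism of partially ordered sets $\textup{Dec}_\mathcal{I}(d)\isom\GpRg(R)$ of Theorem~\ref{thm:composite_maps}; on the module side it becomes an assertion about decompositions of the degree map, which is then settled by the Krull--Remak--Schmidt machinery of Section~\ref{sec:Ustar}. Two preliminary remarks make this work. First, since $R$ is a connected reduced order, $d\colon\mu(R)\to\Gamma(R)$ is a morphism of \emph{finite} abelian groups (Lemma~\ref{lem:degree_abhom}), so under Proposition~\ref{prop:matrix_cat} it corresponds to a \ltm-module of finite length; hence Proposition~\ref{prop:thm_proof_in_S2} applies and tells us that the maximal elements of $\textup{Dec}_\mathcal{I}(d)$ form a single orbit under $U^*(d)$. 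Second, because $R$ is connected, a pair $(A,G)\in\GpRg(R)$ is maximal if and only if $A$ is stark (Lemma~\ref{lem:stark_iff_maximal}).

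The steps I would then carry out are these. (1)~Apply the isomorphism of Theorem~\ref{thm:composite_maps}: since $A$ and $B$ are stark, $(A,G)$ and $(B,H)$ are maximal in $\GpRg(R)$, so the pairs $(d|_{\mu(A)\to\Gamma(A)},\,d|_{G\to d(G)})$ and $(d|_{\mu(B)\to\Gamma(B)},\,d|_{H\to d(H)})$ are maximal elements of $\textup{Dec}_\mathcal{I}(d)$. (2)~Recall that $\mathcal{I}$ is multiplicative and saturated, and invoke Proposition~\ref{prop:decI}.v (via Proposition~\ref{prop:matrix_cat}) to deduce that the two ``cross-over'' pairs $(d|_{\mu(A)\to\Gamma(A)},\,d|_{H\to d(H)})$ and $(d|_{\mu(B)\to\Gamma(B)},\,d|_{G\to d(G)})$ are also maximal elements of $\textup{Dec}_\mathcal{I}(d)$; in particular each is a genuine decomposition of $d$. (3)~Push these two cross-over pairs back through the isomorphism of Theorem~\ref{thm:composite_maps}: the first coordinate of the resulting element of $\GpRg(R)$ is $\bigoplus_{\gamma\in\Gamma(A)}R_\gamma$, respectively $\bigoplus_{\gamma\in\Gamma(B)}R_\gamma$, and these equal $A$, respectively $B$, because that is precisely how the isomorphism reconstructs the subring from $(A,G)$, respectively $(B,H)$; the second coordinate is simply the $\mu$-part. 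Hence the images are $(A,H)$ and $(B,G)$. In particular $(A,H),(B,G)\in\GpRg(R)$, and all four of $(A,G),(A,H),(B,G),(B,H)$ correspond to maximal elements of $\textup{Dec}_\mathcal{I}(d)$. (4)~By Proposition~\ref{prop:thm_proof_in_S2} these four maximal elements lie in one $U^*(d)$-orbit of $\textup{Dec}_\mathcal{I}(d)$, and since the isomorphism of Theorem~\ref{thm:composite_maps} respects the $U^*(d)$-action, the four pairs lie in one $U^*(d)$-orbit of $\GpRg(R)$. (5)~Finally, choosing $u\in U^*(d)$ whose action carries $(A,G)$ to $(B,H)$, the ring automorphism $\psi(u)$ of $R$ (notation of Lemma~\ref{lem:group_actions}) sends the subring $A$ onto $B$ and the subgroup $G$ onto $H$, so it restricts to a ring isomorphism $A\isom B$ and a group isomorphism $G\isom H$; thus $A\cong B$ and $G\cong H$.

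I expect the only real obstacle to be bookkeeping in step~(3): verifying that the cross-over decompositions produced abstractly in $\textup{Dec}_\mathcal{I}(d)$ map back to $(A,H)$ and $(B,G)$ \emph{on the nose}, and not merely to pairs with isomorphic constituents. This relies on keeping the two descriptions of the first coordinate aligned---the $\Gamma(A)=\langle\gamma\in\Gamma:0\neq R_\gamma\subset A\rangle$ appearing in Theorem~\ref{thm:composite_maps} together with the compatibility supplied by Proposition~\ref{prop:universal_of_groupring}.iii---and on the observation that the $\mu$-coordinate is literally the subgroup $H$ (resp.\ $G$), so that the first and second coordinates can be recombined freely. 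Everything else is formal, the substantive work having been front-loaded into Theorem~\ref{thm:composite_maps} and Proposition~\ref{prop:thm_proof_in_S2}.
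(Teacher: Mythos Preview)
Your proposal is correct and follows essentially the same route as the paper: transport to $\textup{Dec}_\mathcal{I}(d)$ via Theorem~\ref{thm:composite_maps}, use Lemma~\ref{lem:stark_iff_maximal} to identify maximality, apply Proposition~\ref{prop:decI}.v and Proposition~\ref{prop:thm_proof_in_S2} for the cross-over and single-orbit statements, and push back. The bookkeeping you flag in step~(3) is resolved exactly as you anticipate---the explicit form of $\Phi$ in Theorem~\ref{thm:composite_maps} gives $\Phi(d_0,e_1)=(A,H)$ and $\Phi(e_0,d_1)=(B,G)$ on the nose, since the first coordinate of the image depends only on the domain $\Gamma_0$ of $d_0$ and the second only on the domain of $e_1$.
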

\begin{proof}
Let \(\Phi\colon\textup{Dec}_\mathcal{I}(d)\to\GpRg(R)\) be the isomorphism of Theorem~\ref{thm:composite_maps}.
Suppose \((A,G),(B,H)\in \GpRg(R)\) are such that \(A\) and \(B\) are stark.
Then \((A,G)\) and \((B,H)\) are maximal elements of \(\GpRg(R)\) by Lemma~\ref{lem:stark_iff_maximal}, and thus \(\Phi(A,G)=(d_0,d_1)\) and \(\Phi(B,H)=(e_0,e_1)\) are maximal in \(\textup{Dec}_\mathcal{I}(d)\).
Then by Proposition~\ref{prop:decI}.v and Proposition~\ref{prop:thm_proof_in_S2} all of \((d_0,d_1)\), \((d_0,e_1)\), \((e_0,d_1)\), and \((e_0,e_1)\) are maximal and in the same \(U^*\)-orbit.
Since  
\(\Phi(d_0,e_1)=(A,H)\) and \(\Phi(e_0,d_1)=(B,G)\), and \(\Phi\) respects the action of \(U^*\),
the last assertion of the theorem follows. As a consequence, \((A,G)\) and \((B,H)\) are in the same
orbit of $\Aut(R)$, so 
$A\cong B$ as rings and $G\cong H$ as groups.
\end{proof}

Theorem~\ref{thm:swop_grp} 
is an immediate consequence of Theorem~\ref{thm:strong_swop_grp}.
Note that in the connected case Theorem~\ref{thm:universal} also follows from Theorem~\ref{thm:strong_swop_grp}. In Section~\ref{sec:general_case} we shall see that the same applies for Theorem~\ref{thm:gcd}.

\begin{example}\label{ex:list_all_of_gprg}
Let \(\langle\sigma\rangle\) be a group of order $2$ and let \(R=\Z[\i][\langle\sigma\rangle]\), where \(\i^2=-1\).
We will compute \(\GpRg(R)\).
By Proposition~\ref{prop:reduced_connected_groupring}.i,ii the ring \(R\) is both reduced and connected. 
With $\Gamma = (\Z/2\Z)^2$, 
consider the grading 
\((\Gamma,(R_{a,b})_{(a,b)})\) 
of \(R\) with \(R_{a,b}=\Z\i^a\sigma^b\), where although \(\i^a\) is not well-defined, \(\Z\i^a\) is.
Since a universal grading exists, and all \(R_{a,b}\) are of rank 1 over \(\Z\), this must be the universal grading.
Let 
\(d\colon\mu\to \Gamma\) 
be the degree map. 
It follows from Proposition~\ref{prop:reduced_connected_groupring}.iii that \(\mu = \langle \i,\sigma\rangle \cong \Z/4\Z \times \Z/2\Z\).
We will first compute \(\textup{Dec}_\mathcal{I}(d)\).

Suppose we have \((d_0,d_1)\in \textup{Dec}_\mathcal{I}(d)\) with \(d_i\colon\mu_i\to\Gamma_i\) as in Remark~\ref{rem:mui_direct_summand}.
If \(\mu_1=1\), then \(d_0=d\), and \((d_0,d_1)\) corresponds
via Theorem~\ref{thm:composite_maps} 
to the trivial element \((R,1)\) of \(\GpRg(R)\).
Now suppose \(\mu_1\neq 1\).
Since \(d_1\) is an isomorphism, the groups \(\mu_1\) and \(\Gamma_1\) are isomorphic, so \(\mu_1\) is isomorphic to a direct factor of \(\mu\) and of $\Gamma$. Since $\Z/2\Z$ is the greatest common divisor of \(\mu\) and $\Gamma$ as $\Z$-modules (in the sense of Remark~\ref{rem:gcd}), we have that \(\mu_1\) is a direct factor of \(\mu\) isomorphic to \(\Z/2\Z\).
It follows that 
\(\mu_1=\langle(-1)^b\sigma\rangle\) for some \(b\in\Z/2\Z\), and the corresponding group \(\Gamma_1\) equals \(\langle (0,1)\rangle\) in both cases.
On the other hand \(\mu_0=\langle \i \sigma^a \rangle\) for some \(a\in(\Z/2\Z)\) 
since 
it must be a cyclic group of order 4, and \(\Gamma_0=\langle(1,a)\rangle\).
Upon inspection, all pairs \((a,b)\) do indeed give a decomposition \((d_0,d_1)\in \textup{Dec}_\mathcal{I}(d)\).
The rings corresponding to the possible \(d_0\) are \(\Z[\i\sigma^a]\), and the groups corresponding to \(d_1\) are \(\langle(-1)^b\sigma\rangle\).
This gives
\[\GpRg(R)=\{(R,1)\}\cup\{ (\Z[\i\sigma^a],\langle(-1)^b\sigma\rangle):a,b\in\Z/2\Z \}.\]
Interesting to note is that, although \((\Z[\i\sigma],\langle\sigma\rangle)\) differs from \((\Z[\i\sigma],\langle-\sigma\rangle)\), the corresponding gradings are isomorphic, since \(\Z[\i\sigma]\cdot \sigma = \Z[\i\sigma] \cdot (-\sigma)\).
\end{example}

\begin{example}\label{ex:counter_connected_swop}
The conclusion to Theorem~\ref{thm:swop_grp} does not hold in general for non-connected reduced orders.
Let \(C\) be a non-trivial finite abelian group and consider \(R = \Z[C\times C]\times \Z[C] \). Let
\begin{align*} 
A &= \Z[C\times 1]\times \Z, \quad\quad G=\{( (1,\gamma),\gamma ):\gamma\in C \}, \\
B &= \Z[1\times C]\times \Z, \quad\quad H=\{( (\gamma,1),\gamma ):\gamma\in C \}.
\end{align*}
Then \(A\) and \(B\) are stark, and \(A[G]=R=B[H]\).
However, the natural map \(A[H]\to R\) has image \(\Z[C\times 1]\times\Z[C]\neq R\).
\end{example}

\section{Automorphisms of group rings}\label{sec:automorphisms}

In this section we will describe \(\Aut(A[G])\), for a stark connected reduced order \(A\) with degree map \(d\) and a finite abelian group \(G\), in terms of \(U^*(d)\), \(G\), and \(\Aut(A)\). 

For a ring \(R\), write \(\textup{Jac}(R)=\{x\in R : 1+RxR \subset R^*\}\) for the {\em Jacobson radical} of \(R\).
Recall the definitions of \(\textup{Id}_0(d)\) from Definition~\ref{def:S} and 
\(\Gamma(A)\) from Definition~\ref{def:univgrading}.
In this section we write \(\textup{Id}_0(A)\) for \(\textup{Id}_0(d)\) and similarly for \(Q\), \(U\), and \(U^*\) as defined in Definition~\ref{def:U}.
In our context \(U^*(A)\) is equal to \(U(A)\) due to the following.

\begin{lemma}\label{lem:eq_stark}
Let \(A\) be a connected reduced order with degree map \(d\colon\mu\to\Gamma\).
Then the following are equivalent: 
\textup{(i)} \(A\) is stark; 
\textup{(ii)} \(\textup{Id}_0(A)=\{0\}\);
\textup{(iii)} the ideal \(\Hom(\Gamma,\mu)\subset Q(A)\) consists of only nilpotent elements; 
\textup{(iv)} \(\Hom(\Gamma,\mu)=\textup{Jac}(Q(A))\);
\textup{(v)} \(U^*(A)=U(A)\);
\textup{(vi)} for all abelian groups \(\Omega\) and morphisms \(f\colon\Omega\to\mu\) and \(g\colon\Gamma\to\Omega\), the element \(gdf\in\End(\Omega)\) is nilpotent.
\end{lemma}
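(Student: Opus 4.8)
The plan is to establish (i)$\Leftrightarrow$(ii) from Theorem~\ref{thm:S0_to_S}, the cycle (ii)$\Rightarrow$(iii)$\Rightarrow$(iv)$\Rightarrow$(v)$\Rightarrow$(ii), and finally (iii)$\Leftrightarrow$(vi). Throughout I abbreviate $I=\Hom(\Gamma,\mu)$, which is a two-sided ideal of $Q=Q(A)$ with $Q/I\cong\Z$; since $\Gamma$ and $\mu$ are finite (Theorem~\ref{thm:universal_main_thm1} and Lemma~\ref{lem:degree_abhom}), the set $I$ is \emph{finite}, and it is closed under the multiplication $\star$ of $Q$, which on $I$ reads $f\star g=f\circ d\circ g$. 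The computational backbone will be the identities $h^{\star n}=h\circ(d\circ h)^{n-1}=(h\circ d)^{n-1}\circ h$ and $d\circ h^{\star n}=(d\circ h)^{n}$ for $h\in I$, which show immediately that, for $h\in I$, nilpotence of $h$ in $Q$ is equivalent to nilpotence of $d\circ h$ in $\End(\Gamma)$ and to nilpotence of $h\circ d$ in $\End(\mu)$.

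The equivalence (i)$\Leftrightarrow$(ii) I would read off from Theorem~\ref{thm:S0_to_S} applied with $R=A$: it provides an isomorphism of partially ordered sets $\textup{Id}_0(A)\to\GpRg(A)$ sending $0$ to $(A,1)$, so $\textup{Id}_0(A)=\{0\}$ exactly when $\#\GpRg(A)=1$, which for the non-zero ring $A$ is precisely the condition that $A$ be stark. For (ii)$\Rightarrow$(iii): given $f\in I$, the powers $f,f^{\star2},f^{\star3},\dots$ all lie in the finite set $I$, so they generate a finite cyclic subsemigroup, which contains an idempotent $e=f^{\star m}$ for some $m\ge1$ (if $f^{\star i}=f^{\star j}$ with $i<j$, take $m$ to be a multiple of $j-i$ that is at least $i$); then $e\in\Id(Q)\cap I=\textup{Id}_0(A)=\{0\}$, so $f^{\star m}=0$ and $f$ is nilpotent. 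For (iii)$\Rightarrow$(iv): the quotient map $Q\to Q/I\cong\Z$ carries $\textup{Jac}(Q)$ into $\textup{Jac}(\Z)=0$, giving $\textup{Jac}(Q)\subseteq I$ unconditionally, while if $I$ is a nil ideal then for $x\in I$ every element of $QxQ\subseteq I$ is nilpotent, so $1+QxQ\subseteq Q^{*}$ and $I\subseteq\textup{Jac}(Q)$; hence $\textup{Jac}(Q)=I$. For (iv)$\Rightarrow$(v): $1+\textup{Jac}(Q)\subseteq Q^{*}$, so $U(A)=1+I\subseteq Q^{*}$ and therefore $U^{*}(A)=U(A)$. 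For (v)$\Rightarrow$(ii): if $f\in\textup{Id}_0(A)$ then $1-f\in U(A)=U^{*}(A)$ is invertible and $(1-f)\star f=f-f^{\star2}=0$, whence $f=0$.

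For (iii)$\Rightarrow$(vi): given an abelian group $\Omega$ and morphisms $f\colon\Omega\to\mu$ and $g\colon\Gamma\to\Omega$, I would put $h=f\circ g\in I$; by (iii) the element $h$ is nilpotent in $Q$, hence $d\circ h=d\circ f\circ g$ is nilpotent in $\End(\Gamma)$, and since $(g\circ d\circ f)^{n}=g\circ(d\circ f\circ g)^{n-1}\circ(d\circ f)$ the endomorphism $g\circ d\circ f$ of $\Omega$ is nilpotent. For (vi)$\Rightarrow$(iii): apply (vi) with $\Omega=\mu$, $f=\id_\mu$, $g=h$ for an arbitrary $h\in I$; then $g\circ d\circ f=h\circ d$ is nilpotent in $\End(\mu)$, so $h$ is nilpotent in $Q$.

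I expect the main obstacles to be (ii)$\Rightarrow$(iii), where finiteness of $I$ is exactly what lets one extract an idempotent, and (iii)$\Rightarrow$(vi), where $\Omega$ may be infinite so that one must exploit that $g\circ d\circ f$ factors through the finite group $\Gamma$ (equivalently $\mu$); by contrast (iv)$\Rightarrow$(v) and the Jacobson-radical bookkeeping in (iii)$\Rightarrow$(iv) are routine, to be carried out directly from the definition $\textup{Jac}(R)=\{x:1+RxR\subseteq R^{*}\}$.
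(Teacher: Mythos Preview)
Your proposal is correct and follows essentially the same route as the paper: the same bijection from Theorem~\ref{thm:S0_to_S} for (i)$\Leftrightarrow$(ii), the same finiteness-of-$I$ argument for (ii)$\Rightarrow$(iii), the same Jacobson bookkeeping for (iii)$\Rightarrow$(iv)$\Rightarrow$(v), and the same factorization $(gdf)^{n+1}=g(dfg)^n df$ for (iii)$\Leftrightarrow$(vi). The only cosmetic differences are that for (v)$\Rightarrow$(ii) the paper observes that the involution $x\mapsto 1-x$ puts the idempotents of $U$ and of $I$ in bijection (so $\textup{Id}_0=\{0\}$ since the only invertible idempotent of $U$ is $1$), whereas you argue directly that $(1-f)\star f=0$ with $1-f$ invertible forces $f=0$; and for (vi)$\Rightarrow$(iii) the paper takes $\Omega=\Gamma$, $g=\id_\Gamma$ where you take $\Omega=\mu$, $f=\id_\mu$.
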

\begin{proof}
We will write \(Q=Q(A)\) and similarly for \(U\) and \(U^*\). 
(i \(\Leftrightarrow\) ii) This follows from the bijection of Theorem~\ref{thm:S0_to_S}.
(ii \(\Rightarrow\) iii) Let \(f\in\Hom(\Gamma,\mu)\). 
Since the semigroup \(\Hom(\Gamma,\mu)\) with the multiplication from \(Q\) is finite by Lemma~\ref{lem:degree_abhom}, some power of \(f\) is idempotent and thus zero.
(iii \(\Rightarrow\) iv) It follows that \(1+\Hom(\Gamma,\mu)\subset Q^*\) and thus \(\Hom(\Gamma,\mu)\subset \textup{Jac}(Q)\). 
The 
surjection \(Q\twoheadrightarrow\Z\) 
must map \(\textup{Jac}(Q)\) to \(\textup{Jac}(\Z)=\{0\}\), so \(\textup{Jac}(Q)\subset \Hom(\Gamma,\mu)\).
(iv \(\Rightarrow\) v) We have \(U^*\subset U = 1+\textup{Jac}(Q)\subset U^*\).
(v \(\Rightarrow\) ii) It follows that \(1\) is the only idempotent of \(U\). The involution \(x\mapsto 1-x\) on \(Q\) shows that \(\Hom(\Gamma,\mu)\) and \(1-\Hom(\Gamma,\mu)=U\) contain equally many idempotents, hence \(\textup{Id}_0(A)=\{0\}\).
(vi \(\Rightarrow\) iii) It suffices to show that \(df\in\End(\Gamma)\) is nilpotent for all 
\(f\in\Hom(\Gamma,\mu)\), 
so take \(\Omega=\Gamma\) and \(g=\id_\Gamma\). 
(iii \(\Rightarrow\) vi) As \((gdf)^{n+1}=g(dfg)^ndf\) it holds that \(gdf\) is nilpotent if \(dfg\) is nilpotent. The latter holds because \(fg\in\Hom(\Gamma,\mu)\subset Q\) is nilpotent.
\end{proof}

A category \(\mathcal{C}\) is {\em small} if the class of objects of \(\mathcal{C}\) is a set and if for any two objects \(A\) and \(B\) of \(\mathcal{C}\) the class \(\Hom(A,B)\) is a set. 
A category \(\mathcal{C}\) is {\em preadditive} (see Section 1.2 in \cite{CategoryHandbook}) if for any two objects \(A\) and \(B\) of \(\mathcal{C}\) the class \(\Hom(A,B)\) is an abelian group such that composition of morphisms is bilinear, i.e., for all objects \(A\), \(B\), and \(C\) and morphisms \(f,f':A\to B\) and \(g,g':B\to C\) we have 
$$g\circ (f+f')=(g\circ f)+(g\circ f') \; \text{ and } \; (g+g')\circ f = (g\circ f)+(g'\circ f).$$ 

\begin{lemma}\label{lem:matrix_ring_from_category}
Let \(\mathcal{C}\) be a preadditive small category with precisely two objects \(\mathbf{0}\) and \(\mathbf{1}\). 
Then:
\begin{enumerate}
\item
With \(M_{ij}=\Hom(j,i)\) for \(i,j\in\{\mathbf{0},\mathbf{1}\}\) both \(M_{\mathbf{00}}\) and \(M_{\mathbf{11}}\) are rings and \(M_{\mathbf{01}}\) and \(M_{\mathbf{10}}\) are a \(M_{\mathbf{00}}\)-\(M_{\mathbf{11}}\)-bimodule and \(M_{\mathbf{11}}\)-\(M_{\mathbf{00}}\)-bimodule respectively. 
\item
The product of groups
\[\textup{M}(\mathcal{C})= \prod_{i,j\in\{\mathbf{0},\mathbf{1}\}} M_{ij} = \begin{pmatrix} M_{\mathbf{00}} & M_{\mathbf{01}} \\ M_{\mathbf{10}} & M_{\mathbf{11}}\end{pmatrix}\]
is a ring with 
respect to 
the addition and multiplication 
implied by the matrix notation. 
\item
If \(M_{\mathbf{01}} \cdot M_{\mathbf{10}} = \im( M_{\mathbf{01}} \tensor M_{\mathbf{10}} \to M_{\mathbf{00}} ) \subset \textup{Jac}(M_{\mathbf{00}})\), then \(M_{\mathbf{10}} \cdot M_{\mathbf{01}} \subset\textup{Jac}(M_{\mathbf{11}})\),
\[ \textup{Jac}(\textup{M}(\mathcal{C})) = \begin{pmatrix} \textup{Jac}(M_{\mathbf{00}}) & M_{\mathbf{01}} \\ M_\mathbf{10} & \textup{Jac}(M_\mathbf{11})\end{pmatrix}\text{,} \quad\text{and}\quad \textup{M}(\mathcal{C})^*=\begin{pmatrix} M_{\mathbf{00}}^* & M_{\mathbf{01}} \\ M_{\mathbf{10}} & M_{\mathbf{11}}^*\end{pmatrix}\text{.}\]
\end{enumerate}
\end{lemma}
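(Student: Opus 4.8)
For parts (i) and (ii) the plan is to use the standard correspondence between small preadditive categories with two objects and rings carrying a pair of orthogonal idempotents. Each of $M_{\mathbf{00}}=\End_{\mathcal C}(\mathbf 0)$ and $M_{\mathbf{11}}=\End_{\mathcal C}(\mathbf 1)$ is a ring under composition with unit $\id_{\mathbf i}$, the two distributive laws being precisely the bilinearity of composition demanded by preadditivity; post- and pre-composition then make $M_{\mathbf{01}}=\Hom_{\mathcal C}(\mathbf 1,\mathbf 0)$ an $M_{\mathbf{00}}$-$M_{\mathbf{11}}$-bimodule and $M_{\mathbf{10}}$ an $M_{\mathbf{11}}$-$M_{\mathbf{00}}$-bimodule, again by bilinearity, which is (i). That $\mathrm M=\textup{M}(\mathcal C)$ with the displayed matrix operations is a ring can be checked by hand from associativity and bilinearity of composition; alternatively, I would embed $\mathcal C$ into the presheaf category $\mathrm{Fun}(\mathcal C^{\mathrm{op}},\catt{Ab})$ --- abelian, hence additive with finite biproducts, and a legitimate category since $\mathcal C$ is small --- by the additive Yoneda functor $h$, and since by the enriched Yoneda lemma $\Hom(h_{\mathbf i},h_{\mathbf j})\cong\Hom_{\mathcal C}(\mathbf i,\mathbf j)$, this identifies $\mathrm M$ with the endomorphism ring of the biproduct $h_{\mathbf 0}\oplus h_{\mathbf 1}$, with its usual matrix ring structure.

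For (iii) I abbreviate $R=M_{\mathbf{00}}$, $S=M_{\mathbf{11}}$, $V=M_{\mathbf{01}}$, $W=M_{\mathbf{10}}$, $\mathrm M=\textup{M}(\mathcal C)$, and write $VW$, $WV$ for the images of $V\otimes W\to R$ and $W\otimes V\to S$; these are two-sided ideals of $R$, resp.\ $S$, by the bimodule relations of (i). The hypothesis reads $VW\subseteq\textup{Jac}(R)$, and the whole of (iii) will hinge on \emph{Step A}, the claim $WV\subseteq\textup{Jac}(S)$, which I expect to be the only non-formal ingredient and hence the main obstacle. Granting Step A, I would first compute $\mathrm M^*$. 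For $a\in R^*$ one has the factorization
\[ \begin{pmatrix} a & b \\ c & d \end{pmatrix}=\begin{pmatrix} 1 & 0 \\ ca^{-1} & 1 \end{pmatrix}\begin{pmatrix} a & 0 \\ 0 & d-ca^{-1}b \end{pmatrix}\begin{pmatrix} 1 & a^{-1}b \\ 0 & 1 \end{pmatrix} \]
inside $\mathrm M$, with both unitriangular factors invertible, so the left side is a unit iff $d-ca^{-1}b\in S^*$; since $ca^{-1}b\in WV\subseteq\textup{Jac}(S)$ by Step A, this is equivalent to $d\in S^*$. Conversely, for a unit $m=\bigl(\begin{smallmatrix} a&b\\c&d\end{smallmatrix}\bigr)$ with $m^{-1}=\bigl(\begin{smallmatrix} a'&b'\\c'&d'\end{smallmatrix}\bigr)$, the diagonal entries of $mm^{-1}=1=m^{-1}m$ give $aa',a'a\in R^*$ (as $bc',b'c\in VW\subseteq\textup{Jac}(R)$) and $dd',d'd\in S^*$ (as $cb',c'b\in WV\subseteq\textup{Jac}(S)$ by Step A), so $a\in R^*$ and $d\in S^*$. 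Hence $\mathrm M^*=\bigl(\begin{smallmatrix} R^*&V\\W&S^*\end{smallmatrix}\bigr)$.

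Still granting Step A, put $N=\bigl(\begin{smallmatrix}\textup{Jac}(R)&V\\W&\textup{Jac}(S)\end{smallmatrix}\bigr)$. Multiplying an arbitrary element of $\mathrm M$ by one of $N$ (on either side), the $(1,1)$-corner lands in $\textup{Jac}(R)$ by the hypothesis $VW\subseteq\textup{Jac}(R)$, the $(2,2)$-corner in $\textup{Jac}(S)$ by Step A, and the off-diagonal corners in $V$, resp.\ $W$, by the bimodule relations; so $N$ is a two-sided ideal of $\mathrm M$, and since $1+N\subseteq\mathrm M^*$ by the description of units, $N\subseteq\textup{Jac}(\mathrm M)$. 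For the reverse inclusion, if $m=\bigl(\begin{smallmatrix} a&b\\c&d\end{smallmatrix}\bigr)\in\textup{Jac}(\mathrm M)$, then $\textup{Jac}(\mathrm M)$ being an ideal gives $\bigl(\begin{smallmatrix} r&0\\0&0\end{smallmatrix}\bigr)\in\textup{Jac}(\mathrm M)$ for every $r\in RaR$, whence $1+r\in R^*$ by the units formula, i.e.\ $1+RaR\subseteq R^*$, i.e.\ $a\in\textup{Jac}(R)$ by the definition $\textup{Jac}(R)=\{x:1+RxR\subseteq R^*\}$; symmetrically $d\in\textup{Jac}(S)$, so $m\in N$. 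This gives $\textup{Jac}(\mathrm M)=N$, and together with Step A it establishes (iii).

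It remains to prove Step A, for which I would use the classical identity that for $b\in V$ and $c\in W$, if $1_R+bc\in R^*$ then $1_S+cb\in S^*$, with inverse $1_S-c(1_R+bc)^{-1}b$ --- a formal computation valid in $\mathrm M$. As $WV$ is a two-sided ideal of $S$ and, for any such ideal $I$, the inclusion $1+I\subseteq S^*$ already forces $I\subseteq\textup{Jac}(S)$ by the definition of $\textup{Jac}$, it is enough to show $1+y\in S^*$ for every $y\in WV$. Writing $y=\sum_{i=1}^n c_ib_i$ with $c_i\in W$, $b_i\in V$, I would induct on $n$: for $n\le 1$ this is the identity above, applicable because $b_1c_1\in VW\subseteq\textup{Jac}(R)$ makes $1_R+b_1c_1$ a unit; for the inductive step, with $y'=\sum_{i<n}c_ib_i$ one has $1+y=(1+y')\bigl(1+(1+y')^{-1}c_nb_n\bigr)$, where $1+y'\in S^*$ by induction and $(1+y')^{-1}c_n\in W$ since $W$ is an $S$-$R$-bimodule, reducing the second factor to the single-term case. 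This completes the plan; the only point requiring a genuine idea, as flagged, is this reduction in Step A, everything else being formal manipulation with matrices and with the Jacobson radical.
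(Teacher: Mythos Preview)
Your proposal is correct. The key step---what you call Step~A, that $M_{\mathbf{10}}\cdot M_{\mathbf{01}}\subseteq\textup{Jac}(M_{\mathbf{11}})$---rests on the same quasi-inverse identity as in the paper, though you package it as an induction on the number of summands in an element of $WV$, whereas the paper shows directly that each single product $nm$ lies in the radical (checking $1+snm\in M_{\mathbf{11}}^*$ for all $s$) and then uses that the radical is closed under addition. For the remaining formal part you reverse the paper's order: you compute $\textup{M}(\mathcal C)^*$ first via an LDU factorization and a Schur-complement argument, and then deduce the Jacobson radical from the description of units; the paper instead first shows the candidate ideal $J$ is contained in $\textup{Jac}(\textup{M}(\mathcal C))$ by writing it as the sum of an upper-triangular piece $T$ and a lower-triangular piece $B$ and verifying each is quasi-regular, then observes that the quotient $\textup{M}(\mathcal C)/J\cong (M_{\mathbf{00}}/\textup{Jac}(M_{\mathbf{00}}))\times(M_{\mathbf{11}}/\textup{Jac}(M_{\mathbf{11}}))$ has trivial radical, and finally reads off the units since invertibility is detected modulo the Jacobson radical. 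Both organizations are short and valid; yours is a bit more hands-on, the paper's a bit more structural.
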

\begin{proof}
That the \(M_{ij}\) are groups, and that the addition is compatible with the composition of morphisms, follows from the fact that \(\mathcal{C}\) is preadditive. It is then 
easy to verify 
that the \(M_{ij}\) are rings and modules as claimed, and that \(\textup{M}(\mathcal{C})\) is a ring,
giving (i) and (ii). 

Now suppose \(M_\mathbf{01} \cdot M_\mathbf{10} \subset \textup{Jac}(M_\mathbf{00})\).
We will show that for all \(m\in M_\mathbf{01}\) and \(n\in M_\mathbf{10}\) we have \(nm\in\textup{Jac}(M_\mathbf{11})\).
Let \(s\in M_\mathbf{11}\). Then \((ms)n\in M_\mathbf{01} \cdot M_\mathbf{10} \subset \textup{Jac}(M_\mathbf{00})\), so \(1+msn\) has an inverse \(r\in M_\mathbf{00}\). Then
\[ (1-snrm)(1+snm)=1-sn(r(1+msn)-1)m = 1-sn(1-1)m=1.\]
Hence \(1+snm\) has a left inverse \(1-snrm\), and 
similarly \(1-snrm\) is a right inverse of \(1+snm\).
Thus \(1+snm\in M_\mathbf{11}^*\) and \(nm\in\textup{Jac}(M_\mathbf{11})\).
We conclude that \(M_\mathbf{10} \cdot M_\mathbf{01} \subset \textup{Jac}(M_\mathbf{11})\).

Consider \(T=\big(\begin{smallmatrix} \textup{Jac}(M_\mathbf{00}) & M_\mathbf{01} \\ 0 & 0\end{smallmatrix}\big)\) and \(B = \big(\begin{smallmatrix} 0 & 0 \\ M_\mathbf{10} & \textup{Jac}(M_\mathbf{11} )\end{smallmatrix}\big)\) and write \(J=T+B\).
We will first show that \(T\subset\textup{Jac}(\textup{M}(\mathcal{C}))\).
For \(x=\big(\begin{smallmatrix} a & b \\ 0 & 0\end{smallmatrix}\big)\in T\) it suffices to show for all \(y=\big( \begin{smallmatrix} r & m \\ n & s\end{smallmatrix}\big)\in \textup{M}(\mathcal{C})\) that \(1+xy\in \textup{M}(\mathcal{C})^*\).
As \(1+xy=\big(\begin{smallmatrix} 1+ar+bn & ma+bs \\ 0 & 1\end{smallmatrix}\big)\) is upper diagonal, it is invertible if its diagonal elements are. 
The element \(1+ar+bn\) is invertible because \(ar+bn\in\textup{Jac}(M_\mathbf{00})\).
So 
\(T\subset\textup{Jac}(\textup{M}(\mathcal{C}))\).
Analogously \(B\subset\textup{Jac}(\textup{M}(\mathcal{C}))\). 
Thus we have a two-sided ideal \(J\subset\textup{Jac}(\textup{M}(\mathcal{C}))\).
To see 
equality, note that the ring 
$$\textup{M}(\mathcal{C})/ J \cong (M_\mathbf{00}/\textup{Jac}(M_\mathbf{00})) \times (M_\mathbf{11}/\textup{Jac}(M_\mathbf{11}))$$ 
has a trivial Jacobson radical.

An element of \(\textup{M}(\mathcal{C})\) is a unit if and only if it maps to a unit in \(\textup{M}(\mathcal{C})/\textup{Jac}(\textup{M}(\mathcal{C}))\), hence if and only if its diagonal elements are units, proving the final statement.
\end{proof}

Naturally, the construction \(\textup{M}(\mathcal{C})\) can be generalized to categories \(\mathcal{C}\) with any finite number of objects.
We call \(\textup{M}(\mathcal{C})\) the {\em matrix ring} of \(\mathcal{C}\). 

\begin{remark}
\label{rem:crazycats}
Given four abelian groups \(M_{ij}\) with \(i,j\in\{0,1\}\) together with compatible (i.e., associative) multiplications \(M_{ij}\tensor M_{jk} \to M_{ik}\) for all \(i,j,k\in\{0,1\}\) with appropriate unit elements, we can construct the preadditive category \(\mathcal{C}\) with two objects \(0\) and \(1\), with \(\Hom(j,i)=M_{ij}\), and 
with 
composition 
being 
these multiplications.
In particular, 
if \(M_{00}\) and \(M_{11}\) are rings, \(M_{01}\) is an \(M_{00}\)-\(M_{11}\)-bimodule, and \(M_{10}\) is an \(M_{11}\)-\(M_{00}\)-bimodule, then it remains only to specify the multiplications \(M_{01}\tensor M_{10}\to M_{00}\) and \(M_{10}\tensor M_{01}\to M_{11}\).
\end{remark}

Let \(A\) be a connected reduced order and \(G\) a finite abelian group.
Recall that \(\mu(A)\) and \(\Gamma(A)\) are \(Q(A)\)-modules by Remark~\ref{rem:Qmodule}, hence \(\Hom(G,\mu(A))\) and \(\Hom(\Gamma(A),G)\) are respectively left and right \(Q(A)\)-modules.
We 
next 
describe \(U^*(A[G])\) in terms of \(A\) and \(G\).

\begin{proposition}\label{prop:U_facts}
Let \(A\) be a connected reduced order and \(G\) a finite abelian group. 
Then:
\begin{enumerate} 
\item
We have a matrix ring 
\[E = \begin{pmatrix}Q(A) & \Hom(G,\mu(A)) \\ \Hom(\Gamma(A),G) & \End(G) \end{pmatrix}\]
where \(\Hom(G,\mu(A))\tensor \Hom(\Gamma(A), G) \to \Hom(\Gamma(A),\mu(A)) \subset Q(A)\) is the composition map and \(\Hom(\Gamma(A),G)\tensor \Hom(G,\mu(A))\to\End(G)\) is given by \(g\tensor f \mapsto gdf\).
\item
There is a natural ring isomorphism 
\(E\isom Q(A[G])\) 
that respects the action of $\Aut(A)$. 
\item
If \(A\) is stark,
then the map in (ii) 
restricts to an isomorphism
\[\begin{pmatrix}U^*(A) & \Hom(G,\mu(A)) \\ \Hom(\Gamma(A),G) & \Aut(G) \end{pmatrix} \isom U^*(A[G]).\]
\end{enumerate}
\end{proposition}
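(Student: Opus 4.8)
The plan is to build one additive bijection \(\theta\colon E\to Q(A[G])\) and to deduce (i), (ii), (iii) from it. By Proposition~\ref{prop:reduced_connected_groupring} the ring \(A[G]\) is again a connected reduced order, so it has a degree map, and by Proposition~\ref{prop:universal_of_groupring} we may take this degree map to be \(d_A\times\id_G\colon\mu\times G\to\Gamma\times G\), writing \(\mu=\mu(A)\), \(\Gamma=\Gamma(A)\). Splitting a homomorphism \(\Gamma\times G\to\mu\times G\) into its four blocks \(\Hom(\Gamma,\mu),\Hom(G,\mu),\Hom(\Gamma,G),\End(G)\) (arranged as a \(2\times2\) matrix acting on columns \((\gamma,g)\)), and recalling \(Q(A[G])=\Z\oplus\Hom(\Gamma\times G,\mu\times G)\) while \(Q(A)=\Z\oplus\Hom(\Gamma,\mu)\), there is an evident additive isomorphism
\[
\theta\colon E\longrightarrow Q(A[G]),\qquad
\begin{pmatrix}(m,a)&b\\ c&\sigma\end{pmatrix}\longmapsto\Bigl(m,\ \begin{pmatrix}a&b\\ c&\sigma-m\,\id_G\end{pmatrix}\Bigr).
\]
The ``shear'' \(\sigma\mapsto\sigma-m\,\id_G\) is forced: the scalar \(m\in\Z\subset Q(A[G])\) acts as \(m\cdot\id\) on \(\mu\times G\) and on \(\Gamma\times G\) (Lemma~\ref{lem:ring_hom}), so once \(m\) is moved into the \((0,0)\)-corner \(Q(A)\), the summand \(m\,\id_G\) it contributed to the \(\End(G)\)-block of \(Q(A[G])\) must be subtracted off. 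In \(E\), the left \(Q(A)\)-module structure on \(\Hom(G,\mu)\) and the right \(Q(A)\)-module structure on \(\Hom(\Gamma,G)\) are those induced from the \(Q(A)\)-module structures of \(\mu\) and \(\Gamma\) (Remark~\ref{rem:Qmodule}), and the \(\End(G)\)-actions are by composition.

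For (i) and (ii) I would transport the ring structure of \(Q(A[G])\) to \(E\) along \(\theta\) and verify that it coincides, entry by entry, with the matrix multiplication in the statement. Explicitly, one expands \(\theta(X_1)\star\theta(X_2)\) using \((n_1,\phi_1)\star(n_2,\phi_2)=(n_1n_2,\,n_1\phi_2+n_2\phi_1+\phi_1\,d_{A[G]}\,\phi_2)\) (Definition~\ref{def:U}), substitutes the block-diagonal \(d_{A[G]}=d_A\times\id_G\), and uses that \((m,f)\in Q(A)\) acts on \(\mu\) as \(m\,\id_\mu+fd_A\) and on \(\Gamma\) as \(m\,\id_\Gamma+d_Af\) (Lemma~\ref{lem:ring_hom}); pulling the answer back through \(\theta\) gives exactly the claimed product, whose corner multiplications are \(b\tensor c\mapsto b\circ c\in\Hom(\Gamma,\mu)\subset Q(A)\) and \(c\tensor b\mapsto c\,d_A\,b\in\End(G)\). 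The one delicate point is that the \(-m\,\id_G\) terms cancel correctly, which happens precisely because \(d_{A[G]}\) has \(\id_G\) in its lower corner. This at once shows \(E\) with the stated operations is a ring, i.e.\ (i) — equivalently, the displayed composites are associative, so they assemble into a preadditive category \(\mathcal{C}\) as in Remark~\ref{rem:crazycats} with \(E=\textup{M}(\mathcal{C})\) — and that \(\theta\) is a ring isomorphism, i.e.\ (ii). Its compatibility with \(\Aut(A)\) is automatic from naturality: \(\Aut(A)\hookrightarrow\Aut(A[G])\) acts coefficientwise, hence as \((\sigma_\mu,\id_G)\) on \(\mu\times G\) and \((\sigma_\Gamma,\id_G)\) on \(\Gamma\times G\) under the identifications of Proposition~\ref{prop:universal_of_groupring}, and \(\theta\) is built from these maps alone.

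For (iii), assume \(A\) is stark. Since \(\theta\) is a ring isomorphism it carries \(U^*(A[G])=U(A[G])\cap Q(A[G])^*\) onto \(\theta^{-1}(U(A[G]))\cap E^*\). From the formula for \(\theta\), the preimage \(\theta^{-1}(U(A[G]))=\theta^{-1}\bigl(1+\Hom(\Gamma\times G,\mu\times G)\bigr)\) is precisely the set of matrices whose \((0,0)\)-entry has \(\Z\)-component \(1\), i.e.\ lies in \(U(A)\), the remaining three entries being unrestricted, so it equals \(\bigl(\begin{smallmatrix}U(A)&\Hom(G,\mu)\\ \Hom(\Gamma,G)&\End(G)\end{smallmatrix}\bigr)\). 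It remains to intersect with \(E^*\). Since \(A\) is stark, Lemma~\ref{lem:eq_stark} gives \(\textup{Jac}(Q(A))=\Hom(\Gamma,\mu)\), and the image of \(\Hom(G,\mu)\tensor\Hom(\Gamma,G)\) in \(Q(A)\) lies in \(\Hom(\Gamma,\mu)\) by construction; thus the hypothesis of Lemma~\ref{lem:matrix_ring_from_category}(iii) is satisfied for \(E=\textup{M}(\mathcal{C})\), and that lemma yields
\[
E^*=\begin{pmatrix}Q(A)^*&\Hom(G,\mu)\\ \Hom(\Gamma,G)&\End(G)^*\end{pmatrix}.
\]
Intersecting this with the previous set and using \(U(A)\cap Q(A)^*=U^*(A)\) and \(\End(G)^*=\Aut(G)\) gives \(\theta^{-1}(U^*(A[G]))=\bigl(\begin{smallmatrix}U^*(A)&\Hom(G,\mu)\\ \Hom(\Gamma,G)&\Aut(G)\end{smallmatrix}\bigr)\); as \(\theta\) is a ring isomorphism, this is exactly the asserted group isomorphism (and here \(U^*(A)=U(A)\) by Lemma~\ref{lem:eq_stark}).

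The main obstacle is purely computational: checking that the multiplication transported along \(\theta\) matches the stated matrix multiplication, the subtle part being to track the \(-m\,\id_G\) shear through products. Conceptually nothing more is needed once \(d_{A[G]}=d_A\times\id_G\) is in hand; in part (iii) the bulk of the argument is supplied by Lemma~\ref{lem:matrix_ring_from_category}(iii).
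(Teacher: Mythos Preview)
Your proof is correct and follows essentially the same route as the paper: the paper writes down the inverse map \(Q(A[G])\to E\), \((n,\bigl(\begin{smallmatrix}p&q\\r&s\end{smallmatrix}\bigr))\mapsto\bigl(\begin{smallmatrix}(n,p)&q\\r&n+s\end{smallmatrix}\bigr)\), checks it is a ring isomorphism (your \(\theta\) is its inverse, with the same ``shear''), and for (iii) invokes Lemma~\ref{lem:eq_stark} and Lemma~\ref{lem:matrix_ring_from_category}(iii) exactly as you do. The only cosmetic difference is that the paper establishes (i) first via Remark~\ref{rem:crazycats} and Lemma~\ref{lem:matrix_ring_from_category}(ii) rather than deducing it from (ii) by transport of structure, and it is terser in (iii) where you spell out the intersection \(\theta^{-1}(U(A[G]))\cap E^*\).
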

\begin{proof}
For (i), apply Remark~\ref{rem:crazycats} and Lemma~\ref{lem:matrix_ring_from_category}.ii.
Since all multiplications are defined in terms of compositions of morphisms, the associativity conditions are trivially satisfied.

Write \(\Gamma=\Gamma(A)\) and \(\mu=\mu(A)\). 
For (ii), we have by Proposition~\ref{prop:universal_of_groupring}.ii that
\[Q(A[G]) \; = \; \Z\oplus\Hom(\Gamma\times G,\mu\times G) \;  \cong \;  \Z \oplus \begin{pmatrix} \Hom(\Gamma,\mu) & \Hom(G,\mu) \\ \Hom(\Gamma,G) & \End(G) \end{pmatrix},\]
where the isomorphism is one of abelian groups. Then the map \(Q(A[G])\to E\) with respect to the latter representation given by
\[\big(n, \begin{pmatrix} p & q \\ r & s \end{pmatrix} \big) \mapsto \begin{pmatrix} (n,p) & q \\ r & n + s \end{pmatrix}\]
is an isomorphism of rings that by functoriality respects the action of \(\Aut(A)\).

For (iii), 
suppose \(A\) is stark. 
Then 
\(\Hom(\Gamma,\mu)=\textup{Jac}(Q(A))\) by Lemma~\ref{lem:eq_stark}.
It follows that 
the ideal \(\Hom(G,\mu)\cdot\Hom(\Gamma,G)\subset\Hom(\Gamma,\mu)\) is contained in \(\textup{Jac}(Q(A))\).
Now apply 
Lemma~\ref{lem:matrix_ring_from_category}.iii.
\end{proof}

In Remark~\ref{lem:Q_inclusion_group_ring} and Proposition~\ref{prop:AutR_facts} we describe \(\Aut(A[G])\) in terms of \(A\) and \(U^*(A[G])\).

\begin{remark}
\label{lem:Q_inclusion_group_ring}
Let \(G\) be a finite abelian group. Then \(-[G]\) and \(U^*\) act functorially on isomorphisms of connected reduced orders.
Let \(A\) be a connected reduced order.
From Proposition~\ref{prop:universal_of_groupring}.ii we get a natural inclusion 
$$\Hom(\Gamma(A),\mu(A))\to\Hom(\Gamma(A[G]),\mu(A[G])),$$ 
which extends to an inclusion of rings \(Q(A)\to Q(A[G])\). 
Then we have a commutative diagram
\begin{center}
\begin{tikzcd}[column sep=large]
U^*(A)    \ar[tail]{d} \ar{r}{\textup{Lem~\ref{lem:group_actions}}} & \Aut(A) \ar{r}{U^*} \ar[tail]{d}{-[G]} & \Aut(U^*(A)) \\
U^*(A[G])                  \ar{r}{\textup{Lem~\ref{lem:group_actions}}} & \Aut(A[G]) \ar{r}{U^*} & \Aut(U^*(A[G])),
\end{tikzcd}
\end{center}
and the composition \(U^*(A)\to\Aut(U^*(A))\) is the conjugation map. 
\end{remark}

\begin{proposition}\label{prop:AutR_facts}
Let \(A\) be a stark connected reduced order and \(G\) a finite abelian group.
Then the maps and actions from 
Remark~\ref{lem:Q_inclusion_group_ring} fit in an exact sequence
\[ 0\to U^*(A) \xrightarrow{\iota} U^*(A[G]) \rtimes \Aut(A) \xrightarrow{\pi} \Aut(A[G]) \to 0, \] 
where \(\iota\) and \(\pi\) are homomorphisms 
such that 
\(\iota(u) = (u^{-1},u)\) and \(\pi\) maps each component to \(\Aut(A[G])\).
\end{proposition}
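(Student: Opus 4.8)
The plan is to make every map in the sequence completely explicit and then read off exactness by a direct computation with the degree map. I identify $Q(A[G])=\Z\oplus\Hom(\Gamma(A)\times G,\mu(A)\times G)$ as in Proposition~\ref{prop:U_facts}(ii), and write $f\in\Hom(\Gamma(A)\times G,\mu(A)\times G)$ via its four ``blocks'' $p\in\Hom(\Gamma(A),\mu(A))$, $q\in\Hom(G,\mu(A))$, $r\in\Hom(\Gamma(A),G)$, $s'\in\End(G)$. The computational engine is the description of $\psi_{A[G]}$ from Lemma~\ref{lem:group_actions}: since $\Gamma(A[G])=\Gamma(A)\times G$ with $R_{(\gamma,g)}=A_\gamma g$ and $\mu(A[G])=\mu(A)\times G$ by Propositions~\ref{prop:universal_of_groupring} and~\ref{prop:reduced_connected_groupring}, the automorphism $\psi_{A[G]}(1+f)$ sends $a\cdot g$ with $a\in A_\gamma$, $g\in G$ to $\bigl(p(\gamma)q(g)\,a\bigr)\cdot\bigl(r(\gamma)s'(g)\,g\bigr)$.

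With this I would first check that $\iota$ and $\pi$ are homomorphisms with $\pi\circ\iota$ trivial. For $\iota(u)=(j(u)^{-1},\psi_A(u))$ this uses the diagram of Remark~\ref{lem:Q_inclusion_group_ring}, which gives that $j$ is $\Aut(A)$-equivariant, that $\psi_{A[G]}\circ j=(-[G])\circ\psi_A$, and that $U^*\circ\psi_A$ is conjugation in $U^*(A)$; combining these yields $\psi_A(u)\cdot j(v)^{-1}=j(u)j(v)^{-1}j(u)^{-1}$, hence $\iota(u)\iota(v)=\iota(uv)$, and also $\pi(\iota(u))=(\psi_A(u)[G])^{-1}\psi_A(u)[G]=\id$. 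That $\pi(u,\alpha)=\psi_{A[G]}(u)\cdot\alpha[G]$ is a homomorphism on the semidirect product follows from the naturality of $\psi$ (so that $\psi_{A[G]}$ intertwines the two actions of $\Aut(A[G])$ on $U^*(A[G])$). Injectivity of $\iota$ is immediate since $j$ is injective, and $\im(\iota)\subseteq\ker(\pi)$ is just $\pi\circ\iota=\id$.

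Next I would prove surjectivity of $\pi$. Given $\sigma\in\Aut(A[G])$, both $(A,G)$ and its image $(\sigma(A),\sigma(G))$ lie in $\GpRg(A[G])$ with stark first component (as $\sigma(A)\cong A$), so by Theorem~\ref{thm:strong_swop_grp} there is $u\in U^*(A[G])$ whose image $\psi_{A[G]}(u)$ carries $(A,G)$ to $(\sigma(A),\sigma(G))$; hence $\tau:=\psi_{A[G]}(u)^{-1}\sigma$ fixes the subring $A$ and the subgroup $G$ setwise. Set $\alpha:=\tau|_A\in\Aut(A)$ and $\beta:=\tau|_G\in\Aut(G)$, and let $v_\beta:=1+f_\beta\in U^*(A[G])$ where $f_\beta$ has all blocks zero except $s'=\beta-\id_G$ (a unit by Proposition~\ref{prop:U_facts}(iii)), so that $\psi_{A[G]}(v_\beta)$ acts by $a\cdot g\mapsto a\cdot\beta(g)$. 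Then $\tau$ and $\psi_{A[G]}(v_\beta)\circ\alpha[G]$ are ring automorphisms of $A[G]$ agreeing on the generating set $A\cup G$, hence equal, and therefore $\sigma=\psi_{A[G]}(u)\psi_{A[G]}(v_\beta)\,\alpha[G]=\pi(u\,v_\beta,\alpha)$.

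Finally I would prove $\ker(\pi)\subseteq\im(\iota)$, which is where the explicit shape of $\psi_{A[G]}$ does the work. If $\pi(u,\alpha)=\id$ then $\psi_{A[G]}(u)=\alpha^{-1}[G]$ fixes $G$ pointwise and maps $A$ into itself; writing $u=1+f$ with blocks $p,q,r,s'$, evaluating $\psi_{A[G]}(u)$ on a nonzero $a\in A_\gamma$ forces $r(\gamma)=1$ whenever $A_\gamma\neq 0$, and such $\gamma$ generate $\Gamma(A)$ by Proposition~\ref{prop:universal_efficient}(v) (cf.\ Proposition~\ref{prop:universal_of_groupring}(iii)), so $r=0$; evaluating on $g\in G$ and comparing $\mu(A)$- and $G$-components in $\mu(A[G])=\mu(A)\times G$ then forces $q=0$ and $s'=0$. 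Thus $u=j(u_A)$ with $u_A=1+p\in U(A)=U^*(A)$ — here starkness of $A$ enters, via Lemma~\ref{lem:eq_stark} — and $\alpha^{-1}[G]=\psi_{A[G]}(j(u_A))=\psi_A(u_A)[G]$ gives $\alpha=\psi_A(u_A)^{-1}=\psi_A(u_A^{-1})$, so $(u,\alpha)=\iota(u_A^{-1})$. The main obstacle throughout is bookkeeping: pinning down the explicit action of $\psi_{A[G]}$ on $U^*(A[G])$ via Lemma~\ref{lem:group_actions} and Proposition~\ref{prop:universal_of_groupring}, and confirming that $j$, $-[G]$, and the conjugation actions are compatible with the identifications of Proposition~\ref{prop:U_facts} (i.e.\ that the square of Remark~\ref{lem:Q_inclusion_group_ring} commutes). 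The one ingredient that is not pure computation is the surjectivity of $\pi$, which rests on the $U^*(d)$-transitivity on stark pairs in $\GpRg(A[G])$ from Theorem~\ref{thm:strong_swop_grp}.
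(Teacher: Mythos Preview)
Your proof is correct and follows essentially the same route as the paper: both use Remark~\ref{lem:Q_inclusion_group_ring} to verify that $\iota$ and $\pi$ are homomorphisms, invoke Theorem~\ref{thm:strong_swop_grp} to reduce surjectivity of $\pi$ to the case $\sigma(A)=A$, $\sigma(G)=G$ and then exhibit the remaining automorphism via an element of $U^*(A[G])$ built from $\sigma|_G$, and compute $\ker(\pi)$ by evaluating $\psi_{A[G]}(1+f)$ on $A$ and on $G$ to force $f\in\Hom(\Gamma(A),\mu(A))$, appealing to Lemma~\ref{lem:eq_stark} for $U(A)=U^*(A)$. The only difference is presentational: you carry the four-block decomposition of $f$ explicitly throughout, whereas the paper argues directly with $f\colon\Gamma(A)\times G\to\mu(A[G])$.
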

\begin{proof}
For all \(u,v\in U^*(A)\) we have 
$$\iota(u)\iota(v)=(u^{-1},u)(v^{-1},v)=(u^{-1}(uv^{-1}u^{-1}),uv)=\iota(uv)$$ 
by 
Remark~\ref{lem:Q_inclusion_group_ring}, so \(\iota\) is a homomorphism. Moreover, \(\iota\) is injective because it maps injectively to the first factor. By the same remark \(\pi\) is a homomorphism.

We will now show that \(\pi\) is surjective.
Suppose \(\sigma\in\Aut(A[G])\).
By Theorem~\ref{thm:strong_swop_grp} there 
exists 
\(1+f\in U^*(A[G])\) that 
maps
 \((A,G)\) to \((\sigma(A),\sigma(G))\), so without loss of generality we may assume \(\sigma(A)=A\) and \(\sigma(G)=G\).
By applying the restriction \(\sigma|_A\in\Aut(A)\) we may assume \(\sigma\) is the identity on \(A\).
Consider the map \(f\colon\Gamma(A)\times G\to\mu(A[G])\) given by \((\delta,g)\mapsto \sigma(g)g^{-1}\) and note that \(1+f\in U(A[G])\) gets mapped to \(\sigma\).
We similarly obtain the inverse of \(1+f\) in \(U(A[G])\) from \((\delta,g)\mapsto \sigma^{-1}(g)g^{-1}\), so \(1+f\in U^*(A[G])\).
It follows that \(\sigma\) is in the image of \(\pi\) and thus \(\pi\) is surjective.

To show the sequence is exact, it remains to show \(\im(\iota)=\ker(\pi)\).
It is clear that \(\im(\iota)\subset\ker(\pi)\), so suppose \((1+f,\alpha)\in\ker(\pi)\).
As \(\alpha^{-1}\) equals the restriction of \(1+f\) by assumption, it suffices to show that \(1+f\in U^*(A)\).
For \(g\in G\) we have \(g=(1+f)\alpha(g)=f(g)g\), and multiplying by \(g^{-1}\) we obtain \(1=f(g)\), i.e., \(G\subset\ker(f)\).
Moreover 
\(\im(f)\subset \mu(A)\), since multiplication by any unit \((\zeta,g)\in\mu(A)\times G=\mu(A[G])\) not in \(\mu(A)\) sends \(A\) to \(Ag \neq A\).
Hence \(f\in\Hom(\Gamma(A),\mu(A))\) and \(1+f\in U(A)\). 
The same holds for the inverse \(1+e\in U^*(A[G])\) of \(1+f\), so \(1+e\in U(A)\) and thus \(1+f\in U^*(A)\). 
It now follows that \((1+f,\alpha)=\iota(1+e)\), so \(\ker(\pi)\subset\im(\iota)\), as was to be shown.
\end{proof}

Proposition~\ref{prop:U_facts} and Proposition~\ref{prop:AutR_facts} combined gives us a description of \(\Aut(A[G])\) in terms of \(A\) and \(G\).
We now prove Theorem~\ref{thm:matrix_group} and describe \(\Aut(A[G])\) by less canonical means.

\begin{lemma}\label{lem:Hom_action_on_Aut}
Let \(A\) be a stark connected reduced order. Then the group \(\Hom(\Gamma(A),\mu(A))\) has a (right) action on the set \(\Aut(A)\), which for \(\alpha\in\Aut(A)\) and \(f\in\Hom(\Gamma(A),\mu(A))\) is given by
\[(\alpha,f)\mapsto \alpha+f = \big( x \in A_\gamma \mapsto \alpha(x) \cdot f(\gamma) \big).\]
\end{lemma}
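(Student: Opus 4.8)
The plan is to verify three things: that the prescription $x\in A_\gamma\mapsto\alpha(x)\cdot f(\gamma)$ extends to a ring automorphism of $A$; that $\alpha+0=\alpha$; and that $(\alpha+f)+g=\alpha+(f+g)$, where $f+g$ denotes the sum in the abelian group $\Hom(\Gamma(A),\mu(A))$. Together these are exactly the assertion that $(\alpha,f)\mapsto\alpha+f$ is a right action of $\Hom(\Gamma(A),\mu(A))$ on the set $\Aut(A)$.

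First I would check that $\alpha+f$ is a well-defined ring endomorphism of $A$. Since $A=\bigoplus_\gamma A_\gamma$ as a $\Z$-module and $x\mapsto\alpha(x)f(\gamma)$ is additive on each $A_\gamma$, the prescription extends uniquely to an additive endomorphism of $A$. It fixes $1$ because $1\in A_1$ (Theorem~1.5.ii in \cite{UniversalGradings}) and $f(1)=1$; and for homogeneous $x\in A_\gamma$ and $y\in A_\delta$ one has $xy\in A_{\gamma\delta}$, so
\[\alpha(xy)f(\gamma\delta)=\alpha(x)\alpha(y)f(\gamma)f(\delta)=\bigl(\alpha(x)f(\gamma)\bigr)\bigl(\alpha(y)f(\delta)\bigr),\]
using that $f$ is a homomorphism and $A$ is commutative; multiplicativity on all of $A$ then follows by bilinearity.

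Next I would show $\alpha+f$ is bijective by exhibiting it as a composition of automorphisms. A ring automorphism of $A$ restricts to an automorphism $\alpha_\mu$ of $\mu(A)$ (see Remark~\ref{rem:r2d2}); put $g=\alpha_\mu^{-1}\circ f\in\Hom(\Gamma(A),\mu(A))$. Since $A$ is stark, Lemma~\ref{lem:eq_stark}.v gives $1+g\in U(A)=U^*(A)$, so that $\psi(1+g)\colon x\in A_\gamma\mapsto g(\gamma)\cdot x$ from Lemma~\ref{lem:group_actions} lies in $\Aut(A)$. For $x\in A_\gamma$ one computes
\[(\alpha\circ\psi(1+g))(x)=\alpha\bigl(g(\gamma)\cdot x\bigr)=\alpha_\mu\bigl(g(\gamma)\bigr)\cdot\alpha(x)=f(\gamma)\cdot\alpha(x)=(\alpha+f)(x),\]
so $\alpha+f=\alpha\circ\psi(1+g)$ is a composition of ring automorphisms, hence itself a ring automorphism (with inverse $\psi(1+g)^{-1}\circ\alpha^{-1}$). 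Thus $(\alpha,f)\mapsto\alpha+f$ takes values in $\Aut(A)$.

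Finally, $\alpha+0=\alpha$ is immediate from the formula. For the remaining axiom I would use that, for any $\beta\in\Aut(A)$ and $x\in A_\gamma$, the definition gives $(\beta+g)(x)=\beta(x)\cdot g(\gamma)$; taking $\beta=\alpha+f$ and substituting $(\alpha+f)(x)=\alpha(x)f(\gamma)$ (valid since $x\in A_\gamma$ is homogeneous) yields
\[\bigl((\alpha+f)+g\bigr)(x)=\alpha(x)f(\gamma)g(\gamma)=\alpha(x)(f+g)(\gamma)=\bigl(\alpha+(f+g)\bigr)(x),\]
where $(f+g)(\gamma)=f(\gamma)g(\gamma)$ because $\Hom(\Gamma(A),\mu(A))$ is written additively while $\mu(A)$ is written multiplicatively; since both sides are additive and the $A_\gamma$ span $A$, we get $(\alpha+f)+g=\alpha+(f+g)$. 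I expect the only delicate point to be the bookkeeping in the bijectivity step: one must twist by $\alpha_\mu^{-1}$ rather than by the identity, so that the action of $\alpha$ on the roots of unity cancels and $f(\gamma)\cdot\alpha(x)$ is recovered exactly. Notably, the induced action of $\alpha$ on $\Gamma(A)$ plays no role in this argument.
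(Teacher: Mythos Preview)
Your proof is correct and follows essentially the same approach as the paper. The paper's argument is simply more terse: it writes $\alpha+f=\alpha\circ\psi(1+\alpha^{-1}f)$ (with $\alpha^{-1}f$ meaning your $\alpha_\mu^{-1}\circ f$) and invokes Lemma~\ref{lem:eq_stark} and Lemma~\ref{lem:group_actions} to conclude this is a composition of ring automorphisms, thereby getting the ring-endomorphism and bijectivity checks in one stroke; your separate verification that $\alpha+f$ is a ring homomorphism is correct but redundant once you have that decomposition.
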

\begin{proof}
Let \(\alpha\in\Aut(A)\) and \(f,g\in\Hom(\Gamma(A),\mu(A))\).
Note that 
$$\alpha+f=\alpha\circ(1+\alpha^{-1}f)\in\Aut(A),$$ 
where \(1+\alpha^{-1} f \in U(A)=U^*(A)\) by Lemma~\ref{lem:eq_stark} and the composition is taken inside \(\Aut(A)\) via Lemma~\ref{lem:group_actions}. 
For \(\gamma\in\Gamma(A)\) and \(x\in A_\gamma\) we clearly have
\[ [(\alpha+f)+g](x)= [\alpha+f](x) \cdot g(\gamma) = \alpha(x) \cdot f(\gamma) \cdot g(\gamma) = [\alpha+(f+g)](x), \]
so the action is well-defined.
\end{proof}

\noindent{\bf{Proof of Theorem~\ref{thm:matrix_group}.}}
To check that \(M\) is a group it remains to verify that \(t_1ds_2+\sigma_1\sigma_2\in\Aut(G)\).
This follows from Lemma~\ref{lem:eq_stark}, namely \(t_1ds_2\in\textup{Jac}(\End(G))\). 
Note that the map 
$$\vartheta\colon M\to\Aut(A[G])$$ 
can be written as the composition of the homomorphism \(\varphi\colon M\to U^*(A[G])\rtimes\Aut(A)\) given by
\begin{align*}
\begin{pmatrix} \alpha & s \\ t & \sigma \end{pmatrix} &\mapsto \begin{pmatrix} 1 & s \\ t\alpha^{-1} & \sigma \end{pmatrix} \cdot \alpha
\end{align*}
where \(U^*(A[G])\) is written in terms of the matrix representation of Proposition~\ref{prop:U_facts}.iii, and the homomorphism \(\pi\colon U^*(A[G])\rtimes\Aut(A) \to \Aut(A[G])\) from Proposition~\ref{prop:AutR_facts}.

The map \(\pi\) is still surjective when restricted to the image of \(\varphi\).
Namely any 
$$\begin{pmatrix} u & s \\ t & \sigma \end{pmatrix}\cdot \alpha\in U^*(A[G])\rtimes\Aut(A)$$ 
has the same image as \(\big(\begin{smallmatrix} 1 & s \\ t\beta^{-1} & \sigma \end{smallmatrix}\big) \cdot \beta\alpha\), where \(\beta\) is the image of \(u\) in \(\Aut(A)\).
Hence the map \(\vartheta\) is surjective.
By Proposition~\ref{prop:U_facts}.iii and Proposition~\ref{prop:AutR_facts}, respectively, we have 
\[\frac{\#M}{\# U^*(A[G])} = \frac{\#\Aut(A)}{\# U^*(A)} = \frac{\#\Aut(A[G])}{\#U^*(A[G])},\]
so the groups \(M\) and \(\Aut(A[G])\) have the same (finite) cardinality, so \(\vartheta\) is bijective.

\section{Proofs of Theorems~\ref{thm:gcd}, \ref{thm:universal}, and \ref{thm:matrix_group_corollary}}
\label{sec:general_case}

We will prove 
Theorems~\ref{thm:gcd} and \ref{thm:universal}
by reducing to the connected case,
where we can apply Theorem~\ref{thm:strong_swop_grp}.
Recall the definition of \(\GpRg\) from Definition~\ref{def:S0}.

\begin{lemma}\label{lem:projection}
Let \(S\) and \(T\) be orders with $S$ non-zero, let \(R=S\times T\) with projection map
\(\pi\colon R\to S\), and let \((A,G)\in\GpRg(R)\). Then we have
\((\pi(A),\pi(G))\in\GpRg(S)\) and the restriction \(G\to\pi(G)\) of $\pi$ is a
group isomorphism.
\end{lemma}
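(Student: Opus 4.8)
The plan is to realise the projection as multiplication by the idempotent $e=(1,0)\in R$ and, crucially, to locate $e$ inside the subring $A$; once that is done everything decomposes cleanly. First I would record the basic finiteness facts: since $R=S\times T$ with $S\neq 0$ is a non-zero order, Lemma~\ref{lem:max_exists} gives that $G$ is finite, and $A$, being a subring of the order $R$, is itself an order. Hence Proposition~\ref{prop:reduced_connected_groupring}(ii) applies to the isomorphism $A[G]\isom R$ coming from $(A,G)\in\GpRg(R)$ and yields $\Id(R)=\Id(A)$. In particular the idempotent $e=(1,0)$ lies in $A$, so it sits in the degree-$1$ component of the decomposition $R=\bigoplus_{g\in G}Ag$. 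Identifying $S$ with $eR$, the map $\pi$ becomes $r\mapsto er$; then $\pi(A)=eA$ is a subring of $S$ (it contains $1_S=e$), and $\pi(G)=eG$ is a subgroup of $S^*$, the inverse of $\pi(g)$ being $\pi(g^{-1})$.

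Next I would show that $\pi|_G$ is injective. For distinct $g,g'\in G$ the elements $eg\in Ag$ and $eg'\in Ag'$ lie in distinct summands of the direct sum $R=\bigoplus_{g''\in G}Ag''$; moreover $eg\neq 0$, because $eg=0$ would force $e=eg\cdot g^{-1}=0$, contradicting $e=(1,0)\neq 0$ (this is where $S\neq 0$ enters). Hence $eg=eg'$ is impossible, and $\pi$ restricts to a bijective group homomorphism $G\to\pi(G)$.

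Finally, I would apply multiplication by $e$ to $R=\bigoplus_{g\in G}Ag$ to obtain $S=eR=\bigoplus_{g\in G}e(Ag)$, the sum remaining direct because it was direct inside $R$. Since $e(Ag)=(eA)(eg)=\pi(A)\,\pi(g)$ and $g\mapsto\pi(g)$ is a bijection onto $\pi(G)$, this reads $S=\bigoplus_{h\in\pi(G)}\pi(A)\,h$. As each $h\in\pi(G)$ is a unit of $S$, the natural $\pi(A)$-module map $\pi(A)[\pi(G)]\to S$ is bijective on each summand $\pi(A)[h]\to\pi(A)h$ (here $c\,h=0$ forces $c=0$) and the images form a direct sum equal to $S$; hence that map is bijective, and being a ring homomorphism it is a ring isomorphism. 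Therefore $(\pi(A),\pi(G))\in\GpRg(S)$, which together with the previous paragraph completes the proof.

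I expect no serious obstacle here: the single point requiring care is the identification $e\in A$, which is exactly what Proposition~\ref{prop:reduced_connected_groupring}(ii) supplies and which is what makes the grading $R=\bigoplus_{g\in G}Ag$ compatible with the product splitting $R=S\times T$; after that, the injectivity of $\pi|_G$ and the direct-sum bookkeeping are entirely routine.
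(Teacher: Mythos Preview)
Your proposal is correct and follows essentially the same route as the paper: the key step in both is to use Proposition~\ref{prop:reduced_connected_groupring}(ii) to place the idempotent $e=(1,0)$ inside $A$, after which the direct-sum decomposition $R=\bigoplus_{g\in G}Ag$ is visibly compatible with multiplication by~$e$. The paper organises the argument slightly differently---it first shows the composite $\pi(A)[G]\twoheadrightarrow\pi(A)[\pi(G)]\to S$ is an isomorphism and then reads off both conclusions---but the content is the same.
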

\begin{proof}
We have a natural map \(\pi(A)[G]\twoheadrightarrow \pi(A)[\pi(G)]\to S\).
Since \(S\) equals \(\pi(A[G])=\sum_{g\in G} \pi(A) \pi(g)\), this map is clearly surjective.
Suppose \(\sum_{g\in G} \pi(a_g) g\) is in its kernel. Writing $e=(1,0)\in R$ and identifying $S$
with $S\times\{0\}$, we have \(\pi(x)=ex\) for all \(x\in R\). By
Proposition~\ref{prop:reduced_connected_groupring}(ii) we have $e\in A$ and therefore
\(\sum_{g\in G} ea_g g=0\) in \(A[G]\). We conclude that for all \(g\in G\) we have \(\pi(a_g)=ea_g=0\),
so
the map \(\pi(A)[G]\to S\) is an isomorphism.
Then the maps \(\pi(A)[G]\to \pi(A)[\pi(G)]\) and \(\pi(A)[\pi(G)]\to S\) are isomorphisms as well.
Since $S\ne0$, this implies that the map $G\to\pi(G)$ is an isomorphism and that 
\((\pi(A),\pi(G))\in \GpRg(S)\).
\end{proof}

Given a group ring structure on a product of orders, Lemma~\ref{lem:projection} constructs
on each of the factors a group ring structure, with the same group. The following proposition does the
opposite. For the definition of greatest common divisors, see Remark~\ref{rem:gcd}.

\begin{proposition}\label{prop:join}
Let \(X\) be a finite non-empty set, let \((R_x)_{x\in X}\) be a collection of connected
orders,  and let \((A_x,G_x)\in \GpRg(R_x)\) for all \(x\in X\).
Suppose that for all \(x\in X\) we write \(G_x= D_x\oplus E_x\) for some subgroups
\(D_x\), \(E_x\subset G_x\) such that for all \(x\), \(y\in X\) we have \(D_x\cong D_y\). 
Put \(R=\prod_{x\in X}R_x\) and \(A=\prod_{x\in X} A_x[E_x]\), and let
\(D\subset \prod_{x\in X} D_x\) be a subgroup for which all 
the 
projection maps
\(\pi_x\colon D\to D_x\) are isomorphisms. Then 
\((A,D)\in \GpRg(R)\).
If in addition \((A_x,G_x)\) is maximal in \(\GpRg(R_x)\) for all \(x\in X\), and
$D$ is a greatest common divisor of $\{G_x:x\in X\}$, then \((A,D)\) is maximal in
\(\GpRg(R)\).
\end{proposition}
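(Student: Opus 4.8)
The plan is to prove the two assertions separately: the first by checking directly that the natural map $A[D]\to R$ is bijective, and the second by projecting a hypothetical larger group-ring structure onto the factors $R_x$ and combining Theorem~\ref{thm:strong_swop_grp} with the greatest common divisor hypothesis.

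For the first assertion, put $B_x=A_x[E_x]$. Since $E_x\subseteq G_x$ and $A_x[G_x]=R_x$ this is a subring of $R_x$, and since $A_x[G_x]=A_x[E_x][D_x]$ we get $B_x[D_x]=R_x$, i.e.\ $(B_x,D_x)\in\GpRg(R_x)$. Hence $A=\prod_x B_x$ is a subring of $R=\prod_x R_x$ and $D\subseteq\prod_x D_x\subseteq R^*$ is a subgroup, so it remains to see that $\varphi\colon A[D]\to R$ is an isomorphism. For injectivity one writes an element of $\ker\varphi$ as $\sum_{g\in D}a_g g$ with $a_g=(a_{g,x})_x\in\prod_x B_x$; its $x$-th coordinate is $\sum_{g\in D}a_{g,x}\pi_x(g)=0$ in $R_x$, and since $\pi_x$ restricts to a bijection $D\to D_x$ and $\{d:d\in D_x\}$ is a $B_x$-basis of $R_x$ (as $B_x[D_x]=R_x$), all $a_{g,x}$ vanish, hence all $a_g=0$. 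For surjectivity, given $(\rho_x)_x\in R$ write $\rho_x=\sum_{d\in D_x}b_{x,d}\,d$ with $b_{x,d}\in B_x$; then $\sum_{g\in D}a_g g$ with $a_g=(b_{x,\pi_x(g)})_x$ is a preimage. Thus $(A,D)\in\GpRg(R)$.

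For the second assertion, assume moreover that each $(A_x,G_x)$ is maximal and that $D$ is a greatest common divisor of $\{G_x:x\in X\}$, and suppose $(A,D)\le(A',G')$ with $(A',G')\in\GpRg(R)$; the goal is to force $(A',G')=(A,D)$. By Lemma~\ref{lem:projection} we have $(\pi_x(A'),\pi_x(G'))\in\GpRg(R_x)$ and $\pi_x$ restricts to an isomorphism $G'\isom\pi_x(G')$, so $G'\cong\pi_x(G')$ for every $x$. Every element of $\GpRg(R_x)$ lies below a maximal one (take a lift whose group has maximal order, which is bounded by $\rk(R_x)$ via Lemma~\ref{lem:max_exists}); extend $(\pi_x(A'),\pi_x(G'))$ to a maximal $(\bar A_x,\bar G_x)$. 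By Lemma~\ref{lem:connected_factorization}, $\pi_x(G')$ is a direct factor, hence a divisor, of $\bar G_x$; and since $\bar A_x$ and $A_x$ are stark by Lemma~\ref{lem:stark_iff_maximal}, Theorem~\ref{thm:strong_swop_grp} gives $\bar G_x\cong G_x$. So $G'\cong\pi_x(G')$ is a divisor of $G_x$ for every $x$, hence $G'$ is a common divisor of $\{G_x:x\in X\}$ and therefore a divisor of $D$; in particular $\#G'\le\#D$. Since $D\subseteq G'$ gives $\#D\mid\#G'$, we get $\#G'=\#D$ and thus $D=G'$. Finally $A'[D]=R=A[D]$ with $A'\subseteq A$: each $r\in R$ has a unique expansion $\sum_{d\in D}c_d d$ with $c_d\in A$, which must coincide with its expansion with coefficients in $A'$, so every $c_d$ lies in $A'$; taking $r\in A$ yields $A\subseteq A'$. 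Hence $(A',G')=(A,D)$ and $(A,D)$ is maximal.

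The first assertion is bookkeeping with the bases $D_x$. The substantive step, and the expected main obstacle, is the reduction in the second assertion showing that each projection $\pi_x(G')$ is a divisor of $G_x$: this forces one to pass through a maximal element of $\GpRg(R_x)$ and use Theorem~\ref{thm:strong_swop_grp} to identify its group with $G_x$ up to isomorphism, after which the greatest common divisor property pins down $G'=D$ and a one-line coefficient comparison gives $A'=A$.
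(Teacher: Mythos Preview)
Your proof is correct and follows essentially the same approach as the paper's: in the first part the paper packages the bijectivity as a chain of ring isomorphisms $A[D]\cong\prod_x A_x[E_x][D]\cong\prod_x A_x[E_x][D_x]\cong\prod_x A_x[G_x]=R$ rather than checking injectivity and surjectivity coordinate-wise, and in the second part it carries out exactly your reduction (project via Lemma~\ref{lem:projection}, extend to a maximal pair, apply Lemma~\ref{lem:connected_factorization} and Theorem~\ref{thm:strong_swop_grp}, then use the gcd hypothesis). Your final coefficient-comparison for $A'=A$ is just an unpacking of the paper's one-line conclusion from $A'[D]=A[D]$ with $A'\subseteq A$.
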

\begin{proof}
Clearly 
\(A\subset R\) and \(D\subset \mu(R)\). There is a sequence of ring isomorphisms
\[A[D]\cong \prod_{x\in X}(A_x[E_x][D])\cong\prod_{x\in X} (A_x[E_x][D_x]) \cong
\prod_{x\in X} A_x[G_x] = R,\]
where one obtains the first isomorphism by tensoring $A=\prod_{x\in X} A_x[E_x]$ with $\Z[D]$ over
$\Z$ and the second isomorphism is induced by the group isomorphisms~$\pi_x$.
The resulting isomorphism \(A[D]\to R\) restricts to the inclusion on both \(A\) and \(D\), so  
\(A[D]=R\) and indeed \((A,D)\in \GpRg(R)\).

Now suppose that \((A_x,G_x)\) is maximal in \(\GpRg(R_x)\) for all \(x\in X\), and that \(D\)
is a greatest common divisor of \(\{G_x:x\in X\}\). 
Let \((B,H)\in \GpRg(R)\) be such that \((A,D)\leq (B,H)\). 
For \(x\in X\) let \(B_x\) and \(H_x\) be the projection of \(B\), respectively \(H\), to \(R_x\),
so 
by Lemma~\ref{lem:projection} we have \((B_x,H_x)\in\GpRg(R_x)\) and \(H\cong H_x\).
Choose \((C_x,I_x)\in \GpRg(R_x)\) to be maximal such that \((B_x,H_x)\leq (C_x,I_x)\).
Since 
$R_x$ 
is connected, Lemma~\ref{lem:connected_factorization} implies that there exists a finite
abelian group \(F_x\) such that \(I_x \cong H_x \oplus F_x\). 
Since both \((A_x,G_x)\) and \((C_x,I_x)\) are maximal in \(\GpRg(R_x)\), we have
\(G_x\cong I_x\) by Theorem~\ref{thm:strong_swop_grp}.
Hence \(G_x\cong I_x \cong H_x \oplus F_x\cong H\oplus F_x\). 
Thus \(H\) is a common divisor of all \(G_x\), and \(H\) contains~\(D\). Since \(D\) is a greatest
common divisor, we obtain \(H=D\). From $A[D]=B[H]=B[D]$ and $A\supset B$ we see $A=B$, so 
\((A,D)=(B,H)\) 
and \((A,D)\) is maximal.
\end{proof}

\noindent{\bf{Proof of Theorem~\ref{thm:gcd}.}}
If \(A=0\) or \(B=0\), then 
Theorem~\ref{thm:gcd} 
holds trivially. Hence assume \(A\) and \(B\) are non-zero.

(ii) \(\Rightarrow\) (i) Assuming (ii), we have ring isomorphisms
\[A[G]\cong C[I][G] \cong C[I\times G]\cong C[J\times H]\cong C[J][H]\cong B[H].\]
(i) \(\Rightarrow\) (ii) 
First assume \(A[G]\) is connected.
Let \((C,V)\geq (A,G)\) and \((D,W)\geq (B,H)\) be 
a 
maximal element 
of \(\GpRg(A[G])\),
respectively \(\GpRg(B[H])\). By Lemma~\ref{lem:stark_iff_maximal} the orders $C$ and $D$
are stark, so by Theorem~\ref{thm:strong_swop_grp} there exists a ring isomorphism
\(\sigma\colon B[H]\to A[G]\) that sends \((D,W)\) to \((C,V)\).
It follows that \((C,V)\geq (\sigma(B),\sigma(H))\), so applying Lemma~\ref{lem:connected_factorization}
twice, we find subgroups \(I\), \(J\subset V\) such that \(I\times G=V=J\times\sigma(H)\cong J\times H\) 
and 
\(C[I]=A\) and \(C[J]=\sigma(B)\cong B\). This concludes the proof of the connected case.

Next consider the general case, where \(A[G]=\prod_{x\in X}R_x\) is a non-empty product of connected
reduced orders~$R_x$. Without loss of generality we may assume \(A[G]=B[H]\). Let $x\in X$.
Write \(A_x\) and \(B_x\) for the image of \(A\), respectively \(B\), of the projection onto \(R_x\).
Then  
\(A_x[G]\cong R_x\cong B_x[H]\) by Lemma~\ref{lem:projection}.
Since \(R_x\) is connected and we proved (i) \(\Rightarrow\) (ii)  
in the connected case, there
exist a reduced order \(C_x\) and finite abelian groups \(I_x\) and \(J_x\) such that
\(C_x[I_x]\cong A_x\) 
and 
\(C_x[J_x]\cong B_x\) and \(I_x\times G \cong J_x\times H=P_x\).
Replacing \(C_x\) by \(C_x[D_x]\) for some greatest common divisor \(D_x\) of \(I_x\) and \(J_x\),
we may assume that \(I_x\) and \(J_x\) are coprime. It follows that \(P_x\) is a least common
multiple of \(G\) and \(H\), as defined in Remark~\ref{rem:gcd}. In particular, when $x$ ranges over $X$,
the finite abelian groups \(P_x\) are pairwise isomorphic, and as a consequence the same holds for
the groups \(I_x\). Hence there exists a subgroup \(I\subset \prod_{x\in X} I_x\) such that all
projections \(I\to I_x\) are isomorphisms, so  
from Proposition~\ref{prop:join} it follows that
\(C[I]\cong A\) with \(C=\prod_{x\in X} C_x\). Similarly we find a finite abelian group \(J\) that
is isomorphic to all $J_x$ such that \(C[J]\cong B\). Now $I$ and $J$ together satisfy
\(I \times G \cong J \times H\), as desired. \qed \\

\noindent{\bf{Proof of Theorem~\ref{thm:universal}.}}
Let \((A,G)\in\GpRg(R)\) be a maximal element (Lemma~\ref{lem:max_exists}).
Then \(A\) is stark by Lemma~\ref{lem:stark_iff_maximal}.
Suppose \(B\) is a stark ring and \(H\) is a finite abelian group such that \(B[H]\cong R\).
By Theorem~\ref{thm:gcd} there exist an order \(C\) and finite abelian groups \(I\) and \(J\)
such that \(A\cong C[I]\)  
and \(B\cong C[J]\) and \(I\times G\cong J\times H\). 
Since both \(A\) and \(B\) are stark we conclude 
that 
\(I=J=1\), so 
\(G\cong H\) and
\(A\cong C\cong B\). Hence \(A\) and \(G\) are unique up to ring and group isomorphism, respectively. \qed \\

\noindent{\bf{Proof of Theorem~\ref{thm:matrix_group_corollary}.}}
We use the notation of Theorem~\ref{thm:matrix_group_corollary}. 
By Remark \ref{rem:r2d2}, if $\alpha\in\Aut(A)$ and $\gamma\in\Gamma$ then $\alpha(A_\gamma) = A_{\alpha\gamma}$. 
It then follows readily from Theorem~\ref{thm:matrix_group} that the orbit of \(A\) under \(\Aut(A[G])\)
is $\mathcal{A}$ and the orbit of \(G\) under \(\Aut(A[G])\) is 
$\mathcal{G}$. 
Theorem~\ref{thm:matrix_group_corollary} now follows from Theorem~\ref{thm:universal}. \qed

\section{Algorithms}\label{sec:algo}

In this section we will prove Theorem~\ref{thm:algos}, the algorithmic counterpart to Theorem~\ref{thm:universal}.
To state our theorems rigorously, we should specify how our data are encoded. 
We will continue the conventions used in \cite{RootsOfUnity,algorithmsQ,
UniversalGradings,Lattices,Iuliana,Daan}, the results from which are therefore compatible.
They can be briefly described as follows. Integers and rationals are encoded straightforwardly. 
Any finitely generated free module is specified by its rank, its elements are represented by vectors,
and morphisms between such modules are represented by matrices.
All rings we consider are Noetherian, so each finitely generated module is a cokernel of some
morphism of finitely generated free modules, and it is encoded as such.
Finally, a ring structure on a finitely generated abelian group is encoded as a collection of
multiplication maps, one for every generator. For matrices with integer coefficients we can do
multiplication and compute (bases for) kernels and images in polynomial time as described in
\cite{Lattices}.

\begin{theorem}[Theorem 4.1.1 in \cite{Iuliana}]\label{thm:comp_gcd}
There exists a polynomial-time algorithm that, given a finite ring \(R\) and finite \(R\)-modules \(M_1\) and \(M_2\), computes a greatest common divisor \(D\) of \(M_1\) and \(M_2\) as defined in Remark~\ref{rem:gcd}, together with injections \(\iota_i\colon D\to M_i\) and a complement \(N_i\subset M_i\) such that \(N_i\oplus \iota_i D=M_i\). 
\end{theorem}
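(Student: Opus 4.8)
Since this statement is quoted from \cite{Iuliana}, I only indicate how one might argue. The plan is to reduce the computation of a greatest common divisor to an explicit decomposition of $M_1$ and $M_2$ into indecomposable summands, followed by an isomorphism-matching step. Observe first that a finite module over a finite ring has finite length, so Theorem~\ref{thm:krs} and Remark~\ref{rem:gcd} apply and $D$ exists; moreover the number of indecomposable summands of $M_i$ is at most $\log_2\#M_i$, which is polynomial in the input length, so all the data produced below will have polynomial size.

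\emph{Decomposing a module.} Given a finite $R$-module $M$, presented as the cokernel of a morphism of finite free $R$-modules, I would first compute the finite ring $E=\End_R(M)$; by the encoding conventions of Section~\ref{sec:algo} every $\Hom$-group between such modules is the kernel of an explicit $\Z$-linear map and is computed in polynomial time. I then compute the Jacobson radical $\textup{Jac}(E)$ and the Wedderburn decomposition of the finite semisimple ring $E/\textup{Jac}(E)\cong\prod_i M_{n_i}(\F_{q_i})$, extract a complete set of orthogonal primitive idempotents there, and lift it through the nilpotent ideal $\textup{Jac}(E)$ by the usual idempotent-lifting procedure. Feeding these idempotents into the bijection $\Phi$ of Proposition~\ref{prop:id_end_dec} yields an explicit decomposition $M=\bigoplus_\alpha L_\alpha$ into indecomposables, recorded by the corresponding projection and injection matrices; by Lemma~\ref{lem:fitting} each $\End_R(L_\alpha)$ is local.

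\emph{Matching and assembling the output.} Applying this to $M_1=\bigoplus_\alpha L_\alpha$ and $M_2=\bigoplus_\beta L'_\beta$, I test, for each of the polynomially many pairs $(\alpha,\beta)$, whether $L_\alpha\cong L'_\beta$: since $\End_R(L_\alpha)$ is local, this holds if and only if some value of the composition map $\Hom_R(L'_\beta,L_\alpha)\times\Hom_R(L_\alpha,L'_\beta)\to\End_R(L_\alpha)$ lies outside $\textup{Jac}(\End_R(L_\alpha))$, and from such a pair of maps one recovers an explicit isomorphism $L_\alpha\isom L'_\beta$ (rescale the first map by the resulting unit; the other composite is then a nonzero idempotent in a local ring, hence the identity). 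Grouping the $L_\alpha$ and the $L'_\beta$ into isomorphism classes and taking in each class the minimum of the two multiplicities, I obtain a matching $\mathcal{M}$ of a subfamily of the $L_\alpha$ with a subfamily of the $L'_\beta$. Put $D=\bigoplus_{(\alpha,\beta)\in\mathcal{M}}L_\alpha$. By Theorem~\ref{thm:krs} this is a greatest common divisor of $\{M_1,M_2\}$: each summand divides both modules, and the class multiplicities are as large as possible. Finally $\iota_1\colon D\to M_1$ is the inclusion of the chosen summands, with complement $N_1=\bigoplus_{\alpha\notin\mathcal{M}}L_\alpha$; and $\iota_2\colon D\to M_2$ sends the $(\alpha,\beta)$-component through $L_\alpha\isom L'_\beta\hookrightarrow M_2$, with complement $N_2=\bigoplus_{\beta\notin\mathcal{M}}L'_\beta$. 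All maps are matrices of polynomial size.

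The hard part is not the surrounding linear algebra but the two subroutines invoked above: computing the Jacobson radical of a finite ring and producing a complete set of primitive orthogonal idempotents (equivalently, an explicit Wedderburn decomposition), both in \emph{deterministic} polynomial time — the latter brushing against the problem of deterministic polynomial factorization over finite fields. In a self-contained treatment these, rather than the bookkeeping, would carry essentially all the weight; here they are exactly what \cite{Iuliana} and its antecedents supply.
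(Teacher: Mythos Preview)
The paper does not prove this theorem at all; it is simply imported from \cite{Iuliana}, so there is no in-paper argument to compare your sketch against.

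That said, your outline has a real problem. The paper insists that all algorithms be \emph{deterministic}. Your plan hinges on extracting a complete set of primitive orthogonal idempotents from $E/\textup{Jac}(E)$, i.e., on an explicit Wedderburn splitting of a finite semisimple ring. Already for commutative $E/\textup{Jac}(E)$ this is the problem of splitting a finite \'etale $\F_p$-algebra into fields, which is equivalent to deterministic polynomial-time factorization over finite fields---a well-known open problem. You correctly flag that your approach ``brushes against'' this, but then assert that \cite{Iuliana} supplies precisely these subroutines. It does not: no one knows how to do deterministic Wedderburn decomposition, and a large part of the point of \cite{Iuliana} is to obtain results such as the one cited \emph{without} passing through primitive idempotents or full indecomposable decompositions. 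So the gap is not merely that you deferred the hard step; it is that the hard step, as you framed it, is not available, and the cited source takes a genuinely different route around it.

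Everything else in your sketch (computing $\End_R(M)$, the Jacobson radical, lifting idempotents through a nilpotent ideal, and the bookkeeping via Proposition~\ref{prop:id_end_dec}) is unproblematic and would combine correctly with a randomized Wedderburn routine to give a \emph{probabilistic} polynomial-time algorithm; it is only the determinism requirement that breaks it.
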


\begin{proposition}\label{prop:comp_gcd_inf}
For each of \(R=\Z\) and \(R=\ltm\) there exists a polynomial-time algorithm that, given finite
\(R\)-modules \(M_1\) and \(M_2\), computes a greatest common divisor \(D\) of \(M_1\) and \(M_2\),
together with injections \(\iota_i\colon D\to M_i\) and a complement \(N_i\subset M_i\) such that
\(N_i\oplus \iota_i D=M_i\).
\end{proposition}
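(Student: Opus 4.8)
The plan is to reduce to the finite-ring algorithm already provided by Theorem~\ref{thm:comp_gcd}. The point is that, although \(\Z\) and \(\ltm\) are infinite, every finite module \(M\) over either ring is annihilated by some \(n\in\Z_{>0}\), namely the exponent of its underlying finite abelian group; hence the \(R\)-module structure on \(M\) factors through the finite quotient ring \(\bar R=R/nR\), which has \(n\) elements when \(R=\Z\) and \(n^{3}\) elements when \(R=\ltm\) (recall \(\ltm\) is free of rank \(3\) as a \(\Z\)-module). Conversely, every \(\bar R\)-module is an \(R\)-module by restriction of scalars. Since a divisor of \(M_i\) is in particular a quotient of \(M_i\), it too is killed by \(n\); therefore the \(R\)-submodules of each \(M_i\), the internal direct-sum decompositions of \(M_i\), and the \(R\)-module homomorphisms between \(n\)-torsion \(R\)-modules all coincide with their \(\bar R\)-counterparts. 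In particular an \(\bar R\)-module \(D\) is a greatest common divisor of \(\{M_1,M_2\}\) over \(\bar R\), in the sense of Remark~\ref{rem:gcd}, if and only if it is one over \(R\), and the accompanying data \((\iota_i,N_i)\) carry the same meaning in both settings.

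Concretely, the algorithm I have in mind proceeds as follows. First, from the given presentation of each \(M_i\) as a cokernel of a morphism of finitely generated free \(R\)-modules, produce a presentation of the underlying abelian group of \(M_i\) as a cokernel of a \(\Z\)-linear map --- for \(R=\ltm\) this amounts to replacing \(\ltm\) by its underlying free \(\Z\)-module of rank \(3\), so that a free \(\ltm\)-module of rank \(k\) becomes \(\Z^{3k}\) and every \(\ltm\)-relation is rewritten as three \(\Z\)-relations --- and then compute Smith normal forms to read off the exponent \(n\) of \(M_1\oplus M_2\); this is polynomial-time by the integer-matrix routines of \cite{Lattices}. Next, form the finite ring \(\bar R=R/nR\), specified by multiplication maps on its (polynomially small) generating set, and re-encode \(M_1\) and \(M_2\) as \(\bar R\)-modules: the generators carry over unchanged, while the relation submodule is the reduction modulo \(n\) of the original one, which already contains \(n\) times each generator because \(M_i\) is \(n\)-torsion. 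Finally, invoke the algorithm of Theorem~\ref{thm:comp_gcd} on \(\bar R\), \(M_1\), \(M_2\) to obtain \(D\), the injections \(\iota_i\colon D\to M_i\), and the complements \(N_i\subset M_i\), and return this data; by the first paragraph it is a valid output over \(R\).

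The one genuine point to check --- which I expect to be the main, though modest, obstacle --- is that the running time is polynomial in the \emph{input} length and not merely in \(n\). Here one uses that \(n\) is bounded by the largest elementary divisor produced by the Smith normal form computation, so that \(\log n\) is polynomial in the input length by Hadamard-type bounds on minors; consequently \(\bar R\) has at most \(n^{3}\) elements, each of bit-size \(O(\log n)\), and is thus of polynomial size, and the base-change re-encoding of \(M_1,M_2\) keeps all matrices polynomially large. Granting these size bounds, correctness is immediate from the dictionary between \(R\)-modules and \(\bar R\)-modules set up above, and everything that remains is routine matrix bookkeeping of the sort already used in \cite{Lattices} and Theorem~\ref{thm:comp_gcd}.
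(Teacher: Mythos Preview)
Your approach is correct and essentially identical to the paper's: both reduce to the finite ring \(\bar R=R/nR\) with \(n\) the common exponent of \(M_1,M_2\), observe that the divisor problem is unchanged under this base-change, and then invoke Theorem~\ref{thm:comp_gcd}. The paper cites \cite[Theorem~2.6.9]{Iuliana} to compute \(n\) where you use Smith normal form, and your write-up is more detailed---though the clause ``has at most \(n^3\) elements \ldots\ and is thus of polynomial size'' is a non sequitur, since it is the \emph{encoding} of \(\bar R\) (generators plus multiplication maps of bit-size \(O(\log n)\)), not its cardinality, that is polynomial.
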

\begin{proof}
By Theorem 2.6.9 in \cite{Iuliana} we may compute the exponents of \(M_1\) and \(M_2\), and their
least common multiple \(n\), in polynomial time. Note that \(M_1\) and \(M_2\) are \(R/nR\)-modules
and that replacing $R$ by $R/nR$ does not change the problem. Since
\(R/nR\) is a finite ring, we can thus reduce to Theorem~\ref{thm:comp_gcd}.
\end{proof}

Proposition~\ref{prop:matrix_cat} allows us to interpret a morphism of finite abelian
groups as a finite length \ltm-module. Although both types of objects are represented differently,
one easily deduces from the proof of Proposition~\ref{prop:matrix_cat} that we can change
representations in polynomial time.

In the following result, \(\textup{Dec}_\mathcal{I}(d)\) is as defined in Definition~\ref{def:dec},
Remark~\ref{rem:transposing}.iv, and Definition~\ref{def:I}.

\begin{proposition}\label{prop:comp_S1}
There exists a polynomial-time algorithm that, given finite abelian groups \(A\) and \(B\) and a
morphism \(d\colon A\to B\), computes a maximal element of \(\textup{Dec}_\mathcal{I}(d)\).
\end{proposition}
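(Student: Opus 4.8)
The plan is to reduce, via the equivalence of categories of Proposition~\ref{prop:matrix_cat}, to a single greatest common divisor computation for \(\ltm\)-modules, which is handled in polynomial time by Proposition~\ref{prop:comp_gcd_inf}.

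Concretely, I would first replace the input morphism \(d\colon A\to B\) by the corresponding finite-length \(\ltm\)-module \(M=F(d)\), where \(F\) is the functor from the proof of Proposition~\ref{prop:matrix_cat}; as noted above, this change of representation and its inverse can be performed in polynomial time. Then I would form the \(\ltm\)-module \(L=F(\id_A)\) attached to the identity morphism of \(A\), which has a presentation of polynomial size. The heart of the matter is the claim that, after transport through the equivalence, a maximal element of \(\textup{Dec}_\mathcal{I}(d)\) is exactly a greatest common divisor of \(\{M,L\}\) together with a choice of complement. To establish this I would show that the common divisors of \(\{M,L\}\) are precisely the divisors of \(M\) that lie in \(\mathcal{I}\): a common divisor is a divisor of \(L\in\mathcal{I}\), hence lies in \(\mathcal{I}\) since \(\mathcal{I}\) is saturated; conversely, a divisor of \(M\) lying in \(\mathcal{I}\) is isomorphic, as an object of \homcat{\catt{ab}}, to \(F(\id_C)\) where \(C\) is the domain of the corresponding isomorphism, and \(C\) is isomorphic to a direct summand of \(A\), so \(F(\id_C)\) divides \(F(\id_A)=L\). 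Hence a greatest common divisor \(D\) of \(\{M,L\}\) is, up to isomorphism, a largest divisor of \(M\) lying in \(\mathcal{I}\).

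I would then invoke Proposition~\ref{prop:comp_gcd_inf} with \(R=\ltm\), \(M_1=M\), \(M_2=L\) to compute in polynomial time such a \(D\), together with an injection \(\iota\colon D\to M\) and a complement \(N\subset M\) with \(N\oplus\iota D=M\). Since \(\iota D\cong D\) lies in \(\mathcal{I}\) by Proposition~\ref{prop:decI}.i, the pair \((N,\iota D)\) lies in \(\textup{Dec}_\mathcal{I}(M)\), and it is maximal: by Proposition~\ref{prop:decI}.iii it suffices to check that \(0\) is the only divisor of \(N\) lying in \(\mathcal{I}\), and if \(D'\) were such a divisor then \(D'\oplus\iota D\) would be a divisor of \(M\) lying in \(\mathcal{I}\), hence a divisor of \(D\) by maximality of \(D\), whence \(D'=0\) by Krull--Remak--Schmidt (Theorem~\ref{thm:krs}). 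Transporting \((N,\iota D)\) back through the equivalence (Remark~\ref{rem:transposing}) finally yields a pair \((d_0,d_1)\) of restrictions of \(d\) with \(d_0\oplus d_1=d\) and \(d_1\) an isomorphism, i.e.\ a maximal element of \(\textup{Dec}_\mathcal{I}(d)\), which the algorithm outputs.

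The representation changes and the manipulations of presentations are routine. The step that demands care is the reduction itself: identifying the maximal elements of \(\textup{Dec}_\mathcal{I}(d)\) with greatest common divisors of \(M\) and \(L\), and in particular verifying both that every \(\mathcal{I}\)-divisor of \(M\) divides \(L\) and that the complement furnished by Proposition~\ref{prop:comp_gcd_inf} is automatically a maximal element of \(\textup{Dec}_\mathcal{I}(M)\); this is where the Krull--Remak--Schmidt theorem is used.
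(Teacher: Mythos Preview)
Your proof is correct and follows essentially the same strategy as the paper: reduce to a single greatest common divisor computation of \(\ltm\)-modules via Proposition~\ref{prop:comp_gcd_inf}, identifying the divisors of \(M=F(d)\) lying in \(\mathcal{I}\) with the common divisors of \(M\) and a suitable identity module. The only difference is that the paper first computes \(D=\gcd_\Z(A,B)\) and then takes the \(\ltm\)-gcd of \(d\) with \(\id_D\), whereas you skip this preliminary step and work directly with \(\id_A\); your observation that any \(\mathcal{I}\)-divisor of \(d\) already divides \(\id_A\) makes the extra \(\Z\)-gcd unnecessary, so your version is in fact slightly more streamlined.
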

\begin{proof}
By Proposition~\ref{prop:comp_gcd_inf} we may compute in polynomial time a greatest common divisor
\(D\) of \(A\) and \(B\) as \(\Z\)-modules.
Similarly we may compute a greatest common divisor \(E\) of \(d\) and \(\id_{D}\) as \(\ltm\)-modules.
We also obtain submodules \(d_0\) and \(d_1\) of \(d\) such that \(d_1\cong E\) and \(d=d_0\oplus d_1\).
We claim that \((d_0,d_1)\) is a maximal element of \(\textup{Dec}_\mathcal{I}(d)\).
First note that \(d_1\) is a divisor of \(\id_{D}\) and thus must be an isomorphism. 
As \(d=d_0\oplus d_1\) we indeed have that \((d_0,d_1)\in \textup{Dec}_\mathcal{I}(d)\).
Let \((e_0,e_1)\geq (d_0,d_1)\) be maximal in \(\textup{Dec}_\mathcal{I}(d)\).
Since \(e_1\) is an isomorphism, it is isomorphic to \(\id_F\) for some finite abelian group \(F\).
Since \(e_1\) is a direct summand of \(d\), the group \(F\) is a direct summand of both \(A\) and \(B\), so \(F\) is a divisor of their greatest common divisor \(D\).
Thus \(e_1\) is a divisor of \(\id_D\). 
It follows that \(e_1\) is a divisor of \(E\cong d_1\), so \((d_0,d_1)=(e_0,e_1)\) and thus \((d_0,d_1)\) is maximal, as was to be shown.
\end{proof}

In the following result, we represent a grading $(\Delta,(R_\delta)_{\delta\in\Delta})$ of an
order $R$ by a finitely generated abelian group $\Delta$ by specifying only those subgroups
$R_\delta$ of $R$ that are non-zero. This differs from the representation of gradings used
in \cite{Daan}, which restricts to finite $\Delta$ and specifies {\em all\/} subgroups $R_\delta$;
the latter representation can readily be converted to the former in polynomial time, which is all
we need.

\begin{proposition}\label{prop:comp_S0}
There exists a polynomial-time algorithm that, given a reduced order $R$ and a universal grading
\((\Gamma,\mathcal{R})\) of \(R\), computes a maximal element of \(\GpRg(R)\) as
defined in Definition~\ref{def:S0}.
\end{proposition}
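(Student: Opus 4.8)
The plan is to reduce to the connected case---where Theorem~\ref{thm:composite_maps} and Proposition~\ref{prop:comp_S1} do all the work---and then to glue the pieces together with Proposition~\ref{prop:join}. If $R=0$ the algorithm returns $(0,\{1\})$, so assume $R\ne0$. First I would compute the primitive idempotents $e_x$ ($x\in X$) of $R$ and hence the decomposition $R=\prod_{x\in X}R_x$ into connected reduced orders $R_x=e_xR$; this is polynomial time by standard methods (pass to the finite reduced $\Q$-algebra $R\otimes\Q=\prod_iK_i$, a product of number fields, compute its primitive idempotents, and intersect with $R$), and $\#X$ is at most the $\Z$-rank of $R$. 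Using Proposition~\ref{prop:product_grading}(i) I would then split the given universal grading: for each of the (at most $\#X$-many, in fact at most $\Z$-rank many) non-zero homogeneous pieces $R_\gamma$ of $\mathcal{R}$ and each $x$, compute $e_xR_\gamma\subseteq R_\gamma$ and keep the non-zero ones; this produces the universal grading $(\Gamma_x,(e_xR_\gamma)_\gamma)$ of $R_x$, where $\Gamma_x=\langle\gamma:e_xR_\gamma\ne0\rangle$ is the corresponding subgroup of $\Gamma$. Throughout, one must bear in mind that $\Gamma$, though finite, may have exponential order, so the algorithm should only ever touch the non-zero homogeneous components and carry $\Gamma$ as an abstract finitely generated abelian group.

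For each $x$ I would handle $R_x$ as follows. Compute $\mu(R)$ by the algorithm of \cite{RootsOfUnity}; for each $x$ the group $\mu(R_x)$ is the image of $\mu(R)$ under the projection $R\to R_x$. Form the degree map $d_x\colon\mu(R_x)\to\Gamma_x$ by testing, for each generator $\zeta$ of $\mu(R_x)$, which (by Lemma~\ref{lem:degree_abhom} unique) non-zero component $e_xR_\gamma$ contains $\zeta$---a linear-algebra computation over $\Z$---so that $d_x$ is obtained as an explicit morphism of finite abelian groups. Apply Proposition~\ref{prop:comp_S1} to $d_x$ to compute, in polynomial time, a maximal element $(d_0,d_1)\in\textup{Dec}_\mathcal{I}(d_x)$, with $d_i\colon\mu_{x,i}\to\Gamma_{x,i}$ the associated restrictions. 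Since the bijection of Theorem~\ref{thm:composite_maps} is an isomorphism of partially ordered sets, the pair $(A_x,G_x)=\big(\bigoplus_{\gamma\in\Gamma_{x,0}}e_xR_\gamma,\ \mu_{x,1}\big)$ is then a maximal element of $\GpRg(R_x)$; here $A_x$ is computed by summing those non-zero components $e_xR_\gamma$ with $\gamma\in\Gamma_{x,0}$, again via a polynomial-time membership test in $\Gamma_x$.

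It remains to glue. Iterating the $R=\Z$ case of Proposition~\ref{prop:comp_gcd_inf} (at most $\#X-1$ times) on the finite abelian groups $G_x$ and composing the injections and complements produced along the way, I would obtain a greatest common divisor $D$ of $\{G_x:x\in X\}$, injections $\iota_x\colon D\to G_x$, and subgroups $E_x\subseteq G_x$ with $E_x\oplus\iota_x(D)=G_x$. Put $A=\prod_{x\in X}A_x[E_x]\subseteq R$ and $D'=\{(\iota_xd)_x:d\in D\}\subseteq\prod_{x\in X}G_x\subseteq R^*$, so that every projection $D'\to\iota_x(D)$ is an isomorphism. By Proposition~\ref{prop:join}---each $(A_x,G_x)$ being maximal in $\GpRg(R_x)$ and $D'\cong D$ being a greatest common divisor of the $G_x$---the pair $(A,D')$ lies in $\GpRg(R)$ and is maximal; this is the output.

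I expect the difficulty to be organizational rather than conceptual. The genuine points to get right are: (i) keeping all manipulations polynomial in the input size even though $\Gamma$ and the groups $\mu(R_x)$ may have exponential order, so that one works solely with the non-zero homogeneous components and with presentations of these abelian groups; and (ii) assembling the poly-time primitives correctly---computing idempotents of a reduced order, computing $\mu(R)$, and invoking the algorithms of Propositions~\ref{prop:comp_gcd_inf} and \ref{prop:comp_S1}---while checking that the correspondences supplied by Theorem~\ref{thm:composite_maps} and Proposition~\ref{prop:join} are effective in both directions.
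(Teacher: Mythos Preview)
Your proposal is correct and follows essentially the same route as the paper's proof: decompose $R$ into connected factors (the paper cites Theorem~1.1 in \cite{RootsOfUnity} rather than describing the idempotent computation directly), push the universal grading to each factor via Proposition~\ref{prop:product_grading}(i), compute the degree map and apply Proposition~\ref{prop:comp_S1} in each factor, translate back to $\GpRg(R_x)$ via Theorem~\ref{thm:composite_maps}, and glue with Proposition~\ref{prop:join} using the gcd algorithm of Proposition~\ref{prop:comp_gcd_inf}. Your care about touching only non-zero homogeneous components and carrying $\Gamma$ as a presentation is appropriate and matches the representation conventions the paper sets up at the start of Section~\ref{sec:algo}.
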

\begin{proof}
First suppose \(R\) is connected.
By Theorem~1.2 in \cite{RootsOfUnity} we may compute \(\mu=\mu(R)\) in polynomial time and thus also the group homomorphism \(d\colon \mu\to\Gamma\) as defined in Definition~\ref{def:degree-map}. 
We may compute a maximal element \((d_0,d_1)\in\textup{Dec}_\mathcal{I}(d)\), with \(d_i\colon \mu_i\to \Gamma_i\) as in Remark~\ref{rem:mui_direct_summand}, in polynomial time using Proposition~\ref{prop:comp_S1}. 
Under the isomorphisms of partially ordered sets of Theorem~\ref{thm:composite_maps} this \(d\) corresponds to a maximal element \((A,G)\in\GpRg(R)\), where \(A=\sum_{\gamma\in \Gamma_0} R_\gamma\) and \(G=\mu_1\), which we may compute in polynomial time.

Now consider the general case.
By Theorem~1.1 in \cite{RootsOfUnity} we may compute in polynomial time connected reduced orders \((R_x)_{x\in X}\) for some index set \(X\) such that \(R\cong \prod_{x\in X} R_x\), together with the projections \(\pi_x\colon R\to R_x\).
Using Proposition~\ref{prop:product_grading}.i we may construct universal gradings for the \(R_x\) in polynomial time. 
Hence by the special case we may compute a maximal element of \(\GpRg(R_x)\) for all \(x\in X\) in polynomial time.
Finally, we may apply Proposition~\ref{prop:join} to compute a maximal element of \(\GpRg(R)\), observing that the construction in Proposition~\ref{prop:join} can be carried out in polynomial time using Proposition~\ref{prop:comp_gcd_inf}.
\end{proof}

Computing a maximal element of \(\GpRg(R)\) for a reduced order \(R\) is now reduced to finding a universal grading of \(R\). 

\begin{theorem}[Theorem~1.4 in \cite{Daan}]\label{thm:daan_extern}
There is an algorithm that takes a reduced order \(R\) as input and produces a universal grading of \(R\) in time \(n^{O(m)}\), where \(n\) is the length of the input and \(m\) is the number of minimal prime ideals of~\(R\).
\end{theorem}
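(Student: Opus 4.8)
This theorem is Theorem~1.4 of \cite{Daan}, whose proof we do not reproduce; we only indicate the approach one would take. \emph{The plan is to} reduce to the connected case, then describe the gradings of a connected reduced order explicitly enough to bound and enumerate the candidates and pick out the universal one.

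\emph{The reduction.} By Theorem~1.1 of \cite{RootsOfUnity} one computes in polynomial time a decomposition $R\cong\prod_{x\in X}R_x$ into connected reduced orders, together with the projections. The number of minimal primes is additive, $m=\sum_{x\in X}m_x$, and by Proposition~\ref{prop:product_grading}(ii) the universal grading of $R$ is obtained from those of the $R_x$ by the explicit product construction; so it suffices to produce a universal grading of a connected reduced order in time $n^{O(m)}$ and then reassemble in polynomial time.

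\emph{The connected case.} For connected reduced $R$ I would pass to the \'etale $\Q$-algebra $F=R\otimes_\Z\Q\cong K_1\times\dotsb\times K_m$, a product of number fields indexed by the minimal primes of $R$. A grading of $R$ induces a grading of $F$ with the same group, and a grading of $F$ is an action on the $m$-point scheme $\operatorname{Spec}F$ of the diagonalizable group scheme dual to the grading group. This separates the grading group into a ``combinatorial'' part, permuting the factors $K_i$ and hence bounded in terms of $m$ alone, and, once the orbit pattern is fixed, an ``arithmetic'' part grading a single field factor over a subfield; the latter has all homogeneous components of $\Q$-dimension one, so its order is at most $[K_i:\Q]\le n$. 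Altogether the grading group of any grading of $R$ has order $n^{O(m)}$, and the candidate grading groups together with the corresponding candidate homogeneous decompositions of $R$ — again only $n^{O(m)}$ of them, using that a number field has only finitely many subfields — can be listed in time $n^{O(m)}$. For each candidate one checks, as a finite system of $\Z$-module equalities, saturations, and multiplicative inclusions, whether it really is a grading of $R$; the module-theoretic bookkeeping (complements, greatest common divisors) is polynomial by Theorem~\ref{thm:comp_gcd} and Proposition~\ref{prop:comp_gcd_inf}. Among the finitely many valid candidates, the universal grading exists by Theorem~\ref{thm:universal_main_thm1} and is characterised as the one admitting a morphism of gradings onto every other; one identifies it and outputs it together with its group $\Gamma(R)$.

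\emph{The main obstacle} is the structure theory underpinning this enumeration: one must describe a grading of a reduced order precisely in terms of (a) the pattern it induces on the minimal primes and (b) the arithmetic of the number-field factors together with the way $R$ sits inside its normalization $\prod_i\mathcal O_{K_i}$, the conductor being the obstruction to lifting gradings from the normalization down to $R$. Carrying this out effectively — and, so as to stay within time $n^{O(m)}$ rather than incurring the cost of integer factorisation, doing so \emph{without} computing the maximal orders $\mathcal O_{K_i}$, working with $R$ and with arbitrary orders throughout — is where the real content of \cite{Daan} lies.
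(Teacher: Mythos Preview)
Your proposal correctly identifies that the paper does not prove this theorem: it is simply quoted as Theorem~1.4 of \cite{Daan} and used as a black box in the proof of Theorem~\ref{thm:algos}. The sketch you provide is a plausible outline of the argument in \cite{Daan}, but it goes well beyond what the present paper does, which is nothing at all for this statement.
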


The above algorithm does not, in general, run in polynomial time;
but when we bound \(m\), or equivalently the number $2^m$ of idempotents of \(R\tensor_\Z \Q\), by a constant
it is guaranteed to finish in polynomial time.

We now consider the case where \(R\) is generated as a group by its autopotent elements.
We recall that a ring element \(x\) is called autopotent if \(x^{n+1}=x\) for some $n\in\Z_{>0}$. 
Write \(\alpha(R)\) for the set of autopotents of \(R\).

\begin{proposition}\label{prop:autopotents}
Let $S$ and $R$ be rings. Then: 
\begin{enumerate}[nosep]
\item The roots of unity and idempotents of \(R\) are autopotent.
\item The product of any two commuting autopotents of $R$ is autopotent.
\item We have \(\mu(R\times S)=\mu(R)\times\mu(S)\) and \(\alpha(R\times S)=\alpha(R)\times\alpha(S)\).
\item Let $x\in R$. Then $x\in\alpha(R)$ if and only if there exist an idempotent $e\in R$ and a $\zeta\in \mu(R)$ such that $x=e\zeta=\zeta e$.
\item If $R$ is commutative, then $R$ is 
generated 
as a ring 
by \(\alpha(R)\) if and only if its
additive group is generated by \(\alpha(R)\).
\item As groups, $R\times S$ is generated by autopotents if and only if each of $R$ and $S$ is generated by autopotents.
\item If $R$ is connected, then \(\alpha(R)=\mu(R)\cup\{0\}\).
\end{enumerate}
\end{proposition}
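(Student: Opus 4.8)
The plan is to obtain both inclusions directly from parts~(i) and~(iv), using that connectedness forces the only idempotents of \(R\) to be \(0\) and \(1\).

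For the inclusion \(\mu(R)\cup\{0\}\subseteq\alpha(R)\): one has \(0\in\alpha(R)\) since \(0^{2}=0\), and \(\mu(R)\subseteq\alpha(R)\) is precisely part~(i). For the reverse inclusion, I would take an arbitrary \(x\in\alpha(R)\) and apply part~(iv) to write \(x=e\zeta=\zeta e\) with \(e\in\textup{Id}(R)\) and \(\zeta\in\mu(R)\). Since \(R\) is connected, \(\textup{Id}(R)=\{0,1\}\) by definition; if \(e=0\) then \(x=0\), while if \(e=1\) then \(x=\zeta\in\mu(R)\). In either case \(x\in\mu(R)\cup\{0\}\), which finishes the argument.

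I expect no real obstacle here: the substantive work has been packaged into part~(iv), and the connectedness hypothesis enters only to collapse the idempotent produced there to \(0\) or \(1\). The single point worth a sentence is that (iv) genuinely yields an honest idempotent \(e\) of \(R\) (not merely an idempotent modulo some ideal), so that the dichotomy \(e\in\{0,1\}\) applies verbatim.
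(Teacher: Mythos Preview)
Your argument for part~(vii) is correct and is exactly the paper's approach: the paper simply records that (vii) follows from (iv), and your write-up spells out the two-line deduction using connectedness to force $e\in\{0,1\}$.
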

\begin{proof}
Statements (i), (ii) and (iii) are trivial.  
The `if'-part
of (iv) 
follows from (i) and (ii).
Conversely, suppose \(x^{n+1}=x\). Then \(e=x^{n}\) satisfies \(e^2=e\), so \(e\) is idempotent. 
Assume without loss of generality that \(R=\Z[x]\), so 
\(R\) is commutative.
Hence we may decompose \(R=eR\times(1-e)R\). As \(x\in eR\) is an \(n\)-th root of unity, so is \(\zeta=(x,1)\in R\). 
Then \(x=e\zeta=\zeta e\).

By (ii) 
the set of autopotents is 
closed under multiplication,
and this gives (v).
Part (vi) 
follows from (iii) combined with the fact that \(0\in\alpha(R)\) and \(0\in\alpha(S)\).
Part (vii) follows from (iv).
\end{proof}

The `if'-part of (vi) is wrong when we replace `autopotents' by `roots of unity':
As a group the connected ring \(\Z\) is generated by its roots of unity \(\{\pm1\}\), but for \(\Z\times\Z\) the group generated by the roots of unity does not contain \((1,0)\).
This is the main reason we introduce autopotents.

\begin{lemma}\label{lem:alpha_gen_implies_reduced}
Let \(R\) be an order that is generated as a group by \(\alpha(R)\). Then \(R\) is reduced.
\end{lemma}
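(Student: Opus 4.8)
The plan is to pass from $R$ to the finite-dimensional commutative $\Q$-algebra $A=R\tensor_\Z\Q$. Since $R$ is an order, its additive group is free, so $R$ embeds into $A$, and clearing denominators shows that $R$ is reduced if and only if $A$ is reduced. Moreover, if the additive group of $R$ is generated by $\alpha(R)$, then $A$ is spanned as a $\Q$-vector space by the images in $A$ of the elements of $\alpha(R)$; each such image is an autopotent of $A$, so $A$ is $\Q$-spanned by $\alpha(A)$. Thus it suffices to prove that a finite-dimensional commutative $\Q$-algebra that is $\Q$-spanned by its autopotents is reduced.

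Next I would decompose $A=\prod_{i=1}^{k}A_i$ into its finitely many connected components. By Proposition~\ref{prop:autopotents}(iii), applied inductively, $\alpha(A)=\prod_i\alpha(A_i)$, and since each projection $A\to A_i$ is $\Q$-linear and surjective it follows that $\alpha(A_i)$ spans $A_i$ over $\Q$. As $A_i$ is connected, Proposition~\ref{prop:autopotents}(vii) gives $\alpha(A_i)=\mu(A_i)\cup\{0\}$, so $\mu(A_i)$ spans $A_i$ over $\Q$. Since $1\in\mu(A_i)$ and $\mu(A_i)$ is closed under multiplication, this $\Q$-span is already a subalgebra; hence $A_i$ is generated as a $\Q$-algebra by the group $\mu(A_i)$, and the natural map $\Q[\mu(A_i)]\to A_i$ is surjective.

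It then remains to see that $\mu(A_i)$ is finite, after which the argument closes: $A_i$ is a connected Artinian $\Q$-algebra, hence local (idempotents lift modulo the nilradical), with residue field $K_i=A_i/\textup{nil}(A_i)$ a number field. The reduction map $\mu(A_i)\to K_i^*$ is injective, for if $\zeta\in\mu(A_i)$ with $\zeta-1=m$ nilpotent and $\zeta^N=1$, then expanding $(1+m)^N=1$ yields $m\cdot\big(N+\binom{N}{2}m+\cdots\big)=0$, and the second factor is a unit of the $\Q$-algebra $A_i$ (a nonzero integer plus a nilpotent), forcing $m=0$. Hence $\mu(A_i)$ embeds in the finite group $\mu(K_i)$, so it is finite. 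Then $\Q[\mu(A_i)]$ is semisimple by Maschke's theorem, so its quotient $A_i$ is a product of fields, hence reduced; therefore $A=\prod_iA_i$ is reduced, and so is $R$.

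I expect the only substantive point to be the finiteness of $\mu(A_i)$ — that is, controlling roots of unity inside the possibly non-reduced algebra $A_i$ — which is exactly where characteristic zero is used (a root of unity congruent to $1$ modulo nilpotents must equal $1$); the remainder is formal bookkeeping together with the semisimplicity of group algebras of finite abelian groups over $\Q$.
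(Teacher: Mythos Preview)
Your proof is correct, but it takes a different and considerably longer route than the paper's.

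The paper's argument is a four-line observation: passing to $K=R\otimes_\Z\Q$ as you do, it notes that each autopotent $x$ satisfies a polynomial $X^{n+1}-X$ with distinct roots, so $x$ is a \emph{separable} element of the finite-dimensional $\Q$-algebra $K$. Since the autopotents span $K$, every element of $K$ is separable, and the only separable nilpotent is~$0$. The implicit fact here is that over a perfect field the separable elements of a finite commutative algebra form a subalgebra (equivalently, an algebra generated by separable elements is \'etale), so if they span, the algebra is reduced.

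Your approach instead decomposes $A=R\otimes_\Z\Q$ into local Artinian factors, uses Proposition~\ref{prop:autopotents}(vii) to reduce to the case where autopotents are roots of unity, proves by hand that $\mu(A_i)$ is finite via the injection into the residue field (this is where you use characteristic zero, in the same spirit as the paper's separability argument), and then invokes Maschke's theorem to conclude that $A_i$ is a quotient of a semisimple group algebra, hence reduced. This is entirely valid and arguably more self-contained, since it does not appeal to the theory of separable subalgebras; on the other hand it imports more machinery (the Artinian decomposition, Maschke) and is longer. The paper's proof trades that machinery for a single clean invocation of separability.
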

\begin{proof}
It suffices to prove that \(K=R\tensor_\Z\Q\) is reduced, because \(R\to K\) is injective.
Each \(x\in\alpha(R)\) has a minimal polynomial in \(K[X]\) dividing \(X^{n+1}-X\) for some \(n > 0\).
In particular \(x\) is separable, and consequently so are all elements of \(K\).
As \(0\) is the only separable nilpotent element, the lemma follows.
\end{proof}

\begin{definition}[Example~3.4 in \cite{UniversalGradings}]\label{def:inner_product}
For an order \(R\) we define a bilinear map
\[ \langle x,y\rangle_R = \langle x,y\rangle = \sum_{\sigma\colon R\to\C} \sigma(x)\cdot\overline{\sigma(y)}, \]
where the sum ranges over all ring homomorphisms, of which there are only finitely many.
\end{definition}

\begin{remark}\label{rem:inner_product_domain}
Following Example~3.4 in \cite{UniversalGradings}, the map from Definition~\ref{def:inner_product} is non-degenerate when \(R\) is reduced, i.e., \(\langle x,x\rangle =0\) implies \(x=0\) for all \(x\in R\).
We have a bijective correspondence
\begin{align*} 
\{ \sigma\colon  R\to \C \} \leftrightarrow \{ (\p,\sigma_\p):\p\subset R\text{ a minimal prime ideal},\ \sigma_\p\colon R/\p\to \C \} 
\end{align*}
that sends \(\sigma\colon R\to\C\) to \((\ker(\sigma),\tilde{\sigma})\) where \(\tilde{\sigma}\colon R/\ker(\sigma)\to\C\) is given by the homomorphism theorem, and conversely sends \((\p,\sigma_\p)\) to \(\sigma_\p\) composed with the projection \(\pi_\p\colon R\to R/\p\).
Thus 
for all \(x,y\in R\)
we have 
\(\langle x,y\rangle_R = \sum_{\p\subset R} \langle \pi_\p(x),\pi_\p(y)\rangle_{R/\p}\), where the sum ranges over all minimal prime ideals.
\end{remark}

\begin{lemma}\label{lem:compute_inner}
For all orders \(R\) that are generated as a group by \(\alpha(R)\) we have \(\langle R,R\rangle\subset \Z\).
There exists a polynomial-time algorithm that, given an order \(R\) that is generated as a group by \(\alpha(R)\) and \(x,y\in R\), computes \(\langle x,y\rangle\).
\end{lemma}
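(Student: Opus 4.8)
The plan is to treat the two assertions in turn, reducing each to the computation of a trace.

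\emph{The inclusion $\langle R,R\rangle\subseteq\Z$.} Since $\langle\,,\rangle$ is $\Z$-bilinear and $R$ is generated as a group by $\alpha(R)$, it suffices to prove $\langle x,y\rangle\in\Z$ for autopotents $x,y$. By Proposition~\ref{prop:autopotents}(iv) I may write $x=e\zeta$ and $y=f\eta$ with $e,f\in\Id(R)$ and $\zeta,\eta\in\mu(R)$. For every ring homomorphism $\sigma\colon R\to\C$ the value $\sigma(f)$ is an idempotent of $\C$, hence lies in $\{0,1\}$ and is real, while $\sigma(\eta)$ is a root of unity, so $\overline{\sigma(\eta)}=\sigma(\eta)^{-1}=\sigma(\eta^{-1})$. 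Thus $\overline{\sigma(y)}=\sigma(f\eta^{-1})$ and $\sigma(x)\overline{\sigma(y)}=\sigma(w)$ with $w:=ef\zeta\eta^{-1}\in R$. Summing over $\sigma$ gives $\langle x,y\rangle=\langle w,1\rangle$, and by Remark~\ref{rem:inner_product_domain} this equals $\sum_{\p}\langle w\bmod\p,\,1\rangle_{R/\p}$, the sum running over the minimal primes $\p$ of $R$; each term is the trace of an algebraic integer from a number field down to $\Q$, hence lies in $\Z$. (The hypothesis on $R$ is essential: already for $R=\Z[\sqrt[3]{2}]$ one has $\langle R,R\rangle\not\subseteq\Q$.)

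\emph{The algorithm.} The idea is to exhibit a ring automorphism $c$ of $R$ — a ``complex conjugation'' — with $\sigma(c(r))=\overline{\sigma(r)}$ for all $r\in R$ and all $\sigma\colon R\to\C$, and to reduce the computation of $\langle x,y\rangle$ to it. On an autopotent $a$ with $a^{N+1}=a$ one has $\overline{\sigma(a)}=\sigma(a)^{N-1}=\sigma(a^{N-1})$ for every $\sigma$, which forces $c(a)=a^{N-1}\in R$; since $R$ is reduced by Lemma~\ref{lem:alpha_gen_implies_reduced} the homomorphisms $\sigma$ separate the points of $R$, so this prescription extends $\Z$-linearly to a well-defined ring endomorphism $c$, necessarily with $c^2=\id$. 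Granting $c$, one gets $\langle x,y\rangle=\sum_\sigma\sigma(x)\sigma(c(y))=\sum_\sigma\sigma(x\,c(y))=\langle x\,c(y),1\rangle$, and — again using that $R$ is reduced — $\langle z,1\rangle=\sum_\sigma\sigma(z)$ is exactly the trace of the $\Z$-linear map ``multiplication by $z$'' on $R\cong\Z^n$, i.e.\ the trace of an integer matrix read off from the given multiplication table, computable in polynomial time.

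It remains to produce $c$ efficiently. By Theorems~1.1 and~1.2 of \cite{RootsOfUnity} one computes in polynomial time the primitive idempotents $e_1,\dots,e_m$ of $R$ and a generating set $\zeta_1,\dots,\zeta_k$ of the finite group $\mu(R)$. Using Proposition~\ref{prop:autopotents} — via $R=\prod_i e_iR$ with each $e_iR$ connected and generated by autopotents, so $\alpha(e_iR)=\mu(e_iR)\cup\{0\}$ — one checks that the autopotents $e_i\zeta_j$ generate $R$ as a $\Z$-algebra, and on them $c(e_i\zeta_j)=e_i\zeta_j^{-1}$ is immediate. Expressing a $\Z$-basis of $R$ through these generators by routine polynomial-time linear algebra and using that $c$ is a ring homomorphism then yields the matrix of $c$; forming $x\,c(y)$ and outputting its matrix trace finishes the algorithm. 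The step I expect to require the most care is precisely the construction and efficient computation of $c$ — its existence, its $R$-valuedness, its being a ring automorphism, and its polynomial-time computability — and this is exactly where the hypothesis that $R$ be generated by its autopotents enters, through Proposition~\ref{prop:autopotents}(iv) and Lemma~\ref{lem:alpha_gen_implies_reduced}.
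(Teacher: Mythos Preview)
Your argument for $\langle R,R\rangle\subset\Z$ is correct and close in spirit to the paper's: both reduce the pairing on autopotents to a trace of an algebraic integer. The paper first passes to the domains $R/\mathfrak p$ and there uses $\langle\zeta,\xi\rangle=\Tr(\zeta\xi^{-1})$; you instead stay in $R$, use Proposition~\ref{prop:autopotents}(iv) to write each autopotent as $e\zeta$, and then pass to $R/\mathfrak p$ only at the very end. The two come to the same thing. (A tiny quibble: your formula $\overline{\sigma(a)}=\sigma(a)^{N-1}$ fails when $\sigma(a)=0$ and $N=1$; take $N\ge2$, which is always possible.)

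For the algorithm you take a genuinely different route: rather than reducing to domains, you construct a global conjugation $c\colon R\to R$ and use $\langle x,y\rangle=\Tr_{R/\Z}(x\cdot c(y))$. The existence of $c$, its $R$-valuedness, and the trace formula are all correctly argued. The gap is in the polynomial-time claim for computing~$c$. You say one can ``express a $\Z$-basis of $R$ through these generators by routine polynomial-time linear algebra,'' but the $e_i\zeta_j$ are only $\Z$-\emph{algebra} generators, not $\Z$-module generators; getting from the former to the latter means producing enough \emph{products} of the $e_i\zeta_j$ to span $R$, and that is not linear algebra. It is exactly the content of Lemma~\ref{lem:compute_gen_Q} (iterate multiplication until the $\Q$-span stabilises, at most $\rk R$ rounds), or of Proposition~\ref{prop:compute_gen_unity}; invoking either one closes the gap. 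The paper's route sidesteps this issue: after reducing to a domain $R/\mathfrak p$, the fraction field is $\Q(\zeta)$ with $\mu$ cyclic, so the successive powers $1,\zeta,\zeta^2,\dots$ already form a $\Q$-basis and no iterated-product argument is needed.
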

\begin{proof}
Note that \(R\) is reduced by Lemma~\ref{lem:alpha_gen_implies_reduced}.
Let \(X\) be the set of minimal primes of \(R\).
Using Theorem~1.10 in \cite{algorithmsQ} we may compute \(X\) and for each \(\p\in X\) the map \(R\to R/\p\) in polynomial time.
Note that as a group, \(R/\p\) is
generated by \(\alpha(R/\p)\).
Then by the formula of Remark~\ref{rem:inner_product_domain} it suffices to prove the lemma for the ring \(R/\p\). 
Thus we suppose \(R\) is a domain and consequently \(\alpha(R)=\mu(R)\cup\{0\}\) by Proposition~\ref{prop:autopotents}.vii. 
For \(\zeta,\xi\in\mu(R)\) and 
a ring homomorphism 
\(\sigma\colon R\to\C\)
we have \(\sigma(\zeta)\cdot\overline{\sigma(\xi)}=\sigma(\zeta\xi^{-1})\).
Thus \(\langle \zeta,\xi\rangle_R = \sum_{\sigma\colon R\to\C} \sigma(\zeta\xi^{-1})\), which is the trace of \(\zeta\xi^{-1}\) from \(R\) to \(\Z\),
and hence 
is an integer.
As \(R\) is generated as a group by \(\mu(R)\), it follows that \(\langle R,R\rangle\subset\Z\) as well.
Moreover, this shows that computing \(\langle x,y\rangle_R\) reduces to computing traces of roots of unity, which clearly can be done in polynomial time.
\end{proof}

For a ring \(R\), an \(R\)-module \(M\), and a subset \(X\subset M\), we write \(R\cdot X\) for the submodule of \(M\) generated by~\(X\).

\begin{lemma}\label{lem:compute_gen_Q}
There exists a polynomial-time algorithm that, given a finite-dimensional commutative \(\Q\)-algebra \(A\) and a finite set \(X\subset A\), computes a \(\Q\)-basis \(Y\) of the subalgebra \(B\) of \(A\) generated by \(X\), where each element in \(Y\) is a finite (possibly vacuous) product of elements of \(X\).
\end{lemma}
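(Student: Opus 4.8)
The plan is to run the obvious incremental ``closure'' algorithm. I would maintain a $\Q$-linearly independent list $Y$ of elements of $A$, each recorded by its coordinate vector with respect to the given basis of $A$ together with an expression as a product of elements of $X$, and I would keep track of the subspace $V=\sum_{y\in Y}\Q y$. Initialise $Y$ to be $\{1\}$ if $1\neq 0$ in $A$ and to $\emptyset$ otherwise, so that $Y$ starts linearly independent and contains the vacuous product when $A\neq 0$. At each round, for every $y\in Y$ and every $x\in X$ compute $xy\in A$ using the given multiplication maps and test whether $xy\in V$; this membership test, together with the expression of $xy$ as a $\Q$-combination of $Y$ when it holds, amounts to solving a $\Q$-linear system and is doable in polynomial time by linear algebra over $\Q$ as in \cite{Lattices,algorithmsQ}. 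If some $xy$ is found outside $V$, adjoin it to $Y$, with product expression obtained by prepending $x$ to the expression recorded for $y$, and start a new round; otherwise halt and output $Y$.

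For correctness, every element of $Y$ is by construction a product of elements of $X$, so $V\subseteq B$ at all times. When the algorithm halts, $V$ is a $\Q$-subspace containing $1$ and satisfying $xV\subseteq V$ for all $x\in X$; by induction on the number of factors this gives $pV\subseteq V$ for every product $p$ of elements of $X$, and since $V$ is spanned by such products, bilinearity of multiplication yields $V\cdot V\subseteq V$. Hence $V$ is a subalgebra of $A$ with $X\subseteq V$, because $x=x\cdot 1\in V$, so $B\subseteq V$ and thus $V=B$; being linearly independent, $Y$ is then a $\Q$-basis of $B$ of the desired form. (When $A=0$ one has $1=0$, the halting test is vacuous, and the empty basis is returned.)

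For the running time, note that $\dim_\Q V$ strictly increases at each round that adjoins an element and is bounded by $\dim_\Q A$, which is part of the input length; hence there are at most $\dim_\Q A+1$ rounds, each performing at most $\#X\cdot\#Y$ multiplications in $A$ and membership tests. In particular each $y\in Y$ is a product of at most $\dim_\Q A$ elements of $X$, so its coordinate vector, and all rationals arising in the linear-algebra steps, have bit-length polynomial in the input. Verifying this last point is the only part that is not entirely formal, but it is handled by the same standard bounds on Gaussian elimination over $\Q$ used throughout the references cited at the start of this section; the combinatorial skeleton of the algorithm is otherwise immediate.
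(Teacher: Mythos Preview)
Your proposal is correct and follows essentially the same approach as the paper: initialise $Y=\{1\}$, repeatedly enlarge $Y$ by products $xy$ with $x\in X$, $y\in Y$ not in the current span until stable, and bound the number of rounds and the product-lengths by $\dim_\Q A$. The only cosmetic difference is that the paper adds a maximal independent subset of all new products at once per step, whereas you adjoin one element per round; the correctness and polynomial-time arguments are the same.
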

\begin{proof}
The algorithm proceeds as follows.
Start with \(Y=\{1\}\).
Compute the set of products \(Z=\{xy:x\in X,\, y\in Y\}\) and update \(Y\) to be a maximal \(\Q\)-linearly independent subset of \(Z\cup Y\).
Repeat this until \(\Q\cdot Y\) is stable.

Suppose in some step \(\Q\cdot Y = \Q \cdot (Z\cup Y)\). Then \(Z\subset \Q\cdot Y\), so \(\Q\cdot Y\) is closed under taking products with~\(X\). 
Since \(X\) generates \(B\) as 
a 
\(\Q\)-algebra and \(1\in\Q \cdot Y\) by the choice of initial \(Y\), it follows that \(\Q\cdot Y=B\).
Note that \(\# Y\leq \dim_\Q(B)\) and thus there are at most \(\dim_\Q(B)\) steps in the algorithm.
Moreover, in each step \(\# Z \leq \# (X\times Y)\) is polynomially bounded in the input length, so in total there are only polynomially many multiplications.
Lastly, note that in step \(i\) of the algorithm each element of \(Y\) can be written as a product of \(i\) elements from \(X\), and therefore the encoding of every element has length proportional to at most \(i\) times that of the longest element of \(X\).
Hence the multiplications can be carried out in polynomial time.
\end{proof}

Although it is possible to compute \(\alpha(R)\) for a reduced order
$R$, 
we cannot in general do this in polynomial time, even if \(R\) is connected. 
Note that 
 for the ring 
 $$R=\{(a_i)_i \in \Z^{n} : (\forall i,j)\ a_i \equiv a_j \textup{ mod }2\},$$
the set 
\(\{-1,1\}^{n} = \mu(R) \subset \alpha(R)\) is exponentially large.
Theorem~1.4 of \cite{RootsOfUnity} gives a polynomial-time algorithm that, given an order $R$, produces a set of generators of $\mu(R)$.

\begin{proposition}\label{prop:compute_gen_unity}
There exists a polynomial-time algorithm that, given an order \(R\), computes a set \(Y\subseteq \alpha(R)\) such that \(\Z \cdot Y = \Z \cdot \alpha(R)\).
\end{proposition}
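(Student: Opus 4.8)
The plan is to reduce to the case of a \emph{connected} order, where by Proposition~\ref{prop:autopotents}(vii) the autopotents are precisely the roots of unity together with $0$, and then to invoke the known polynomial-time algorithm for computing a generating set of $\mu$. Concretely, I would first use Theorem~1.1 of \cite{RootsOfUnity} to compute, in polynomial time, connected orders $(R_x)_{x\in X}$ together with an isomorphism $R\cong\prod_{x\in X}R_x$; the cardinality of $X$ is at most the rank of $R$, hence polynomially bounded, and each $R_x$ has polynomially bounded size. Applying Proposition~\ref{prop:autopotents}(iii) repeatedly gives $\alpha(R)=\prod_{x\in X}\alpha(R_x)$, and Proposition~\ref{prop:autopotents}(vii) gives $\alpha(R_x)=\mu(R_x)\cup\{0\}$ for each $x$. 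Since $0\in\alpha(R_x)$, for every $x\in X$ and every $\zeta\in\mu(R_x)$ the element $\widehat\zeta_x\in\prod_{y\in X}R_y$ that equals $\zeta$ in coordinate $x$ and $0$ in every other coordinate is again autopotent (directly: if $\zeta^m=1$ then $\widehat\zeta_x^{\,m+1}=\widehat\zeta_x$; alternatively by Proposition~\ref{prop:autopotents}(iv)). Identifying $R$ with $\prod_{x\in X}R_x$, it follows that
\[ \Z\cdot\alpha(R)\;=\;\bigoplus_{x\in X}\Z\cdot\alpha(R_x)\;=\;\bigoplus_{x\in X}\Z\cdot\mu(R_x), \]
with the direct sum taken coordinate-wise: the inclusion ``$\subseteq$'' holds because $\alpha(R)=\prod_x\alpha(R_x)\subseteq\prod_x\bigl(\Z\cdot\alpha(R_x)\bigr)=\bigoplus_x\bigl(\Z\cdot\alpha(R_x)\bigr)$, and ``$\supseteq$'' holds because each $\widehat\zeta_x$ lies in $\alpha(R)$.

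Next, for each $x\in X$ I would run the polynomial-time algorithm of Theorem~1.4 of \cite{RootsOfUnity} on the order $R_x$ to obtain a finite generating set $Y_x$ of $\mu(R_x)$, and then output $Y=\bigcup_{x\in X}\{\,\widehat\zeta_x:\zeta\in Y_x\,\}$, transported back to $R$ along the computed isomorphism. By construction $Y\subseteq\alpha(R)$, and $\Z\cdot Y=\bigoplus_{x\in X}\Z\cdot Y_x=\bigoplus_{x\in X}\Z\cdot\mu(R_x)=\Z\cdot\alpha(R)$, as required. The whole procedure is polynomial-time: the decomposition step runs in polynomial time and yields polynomially many factors of polynomially bounded size, and each of the polynomially many invocations of the $\mu$-generator algorithm is polynomial-time.

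The content of the statement --- and the reason one routes through $\mu$ rather than through $\alpha$ directly --- is that $\alpha(R)$, indeed already $\mu(R)$, can be exponentially large: the ring $R=\{(a_i)_i\in\Z^n:(\forall i,j)\ a_i\equiv a_j \bmod 2\}$ is connected with $\mu(R)=\{\pm1\}^n$. Thus the algorithm must never enumerate autopotents, which is exactly what the reduction to connected components accomplishes. The only genuinely load-bearing structural fact is the coordinate-wise direct-sum identity for $\Z\cdot\alpha(R)$ displayed above, and inside it the observation that $0\in\alpha(R_x)$, which is what promotes the ``standard-basis'' lifts $\widehat\zeta_x$ of generators of $\mu(R_x)$ to genuine autopotents of $R$. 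I would expect the main (minor) point requiring care to be the bookkeeping of these lifts across the isomorphism $R\cong\prod_x R_x$ and the verification that the cited decomposition algorithm applies to a general, possibly non-reduced, order; everything else is either cited from \cite{RootsOfUnity} or an immediate consequence of Proposition~\ref{prop:autopotents}.
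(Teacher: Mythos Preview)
There is a genuine gap. Your reduction to connected components is fine and agrees with the paper, but the step ``$\Z\cdot Y_x=\Z\cdot\mu(R_x)$'' is unjustified and in general false. Theorem~1.4 (or Theorem~1.2) of \cite{RootsOfUnity} produces a set $Y_x$ that generates $\mu(R_x)$ \emph{multiplicatively}, not additively. These are very different: already for $R_x=\Z[\i]$ one has $\mu(R_x)=\langle\i\rangle$, so $Y_x=\{\i\}$ is a valid output, yet $\Z\cdot\{\i\}=\Z\i\subsetneq\Z\cdot\mu(R_x)=\Z[\i]$. In your own cautionary example $R=\{(a_i)_i\in\Z^n:a_i\equiv a_j\bmod 2\}$ the group $\mu(R)=\{\pm1\}^n$ is generated multiplicatively by $n$ elements, whose $\Z$-span has rank $\le n$, whereas $\Z\cdot\mu(R)$ has rank $n$; but $\Z\cdot Y_x$ need not hit a $\Z$-basis of $\Z\cdot\mu(R)$ without further work.

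Passing from a multiplicative generating set of $\mu(R_x)$ to a $\Z$-module generating set of $\Z\cdot\mu(R_x)$ in polynomial time is precisely the nontrivial content of the proposition, and the paper devotes most of its proof to it. The paper first uses Lemma~\ref{lem:compute_gen_Q} to compute a $\Q$-basis $Z\subset\mu(R_x)$ of $\Q\cdot\mu(R_x)$ consisting of products of the multiplicative generators, then bounds the index $[\Z\cdot\mu(R_x):\Z\cdot Z]$ via a discriminant argument (Hadamard's inequality together with $|\Tr(\zeta)|\le n$ for roots of unity), showing that $\log_2$ of this index is polynomially bounded. Only then can one safely iterate, adjoining products $xy$ with $x$ a multiplicative generator and $y\in Y$ whenever $xy\notin\Z\cdot Y$: each such step halves the remaining index, so the procedure terminates after polynomially many steps with $\Z\cdot Y=\Z\cdot\mu(R_x)$. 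Without such a bound you have no control over the size of $Y$ or the running time.
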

\begin{proof}
We may factor \(R\) into a product of connected orders in polynomial time using Algorithm~6.1 in \cite{RootsOfUnity}. 
Combined with Proposition~\ref{prop:autopotents}.vii we may assume \(R\) is connected and \(\alpha(R)=\mu(R)\cup\{0\}\).

Apply Theorem~1.2 in \cite{RootsOfUnity} to compute in polynomial time a set \(X\) of generators of the group \(\mu(R)\).
Using Lemma~\ref{lem:compute_gen_Q} we may compute a basis \(Z\subseteq\mu(R)\) for the subalgebra \(\Q\cdot\mu(R)\) of \(R\tensor\Q\) as a \(\Q\)-vector space. 

Denote the discriminant $\det( (\Tr_{\Q\cdot \mu(R)/\Q} (xy))_{x,y\in Z})$ of \(\Z\cdot Z\) by $\Delta_Z$, and similarly let $\Delta_{\mu(R)}$ denote the discriminant of \(\Z\cdot \mu(R)\).
Let \(n=\#Z=\dim_\Q(\Q\cdot \mu(R))\).
We have \(|\Delta_Z|\leq n^{3n/2}\)
by Hadamard's inequality and the fact that \(|\Tr(\zeta)|\leq n\) for \(\zeta\in \mu(R)\).
Thus,
$$
\#(\Z\cdot\mu(R)/\Z\cdot Z)^2 = |\Delta_Z|/|\Delta_{\mu(R)}| \le |\Delta_Z| \le n^{3n/2}.
$$
In particular, \(\log_2\#(\Z\cdot\mu(R)/\Z\cdot Z)\) is polynomially bounded.

First we set \(Y=Z\).
Then we iterate over \(x\in X\) and \(y\in Y\) and add \(xy\) to \(Y\) whenever \(xy\not\in\Z\cdot Y\).
Once \(\Z \cdot Y\) stabilizes we have \(\Z\cdot Y = \Z\cdot \mu(R)\) and may return \(Y\).
Each new element added to \(Y\) decreases \(\log_2 \#(\Z\cdot \mu(R)/\Z \cdot Y)\) by at least \(1\), so the cardinality of \(Y\) and the number of steps taken in the algorithm are polynomially bounded.
Finally, we remark that there is a polynomial upper bound on the lengths of the encodings of the elements of \(Y\), since each element is the product of at most \(\#Y\) elements of \(X\) and an element of \(Z\).
Hence the algorithm runs in polynomial time.
\end{proof}

Roots of unity are homogeneous in any grading of a connected order, by Theorem 1.5 of \cite{UniversalGradings}. The next result shows this is also true in any orthogonal decomposition of a connected reduced order, where orthogonal means with respect to the inner product in Definition~\ref{def:inner_product}.

\begin{lemma}\label{lem:roots_of_unity_indec}
Let \(\mathcal{R}\) be an orthogonal decomposition of a connected reduced order \(R\).
Then the roots of unity of \(R\) are homogeneous in \(\mathcal{R}\), i.e., for all \(\zeta\in\mu(R)\) there exists some \(M\in\mathcal{R}\) such that \(\zeta\in M\).
\end{lemma}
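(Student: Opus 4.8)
The plan is to pass to the finite-dimensional reduced $\Q$-algebra $V=R\otimes_\Z\Q$ and to exploit positive-definiteness of the inner product of Definition~\ref{def:inner_product} together with a lower bound for $\langle v,v\rangle$, valid for all $v\in R$, phrased in terms of the ``support'' of $v$. Write $\Sigma$ for the set of ring homomorphisms $R\to\C$ (finite, and in bijection with the pairs $(\p,\sigma_\p)$ of Remark~\ref{rem:inner_product_domain}), and for $v\in R$ set $\operatorname{supp}(v)=\{\sigma\in\Sigma:\sigma(v)\neq0\}$. Since $R$ is reduced, $\bigcap_{\sigma\in\Sigma}\ker\sigma=\textup{nil}(R)=0$, so $\Sigma$ separates the points of $R$; also $\langle v,v\rangle=\sum_{\sigma\in\Sigma}|\sigma(v)|^2$, so the form is positive definite, in agreement with Remark~\ref{rem:inner_product_domain}. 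Throughout I use that ``$\mathcal R$ is an orthogonal decomposition'' means $\bigoplus_{M\in\mathcal R}M=R$ with $\langle M,M'\rangle=0$ for distinct $M,M'\in\mathcal R$.

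The first step is the inequality $\langle v,v\rangle\geq\#\operatorname{supp}(v)$ for every $v\in R$. Group $\Sigma$ into the blocks $\Sigma_\p$ of homomorphisms killing a fixed minimal prime $\p$ of $R$; by Remark~\ref{rem:inner_product_domain} one has $\langle v,v\rangle=\sum_\p\sum_{\sigma\in\Sigma_\p}|\sigma(v)|^2$. Fix a $\p$ with $v\notin\p$. Then $\prod_{\sigma\in\Sigma_\p}\sigma(v)$ is the norm, from the order $R/\p$ to $\Z$, of the nonzero element $v\bmod\p$, hence a nonzero rational integer, so $\prod_{\sigma\in\Sigma_\p}|\sigma(v)|\geq1$; the inequality between the arithmetic and geometric means of the numbers $|\sigma(v)|^2$ for $\sigma\in\Sigma_\p$ then gives $\sum_{\sigma\in\Sigma_\p}|\sigma(v)|^2\geq\#\Sigma_\p$. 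Summing over the $\p$ with $v\notin\p$ yields $\langle v,v\rangle\geq\#\operatorname{supp}(v)$.

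The second step applies this to a root of unity. Fix $\zeta\in\mu(R)$. Then $\zeta\in R^*$ with $\zeta^{-1}\in\mu(R)\subseteq R$, and $|\sigma(\zeta)|=1$ for all $\sigma$, so $\langle\zeta,\zeta\rangle=\#\Sigma$. Write $\zeta=\sum_{M\in\mathcal R}\zeta_M$ with $\zeta_M\in M\subseteq R$, a finite sum. Because $\mathcal R$ is orthogonal, $\langle\zeta_M,\zeta_{M'}\rangle=0$ for $M\neq M'$, so $\#\Sigma=\langle\zeta,\zeta\rangle=\sum_M\langle\zeta_M,\zeta_M\rangle\geq\sum_M\#\operatorname{supp}(\zeta_M)\geq\#\Sigma$, the last inequality because every $\sigma\in\Sigma$ lies in some $\operatorname{supp}(\zeta_M)$ (as $\sigma(\zeta)=\sum_M\sigma(\zeta_M)\neq0$). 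Hence equality holds throughout: the sets $\operatorname{supp}(\zeta_M)$ are pairwise disjoint and cover $\Sigma$. In particular, for each $\sigma\in\Sigma$ there is a unique $M$ with $\sigma(\zeta_M)\neq0$, and for that $M$ one has $\sigma(\zeta_M)=\sigma(\zeta)$, the remaining terms of $\sum_{M'}\sigma(\zeta_{M'})=\sigma(\zeta)$ being zero.

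It remains to deduce homogeneity from connectedness. For each $M$ put $g_M=\zeta^{-1}\zeta_M\in R$. By the previous paragraph $\sigma(g_M)=\sigma(\zeta)^{-1}\sigma(\zeta_M)\in\{0,1\}$ for every $\sigma\in\Sigma$, whence $\sigma(g_M^2-g_M)=0$ for all $\sigma$ and so $g_M^2=g_M$ since $R$ is reduced; likewise $\sigma(g_Mg_{M'})=0$ for all $\sigma$ when $M\neq M'$ by disjointness of supports, so $g_Mg_{M'}=0$, and $\sum_Mg_M=\zeta^{-1}\zeta=1$. Thus the $g_M$ are pairwise orthogonal idempotents of $R$ summing to $1$. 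As $R$ is connected its only idempotents are $0$ and $1$, so exactly one $g_{M_0}$ equals $1$ and the others are $0$; then $\zeta_M=\zeta g_M$ equals $0$ for $M\neq M_0$ and equals $\zeta$ for $M=M_0$, that is, $\zeta=\zeta_{M_0}\in M_0$, as required. The one genuinely non-routine ingredient is the support inequality $\langle v,v\rangle\geq\#\operatorname{supp}(v)$ together with the observation that, for a root of unity, the ``budget'' $\langle\zeta,\zeta\rangle=\#\Sigma$ is exactly tight, which is what forces the supports of the components $\zeta_M$ to partition $\Sigma$; everything afterwards is bookkeeping plus the definition of connectedness.
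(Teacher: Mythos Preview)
Your proof is correct. The paper's own proof is a three-line appeal to an external result: Corollary~5.6 of \cite{UniversalGradings} states that $1\in R$ is \emph{indecomposable} (meaning any writing $1=x+y$ with $\langle x,y\rangle\geq0$ forces $x=0$ or $y=0$); since multiplication by $\zeta$ is an isometry of $R$, every $\zeta\in\mu(R)$ is indecomposable too, and indecomposable elements are trivially homogeneous in any orthogonal decomposition. Your argument unwinds exactly the content behind that cited corollary: the support inequality $\langle v,v\rangle\geq\#\operatorname{supp}(v)$ via the norm--AM/GM bound is precisely what makes $1$ indecomposable. Where the paper finishes abstractly (indecomposable $\Rightarrow$ homogeneous), you finish concretely by converting the tight support partition into a system of orthogonal idempotents and invoking connectedness directly. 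The mathematical substance is the same; what your version buys is self-containedness (no dependence on \cite{UniversalGradings}), at the cost of some length.
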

\begin{proof}
By Corollary~5.6 in \cite{UniversalGradings} the element \(1\in R\) is {\em indecomposable}, i.e., for all \(x,y\in R\) such that \(x+y=1\) and \(\langle x,y\rangle\geq 0\) we have \(x=0\) or \(y=0\). Since \(x\mapsto \zeta x\) is an isometry of \(R\) for all \(\zeta\in\mu(R)\), we conclude that all \(\zeta\in\mu(R)\) are indecomposable.
Indecomposable elements are 
clearly 
homogeneous.
\end{proof}

\begin{lemma}\label{lem:universal_of_mu_gen}
Let \(R\) be a connected reduced order and \(\overline{R}=(\Gamma,(R_\gamma)_\gamma)\) be the universal grading
of $R$.
Suppose \(X\subset\mu(R)\) is a subset and \(A\subset R\) is a subring such that \(\bigoplus_{x\in X} Ax\) is an orthogonal decomposition of \(R\). 
Write \(\Delta=\mu(R)/\mu(A)\).
Then the natural map \(g\colon X\to\Delta\) is a bijection and \(\overline{S}=(\Delta,(A \cdot g^{-1}(\delta))_{\delta})\) is a grading of \(R\).
If also \(A\subset R_1\), then \(\overline{S}\) is universal.
\end{lemma}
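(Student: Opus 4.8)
The plan is to prove the three assertions in order: that $g$ is a bijection, that $\overline S$ is a grading, and — assuming $A\subseteq R_1$ — that $\overline S$ is universal, the last via Proposition~\ref{prop:universal_efficient}.iv.

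First I would note the elementary identity $\mu(A)=\mu(R)\cap A$ (an element of $A$ that is a root of unity in $R$ is a root of unity in $A$). For injectivity of $g$: if $x,y\in X$ satisfy $g(x)=g(y)$ then $xy^{-1}\in\mu(A)\subseteq A$, so $x=(xy^{-1})y\in Ay$; since also $x\in Ax$ and $x\neq0$, the directness of $\bigoplus_{z\in X}Az$ forces $x=y$. For surjectivity I would apply Lemma~\ref{lem:roots_of_unity_indec}: since $\bigoplus_{x\in X}Ax$ is an \emph{orthogonal} decomposition, any $\zeta\in\mu(R)$ lies in some $Ax$, say $\zeta=ax$ with $a\in A$, and then $a=\zeta x^{-1}\in\mu(R)\cap A=\mu(A)$, so $g(x)=\zeta\mu(A)$. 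Thus $g$ is bijective and $\overline S$ is well-defined. Writing $x_\delta=g^{-1}(\delta)$, so that $S_\delta=Ax_\delta$, the decomposition $\bigoplus_{x\in X}Ax=R$ becomes $\bigoplus_{\delta\in\Delta}S_\delta=R$ after reindexing along $g$; and $S_\gamma S_\delta=A\,x_\gamma x_\delta\subseteq S_{\gamma\delta}$ because $g(x_{\gamma\delta})=\gamma\delta=x_\gamma x_\delta\,\mu(A)$ gives $x_{\gamma\delta}^{-1}x_\gamma x_\delta\in\mu(A)\subseteq A$, hence $x_\gamma x_\delta\in Ax_{\gamma\delta}$. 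This shows $\overline S$ is a grading.

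Now assume $A\subseteq R_1$, and let $d=d_R\colon\mu(R)\to\Gamma$ be the degree map (Definition~\ref{def:degree-map}). The key point is that $\mu(A)\subseteq R_1$ forces $d|_{\mu(A)}=1$, so $d$ descends to a group homomorphism $\bar d\colon\Delta\to\Gamma$ with $\bar d(g(x))=d(x)$. Moreover each $x\in X$ lies in $R_{d(x)}$ and $A\subseteq R_1$, so $Ax\subseteq R_1R_{d(x)}\subseteq R_{d(x)}$; grouping the summands of $\bigoplus_{x\in X}Ax=R$ according to the value of $d$ gives $R=\bigoplus_{\gamma\in\Gamma}T_\gamma$ with $T_\gamma:=\bigoplus_{x\in X,\,d(x)=\gamma}Ax\subseteq R_\gamma$, and comparing with $R=\bigoplus_{\gamma}R_\gamma$ forces $T_\gamma=R_\gamma$ for all $\gamma$ (project onto $R_\gamma$ in the universal grading). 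Since $g$ is a bijection, $\bar d^{-1}(\gamma)=g(\{x\in X:d(x)=\gamma\})$, so $\sum_{\delta\in\bar d^{-1}(\gamma)}S_\delta=T_\gamma=R_\gamma$; that is, $\bar d_*(\overline S)=\overline R$, so $\bar d$ is a morphism of gradings $\overline S\to\overline R$. Finally $A\neq0$ and $x_\delta\in R^*$, so every $S_\delta$ is nonzero and $\Delta=\langle\delta:S_\delta\neq0\rangle$; hence $\overline S$ coincides with its reduction in the sense of Proposition~\ref{prop:universal_efficient}, and since $\overline R$ is the universal grading of $R$, Proposition~\ref{prop:universal_efficient}.iv yields that $\overline S$ is universal.

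The step I expect to demand the most care is the universality argument, specifically the identity $R_\gamma=\bigoplus_{d(x)=\gamma}Ax$: this is where $A\subseteq R_1$ is genuinely used — both to define $\bar d$ via $\mu(A)\subseteq\ker d$ and to get $Ax\subseteq R_{d(x)}$ — and it is the only place where the orthogonal decomposition must be reconciled with the universal grading. The rest is bookkeeping around the bijection $g$; I would, however, be careful to route through Proposition~\ref{prop:universal_efficient}.iv (checking that $\overline S$ has no zero homogeneous components, so that it equals the reduced grading there) rather than attempting to verify the universal mapping property of $\overline S$ directly.
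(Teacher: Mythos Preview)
Your proof is correct and follows essentially the same approach as the paper: you establish bijectivity of $g$ via Lemma~\ref{lem:roots_of_unity_indec}, define the induced map $\bar d\colon\Delta\to\Gamma$ (the paper's $e$), verify it is a morphism of gradings $\overline S\to\overline R$, and conclude with Proposition~\ref{prop:universal_efficient}.iv. You supply more detail than the paper does---in particular the explicit check that $S_\gamma S_\delta\subseteq S_{\gamma\delta}$, the verification that $\bar d_*(\overline S)=\overline R$ via the auxiliary $T_\gamma$, and the observation that every $S_\delta$ is nonzero so that $\overline S$ coincides with its reduction---but the underlying argument is the same.
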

\begin{proof}
First we show \(g\) is a bijection. 
If \(g(x)=g(y)\) for \(x,y\in X\), then \(Ax=Ay\) and thus \(x=y\) by orthogonality. Hence \(g\) is injective. 
It follows from Lemma~\ref{lem:roots_of_unity_indec} 
that \(\mu(R) = \bigcup_{x\in X} ( Ax \cap \mu(R) ) = \mu(A) \cdot X\), so \(g\) is surjective.
It follows that \(\overline{S}\) is a grading
of $R$.
Now assume \(A\subset R_1\).
Let \(e\colon \Delta\to\Gamma\) be induced by the degree map \(d\colon \mu(R)\to\Gamma\), which is well-defined because \(\mu(A)\subset\ker(d)\) by assumption.
Then 
\(e_* (\overline{S})=\overline{R}\), and \(\overline{S}\) is universal by Proposition~\ref{prop:universal_efficient}.iv.
\end{proof}

\begin{theorem}\label{thm:universal_unity_grading}
There exists a polynomial-time algorithm that, given a reduced order \(R\) that is generated as a group by \(\alpha(R)\), computes the universal grading of \(R\).
\end{theorem}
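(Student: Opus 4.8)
The plan is to first reduce to the connected case (where autopotents are just roots of unity) and then to produce the universal grading as the decomposition of $R$ into orthogonal components for the inner product $\langle\cdot,\cdot\rangle_R$ of Definition~\ref{def:inner_product}, but computed only on a \emph{polynomial-size} set of roots of unity supplied by Proposition~\ref{prop:compute_gen_unity}, so as to avoid enumerating the possibly exponentially large group $\mu(R)$.

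First I would reduce to the connected case. By Lemma~\ref{lem:alpha_gen_implies_reduced} the ring $R$ is reduced. Using Algorithm~6.1 in \cite{RootsOfUnity} we factor $R\cong\prod_{x\in X}R_x$ into connected orders, together with the projections, in polynomial time; by Proposition~\ref{prop:autopotents}.vi each $R_x$ is again generated as a group by $\alpha(R_x)$. Given the universal grading of each $R_x$, Proposition~\ref{prop:product_grading}.ii assembles the universal grading of $R$, and the conversion is clearly polynomial-time. So assume $R$ is connected, whence $\alpha(R)=\mu(R)\cup\{0\}$ by Proposition~\ref{prop:autopotents}.vii. Now apply Proposition~\ref{prop:compute_gen_unity} to compute a polynomial-size $Y\subseteq\mu(R)$ with $\Z\cdot Y=\Z\cdot\alpha(R)=R$; discard $0$ and adjoin $1$. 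By Lemma~\ref{lem:compute_inner} the form $\langle\cdot,\cdot\rangle_R$ has values in $\Z$ and is computable in polynomial time, so we may form the graph on the vertex set $Y$ with an edge $\{y,y'\}$ whenever $\langle y,y'\rangle_R\neq0$, and compute its connected components $Y_0,\dots,Y_k$ with $1\in Y_0$, together with the sublattices $M_j=\Z\cdot Y_j\subseteq R$ and a representative $\zeta_j\in Y_j$ for each $j$. All of this is polynomial-time, and I claim that $\bigl(\mu(R)/\mu(M_0),\,(M_j)_j\bigr)$, suitably presented, is the universal grading of $R$; outputting it — e.g.\ presenting $\mu(R)/\mu(M_0)$ from generators of $\mu(R)$ and of $\mu(M_0)=\mu(R)\cap M_0$ obtained via Theorem~1.2 of \cite{RootsOfUnity}, and giving the nonzero components $M_j$ — is then routine.

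The substance is the proof of the claim. Since there are no edges between distinct $Y_i,Y_j$, we get $M_i\perp M_j$ for $i\neq j$; as $\langle\cdot,\cdot\rangle_R$ is positive definite on $R$ (Remark~\ref{rem:inner_product_domain}) the $M_j$ are independent, and they span $R$, so $R=\bigoplus_j M_j$ is an orthogonal decomposition. By Lemma~\ref{lem:roots_of_unity_indec} every root of unity is homogeneous in it. Using this, one checks readily that $\mu(R)\cap M_0$ is precisely the connected component of $1$ in the orthogonality graph on all of $\mu(R)$ (along any non-orthogonality chain the homogeneous component is constant), and that this component is a subgroup of $\mu(R)$, because multiplication by a root of unity is an isometry of $R$ and hence an automorphism of that graph; therefore $M_0=\Z\cdot(\mu(R)\cap M_0)$ is a subring. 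The same bookkeeping shows each $M_j$ equals $\zeta_j M_0$ and that the cosets $\zeta_j\,(\mu(R)\cap M_0)$ run exactly once through $\mu(R)/(\mu(R)\cap M_0)$, so that $R=\bigoplus_{x\in X}M_0 x$ with $X=\{\zeta_0,\dots,\zeta_k\}$ is an orthogonal decomposition. Finally, $M_0\subseteq R_1$: for $\zeta,\xi\in\mu(R)$ with $d(\zeta)\neq d(\xi)$ we have $\langle\zeta,\xi\rangle_R=\sum_{\sigma\colon R\to\C}\sigma(\zeta\xi^{-1})=0$, since $\zeta\xi^{-1}$ is homogeneous of nontrivial degree in the finite universal grading and the sum of a nontrivial character of $\Gamma(R)$ over the values $\sigma$ vanishes (equivalently, $\langle R_\gamma,R_\delta\rangle_R=0$ for $\gamma\neq\delta$; see \cite{UniversalGradings}); hence along an edge of the orthogonality graph on $\mu(R)$ the degree $d$ is constant, so every element of $\mu(R)\cap M_0$ has degree $1$, giving $M_0=\Z\cdot(\mu(R)\cap M_0)\subseteq R_1$. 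Now Lemma~\ref{lem:universal_of_mu_gen}, applied with $A=M_0$ and $X=\{\zeta_0,\dots,\zeta_k\}$, yields that $\bigl(\mu(R)/\mu(M_0),\,(M_0\cdot g^{-1}(\delta))_\delta\bigr)=\bigl(\mu(R)/\mu(M_0),\,(M_j)_j\bigr)$ is the universal grading.

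The main obstacle is the structural input rather than the algorithmics: one must see that the orthogonal-component decomposition of $R$, read off from a cleverly chosen polynomial-size set of roots of unity, actually reproduces the universal grading — in particular that the component of $1$ is a subring contained in $R_1$. This is exactly where Lemma~\ref{lem:roots_of_unity_indec}, the isometry/subgroup argument, and the vanishing of nontrivial character sums (i.e.\ orthogonality of distinct homogeneous components) are needed; once these are in hand, everything else — factoring into connected pieces, computing the spanning set of roots of unity, the inner products, and the graph components — is polynomial-time by the results already cited.
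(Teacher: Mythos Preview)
Your proof is correct and reaches the same conclusion via the same key ingredients (Lemma~\ref{lem:roots_of_unity_indec}, the orthogonality of distinct homogeneous pieces, and Lemma~\ref{lem:universal_of_mu_gen}), but the algorithm itself differs from the paper's. The paper proceeds \emph{iteratively}: it starts with $A=\Z$ and a spanning set $s_1,\dots,s_n\in\mu(R)$, and whenever $\langle As_i,As_j\rangle\neq0$ it enlarges $A$ by $s_i^{-1}s_j$ and drops $s_j$, maintaining the invariants $\sum_i As_i=R$ and $A\subseteq R_1$ throughout; the invariant proof is a one-line appeal to Proposition~5.8 of \cite{UniversalGradings}. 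Your approach instead computes the connected components of the orthogonality graph on $Y$ in one shot and identifies $M_0$ with $\Z C_0$, where $C_0$ is the component of $1$ in the full graph on $\mu(R)$. Both yield the same subring (necessarily $R_1$) and invoke Lemma~\ref{lem:universal_of_mu_gen} identically. Your route is a bit more conceptual, but the justification that $\mu(R)\cap M_0=C_0$ is terse: you give only the inclusion $C_0\subseteq\mu(R)\cap M_0$ via the chain argument. The reverse needs the observation that $Y_0$, being connected already in the subgraph on $Y$, lies inside $C_0$, so $M_0=\Z Y_0\subseteq\Z C_0$; together with $C_0\subseteq M_0$ this gives $M_0=\Z C_0$, and then Lemma~\ref{lem:roots_of_unity_indec} applied to the orthogonal coset decomposition $R=\bigoplus_\xi\,\xi\,\Z C_0$ forces $\mu(R)\cap\Z C_0=C_0$. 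With that made explicit, the rest of your argument (that $C_0$ is a subgroup, that $M_j=\zeta_jM_0$, and that the $\zeta_j$ enumerate the cosets) goes through cleanly.
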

\begin{proof}
We may write \(R\) as a product of connected orders in polynomial time using Algorithm~6.1 in \cite{RootsOfUnity}. 
Using Proposition~\ref{prop:product_grading}.ii and Proposition~\ref{prop:autopotents}.vi we may restrict ourselves to the connected case, so suppose \(R\) is connected and thus \(\alpha(R)=\mu(R)\cup\{0\}\) by Proposition~\ref{prop:autopotents}.vii.
Let \(\overline{R}=(\Gamma,(R_\gamma)_\gamma)\) be the universal grading of \(R\), which is not part of the algorithm.

We will compute a subring \(A\subset R\) and a sequence \(s_1,\dotsc,s_n\in \mu(R)\) having the following properties:
\begin{align*}
\textup{(i)} \enspace \sum_{i=1}^n A s_i = R; \hspace{1.5em} \textup{(ii)} \enspace A \subset R_1; \hspace{1.5em} \textup{(iii)} \enspace (\forall\, i\neq j)\ \langle A s_i, A s_j \rangle = 0.
\end{align*}
It then follows from Lemma~\ref{lem:universal_of_mu_gen} that we have in fact computed the universal grading of \(R\).
The algorithm is as follows.
\begin{enumerate}[nosep,label={(\arabic*)}]
\item Set \(A=\Z\) and compute \(\Z\)-module generators \(s_1,\dotsc,s_n\in\mu(R)\) for \(R\) using Proposition~\ref{prop:compute_gen_unity}.
\item While there exist \(i\neq j\) such that \(\langle As_i, As_j \rangle \neq 0\), as computed by Lemma~\ref{lem:compute_inner}:
\begin{enumerate}[nosep]
\item Choose any \(i<j\) such that \(\langle A s_i, As_j\rangle\neq 0\);
\item Replace \(A\) by the ring generated by \(A\) and \(s_i^{-1}s_j\);
\item Remove \(s_j\) from the list of generators.
\end{enumerate}
\end{enumerate}
Clearly,
properties 
(i) and (ii) are satisfied after step (1), and (iii) is satisfied after step (2).
Moreover, since the number of generators decreases each step, the number of iterations in step (2) is polynomially bounded and the algorithm terminates.
It remains to show 
that 
(i) and (ii) are preserved by step (2).
Property (i) is preserved because \(s_j\) is contained in \(A s_i\) after updating \(A\) in step (2b).
For (ii) it suffices to show that \(s_i^{-1}s_j\in R_1\).
As \(s_i^{-1}s_j\in\mu(R)\) we have \(s_i^{-1}s_j\in R_\gamma\) for \(\gamma=d(s_i^{-1}s_j)\) by Lemma~\ref{lem:degree_abhom}.
Since \(\langle A,As_i^{-1}s_j\rangle=\langle As_i,As_j\rangle\neq 0\) we conclude that \(R_\gamma=R_1\) by Proposition~5.8 in \cite{UniversalGradings}, as was to be shown.
Lastly, note that all operations can be carried out in polynomial time.
\end{proof}

\noindent{\bf{Proof of Theorem~\ref{thm:algos}.}}
By Proposition~\ref{prop:comp_S0} it suffices to compute a universal grading of \(R\).
Apply Theorem~\ref{thm:daan_extern}
in the general case 
and Theorem~\ref{thm:universal_unity_grading} in the specific case of an order generated by its autopotents.

\bibliography{references}{}
\bibliographystyle{plain}

\end{document}